\title[Gravitating vortices and symplectic reduction by stages]{Gravitating vortices\\ and symplectic reduction by stages}
\author[\'Alvarez-C\'onsul]{Luis \'Alvarez-C\'onsul}
\address{Instituto de Ciencias Matem\'aticas (CSIC-UAM-UC3M-UCM)\\ Nicol\'as Cabrera 13--15, Cantoblanco\\ 28049 Madrid, Spain}
\email{l.alvarez-consul@icmat.es}
\author[Garcia-Fernandez]{Mario Garcia-Fernandez}
\address{Instituto de Ciencias Matem\'aticas (CSIC-UAM-UC3M-UCM)\\ Nicol\'as Cabrera 13--15, Cantoblanco\\ 28049 Madrid, Spain}
\email{mario.garcia@icmat.es}
\author[Garc\'{\i}a-Prada]{Oscar Garc\'{\i}a-Prada}
\address{Instituto de Ciencias Matem\'aticas (CSIC-UAM-UC3M-UCM)\\ Nicol\'as Cabrera 13--15, Cantoblanco\\ 28049 Madrid, Spain}
\email{oscar.garcia-prada@icmat.es}
\author[Pingali]{Vamsi Pritham Pingali}
\address{Department of Mathematics, Indian Institute of Science, Bangalore, India - 560012}
\email{vamsipingali@iisc.ac.in}
\author[Yao]{Chengjian Yao}
\address{ Institute of Mathematical Sciences, ShanghaiTech University, 393 Middle Huaxia Road, Pudong,
	Shanghai, 201210 China.}
\email{yaochj@shanghaitech.edu.cn}
\thanks{
The first three authors are partially supported by the Spanish Ministry of Science and Innovation, through the `Severo Ochoa Programme for Centres of Excellence in R\&D' (CEX2019-000904-S).
The first and the third authors are partially supported by MICINN under grants PID2019-109339GB-C31 and PID2022-141387NB-C21.
The second author is partially supported by MICINN under grants PID2019-109339GA-C32, PID2022-141387NB-C22 and CNS2022-135784. The fourth author is partially supported by grant F.510/25/CAS-II/2018(SAP-I) from UGC
(Govt. of India). The last named author is supported by the NSFC grant No. 12371207.}
\theoremstyle{plain}
\newtheorem{theorem}{Theorem}[section]
\newtheorem{lemma}[theorem]{Lemma}
\newtheorem{proposition}[theorem]{Proposition}
\theoremstyle{definition}
\newtheorem{definition}[theorem]{Definition}
\newtheorem{definition-theorem}[theorem]{Definition-Theorem}
\newtheorem{example}[theorem]{Example}
\theoremstyle{remark}
\newtheorem{remark}[theorem]{Remark}
\newtheorem*{acknowledgements}{Acknowledgements}
\numberwithin{equation}{section} \setcounter{tocdepth}{1}
\newcommand{\tr}{\operatorname{tr}}
\newcommand{\pr}{p}
\newcommand{\Ad}{\operatorname{Ad}}
\newcommand{\Aut}{\operatorname{Aut}}
\newcommand{\dbar}{\bar{\partial}}
\newcommand{\CC}{{\mathbb C}}
\newcommand{\RR}{{\mathbb R}}
\renewcommand{\(}{\left(}
\renewcommand{\)}{\right)}
\newcommand{\defeq}{\mathrel{\mathop:}=} 
\newcommand{\iprod}{\mathbin{\lrcorner}} 
\newcommand{\surj}{\to\kern-1.8ex\to}
\newcommand{\lto}{\longrightarrow}
\newcommand{\lra}[1]{\stackrel{#1}{\longrightarrow}}
\DeclareMathOperator{\Ric}{Ric}
\newcommand{\cA}{\mathcal{A}}
\newcommand{\cJ}{\mathcal{J}}
\newcommand{\cT}{\mathcal{T}}
\newcommand{\cF}{\mathcal{F}}
\newcommand{\cG}{\mathcal{G}}
\newcommand{\cS}{\mathcal{S}}
\newfont{\gl}{eufm10 scaled \magstep1}
\newcommand{\Lie}{\operatorname{Lie}}
\newcommand{\LieG}{\operatorname{Lie} \cG}
\newcommand{\cX}{\widetilde{\mathcal{G}}}
\newcommand{\LieX}{\operatorname{Lie} \cX}
\newcommand{\cH}{\mathcal{H}} 
\newcommand{\LieH}{\Lie\cH}
\newcommand{\SU}{\operatorname{SU}}
\newcommand{\PP}{{\mathbb P}}
\newcommand{\AMY}{I} 
\begin{document}
\begin{abstract}
We undertake a novel approach to the existence problem for gravitating vortices on a Riemann surface based on symplectic reduction by stages, which seems to be new in the PDE as well as the gauge theory literature. The main technical tool for our study is the reduced $\alpha$-K-energy, for which we establish convexity properties by means of finite-energy pluripotential theory, as recently applied to the study of constant scalar curvature K\"ahler metrics. Using these methods, we prove that the existence of solutions to the gravitating vortex equations on the sphere implies the polystability of the effective divisor defined by the zeroes of the Higgs field. This approach also enables us to establish the uniqueness of gravitating vortices in any admissible K\"ahler class, in the absence of automorphisms. Lastly, we also prove the existence of solutions for the gravitating vortex equations for genus $g\geq 1$ for certain ranges of the coupling constant $\alpha$ and the volume.
\end{abstract}

\maketitle
Mathematics Subject Classification (2010): \emph{Primary 53C07; Secondary 53D20, 53C25.}

\section{Introduction}\label{sec:intro}

This paper is devoted to the study of the existence problem for the gravitating vortex equations.
Let $\Sigma$ be a compact Riemann surface of  genus $g$. Let $L$ be a holomorphic line bundle over $\Sigma$ and $\bm\phi \in H^0(\Sigma,L)$ a holomorphic section of $L$. We fix a parameter $0 < \tau \in \RR$ and a
coupling constant $\alpha \in \RR$.
The gravitating vortex equations  for a K\"ahler metric on $\Sigma$ with K\"ahler form $\omega$ and a Hermitian metric $h$ on $L$ are
\begin{equation}
\left\{
\label{eqn:GV0}
\begin{array}{l}
iF_{h} + \frac{1}{2} (\lvert\bm\phi\rvert_{h}^2 - \tau)\omega
= 0,\\
S_{\omega} + \alpha ( \Delta_{\omega} + \tau )(\lvert\bm\phi\rvert_{h}^2 - \tau )  = c.
\end{array}
\right.
\end{equation}
Here $F_h$ is  the curvature of the Chern connection of $h$, $\lvert\bm\phi\rvert_h$ is the pointwise norm of $\bm\phi$ with respect to $h$, $S_\omega$ is the scalar curvature of $\omega$ 
and $\Delta_\omega$ is the Laplace operator for the metric $\omega$, whose  action  on a smooth function $f$ is defined by $\Delta_\omega f \omega = 2i \dbar\partial f$. 
The constant $c\in\RR$ is topological and is obtained by integrating~\eqref{eqn:GV0} over $\Sigma$ to be the following.
\begin{equation}\label{eq:constantcintro}
c=\frac{2\pi(\chi(\Sigma)-2\alpha\tau N)}{V},
\end{equation}
where $V = \int_{\Sigma} \omega$ and
$$
N=\int_{\Sigma}c_1(L).
$$
The first equation in \eqref{eqn:GV0} is known as the abelian vortex equation, and for fixed $\omega$ it is well known (see~\cite{Brad,Prada2,Prada,Noguchi} and Appendix~\ref{sub:variational-approach-vortex-eq}) that the existence of solutions with $\bm\phi\neq 0$  (called vortices) is equivalent to
\begin{equation}\label{tau-bound}
V > \frac{4\pi N}{\tau}.
\end{equation}
The second equation in \eqref{eqn:GV0} reflects a back-reaction of the vortex on the K\"ahler metric $\omega$, whose intensity is measured by the coupling constant $\alpha$, 
similarly to matter couples to the gravitational field in Einstein's field equations in general relativity, with a push proportional to the gravitational constant. This analogy motivates referring to the solutions of \eqref{eqn:GV0} as \emph{gravitating vortices}, as first introduced in \cite{Al-Ga-Ga1}.

Beyond this appealing analogy, a striking observation made in \cite{Al-Ga-Ga1}, based on earlier work by Linet \cite{Linet} and Comtet-Gibbons \cite{ComtetGibbons} (see also \cite{Yang1992}), is that, in the special case $c=0$ and genus $g = 0$, the gravitating vortices can be regarded as special solutions of the Einstein-Maxwell-Higgs equations in general relativity \cite{Chae,CBruhat,Friedrich,Tadmon}. The solutions obtained from \eqref{eqn:GV0} impose a natural product ansatz $\mathbb{R}^{1,1} \times \Sigma$ for the space-time metric and minimize a suitable energy functional. Having a special physical significance, they are referred to as \emph{cosmic strings}, while the corresponding equations are known as the Einstein--Bogomol'nyi equations or the self-dual Einstein--Maxwell--Higgs equations. Partly motivated by this interesting relation to physics, there has been some recent activity in the study of the gravitating vortex equations \eqref{eqn:GV0}, both in the complex geometry \cite{Al-Ga-Ga2,Al-Ga-Ga-P,Al-Ga-Ga-P-Y,FPY,Pingali,Yao, Yao2} and PDE communities \cite{HLS,HLSErratum}.

The basic formal structure underlying the equations \eqref{eqn:GV0} is provided by the theory of Hamiltonian reduction in symplectic geometry. The gravitating vortex equations describe the zeros of a moment map for the Hamiltonian
action of an infinite-dimensional gauge group $\cX$ \cite{Al-Ga-Ga,Al-Ga-Ga-P}, on a phase space given by unitary connections, Higgs fields, and complex structures on the underlying smooth surface $S$. Compared to the moment map intepretation of other equations in complex geometry, as for the renown constant scalar curvature (cscK) problem \cite{Don,Fujiki}, the main novelty here is the structure of the group of symmetries, given by a non-trivial extension of the form
\begin{equation}\label{eq:coupling-term-moment-map-}
1\to\cG\lto\cX \to \cH\to 1,
\end{equation}
where $\cH$ denotes the group of Hamiltonian symplectomorphisms of a fixed symplectic structure on the surface $S$ and $\cG$ is the gauge group of a unitary line bundle with degree $N$.

As discovered in \cite{Al-Ga-Ga}, the special structure of the \emph{extended gauge group} $\cX$ reflects on a higher level of difficulty on the existence problem for the corresponding moment map equations. Not only does the group of symmetries lack a formal complexification, as is the case for the cscK problem in K\"ahler geometry \cite{Don}, but also the corresponding infinite-dimensional symmetric space (whose points are given by pairs $(\omega,h)$ as in \eqref{eqn:GV0}) does not admit a natural Riemannian metric compatible with the characteristic affine connection (see \cite[Remark 4.4]{Al-Ga-Ga}). This symmetric space, denoted by abuse of notation $\cX^c/\cX$, has furthermore an intricate submersion structure  with totally geodesic fibres \cite[Proposition 3.17]{Al-Ga-Ga} over the space of K\"ahler potentials \cite{Mabuchi}. This weak structure of $\cX^c/\cX$ prevents us from directly applying to \eqref{eqn:GV0} the strong methods from pluripotential theory recently developed for the study of the space of K\"ahler potentials \cite{Chen}, and so succesfully applied to the cscK problem \cite{Darvas}.

Motivated by this problem, and building upon the symplectic interpretation of the gravitating vortex equations \cite{Al-Ga-Ga-P}, in the present work we propose and carry out a novel approach to the existence problem for gravitating vortices on a Riemann surface based on symplectic reduction by stages, which seems to be new both in the PDE and in the field theory literature (cf. Example No. 3 in~\cite[pp. 108--109]{Ma-Mi-Or-Pe-Ra}). This approach is a two-step process in which  we first consider the symplectic reduction by the normal subgroup $\cG \subset \cX$, and then consider the residual Hamiltonian $\cH$-action. In the K\"ahler context, this procedure is known as K\"ahler reduction by stages, and we shall explain the main formal ingredients in Section \ref{section:setupabstract}, in a situation where one has a finite dimensional K\"ahler manifold. Effectively, in our setup, the reduction by stages approach to gravitating vortices stands for considering the following PDE
\begin{equation}\label{eqn:GV0reducedintro}
S_{\omega} + \alpha ( \Delta_{\omega} + \tau )(\lvert\bm\phi\rvert_{h_\omega}^2 - \tau )  = c
\end{equation}
for a K\"ahler metric $\omega$ on a fixed Riemann surface $\Sigma$. Here, $h_\omega$ is the unique solution to the abelian vortex equation with respect to $\omega$ established in \cite{Brad,Prada2,Prada,Noguchi}.
From the analytical viewpoint, a remarkable fact about the previous equation is that it is a non-local PDE, depending on global information on $\Sigma$ via a (simple) stability condition.

Using our new approach, in the present work we prove three main results which constitute an important step forward in the theory for the gravitating vortex equations \eqref{eqn:GV0}. In our first theorem we establish the first uniqueness result for the equations (\ref{eqn:GV0}) for arbitrary genus $g$, giving a complete solution to \cite[Conjecture 5.5]{Al-Ga-Ga-P} in the stable case. We should stress the fact that our next theorem provides, in particular, the first known uniqueness result for the self-dual Einstein--Maxwell--Higgs equations, corresponding to the case $c = 0$ (cf. \cite{Chae}). Namely, we have the following result (Theorem \ref{thm:uniqueness}).

\begin{theorem} \label{uniqueness-intro}
Suppose either \textup{(i)} $g(\Sigma)\geq 1$, or \textup{(ii)} $g(\Sigma)=0$ and $\bm\phi$ vanishes at least at $3$ points. Then smooth solutions to the gravitating vortex equations with fixed volume $V>\frac{4\pi N}{\tau}$ are unique.
\end{theorem}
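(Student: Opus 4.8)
The plan is to adapt to the reduction-by-stages picture the variational method that establishes uniqueness of constant scalar curvature K\"ahler metrics. The first step is to pass from the coupled system \eqref{eqn:GV0} to the single reduced equation \eqref{eqn:GV0reducedintro}. Since the volume is fixed with $V > 4\pi N/\tau$, for each K\"ahler metric $\omega$ of volume $V$ the abelian vortex equation --- the first line of \eqref{eqn:GV0} --- has a unique solution $h_\omega$; this is precisely the content of the first stage of the reduction. Hence a solution $(\omega,h)$ of \eqref{eqn:GV0} is the same datum as a K\"ahler metric $\omega$ solving \eqref{eqn:GV0reducedintro} with $h=h_\omega$, and uniqueness of the pair follows from uniqueness of $\omega$. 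On a Riemann surface all K\"ahler forms of volume $V$ lie in a single cohomology class, so I would work on the space of K\"ahler potentials $\{\varphi : \omega_\varphi = \omega + i\partial\dbar\varphi > 0,\ \int_\Sigma \omega_\varphi = V\}$ relative to a fixed reference $\omega$.

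The engine of the proof is the reduced $\alpha$-K-energy $\mathcal{M}_\alpha$, a functional on this space of potentials whose critical points are exactly the solutions of \eqref{eqn:GV0reducedintro}. I would first confirm this variational characterization by computing the first variation of $\mathcal{M}_\alpha$ and matching it with $S_{\omega_\varphi} + \alpha(\Delta_{\omega_\varphi}+\tau)(|\bm\phi|^2_{h_{\omega_\varphi}}-\tau) - c$; the delicate point is the implicit dependence of $h_{\omega_\varphi}$ on $\varphi$, which is the source of the nonlocality of the equation. The decisive analytic input, to be supplied by finite-energy pluripotential theory, is the convexity of $\mathcal{M}_\alpha$ along the weak (finite-energy) geodesics of the Mabuchi metric; by the theory of Chen and its refinements such geodesics exist and are of class $C^{1,1}$, which is enough to run the argument.

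Granting convexity, the uniqueness argument proceeds as follows. Let $\omega_0 = \omega_{\varphi_0}$ and $\omega_1 = \omega_{\varphi_1}$ be two smooth solutions and let $\varphi_t$ be the finite-energy geodesic joining $\varphi_0$ to $\varphi_1$. Because both endpoints are critical points of $\mathcal{M}_\alpha$, the convex function $t \mapsto \mathcal{M}_\alpha(\varphi_t)$ has vanishing derivative at $t=0$ and $t=1$, and is therefore affine on $[0,1]$. Equality in the convexity estimate forces the geodesic to be generated by a holomorphic vector field preserving $\bm\phi$, hence vanishing on the zero divisor of $\bm\phi$. This is where hypotheses (i) and (ii) enter: for $g(\Sigma)\geq 2$ there are no nonzero holomorphic vector fields at all; for $g(\Sigma)=1$ a nonzero holomorphic vector field is nowhere vanishing and so cannot fix the nonempty zero divisor; and for $g(\Sigma)=0$ a nonzero holomorphic vector field vanishes at most twice, which the hypothesis of at least three zeros excludes. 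In every case the geodesic is constant, so $\varphi_0$ and $\varphi_1$ differ by a constant, whence $\omega_0=\omega_1$ and therefore $h_{\omega_0}=h_{\omega_1}$.

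The main obstacle I anticipate is the convexity of $\mathcal{M}_\alpha$ along weak geodesics, which is harder than in the cscK case for two reasons. First, the coupling term is nonlocal, assembled from $h_{\omega_\varphi}$, whose variation must be controlled along a path that is only $C^{1,1}$; one must show both that this term is convex and that it does not spoil the strict convexity coming from the scalar-curvature (Mabuchi) part. Second, the low regularity of the geodesics forbids the naive second-variation computation, so convexity has to be extracted by approximation and subharmonicity arguments in the spirit of Berman--Berndtsson and Darvas rather than by direct differentiation. Once this convexity and the characterization of its equality case are established, the reduction by stages turns the uniqueness problem into the clean convexity argument sketched above.
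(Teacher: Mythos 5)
Your reduction-by-stages setup, the variational characterization of solutions as critical points of the reduced $\alpha$-K-energy, and the convexity program along weak geodesics all match the paper's machinery (Proposition \ref{prop:convexalphaK}, Theorems \ref{thm:K-alpha-convex} and \ref{thm:convexfinen}). The gap is in your final step. From affineness of $t\mapsto\mathcal{K}_\alpha(\varphi_t)$ along the finite-energy geodesic you conclude that ``equality in the convexity estimate forces the geodesic to be generated by a holomorphic vector field preserving $\bm\phi$''. That inference requires the pointwise second-variation identity of Proposition \ref{prop:second}, which is only valid along \emph{smooth} paths; along $C^{1,\bar 1}$ or finite-energy geodesics, convexity is obtained as a distributional limit of computations along $\epsilon$-geodesics (Theorem \ref{thm:K-alpha-convex}), and affineness of the limiting function does not force the individual non-negative terms such as $\lVert\bar\partial\nabla^{1,0}\varphi_t'\rVert^2$ to vanish --- at the regularity of a weak geodesic these terms are not even defined pointwise. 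Even for the ordinary K-energy, affineness along a weak geodesic is not known to imply that the geodesic arises from a one-parameter family of automorphisms; this is precisely why Berman--Berndtsson prove uniqueness of cscK metrics by a perturbation argument rather than by an equality-case analysis, and your own ``main obstacle'' paragraph concedes that the naive second-variation computation is unavailable while your uniqueness step secretly relies on it.

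The paper's proof of Theorem \ref{thm:uniqueness} replaces the equality case by the Berman--Berndtsson twist, and this changes where hypotheses (i)/(ii) enter. One fixes a strictly positive $(1,1)$-form $\xi$ with $\int_\Sigma\xi=V$ and considers $\mathcal{K}_\alpha^\lambda=\mathcal{K}_\alpha+\lambda\mathcal{J}_\xi$. Hypotheses (i)/(ii) are used in Lemma \ref{lem:perturb}: they rule out infinitesimal automorphisms of $(\Sigma,L,\bm\phi)$ (your classification of holomorphic vector fields fixing the zeros of $\bm\phi$ is exactly what is needed here), so the linearized operator is invertible and the Implicit Function Theorem perturbs each of the two given solutions into critical points $\varphi_0^\lambda,\varphi_1^\lambda$ of $\mathcal{K}_\alpha^\lambda$ for $0<\lambda\ll 1$. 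Along the $C^{1,\bar 1}$ geodesic joining $\varphi_0^\lambda$ and $\varphi_1^\lambda$, the endpoint sub-slope inequalities (as in Proposition \ref{thm:lower-bound}) combined with convexity of $\mathcal{K}_\alpha$ and the \emph{strict} convexity of $\mathcal{J}_\xi$ --- quantified by \cite[Proposition 4.1]{BeBern}, which bounds $d(\omega_{\varphi_0^\lambda},\omega_{\varphi_1^\lambda})^2$ by the difference of the endpoint slopes of $\mathcal{J}_\xi$ --- force $\omega_{\varphi_0^\lambda}=\omega_{\varphi_1^\lambda}$; letting $\lambda\to 0^+$ gives $\omega_0=\omega_1$, and Theorem \ref{th:B-GP} then gives equality of the Hermitian metrics. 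To repair your argument you should make this substitution: strict convexity purchased by a twist plus the Implicit Function Theorem, in place of an equality-case analysis that the regularity theory does not support.
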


Before going into the specifics of our other two main theorems, let us briefly summarise the state of the art of the existence problem for the gravitating vortex equations~\eqref{eqn:GV0}. The study of the existence problem for ~\eqref{eqn:GV0} in the special case $c=0$ and genus $g=0$ goes back to Spruck and Yang~\cite{YangSpruck,YangSpruck2}, where the equations
reduce to a single PDE for a function on the sphere and have the aforementioned interesting relation to physics.
An existence theorem \cite[Theorem 6.1]{Al-Ga-Ga-P}  was proven for surfaces of genus $g\geq 2$.
The case of  genus $g=1$ was studied in \cite{Al-Ga-Ga1}, wherein it was shown that the gravitating vortex
equations (with $\bm\phi\neq 0$) always have a solution in the weak coupling limit
$0< \alpha \ll 1$ (see~\cite[Theorem~4.1]{Al-Ga-Ga1} for a
precise formulation).

Unlike the cases of genus $g\geq 1$, it was shown in \cite{Al-Ga-Ga-P} that in
genus $g=0$ there are obstructions to the existence of solutions of \eqref{eqn:GV0}, making the
problem  comparatively closer to the  Calabi problem on the existence of K\"ahler--Einstein
metrics, where algebro-geometric stability obstructions appear on
compact K\"ahler manifolds with $c_1>0$. Based on the moment map interpretation of the gravitating vortex
equations, it was suspected that in genus $g=0$ the existence of solutions had to be equivalent to some stability condition. 

The first evidence pointing out to such a correspondence lay in Yang's existence result~\cite{Yang} for \eqref{eqn:GV0} when $c=0$ and $V \gg \frac{4\pi N}{\tau}$ (see \cite{HLS,HLSErratum} for an alternative proof). In this case, $\Sigma$ is forced topologically to be $\PP^1$ and \eqref{eqn:GV0} are known as the  Einstein--Bogomol'nyi equations or the self-dual Einstein--Maxwell--Higgs equations. Yang's result was extended in \cite{FPY} to the case $c \geq 0$ and any admissible volume $V > \frac{4\pi N}{\tau}$. These results can be reformulated  in the language of Mumford's Geometric Invariant Theory (GIT) as follows.

\begin{theorem}[\cite{FPY,Yang,Yang2}]\label{th:Yangintro}
	Suppose that~\eqref{tau-bound} is satisfied and that $\alpha \in (0,1/\tau N]$. Let $D=\sum_j n_jp_j$
	be an effective divisor on $\PP^1$ corresponding to a pair $(L,\bm\phi)$.
	Then, the gravitating vortex equations \eqref{eqn:GV0} on $(\PP^1,L,\bm\phi)$ have
	solutions with volume $V > \frac{4\pi N}{\tau}$, provided that the divisor $D$ is GIT polystable for the
	canonical linearized $\textup{SL}(2,\CC)$-action on the space of effective
	divisors.
\end{theorem}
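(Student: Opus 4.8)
The plan is to establish the implication ``polystable $\Implies$ existence'' by converting the GIT hypothesis into a numerical condition on the multiplicities of $D$, and then producing solutions separately on the two strata of the polystable locus: the open stratum of strictly stable divisors, handled by PDE/variational analysis, and the single closed orbit of divisors supported on two points of equal multiplicity, handled by explicit symmetry reduction.

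First I would run the Hilbert--Mumford computation for the canonical linearised $\textup{SL}(2,\CC)$-action on the space $\operatorname{Sym}^N\PP^1\cong\PP^N$ of effective divisors of degree $N$. Every nontrivial one-parameter subgroup is conjugate to $t\mapsto\operatorname{diag}(t,t^{-1})$, which fixes $0,\infty\in\PP^1$, and the Mumford weight of $D=\sum_j n_jp_j$ along it is governed by the multiplicities at the attracting and repelling fixed points. The standard outcome is that $D$ is stable iff $n_j<N/2$ for all $j$, semistable iff $n_j\le N/2$ for all $j$, and polystable iff either it is stable or $D=\tfrac N2(p_1+p_2)$ with $p_1\ne p_2$ (forcing $N$ even). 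This characterisation is precisely what the analysis below must reproduce, and in particular GIT stability is equivalent to $D$ having finite stabiliser in $\textup{SL}(2,\CC)$.

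For the strictly stable stratum I would invoke the existence theory of Yang \cite{Yang,Yang2} and \cite{FPY}. The key preliminary is that, since $\chi(\PP^1)=2$, formula \eqref{eq:constantcintro} reads $c=4\pi(1-\alpha\tau N)/V$, so the hypothesis $\alpha\in(0,1/\tau N]$ is exactly $c\ge 0$, with $c=0$ attained only at $\alpha=1/\tau N$. Yang treats $c=0$ for $V\gg 4\pi N/\tau$, while \cite{FPY} covers all $c\ge0$ and every admissible $V$; in both the decisive hypothesis is that each multiplicity lies strictly below $N/2$, i.e. GIT stability. I would organise this as a continuity argument in $\alpha$ (equivalently in $c$): at each stage solve the abelian vortex equation for $h_\omega$, so the remaining problem becomes the single equation \eqref{eqn:GV0reducedintro}, and deform in the coupling. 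Openness follows from the implicit function theorem, the linearised operator being invertible because stable divisors carry no infinitesimal automorphisms; closedness rests on the a priori estimates of \cite{Yang,FPY}.

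Finally, for the polystable-but-not-stable orbit I would use symmetry. After a M\"obius transformation we may take $D=\tfrac N2([0]+[\infty])$, a configuration invariant under the maximal torus $\U(1)\subset\SU(2)$ of rotations of $\PP^1$. Imposing the corresponding rotationally symmetric ansatz on $(\omega,h)$ collapses \eqref{eqn:GV0} to ODEs in the radial variable; eliminating $h$ via the first equation leaves a single ODE whose solutions, with $\bm\phi$ vanishing to order $N/2$ at each pole and $\omega$ extending smoothly across both, can be produced explicitly or by shooting, as in \cite{Al-Ga-Ga-P}. I expect the genuine obstacle to be the analytic input of the stable stratum---establishing the a priori estimates and excluding concentration/bubbling as some multiplicity approaches the threshold $N/2$---for this is exactly where the strict GIT inequality must be used quantitatively; by contrast the boundary orbit is soft once the symmetry reduction is in place.
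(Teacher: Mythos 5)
First, a point of orientation: this paper does not prove Theorem~\ref{th:Yangintro} at all. It is quoted as a known result, with the existence content attributed to \cite{Yang,Yang2,FPY}; the paper's own contribution in genus $0$ is the \emph{converse} direction (Theorem~\ref{thm:existence-implies-stability}), proved via properness of the reduced $\alpha$-K-energy (Theorem~\ref{thm:K-alpha-proper}). So there is no internal proof to compare yours against, and the right benchmark is the cited literature. Measured against that, your reconstruction is essentially the correct one and assembles the GIT reformulation the way it is assembled there: the Hilbert--Mumford computation for binary quantics (stable iff all $n_j<N/2$; polystable iff stable or $D=\frac{N}{2}(p_1+p_2)$ with $p_1\neq p_2$, cf. \cite[Proposition 2.6]{Al-Ga-Ga-P}); the identity $c=4\pi(1-\alpha\tau N)/V$ on $\PP^1$, so that $\alpha\in(0,1/\tau N]$ is exactly $c\geq 0$; Yang's theorem and its extension in \cite{FPY} on the stable stratum; and rotationally symmetric solutions on the residual closed orbit --- though for the latter the reference is \cite{Yang2} (cf.\ also \cite{Al-Ga-Ga-P-Y}), not \cite{Al-Ga-Ga-P}.

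The one place where your sketch would fail if taken literally is the initialization of the continuity method on the stable stratum. You never say where the path in $\alpha$ starts, and the natural choice $\alpha=0$ does not work: there the system decouples, the second equation forces the round metric on $\PP^1$, and the linearization in the K\"ahler-potential direction is the Lichnerowicz operator of the round metric, whose kernel (the holomorphy potentials coming from $\mathfrak{sl}(2,\CC)$) is nontrivial no matter how many zeroes $\bm\phi$ has, because the coupling that would break these automorphisms is switched off precisely at $\alpha=0$. Hence ``openness by the implicit function theorem, since stable divisors carry no infinitesimal automorphisms'' is valid only at points $\alpha>0$ where a solution already exists; it does not get the path started. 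The deformation in \cite{FPY} is instead anchored at Yang's solutions ($c=0$, $V\gg 4\pi N/\tau$), which is why Yang's theorem is a genuine input rather than a special case, and one must then move both the volume and the coupling, with closedness resting on a priori estimates in which the strict bound $n_j<N/2$ enters quantitatively. Since you defer exactly those estimates to \cite{Yang,FPY} --- legitimate here, as the theorem is attributed to those works --- this is a defect of the sketch rather than a logical gap in your use of the literature; but be aware that the part of your outline you call soft conceals precisely the difficulty (automorphisms of $\PP^1$ at weak coupling) that makes genus $0$ hard in the first place.
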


In \cite[Theorem 1.3]{Al-Ga-Ga-P} the authors stated the following  converse to Theorem~\ref{th:Yangintro}.

\begin{theorem}\label{th:Yangconjectureintro}
	If $(\PP^1,L, \bm\phi)$ admits a solution of the gravitating vortex
	equations with $\alpha > 0$, then \eqref{tau-bound} holds and the
	divisor $D$ is polystable for the $\textup{SL}(2,\CC)$-action.
\end{theorem}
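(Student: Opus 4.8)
The plan is to treat the two conclusions separately. The bound \eqref{tau-bound} is the easy part and follows by integration: integrating the first equation of \eqref{eqn:GV0} over $\Sigma=\PP^1$ and using $\int_\Sigma iF_h=2\pi N$ gives
\begin{equation*}
\tfrac12\int_\Sigma\lvert\bm\phi\rvert_h^2\,\omega=\tfrac{\tau V}{2}-2\pi N,
\end{equation*}
and since $D$ is a genuine effective divisor we have $\bm\phi\neq 0$, so the left-hand side is strictly positive and \eqref{tau-bound} holds. All the content lies in the polystability of $D$, which I would obtain by a Kempf--Ness type argument for the residual $\cH$-action coming from the reduction by stages, with the reduced $\alpha$-K-energy playing the role of the Kempf--Ness functional.

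Concretely, I would work on the space $\mathcal{H}_\omega$ of K\"ahler potentials in the fixed class $[\omega]$ on $\PP^1$ and regard the reduced equation \eqref{eqn:GV0reducedintro} as the Euler--Lagrange equation of the reduced $\alpha$-K-energy $\mathcal{M}_\alpha\colon\mathcal{H}_\omega\to\RR$, i.e.\ as the vanishing of the moment map for the $\cH$-action. A solution of \eqref{eqn:GV0} then produces a critical point of $\mathcal{M}_\alpha$; by the geodesic convexity of $\mathcal{M}_\alpha$ furnished by the finite-energy pluripotential methods, this critical point is a global minimum, so $\mathcal{M}_\alpha$ is bounded below and attains its infimum. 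To test against the symmetries of $\PP^1$, I would use that each algebraic one-parameter subgroup $\lambda\colon\CC^{\times}\to\textup{SL}(2,\CC)$ generates, through the induced automorphism flow, a geodesic ray $\{\omega_t\}_{t\geq 0}$ in $\mathcal{H}_\omega$ issuing from the solution. The crucial computation is to identify the asymptotic slope
\begin{equation*}
\mu(D,\lambda)\defeq\lim_{t\to\infty}\frac{d}{dt}\,\mathcal{M}_\alpha(\omega_t)
\end{equation*}
with a fixed positive multiple of the Mumford weight of the effective divisor $D$ for the linearized $\textup{SL}(2,\CC)$-action on divisors.

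Granting this identification, polystability follows from the standard dichotomy. Since the solution is a minimizer, each such ray has nonnegative right-derivative at $t=0$, and convexity makes the slope nondecreasing, forcing $\mu(D,\lambda)\geq 0$ for every $\lambda$; this is exactly GIT semistability of $D$, i.e.\ no point carries multiplicity greater than $N/2$. For polystability I would analyse the equality case: if $\mu(D,\lambda)=0$ for some nontrivial $\lambda$, then convexity together with the minimizing property forces $\mathcal{M}_\alpha$ to be constant along the ray, so every $\omega_t$ is again a solution and the flow of $\lambda$ preserves the solution set for the fixed triple $(\PP^1,L,\bm\phi)$. A nontrivial $\CC^{\times}\subset\textup{SL}(2,\CC)$ can fix the divisor $D$ only if $D$ is supported at its two fixed points, so semistability leaves only the balanced configuration of two points of multiplicity $N/2$; hence $D$ is either stable or this balanced case, which is precisely polystability.

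The main obstacle is exactly the point stressed in the introduction: the symmetric space $\cX^c/\cX$ underlying \eqref{eqn:GV0} admits no good Riemannian structure, so neither convexity nor regularity can be read off from \eqref{eqn:GV0} directly. The role of the reduction by stages is to descend the problem to the space of K\"ahler potentials, where finite-energy pluripotential theory does apply; the delicate technical heart is then to prove the geodesic convexity of $\mathcal{M}_\alpha$ in spite of the nonlocal dependence of $h_\omega$ on $\omega$ through the abelian vortex equation, and to control $\lvert\bm\phi\rvert^2_{h_{\omega_t}}$ along the degenerating family $\omega_t$ in order to evaluate the asymptotic slope and match it to the Mumford weight. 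Upgrading semistability to polystability, which requires passing rigorously from constancy of $\mathcal{M}_\alpha$ along a weak geodesic ray to a genuine automorphism preserving the solution, is the subtlest regularity issue and where I expect the real difficulty to lie.
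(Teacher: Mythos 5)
Your proposal founders on one concrete point: the rays you use are not geodesics. The construction of Example~\ref{ex:autgeod} turns the flow of a one-parameter subgroup into a geodesic ray \emph{only when} $\operatorname{Im}\bm v\iprod\omega$ is exact, i.e.\ only when the starting metric $\omega$ is invariant under the circle inside the $\CC^{\times}$. The paper's proof therefore starts its ray at the $S^1$-invariant Fubini--Study metric. You instead start the ray at the gravitating vortex solution itself, and a solution has no reason to be $S^1$-invariant (for $\bm\phi$ with at least three zeros it is unique, and a generic divisor is not rotation-invariant, so the solution cannot be). Once $\sigma_t^*\omega_{\mathrm{sol}}$ is not a geodesic, the term $\int(\varphi''_t-\lvert\mathrm{d}\varphi'_t\rvert^2_{\omega_t})\,\wp_2\,\omega_t$ in Proposition~\ref{prop:second} does not vanish, so neither your monotonicity of slopes (semistability step) nor your ``constancy $\Rightarrow$ automorphism'' step (polystability step) is available. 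This is not a repairable detail: it is exactly the reason the paper routes the argument through properness.

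Worse, the statement your equality-case analysis tries to establish --- that $\mu(D,\lambda)=0$ forces $\lambda$ to fix $D$ --- is false, and your argument would prove false things. Take the balanced divisor $D=\tfrac{N}{2}[0]+\tfrac{N}{2}[p]$ with $p\notin\{0,\infty\}$: it is polystable, so by Theorem~\ref{th:Yangintro} a solution exists; yet for the $\CC^{\times}$ fixing $\{0,\infty\}$ the limit divisor is $\tfrac{N}{2}[0]+\tfrac{N}{2}[\infty]$ and the weight (equivalently the gravitating vortex Futaki invariant of the limit, $2\alpha\tau(N-2n_1)(V-4\pi N/\tau)$ with $n_1=N/2$) vanishes, while $\lambda$ manifestly does not fix $D$. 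Your chain ``critical point, convex ray, asymptotic slope $0$ $\Rightarrow$ energy constant $\Rightarrow$ the sum-of-squares terms vanish $\Rightarrow$ $\operatorname{supp}D\subset\{0,\infty\}$'' would then contradict Yang's existence theorem; the broken link is precisely the geodesic claim above. In GIT the correct equality criterion is that the limit lies in the \emph{same orbit}, not that $\lambda$ stabilizes $D$, and no soft convexity argument at the solution sees this. What the paper actually does: along the geodesic ray from Fubini--Study, existence of a solution gives only a lower bound for $\mathcal{K}_\alpha$ (Proposition~\ref{thm:lower-bound}), hence only nonnegativity of the asymptotic slope, i.e.\ semistability; the strict inequality $n_1<N/2$ when $\bm\phi$ has at least three zeros requires the energy to \emph{blow up} along the ray, which is supplied by properness (Theorem~\ref{thm:K-alpha-proper}, resting on Theorem~\ref{thm:uniqueness-of-weak-minimizer} and the Berman--Darvas--Lu machinery), and the case of at most two zeros is settled separately by \cite{Al-Ga-Ga-P-Y}. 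Your proposal, even after repairing the base point of the ray, delivers only semistability and has no mechanism to exclude a multiplicity exactly $N/2$ supported among three or more points.
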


While the result in Theorem \ref{th:Yangconjectureintro} is right, it turns out that the proof  given in  \cite{Al-Ga-Ga-P} is not. More specifically, the proof of \cite[Theorem 1.3]{Al-Ga-Ga-P} contains an essential flaw (which nevertheless does not affect other parts of the paper). The initial goal of the present work was to give a correct proof, which we achieve in Theorem~\ref{thm:existence-implies-stability}.

In this paper we also improve the existence results given in \cite{Al-Ga-Ga-P} for the genus $g\geq 1$. Namely, we prove the following result (see Theorem \ref{thm:existenceforggeq1}).

\begin{theorem}\label{thm:existenceg>1intro}
	Let $g\geq 1$. Assume  the following three numerical conditions
	\begin{enumerate}
		\item[\textup{(1)}] $V- \frac{4\pi N}{\tau}>0$,
		\item[\textup{(2)}] $\alpha\tau \left( \frac{8\pi N}{\tau} - V\right) < \frac{2g-2}{N}\left( V- \frac{4\pi N}{\tau}\right)$,
		\item[\textup{(3)}] $2\alpha\tau m<1$,
	\end{enumerate}
	where $m$ is the maximum multiplicity of the zeroes of $\bm\phi$. Then there exists a unique smooth solution to the gravitating vortex equations.
	\label{thm:existenceforggeq1-intro}
\end{theorem}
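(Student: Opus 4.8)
The plan is to follow the reduction-by-stages philosophy, turning the system \eqref{eqn:GV0} into the single scalar equation \eqref{eqn:GV0reducedintro} and then solving the latter by a direct variational argument for the reduced $\alpha$-K-energy. In the first stage, condition (1) is exactly the vortex bound \eqref{tau-bound}, so for every K\"ahler metric $\omega$ of volume $V$ the abelian vortex equation admits a unique solution $h_\omega$; substituting it into the second line of \eqref{eqn:GV0} produces the non-local equation \eqref{eqn:GV0reducedintro} for $\omega$ alone, together with the topological normalisation $\int_\Sigma iF_{h_\omega}=2\pi N$ and hence $\int_\Sigma(\lvert\bm\phi\rvert_{h_\omega}^2-\tau)\,\omega=-4\pi N$.

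In the second stage I would realise \eqref{eqn:GV0reducedintro} as the critical point equation of the reduced $\alpha$-K-energy $\mathcal{M}_\alpha$ on the space $\cE^1$ of finite-energy K\"ahler potentials of volume $V$, writing $\mathcal{M}_\alpha=\mathcal{M}_{\mathrm{Mab}}+\alpha\,\cC$, where $\mathcal{M}_{\mathrm{Mab}}$ is the Mabuchi K-energy and $\cC$ is the coupling functional whose first variation recovers the back-reaction term $(\Delta_\omega+\tau)(\lvert\bm\phi\rvert_{h_\omega}^2-\tau)$. Since $\mathcal{M}_\alpha$ is geodesically convex and lower semicontinuous on the complete metric space $(\cE^1,d_1)$ --- this being the convexity established earlier in the paper --- the existence of a minimiser, and hence of a weak solution, reduces to proving that $\mathcal{M}_\alpha$ is coercive, i.e. $\mathcal{M}_\alpha(\varphi)\ge \delta\,J(\varphi)-C$ for some $\delta>0$.

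The coercivity estimate is the heart of the argument and the place where conditions (2) and (3) enter. I would expand $\mathcal{M}_\alpha$ via the Chen--Tian decomposition into the nonnegative entropy term plus explicit linear energy functionals, and then control $\cC$ using the vortex solution, in particular the pointwise bound $\lvert\bm\phi\rvert_{h_\omega}^2\le\tau$ and the topological identity above. For $g\ge1$ the curvature part contributes an asymptotic slope proportional to $2g-2=-\chi(\Sigma)$, while the coupling contributes a slope governed by $\alpha\tau$, $N$ and $V$; balancing the two reduces coercivity to the positivity of a single explicit constant, which is precisely condition (2) (for $g=1$ it forces $V>\tfrac{8\pi N}{\tau}$). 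Condition (3), $2\alpha\tau m<1$ with $m$ the maximal multiplicity of the zeros of $\bm\phi$, is what controls the local contribution of $\cC$ near those zeros, guaranteeing that the corresponding term stays dominated by the entropy and does not destroy coercivity.

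Once coercivity holds, the direct method yields a minimiser $\varphi\in\cE^1$, which is a finite-energy weak solution of \eqref{eqn:GV0reducedintro}. A regularity bootstrap --- using that $h_\omega$ is smooth and that $\lvert\bm\phi\rvert_{h_\omega}^2$ vanishes to finite order at the zeros of $\bm\phi$, so the coupling term is smooth --- upgrades $\varphi$ to a smooth metric $\omega$, and $(\omega,h_\omega)$ is then a smooth solution of the full system \eqref{eqn:GV0}; uniqueness is immediate from Theorem \ref{uniqueness-intro} and also from the strict convexity of $\mathcal{M}_\alpha$. I expect the main obstacle to be exactly the coercivity step: because $h_\omega$ depends on $\omega$ non-locally through the vortex equation, $\cC$ is not one of the standard energy functionals, and extracting the sharp constants that yield the clean conditions (2) and (3) requires careful control of $\lvert\bm\phi\rvert_{h_\omega}^2$ and of its energy along the geodesics of $(\cE^1,d_1)$, which is where finite-energy pluripotential theory is essential.
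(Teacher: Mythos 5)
Your overall frame (reduction by stages, realising the reduced equation as the Euler--Lagrange equation of $\mathcal{K}_\alpha=\mathcal{K}+\mathcal{M}_\alpha$, and quoting Theorem \ref{uniqueness-intro} for uniqueness) matches the paper, but your existence mechanism --- minimise $\mathcal{K}_\alpha$ over $\mathcal{E}_1$ via a coercivity estimate and then bootstrap the minimiser --- is not the paper's, and it has two genuine gaps. First, the coercivity $\mathcal{K}_\alpha(\varphi)\geq\delta\,J(\varphi)-C$ under conditions (1)--(3) is not obtainable by the means you describe. The estimates in which (2) and (3) actually enter in the paper (the decomposition over the region $S=\{\omega_\varphi\leq\lvert\bm\phi\rvert_{h_0}^{-(2-\epsilon)/m}\omega_0\}$ and the comparison of the two energy inequalities \eqref{ineq:energyestimate2} and \eqref{ineq:secondenergyestimate2}) are \emph{a priori} estimates for smooth solutions along a continuity path: besides the bound $\mathcal{K}_\alpha(\varphi)\leq\mathcal{K}_\alpha(0)$, which follows from convexity because a solution is a global minimiser (Proposition \ref{thm:lower-bound}), they use the equations pointwise --- the maximum principle applied to the second equation of \eqref{eq:rewrittenGVcontmethod} to control $\varphi$ by $\min_\Sigma f$ (inequality \eqref{ineq:maxprinciplev}), Green's representation, and the vortex equation satisfied by $f$. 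None of these tools is available for an arbitrary potential in a sublevel set of $\mathcal{K}_\alpha$. Note also that the paper's properness result (Theorem \ref{thm:K-alpha-proper}) runs in the opposite logical direction: properness is deduced \emph{assuming} a smooth solution exists, so it cannot be used as an input to existence.

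Second, and more fundamentally, even granting coercivity, the direct method only produces a minimiser $\psi\in\overline{\mathcal{R}}\subset\mathcal{E}_1$, and your ``regularity bootstrap'' fails at the first step: a finite-energy minimiser does not satisfy any usable Euler--Lagrange equation, since two-sided variations $\psi+tv$ leave the space of $\omega_0$-plurisubharmonic functions, the entropy term is only lower semicontinuous, and $\mathcal{M}_\alpha$ is non-local through the weak vortex solution $f_\psi$ of Proposition \ref{prop:W-1-2-estimate}. Regularity of weak minimisers is a deep open-ended issue already for the pure K-energy (cf. \cite{BDL2}), and no analogue exists for the coupled functional. The paper avoids this entirely by running the method of continuity in $\alpha$: openness comes from the implicit function theorem (Lemma \ref{lem:existenceforggeq1.1}, using that $(\Sigma,L,\bm\phi)$ has no infinitesimal automorphisms when $g\geq 1$ and $\bm\phi\neq 0$), and closedness comes from the $C^0$, $W^{1,2}$, Moser--Trudinger, $W^{2,p}$ and Schauder estimates applied to solutions that are already smooth, with conditions (2) and (3) entering exactly in the energy estimates described above; smoothness is therefore never an issue. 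The variational input from convexity is real but more modest than you propose: it only supplies the uniform energy bound that starts the closedness estimates, and cannot substitute for the continuity method.
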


The significance of our main results is better understood from the point of view of moduli theory and the classical Teichm\"uller spaces. Ever since the pioneering work of Fricke and Teichm\"uller, it has been convenient to enhance the geometric data of the Riemann surface $\Sigma$ with marked points or other types of decoration. The simplest decoration for which this moduli space has not been so well studied yet is the case of an effective divisor $D$ on the surface, and the results of this paper pave the way to a complex analytic approach (cf., e.g., \cite[Section 2.1.3]{Hassett}, for the corresponding algebro-geometric moduli problem). As a matter of fact, Theorem \ref{uniqueness-intro} and Theorem \ref{thm:existenceg>1intro} directly imply that in genus $g \geq 1$, and for an effective range of the parameter $\alpha$, a suitable moduli space of solutions to \eqref{eqn:GV0} can be interpreted as a Teichm\"uller space for pairs $(\Sigma,D)$ (cf. \cite[Section 3.3]{Al-Ga-Ga-P}). 

In the special case of genus $g = 0$, Theorem \ref{uniqueness-intro} and Theorem \ref{th:Yangconjectureintro}, combined with the main results in \cite{FPY,Yang}, establish a bijection between an open set in the moduli space of gravitating vortices with fixed volume $V>\frac{4\pi N}{\tau}$ and coupling constant $\alpha \in (0,1/\tau N]$ and the stable locus of the moduli space of binary quantics, given by the GIT quotient
$$
S^N \PP^1 /\!\!/ \operatorname{SL}(2,\CC),
$$
bringing us closer to the solution of \cite[Conjecture 5.6]{Al-Ga-Ga-P}. Important steps towards the compactification of  the moduli spaces of gravitating vortices with $c \geq 0$ and genus $g = 0$ have been taken in \cite{FPY,Yao2}. In the special case $c = 0$, these spaces can be interpreted as moduli spaces of cosmic string solutions to the Einstein-Maxwell-Higgs equations in general relativity, and we speculate that our current developments may have an impact in future advances in theoretical physics, such as the study of cosmic string dynamics \cite{ViShe} via the method of \emph{moduli space approximation} \cite{LS,Man,MR}.

An important ingredient emerging from  the reduction by stages approach to the study of  the gravitating vortex equations is the reduced $\alpha$-K-energy. This is a functional
$$
\mathcal{K}_\alpha = \mathcal{K} + \mathcal{M}_\alpha \colon \mathcal{P}_{\omega_0} \to \mathbb{R}
$$
defined on the infinite-dimensional space $\mathcal{P}_{\omega_0}$ of K\"ahler potentials determined after fixing a reference background K\"ahler metric $\omega_0$ with total volume $V > 0$. Here, $\mathcal{K}$ is the usual K-energy in the space of K\"ahler potentials \cite{Mabuchi} while $\mathcal{M}_\alpha$ is a new functional which arises via the reduction by stages mechanism. Key to the proof of our main theorems is the fact that this last functional is continuous on the completion of the space of smooth K\"ahler potentials introduced by Darvas \cite{Darvas0} (see Lemma \ref{lem:continuityofMalphaoncompletion}), whereby the reduced $\alpha$-K-energy inherits good semicontinuity properties (see Theorem \ref{thm:extended-vortex-K-energy}).

Equations \eqref{eqn:GV0}  were introduced in \cite{Al-Ga-Ga1} in an attempt to find symmetric solutions to the K\"ahler--Yang--Mills
equations \cite{Al-Ga-Ga}. These are equations for a K\"ahler metric on a compact complex manifold and a Hermitian metric on a holomorphic vector bundles over it. Indeed, solutions to
\eqref{eqn:GV0} are equivalent to $\SU(2)$-invariant solutions the K\"ahler--Yang--Mills equations on the  rank 2 holomorphic vector bundle $E$ over $\Sigma\times \PP^1$, where $\PP^1$ is the Riemann sphere,
similarly to the approach to the abelian vortex equation in \cite{Prada2}.
The results in this paper can therefore be naturally interpreted whithin the theory for the K\"ahler--Yang--Mills equations, leading to new existence and uniqueness results.

To our knowledge, this is the first paper where symplectic reduction by stages is used in the context of the study of canonical metrics in K\"ahler geometry and gauge theory. As illustrated in this work, this is a very powerful tool that may very likely be applied to other situations, including  the K\"ahler--Yang--Mills equations from which  gravitating vortices initially emerged.

The present paper is organized as follows: the main formal ingredients of the K\"ahler reduction by stages are explained in Section \ref{section:setupabstract}. In Section \ref{sec:reducedK} we give an explicit formula for the reduced $\alpha$-K-energy. Section \ref{sec:extensionKen} is devoted to the extension of the reduced $\alpha$-K-energy to the completion of the space of smooth K\"ahler potentials introduced by Darvas. In Section \ref{section:Laplacianestimate}  we study the convexity properties of the (extended) reduced $\alpha$-K-energy. In Section \ref{sec:geometric-applications} we apply the results obtained in the previous sections on the reduced $\alpha$-K-energy and convexity to prove the main results of our paper, including Theorem
\ref{th:Yangconjectureintro} (Theorem \ref{thm:existence-implies-stability}), Theorem \ref{uniqueness-intro} (Theorem \ref{thm:uniqueness}),  and Theorem \ref{thm:existenceforggeq1-intro} (Theorem \ref{thm:existenceforggeq1}). An important ingredient for Theorem \ref{th:Yangconjectureintro} is the proof of properness of the reduced $\alpha$-K-energy given in Theorem~\ref{thm:K-alpha-proper}.

\begin{acknowledgements}
The authors wish to thank Marco Castrill\'on L\'opez and Tam\'as Darvas for useful discussions. This research was supported in part by the International Centre for Theoretical Sciences (ICTS) for participating in
the programme - Vortex Moduli (code:ICTS/Vort2023/02).
\end{acknowledgements}

\section{Reduction by stages and gravitating vortices}\label{section:setup}

\subsection{K\"ahler reduction by stages}\label{section:setupabstract}

In this section we describe some basic aspects of the theory of K\"ahler reduction by stages, which we will formally apply as a guideline for our main results on the gravitating vortex equations. For a more complete
account of this theory, we refer to \cite{Greb}.

Let $(\cT,\omega_\cT)$ be a symplectic manifold endowed with a symplectic left action of a Lie group $\cX$. We assume that the group is given by a Lie group extension of the form
\begin{equation}
\label{eq:Ext-Lie-groups}
1 \to \cG \lra{\iota} \cX \lra{\pr} \cH \to 1.
\end{equation}
The extension~\eqref{eq:Ext-Lie-groups} determines an extension of Lie
algebras
\begin{equation}
\label{eq:Ext-Lie-alg}
0 \to \LieG \lra{\iota} \LieX \lra{\pr} \LieH \to 0,
\end{equation}
where the use of the same symbols $\iota$ and $\pr$ should lead to no confusion. 
as a short exact sequence of vector spaces.
We are interested in the case where~\eqref{eq:Ext-Lie-alg} does not split as a sequence of Lie algebras. Assume that the $\cX$-action is Hamiltonian and there exists a $\cX$-equivariant moment map
$$
\mu_{\cX}\colon \cT\lto (\LieX)^*.
$$
Provided that $0 \in (\LieX)^*$ is a regular value of $\mu_{\cX}$ and that $\cX$ acts freely and properly on $\mu_{\cX}^{-1}(0)$, the symplectic reduction of $\cT$ by $\cX$, defined by
$$
\mathcal{M} =  \mu_{\cX}^{-1}(0)/\cX,
$$
inherits a natural structure of symplectic manifold. Given the extension structure of the group $\cX$, the symplectic quotient can be obtained in a two-step process instead: with the same hypothesis, we first take
the symplectic reduction by the normal subgroup $\cG \subset \cX$,
$$
\mathcal{N} = \mu_{\cG}^{-1}(0)/\cG,
$$
where $\mu_{\cG}\colon \cT \to (\LieG)^*$ is a moment map for the $\cG$-action obtained by composition of $\mu_{\cX}$ with the natural map $\iota^*\colon (\LieX)^* \to (\LieG)^*$. The symplectic manifold
$\mathcal{N}$ inherits a residual Hamiltonian $\cH$-action, with moment map
$$
\mu_{\cH}\colon \mathcal{N} \lto (\LieH)^*.
$$
More explicitly, $\cH$ is identified with the right quotient $\cX/\cG$ with induced left action
$$
(\tilde g \cdot \cG) \cdot [x] = [\tilde g \cdot x] \in \mathcal{N}
$$
and the moment map $\mu_{\cH}$ is given by
\begin{equation}\label{eq:muHexp}
\langle \mu_{\cH}([x]),y \rangle = \langle \mu_{\cX}(\tilde x), \tilde y \rangle,
\end{equation}
for any $\tilde x \in \cT$ such that $[\tilde x] = [x] \in \mathcal{N}$ and any choice of $\tilde y \in \LieX$ such that $p(\tilde y) = y$. The fact that $\mu_\cH$ is well defined follows easily from the
$\cG$-equivariance of $\mu_{\cX}$, using that $x \in \mu_{\cG}^{-1}(0)$ and that the adjoint $\cG$-action on $\LieX$ preserves the fibres of $p \colon \LieX \to \LieH$. Secondly, we do Hamiltonian reduction of
$\mathcal{N}$ by $\cH$, obtaining a natural symplectomorphism
$$
\mu_{\cH}^{-1}(0)/\cH \cong \mu_{\cX}^{-1}(0)/\cX,
$$
induced by the inclusion $\mu_{\cX}^{-1}(0) \subset \mu_{\cG}^{-1}(0)$.

Assume now that $(\cT,\mathbf{J},\omega_\cT)$ is a K\"ahler manifold and that $\cX$ acts further by holomorphic isometries. Assume that $\cX$ is compact and that its complexification $\cX^c$ acts by biholomorphisms
on $\cT$. Then we can define the set of \emph{stable points} for the $\cX^c$-action by (see \cite{Mundet})
$$
\cT^s_{\cX^c} = \{ x \in \cT \mid (\cX^c \cdot x) \cap \mu_{\cX}^{-1}(0) \neq \emptyset \}.
$$
Since we are assuming that the $\cX$-action on $\mu_{\cX}^{-1}(0)$ is free, one can prove that $\cT^s_{\cX}$ is an open and $\cX^c$-invariant subset of $\cT$. By the general theory of K\"ahler reduction, there is a
natural diffeomorphism
\begin{equation}\label{eq:KempfisoX}
\mathcal{M} \cong \cT^s_{\cX^c} / \cX^c
\end{equation}
which endowes $\mathcal{M}$ with a structure of K\"ahler manifold. More generally, without the assumption on the isotropy group, $\mathcal{M}$ inherits a structure of stratified K\"ahler space (see \cite[Theorem
1.8]{Greb}). In the special case that $\cT$ is a projective manifold, the stable locus can be defined independently of the moment map by means of Geometric Invariant Theory.

As above, the extension structure of $\cX$ enables us to construct this K\"ahler structure in a two-step process. For this, we first consider the set of stable points for the $\cG^c$-action by
$$
\cT^s_{\cG^c} = \{ x \in \cT \mid (\cG^c \cdot x) \cap \mu_{\cG}^{-1}(0) \neq \emptyset \},
$$
where $\cG^c \subset \cX^c$ is the complexification of $\cG$. Then, there is a natural diffeomorphism
\begin{equation}\label{eq:Kempfiso}
\mathcal{N} \cong \cT^s_{\cG^c} / \cG^c
\end{equation}
which endowes $\mathcal{N}$ with a natural structure of K\"ahler manifold such that $\cH$ acts by holomorphic isometries. We then consider the complexification $\cH^c$ of $\cH$ and the corresponding set of stable
points for the induced $\cH^c$-action, provided that it exists,
$$
\mathcal{N}^s_{\cH^c} = \{ x \in \mathcal{N} \mid (\cH^c \cdot x) \cap \mu_{\cH}^{-1}(0) \neq \emptyset \}.
$$
Similarly as before, we obtain now a K\"ahler isometry (see \cite[Theorem 2.1]{Greb})
$$
\mathcal{M} \cong \mu_{\cH}^{-1}(0)/\cH \cong \mathcal{N}^s_{\cH^c}/ \cH^c.
$$
A key point in the previous construction is the existence of the holomorphic $\cH^c$-action on $\mathcal{N}$. This is ensured, for instance, if $\mathcal{N}$ is compact or if $\cT^s_{\cG^c}$ is a complex analytic
subset of $\cT$.

The upshot of the previous construction is a criterion to decide whether a point $x \in \cT$ is represented in the complex quotient $\cT^s_{\cX^c} / \cX^c$, that is, when
\begin{equation}\label{eq:existenceabs}
(\cX^c \cdot x) \cap \mu_{\cX}^{-1}(0) \neq \emptyset.
\end{equation}
For simplicity, and since this is the case in the situation of our interest, we shall assume
$$
\cT^s_{\cG^c} = \cT,
$$
that is, for any given $x \in \cT$ there exists $g_x \in \cG^c$ (unique up to the $\cG$-action) such that
$$
\mu_{\cG}(g_x \cdot x) = 0.
$$
Then, in order to check whether \eqref{eq:existenceabs} holds, as in the previous construction one considers
$$
[g_x \cdot x] \in \mathcal{N}
$$
and then $x \in \cT^s_{\cX^c}$ is equivalent to
\begin{equation}\label{eq:existencereduced}
(\cH^c \cdot [g_x \cdot x]) \cap \mu_{\cH}^{-1}(0) \neq \emptyset.
\end{equation}
Condition \eqref{eq:existencereduced} can be now addressed using standard machinery in K\"ahler geometry. For instance, consider the negatively curved symmetric space
$$
\mathcal{R} = \cH^c/\cH
$$
(of all the cosets $g\mathcal{H}$ where $g\in \mathcal{H}^c$) with metric given by a choice of bi-invariant metric on $\LieH$. Define a 1-form $\sigma_x \in \Omega^1(\mathcal{R})$ by
$$
\sigma_x(v) = \langle \mu_{\mathcal{H}}(h^{-1} \cdot [g_x \cdot x]), \Ad(h^{-1})(iv)  \rangle
$$
for $[h] \in \mathcal{R}$ and
$$
v \in T_{[h]}\mathcal{R} \cong \Ad(h) i\LieH.
$$
By the defining properties of the moment map $\mu_{\mathcal{H}}$, $\sigma_x$ is well-defined and closed and, since $\mathcal{R}$ is contractible, it integrates to a functional (see e.g. \cite{Mundet})
$$
\mathcal{K} \colon \mathcal{R} \lto \mathbb{R},
$$
normalized so that $\mathcal{K}([1]) = 0$. The functional $\mathcal{K}$ is geodesically convex, where geodesics on
$\mathcal{R}$ are given by 1-parameter subgroups
$$
t \longmapsto [h e^{ity}], \qquad y \in \LieH.
$$
Hence, by geodesic completeness of $\mathcal{R}$,  condition \eqref{eq:existencereduced} is equivalent to the existence of a critical point of $\mathcal{K}$. The latter reduces to the \emph{linear properness} of the
functional (see e.g. \cite{Mundet}), namely, the existence of a pair of positive constants $C_1,C_2 > 0$ such that
$$
d([1],[h]) \leq C_1 \mathcal{K}([h]) + C_2
$$
for all $[h] \in \mathcal{R}$, where $d(,)$ stands for the natural distance on the symmetric space. Note here that, by the special form of the geodesics and the polar decomposition $\cH^c = \exp(i \LieH) \cdot \cH$,
the previous condition is equivalent to
$$
\lvert y\rvert \leq C_1 \mathcal{K}([e^{i y}]) + C_2
$$
for all $y \in \LieH$ (and possibly different constants). Finally, we note that the linear properness of $\mathcal{K}$ can be recasted, similarly as in the Hilbert--Mumford criterion in algebraic geometry, in terms
of a \emph{stability condition} for $[g_x \cdot x] \in \mathcal{N}$, given by numerical criterion
$$
\lim_{t \to +\infty} \sigma_x([e^{tiy}]) > 0,
$$
for all $y \in \LieH$.

As stated in the introduction and we will see throughout the whole paper, the application of the previous abstract picture to the situation of gravitating vortices is far from straighforward.

\subsection{Reduction by stages approach to gravitating vortices}\label{section:setupsub}
In this section we explain how the existence and uniqueness problem for the gravitating vortex equations \eqref{eqn:GV0} fits into the general abstract framework of Section \ref{section:setupabstract}, and present
our overall strategy to the problem via reduction by stages. We follow closely \cite{Al-Ga-Ga2,Al-Ga-Ga-P}.

Fix a smooth compact oriented surface $S$ of genus $g(S)$ endowed with a smooth volume form $\omega \in \Omega^2(S)$. We will regard $\omega$ as a symplectic form on $S$. Let $(L,h)$ be a smooth Hermitian line
bundle over $S$ of degree
$$
N = \int_S c_1(L) \in \mathbb{Z}.
$$
Let $\cJ$ be the space of almost complex structures on $S$ compatible with $\omega$. Let $\cA$ be the space of unitary connections on $(L,h)$ and $\cS = \Omega^0(L)$ the space of smooth sections of $L$. Consider the
space of triples
\begin{equation}\label{eq:spc-triples}
\cJ\times\cA\times\cS,
\end{equation}
endowed with the symplectic structure
\begin{equation}\label{eq:symplecticT}
\omega_{\alpha} = \omega_\cJ + 4 \alpha \omega_\cA + 4\alpha \omega_\cS
\end{equation}
(for a choice of positive real coupling constant $\alpha > 0$). Here, for $(J,A, \bm\phi) \in \cJ\times\cA\times\cS$, we denote
$$
\omega_\cJ(\dot J_1,\dot J_2) = \frac{1}{2}\int_S \tr(J\dot J_1 \dot J_2)\omega, \;\,\omega_\cA(a_1,a_2) = - \int_S a_1 \wedge a_2,\;\,\omega_\cS(\dot{\bm\phi}_1,\dot{\bm\phi}_2) = -\operatorname{Im} \int_S
h(\dot{\bm\phi}_1,\dot{\bm\phi}_2)\omega
$$
where $\dot{\bm\phi}_1,\dot{\bm\phi}_2 \in T_\phi \cS = \Omega^0(L)$, $a_1,a_2 \in T_A\cA = i\Omega^1(S)$, and $\dot J_1,\dot J_2 \in T_J \cJ$, identified with the space of $g_J$-symmetric tensors which anticommute
with $J$, where $g_J = \omega(,J)$.

Let $\cH$ be the group of Hamiltonian diffeomorphisms of $(S,\omega)$ and $\Aut (L,h)$ the extended group of unitary automorphisms of the $(L,h)$. Any such automorphism $\psi$ covers a unique
diffeomorphism $\check{\psi}\colon S\to S$ and we set $p(\psi) = \check{\psi}$. We define the
\emph{extended gauge group}
\[
\cX \subset \Aut (L,h),
\]
as the group of automorphisms which cover elements of $\cH$. Then the unitary gauge group of $(L,h)$  is the
normal subgroup $\cG\subset\cX$ of automorphisms covering the identity, and we have an exact sequence of Lie groups
\begin{equation}
\label{eq:coupling-term-moment-map-1}
  1\to \cG \lra{\iota} \cX \lra{\pr} \cH \to 1,
\end{equation}
where $\iota$ is the inclusion map. The natural diagonal left action of $\cX$ on $\cJ\times\cA\times\cS$ by push-forward is Hamiltonian (see \cite[Section 3.2]{Al-Ga-Ga-P}), and there is an equivariant moment map
$\mu_{\cX}\colon\cJ\times\cA\times\cS\to(\LieX)^*$ given
by
\begin{equation}\label{eq:mutriples}
\begin{split}
\langle \mu_{\cX}(J,A,\bm\phi),\zeta\rangle = & - 4\alpha \int_S A\zeta\!\cdot\!\(F_{A} - \frac{i}{2} (\lvert\bm\phi\rvert_{h}^2 - \tau)\omega\)\\
&- \int_S  f\(S_{J}\omega + 2i\alpha\mathrm{d}(\mathrm{d}_A \bm\phi,\bm\phi)_h - 2i\alpha \tau F_A\),
\end{split}
\end{equation}
where $S_J$ denotes the scalar curvature of $g_J = \omega(,J)$ and $F_A$ is the curvature of the connection $A$. Here, $A \zeta \in iC^\infty(S,\mathbb{R})$ is the vertical part of $\zeta \in \LieX$ with respect to
$A$, identified with a vector field on the total space of $L$, and $f$ is the Hamiltonian function of the vector field $p(\zeta) \in \LieH$ on $S$, normalized so that $\int_S f \omega = 0$. The parameter $\tau \in
\mathbb{R}$, often called \emph{symmetry breaking parameter}, can be chosen at will in the previous formula.

Following the discussion in the previous section, one can formally consider the symplectic reduction of $\cT$ by $\cG$, that is, $\mu_{\cG}^{-1}(0)/\cG$, which gives a relative moduli space of solutions of the
vortex equation
\begin{equation}
\label{eqn:V0}
iF_{A} + \frac{1}{2} (\lvert\bm\phi\rvert_{h}^2 - \tau)\omega
 = 0
\end{equation}
over the space of complex structures $\cJ$, with possibly non-holomorphic section $\phi$. This space inherits then a Hamiltonian $\cH$-action with moment map given by the second line in \eqref{eq:mutriples}. Notice
that in order for the symplectic $\cG$-reduction to be non-empty, one already needs to impose (provided that $\bm\phi\neq 0$)
\begin{equation}
\label{eqn:ineqtauV}
V> \frac{4\pi N}{\tau},
\end{equation}
where $V = \int_S \omega$ is the total volume of $S$ with respect to $\omega$.

The moduli space of solutions of the gravitating vortex equations arises when we restrict to a natural symplectic submanifold of the space of triples $\cJ\times\cA\times\cS$, dictated by its K\"ahler geometry.
Similarly as in \cite[Proposition 2.2]{Al-Ga-Ga}, the space \eqref{eq:spc-triples} has a formally integrable almost complex structure, given by
$$
\mathbf{J}(\dot J,a,\dot{\bm\phi}) = (J \dot J,- a(J \cdot), i \dot{\bm\phi})
$$
where $(\dot J,a,\dot{\bm\phi})$ is a tangent vector at $(J,A,\bm\phi)$, which is compatible with \eqref{eq:symplecticT} when $\alpha >0$, thus inducing a K\"ahler structure in this case. Consider the subspace of
\emph{integrable triples}
$$
\cT = \{(J,A,\bm\phi) \mid \dbar_{J,A} \bm\phi = 0 \}
$$
where $\dbar_{J,A}\bm\phi$ denotes the $(0,1)$-part of $d_A \bm\phi$ with respect to $J$. This space is in bijection with the space of Riemann surfaces $\Sigma_J = (S,J)$ of genus $g(S)$ equipped with a holomorphic
line bundle $(L,\dbar_{J,A})$ with holomorphic section $\phi$. This implies, in particular, that the degree of $L$ must be a positive integer
$$
N > 0,
$$
provided that $\cT$ is non-empty, and we shall assume this in the sequel. By definition, $\cT$ is a complex subspace of \eqref{eq:spc-triples} (away from its singularities) preserved by the $\cX$-action, and hence
it inherits a Hamiltonian $\cX$-action.

\begin{proposition}[\cite{Al-Ga-Ga-P}]\label{prop:momentmap-inttriples}
The $\cX$-action on $\cT$ is Hamiltonian with $\cX$-equivariant moment map $\mu_{\cX}\colon\cT\to(\LieX)^*$ given by
\begin{equation}\label{eq:prop-mutriples}
\begin{split}
\langle \mu_{\cX}(J,A,\bm\phi),\zeta\rangle & = - 4\alpha \int_S A\zeta\!\cdot\! \(F_{A} - \frac{i}{2} (\lvert\bm\phi\rvert_{h}^2 - \tau)\omega\) - \int_S  f\(\omega\(S_{J} + \alpha
\Delta_{g}\lvert\bm\phi\rvert^2_h\) - 2\alpha \tau iF_A
\)
\end{split}
\end{equation}
for all $(J,A,\bm\phi)\in\cT$ and $\zeta\in\LieX$, where $\Delta_g = 2i \Lambda_\omega \dbar_J \partial_J$ is the Laplacian operator of the Riemannian metric $g_J = \omega(\cdot,J\cdot)$.
\end{proposition}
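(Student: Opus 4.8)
The plan is to obtain \eqref{eq:prop-mutriples} directly from the moment map \eqref{eq:mutriples} already established on the ambient space of triples $\cJ\times\cA\times\cS$, by restricting to the invariant subspace $\cT$ and rewriting the coupling term by means of the integrability condition $\dbar_{J,A}\bm\phi=0$. Concretely there are two steps: first, that the restriction of $\mu_\cX$ to $\cT$ is still an $\cX$-equivariant moment map for the induced action; and second, that on $\cT$ the expression \eqref{eq:mutriples} simplifies to \eqref{eq:prop-mutriples}.

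For the first step I would invoke the standard principle that a moment map restricts to any invariant symplectic submanifold. Write $j\colon\cT\hookrightarrow\cJ\times\cA\times\cS$ for the inclusion. Away from its singular locus $\cT$ is a complex submanifold on which $\mathbf J$ restricts to a compatible almost complex structure, so $j^*\omega_\alpha$ is the K\"ahler form of the induced metric and in particular is symplectic. Since $\cT$ is $\cX$-invariant, each fundamental vector field $X_\zeta$ (with $\zeta\in\LieX$) is tangent to $\cT$; pulling back the defining moment map relation $d\langle\mu_\cX,\zeta\rangle = X_\zeta\iprod\omega_\alpha$ along $j$ then gives $d\langle\mu_\cX\circ j,\zeta\rangle = X_\zeta\iprod(j^*\omega_\alpha)$, so $\mu_\cX\circ j$ is a moment map for the restricted action, and its $\cX$-equivariance is inherited from that of $\mu_\cX$.

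The content of the second step is the pointwise identity
\[
2i\,d(d_A\bm\phi,\bm\phi)_h = \Delta_g\lvert\bm\phi\rvert_h^2\cdot\omega \qquad\text{on }\cT,
\]
after which \eqref{eq:mutriples} and \eqref{eq:prop-mutriples} agree term by term, the entire first line and the $S_J\omega$ and $-2i\alpha\tau F_A$ contributions being literally unchanged. To prove it I would split $d_A=\partial_A+\dbar_A$ with respect to the (integrable) complex structure $J$ and use $\dbar_A\bm\phi=\dbar_{J,A}\bm\phi=0$ on $\cT$. Chern-connection compatibility $d\lvert\bm\phi\rvert_h^2=(d_A\bm\phi,\bm\phi)_h+(\bm\phi,d_A\bm\phi)_h$, compared in bidegrees, yields $\partial_J\lvert\bm\phi\rvert_h^2=(\partial_A\bm\phi,\bm\phi)_h$; hence on $\cT$
\[
(d_A\bm\phi,\bm\phi)_h=(\partial_A\bm\phi,\bm\phi)_h=\partial_J\lvert\bm\phi\rvert_h^2.
\]
Applying $d$ and using $\partial_J\partial_J=0$ on the Riemann surface $\Sigma_J$ gives $d(d_A\bm\phi,\bm\phi)_h=\dbar_J\partial_J\lvert\bm\phi\rvert_h^2$, and the normalisation $\Delta_g f\cdot\omega=2i\dbar_J\partial_J f$ delivers the displayed identity; multiplying by $\alpha$ turns $2i\alpha\,d(d_A\bm\phi,\bm\phi)_h$ into $\alpha\Delta_g\lvert\bm\phi\rvert_h^2\cdot\omega$, as required.

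I expect no serious obstacle: the computation is local and algebraic once integrability is imposed, and it needs no input beyond the ambient moment map \eqref{eq:mutriples}. The only points requiring care are the bookkeeping of conventions (the conjugation and linearity conventions in $(\,\cdot\,,\,\cdot\,)_h$, the $(1,0)/(0,1)$ splitting relative to the varying $J$, and the normalisation of $\Delta_g$), and the fact that $\cT$ is singular, so the moment-map identity should be read on the smooth locus---or, as is customary in this infinite-dimensional setting, interpreted formally.
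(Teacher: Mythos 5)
Your proposal is correct and follows essentially the same route as the paper: Proposition \ref{prop:momentmap-inttriples} is obtained there by restricting the ambient moment map \eqref{eq:mutriples} to the $\cX$-invariant complex subspace $\cT$ and rewriting the coupling term using $\dbar_{J,A}\bm\phi=0$. Your direct bidegree computation $(d_A\bm\phi,\bm\phi)_h=\partial_J\lvert\bm\phi\rvert_h^2$, hence $2i\,d(d_A\bm\phi,\bm\phi)_h=\Delta_g\lvert\bm\phi\rvert_h^2\,\omega$, is precisely the content the paper summarizes by invoking the K\"ahler identities and the holomorphicity of $\bm\phi$ (the Bochner formula \eqref{eq:Bochner.1} it also cites is not actually needed for this particular step).
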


Formula \eqref{eq:prop-mutriples} follows from \eqref{eq:mutriples} using the K\"ahler identities and the fact that $\bm\phi$ is a holomorphic section of $(L,\dbar_{J,A})$, combined with the Bochner formula
\begin{equation}\label{eq:Bochner.1}
\frac{1}{2}\Delta_\omega\lvert\bm\phi\rvert_h^2=-i\Lambda\partial\bar{\partial}\lvert\bm\phi\rvert_h^2= i\Lambda_\omega F_A \lvert\bm\phi\rvert_h^2-\lvert\partial_A\bm\phi\rvert_{h,\omega}^2,
\end{equation}
where $\lvert\partial_A\bm\phi\rvert_{h,\omega}^2=i\Lambda_\omega (\partial_A\bm\phi\wedge(\partial_A\bm\phi)^*)$. It is not difficult to see that the zeroes of the moment map $\mu_{\cX}$ correspond precisely to
integrable triples $(J,A,\bm\phi) \in \cT$ satisfying the gravitating vortex equations \eqref{eqn:GV0} with constant
\begin{equation}\label{eq:constantc}
c = \frac{2\pi(\chi(S) - 2 \alpha \tau N)}{V}.
\end{equation}
The moduli space of solutions of the gravitating vortex equations, given by the symplectic quotient
$$
\mathcal{M} = \mu_{\cX}^{-1}(0)/\cX
$$
inherits, by means of standard gauge-theoretical considerations \cite{GFTipler}, a K\"ahler structure on its smooth locus.
$\cT$.
We should notice that the role played by the orbits of the complexification in the definition of the stable locus of $\cT$ is now played by the integral leaves of the distribution
$$
D_{p} = \{Y_{\zeta_1}(p) + \mathbf{J} Y_{\zeta_2}(p) \mid \zeta_1,\zeta_2\in \LieX \} \subset T_p \cT,
$$
for $p \in \cT$, where $Y_{\zeta_j}(p)$ is the infinitesimal action of $\zeta_j$ at $p$.

In order to yield a deeper understanding of the global structure of the moduli space $\mathcal{M}$, including its singularities, we adopt a \emph{reduction by stages approach} to the gravitating vortex equations.
For this, we build on the following existence theorem for the vortex equation \eqref{eqn:V0}, established independently by Noguchi, Bradlow and Garc\'ia-Prada. For completeness, in Appendix~\ref{sub:variational-approach-vortex-eq} we provide a direct proof of this theorem with the methods of the present work.

\begin{theorem}[{\cite{Brad,Prada,Noguchi}}]\label{th:B-GP}
Let $\Sigma$ be a compact connected Riemann surface endowed with a holomorphic line bundle $L$ with non-zero holomorphic section
$$
0 \neq \bm\phi \in H^0(\Sigma,L).
$$
Let $\omega$ be a K\"ahler form on $\Sigma$ and assume that the symmetry breaking parameter $\tau$ satisfies the inequality \eqref{eqn:ineqtauV}. Then there exists a unique Hermitian metric $h_\omega$ on $L$ solving
the vortex equation~\eqref{eqn:V0}, where $A$ is the Chern connection of $h_\omega$.
\end{theorem}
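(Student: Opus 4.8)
The plan is to reduce the abelian vortex equation \eqref{eqn:V0} to a single scalar semilinear elliptic PDE on $\Sigma$ and to solve it by the direct method of the calculus of variations, exploiting the strict convexity of a Kempf--Ness type energy whose critical points are the solutions. Fix a smooth background Hermitian metric $h_0$ on $L$ with Chern connection curvature $F_0$, and parametrise every other metric as $h = h_0 e^{-2u}$ with $u \in C^\infty(\Sigma,\RR)$. Then $F_h = F_0 + 2\partial\dbar u$ and $\lvert\bm\phi\rvert_h^2 = e^{-2u}\lvert\bm\phi\rvert_{h_0}^2$, so contracting \eqref{eqn:V0} with $\omega$ and using $2i\Lambda_\omega\partial\dbar u = -\Delta_\omega u$ shows that \eqref{eqn:V0} (with $A$ the Chern connection of $h$) is equivalent to
\begin{equation}\label{eq:scalar-vortex}
\Delta_\omega u = i\Lambda_\omega F_0 + \tfrac{1}{2}\bigl(e^{-2u}\lvert\bm\phi\rvert_{h_0}^2 - \tau\bigr).
\end{equation}
Integrating \eqref{eq:scalar-vortex} against $\omega$ and using $\int_\Sigma i\Lambda_\omega F_0\,\omega = 2\pi N$ gives $\int_\Sigma e^{-2u}\lvert\bm\phi\rvert_{h_0}^2\,\omega = \tau V - 4\pi N$; since the left-hand side is positive, this both exhibits \eqref{eqn:ineqtauV} as a necessary condition and isolates the precise mechanism that will make the energy coercive below.

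I would then recognise \eqref{eq:scalar-vortex} as the Euler--Lagrange equation of
\begin{equation}\label{eq:vortex-energy}
E(u) = \frac{1}{2}\int_\Sigma \lvert\nabla u\rvert^2\,\omega + \frac{1}{4}\int_\Sigma e^{-2u}\lvert\bm\phi\rvert_{h_0}^2\,\omega + \int_\Sigma\Bigl(\frac{\tau}{2} - i\Lambda_\omega F_0\Bigr) u\,\omega,
\end{equation}
defined on $W^{1,2}(\Sigma,\RR)$; on a line bundle the space of metrics is flat, and $E$ is, up to convention, the integral of the $\cG$-moment map along the straight path $u_t = tu$, which explains a priori why it should be convex. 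Indeed the second variation is
\begin{equation}\label{eq:second-variation}
E''(u)[\psi,\psi] = \int_\Sigma \lvert\nabla\psi\rvert^2\,\omega + \int_\Sigma e^{-2u}\lvert\bm\phi\rvert_{h_0}^2\,\psi^2\,\omega \geq 0,
\end{equation}
which vanishes only if $\psi$ is constant and $\int_\Sigma e^{-2u}\lvert\bm\phi\rvert_{h_0}^2\,\psi^2\,\omega = 0$. As $\bm\phi\neq 0$ is holomorphic, $\lvert\bm\phi\rvert_{h_0}^2 > 0$ off the finite zero set of $\bm\phi$, forcing $\psi = 0$; hence $E$ is strictly convex. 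This already yields the uniqueness assertion of Theorem~\ref{th:B-GP}, since a strictly convex functional has at most one critical point.

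For existence I would apply the direct method. Decompose $u = \bar u + v$ with $\bar u = \frac{1}{V}\int_\Sigma u\,\omega$ and $\int_\Sigma v\,\omega = 0$; the linear term in \eqref{eq:vortex-energy} contributes $\bar u\bigl(\tfrac{\tau V}{2} - 2\pi N\bigr)$, whose coefficient is strictly positive precisely by \eqref{eqn:ineqtauV}, the Dirichlet term controls $v$ in $W^{1,2}$ through the Poincaré inequality (and dominates the remaining linear contribution $-\int_\Sigma i\Lambda_\omega F_0\,v\,\omega$ by Cauchy--Schwarz), and the nonnegative exponential term drives $E\to+\infty$ as $\bar u\to-\infty$. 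Together with the weak lower semicontinuity coming from convexity, this makes $E$ bounded below and coercive on $W^{1,2}(\Sigma,\RR)$, so a minimising sequence is bounded, converges weakly to a minimiser $u_\infty$, and $u_\infty$ solves \eqref{eq:scalar-vortex} weakly. Elliptic bootstrapping on \eqref{eq:scalar-vortex} then upgrades $u_\infty$ to a smooth solution, and $h_\omega = h_0 e^{-2u_\infty}$ is the required metric.

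The main obstacle is the coercivity of $E$, specifically the interaction between the constant mode $\bar u$ and the oscillatory mode $v$ inside $\int_\Sigma e^{-2\bar u}e^{-2v}\lvert\bm\phi\rvert_{h_0}^2\,\omega$: one must rule out any direction of recession along which $E$ stays bounded. The cleanest route I foresee is a convex-analysis argument — the only directions in which neither the Dirichlet energy nor the exponential term grows are the constants, and on constants the linear term is coercive by \eqref{eqn:ineqtauV} — so that $E$ has no nontrivial recession direction and is therefore proper. The role of the zero divisor of $\bm\phi$ (which makes the weight $\lvert\bm\phi\rvert_{h_0}^2$ degenerate on a finite set) must be tracked here, but since this set has measure zero it does not affect the integrals, and I expect it to cause no essential difficulty beyond bookkeeping.
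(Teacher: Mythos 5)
Your proposal follows essentially the same route as the paper's own proof (Appendix~\ref{sub:variational-approach-vortex-eq}, Theorem~\ref{thm:classicalvortex}): after the substitution $f=-u$ your energy $E$ is exactly the functional $\widehat{\mathcal{M}}_{\omega}$ of \eqref{eq:vortex-functional.1} (up to the paper's normalization $iF_{h_0}=\frac{2\pi N}{V}\omega$ of the background metric), uniqueness comes from strict convexity along affine segments, necessity of \eqref{eqn:ineqtauV} from integrating the equation, and existence from the direct method followed by elliptic regularity.

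The one step that would fail as written is your ``cleanest route'' to coercivity. For a convex lower semicontinuous functional on an infinite-dimensional space, the absence of nontrivial recession directions does \emph{not} imply properness or boundedness of sublevel sets: on $\ell^2$ the convex continuous function $x\mapsto\sum_n (x_n/n)^2$ tends to $+\infty$ along every ray, yet it equals $1$ on the unbounded set $\{ne_n\}_{n\geq 1}$, so minimizing-sequence compactness cannot be extracted from ray-wise growth alone. What actually closes the argument --- and what the paper does --- is a quantitative bound decoupling the constant mode from the oscillation. Writing $u=\bar u+v$ with $\fint_\Sigma v\,\omega=0$, Jensen's inequality for the probability measure $\omega/V$ gives
\begin{equation*}
\int_\Sigma e^{-2v}\lvert\bm\phi\rvert_{h_0}^2\,\omega
=\int_\Sigma e^{-2v+\log\lvert\bm\phi\rvert_{h_0}^2}\,\omega
\geq V\exp\Big(\fint_\Sigma \log\lvert\bm\phi\rvert_{h_0}^2\,\omega\Big)=:c_0>0,
\end{equation*}
uniformly in $v$, so your exponential term is bounded below by $\tfrac{c_0}{4}e^{-2\bar u}$; combining this with the elementary inequality $a e^{-2x}+\kappa x\geq \kappa\lvert x\rvert-C(a,\kappa)$, where $\kappa=\tfrac{\tau V}{2}-2\pi N>0$ precisely by \eqref{eqn:ineqtauV}, and with the Poincar\'e inequality to absorb $\int_\Sigma i\Lambda_\omega F_0\, v\,\omega$, one gets $E(u)\geq \tfrac{1}{4}\lVert\nabla v\rVert_{L^2}^2+\kappa\lvert\bar u\rvert-C$, which is genuine coercivity on $W^{1,2}$. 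Note also that the relevant property of the zero divisor is not that it has measure zero but that $\log\lvert\bm\phi\rvert_{h_0}^2$ is \emph{integrable} (the zeros of the holomorphic section have finite order), which is exactly what makes $c_0$ strictly positive; without that, the Jensen bound is vacuous. With this replacement your argument goes through, and the remaining steps (Fatou or Mazur for weak lower semicontinuity of the exponential term, Moser--Trudinger or Moser iteration to put the right-hand side of the scalar equation in $L^p$ before bootstrapping) are as in the paper.
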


In the language of Section \ref{section:setupabstract}, this result can be interpreted as follows. Assume that $\omega$ and $\tau$ are chosen, \emph{a priori}, so that the inequality \eqref{eqn:ineqtauV} is
satisfied. Then for any $p = (J,A,\bm\phi) \in \cT$ there exists a complex gauge transformation $g_p \in \cG^c$, unique up to the action of the unitary gauge group $\cG$, such that $g_p \cdot p$ is a solution of the
vortex
equation~\eqref{eqn:V0}, that is,
$$
\mu_\cG(g_p \cdot p) = 0,
$$
where $\mu_\cG = \iota^* \circ \mu_{\cX}$ for $\iota$ the inclusion in \eqref{eq:coupling-term-moment-map-1}. The metric $h_\omega$ in Theorem \ref{th:B-GP} corresponds to the pull-back of the fixed
metric $h$ by $g_p$. Therefore $\cT^s_{\cG^c} = \cT$ and the symplectic $\cG$-reduction of $\cT$ by $\cG$, that is,
$$
\mathcal{N} = \mu_{\cG}^{-1}(0)/\cG,
$$
admits a more amenable description as the relative symmetric product $S^N \Sigma_J$ of the universal family of Riemann surfaces $\Sigma_J = (S,J)$ over the space of complex structure $\cJ$ on $S$ \cite{Prada}. The
induced Hamiltonian $\cH$-action on $\mathcal{N}$ has then an equivariant moment map
\begin{equation}\label{eq:prop-mutriplesreduced}
\begin{split}
\langle \mu_{\cH}(J,[(A, \bm\phi)]),y\rangle & = - \int_S  f\(\omega\(S_{J} + \alpha \Delta_{g}\lvert\bm\phi\rvert^2_h\) - 2i\alpha \tau F_{A}\),
\end{split}
\end{equation}
where $y \in \LieH$ is a Hamiltonian vector field on $(S,\omega)$ with normalized Hamiltonian $f$. Note that, by construction, the connection $A$ in the previous formula solves the vortex equation on the Riemann
surface $\Sigma_J$. By Section \ref{section:setupabstract}, the K\"ahler moduli space $\mathcal{M}$ can be now understood as the K\"ahler reduction of $\mathcal{N}$ by $\cH$.

The main goal of the present paper is to give a characterization of the complexified $\cX$-orbits on $\cT$ which cut the zero locus of the moment map $\mu_{\cX}$, that is, which contain a solution of the gravitating
vortex equations. By the previous discussion and \cite[Section 3]{Al-Ga-Ga}, a complexified orbit is represented by a compact connected Riemann surface $\Sigma$ (with genus $g(S)$) endowed with an effective divisor
$$
D = \sum_j n_j p_j,
$$
corresponding to a holomorphic line bundle $L\to \Sigma$ with holomorphic section $\phi \in H^0(\Sigma,L)$. Moving along the complexified orbit is equivalent to, up the action of $\cX$, varying the K\"ahler metric
on $\Sigma$ with fixed total volume $V$ and the Hermitian metric $h$ on $L$. This is the \emph{complex viewpoint} on the gravitating vortex equations presented in Section \ref{sec:intro}. Following the reduction by
stages approach, we now consider the following equation for a K\"ahler metric on $\omega$ with volume $V$, which we shall call the \emph{reduced gravitating vortex equation}
\begin{equation}\label{eqn:GV0red}
S_{\omega} + \alpha ( \Delta_{\omega} + \tau )( \lvert\bm\phi\rvert_{h_\omega}^2 - \tau )  = c,
\end{equation}
where $c$ is given by \eqref{eq:constantc} and $h_\omega$ is the unique solution of the vortex equation associated to $\omega$ via Theorem \ref{th:B-GP}. From a physical view point, this deformation of the usual
constant scalar curvature condition on the K\"ahler metric $\omega$ takes implicit account of the back-reaction of the metric on the vortices.

Similarly as in Section \ref{section:setupabstract}, consider the infinite-dimensional space
$$
\mathcal{P} = \left\{\omega\textrm{ K\"ahler }\Bigm\vert\;\int_\Sigma\omega=V\right\}
$$
of K\"ahler metrics with total volume $V>0$, which can regarded as the quotient $\cH^c/\cH$. This has a natural structure of infinite-dimensional symmetric space, endowed with the Mabuchi $L^2$-metric
\cite{Mabuchi}. Applying the formal properties of the reduced moment map \eqref{eq:prop-mutriplesreduced}, one can define a closed 1-form on $\mathcal{P}$. By contractibility of $\mathcal{P}$, this 1-form integrates
to a functional, whose critical points are the solutions of the reduced gravitating vortex equation \eqref{eqn:GV0red} and which is convex along geodesics on $\mathcal{P}$ (see \cite[Proposition 3.10]{Al-Ga-Ga} and
Section \ref{section:setupabstract}). Instead of working with $\mathcal{P}$ directly, it will be useful to fix a reference background K\"ahler metric $\omega_0 \in \mathcal{P}$ and consider the space of K\"ahler
potentials
\begin{equation}\label{eq:sp-Kahler-potentials.1}
\mathcal{P}_{\omega_0} = \left\{\varphi \mid \omega_\varphi := \omega_0 + 2i\partial\bar\partial \varphi>0 \right\} \subset C^\infty(\Sigma,\RR).
\end{equation}
Define a 1-form on $\mathcal{P}_{\omega_0}$ by
\begin{equation}\label{eq:sigmaGVreduced}
\sigma_\alpha|_\varphi(\dot\varphi) = - \int_S  \dot \varphi\(\omega_\varphi\(S_{\omega_\varphi} + \alpha \Delta_{\omega_\varphi}\lvert\bm\phi\rvert^2_{h_\varphi}\) - 2\alpha \tau iF_{h_\varphi} - c \omega_\varphi\)
\end{equation}
where $\dot \varphi \in C^\infty(\Sigma,\RR) $ is tangent at $\varphi$ and $h_\varphi$ is the unique solution of the vortex equation \eqref{eqn:GV0red} associated to $\omega_\varphi$ via Theorem \ref{th:B-GP}.
Applying the formal properties of the reduced moment map \eqref{eq:prop-mutriplesreduced} one can prove that $\sigma_\alpha$ is closed and, since the space $\mathcal{P}_{\omega_0}$ is contractible, it integrates to
a functional which we call the \emph{reduced $\alpha$-K-energy}
$$
\mathcal{K}_\alpha \colon \mathcal{P}_{\omega_0} \lto \RR
$$
(upon a choice of constant $C \in \mathbb{R}$ such that $\mathcal{K}_\alpha(\omega_0) = C$). We state next the main result of this section, whose proof is given in Section \ref{sec:reducedK}. By a smooth path on
$\mathcal{P}_{\omega_0}$, denoted $\varphi\colon[0,1]\to \mathcal{P}_{\omega_0},\, t\mapsto\varphi_t$, we mean a smooth function $\varphi \in C^\infty([0,1]\times\Sigma)$ such that $\varphi_t := \varphi(t,\cdot) \in
\mathcal{P}_{\omega_0}$ for all $t \in
[0,1]$. In the sequel, we denote $\varphi'_t\defeq\partial\varphi_t/\partial t$ and $\varphi''_t\defeq\partial\varphi'_t/\partial t$.

\begin{proposition}\label{prop:convexalphaK}
The reduced $\alpha$-K-energy $\mathcal{K}_\alpha$ is convex along smooth geodesic paths $\varphi_t$ on $\mathcal{P}_{\omega_0}$, given by smooth solutions to the partial differential equation
\begin{equation}\label{eq:geodesiceq}
\varphi''_t - \lvert\mathrm{d}\varphi'_t\rvert^2_{\omega_{\varphi_t}} = 0.
\end{equation}
Furthermore, its critical points are given by solutions of the reduced gravitating vortex equation \eqref{eqn:GV0red} on $\mathcal{P}_{\omega_0}$.
\end{proposition}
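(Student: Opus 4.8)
The plan is to handle the two statements separately, the characterisation of critical points being elementary and the convexity being the substantial part. For the critical points, note that $\varphi$ is critical for $\mathcal{K}_\alpha$ exactly when the closed $1$-form $\sigma_\alpha$ of \eqref{eq:sigmaGVreduced} vanishes at $\varphi$. Since its value on a tangent vector $\dot\varphi$ is $-\int_S\dot\varphi\,\Theta_\varphi$ with the fixed top-form
\[
\Theta_\varphi=\omega_\varphi\bigl(S_{\omega_\varphi}+\alpha\Delta_{\omega_\varphi}|\bm\phi|^2_{h_\varphi}\bigr)-2\alpha\tau\,iF_{h_\varphi}-c\,\omega_\varphi,
\]
and $\dot\varphi$ ranges over all smooth functions, the condition $\sigma_\alpha|_\varphi\equiv0$ is equivalent to $\Theta_\varphi=0$. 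As $h_\varphi$ solves the vortex equation \eqref{eqn:V0} by Theorem~\ref{th:B-GP}, we may substitute $iF_{h_\varphi}=-\tfrac12(|\bm\phi|^2_{h_\varphi}-\tau)\omega_\varphi$, turning $-2\alpha\tau\,iF_{h_\varphi}$ into $\alpha\tau(|\bm\phi|^2_{h_\varphi}-\tau)\omega_\varphi$; then $\Theta_\varphi=0$ reads $S_{\omega_\varphi}+\alpha(\Delta_{\omega_\varphi}+\tau)(|\bm\phi|^2_{h_\varphi}-\tau)=c$, which is exactly the reduced gravitating vortex equation \eqref{eqn:GV0red}.

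For convexity I would differentiate once more, computing $\tfrac{d^2}{dt^2}\mathcal{K}_\alpha(\varphi_t)=\tfrac{d}{dt}\,\sigma_\alpha|_{\varphi_t}(\varphi'_t)$ along a smooth geodesic and splitting it according to $\mathcal{K}_\alpha=\mathcal{K}+\mathcal{M}_\alpha$. The scalar-curvature contribution is the classical Mabuchi K-energy, whose second derivative along a smooth geodesic is the manifestly nonnegative $\int_\Sigma|\bar\partial\,\mathrm{grad}^{1,0}\varphi'_t|^2\,\omega_{\varphi_t}$; this is standard, and I would absorb the topological discrepancy between $c$ and the average scalar curvature (which is $O(\alpha)$) into $\mathcal{M}_\alpha$, tracking it through the relation $\dot\omega_{\varphi_t}=-\Delta_{\omega_{\varphi_t}}\varphi'_t\,\omega_{\varphi_t}$ and the geodesic equation \eqref{eq:geodesiceq}. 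It then remains to establish the geodesic convexity of the coupling functional $\mathcal{M}_\alpha$.

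The crux is the second variation of $\mathcal{M}_\alpha$, which through the terms $\Delta_{\omega_\varphi}|\bm\phi|^2_{h_\varphi}$ and $F_{h_\varphi}$ depends on the variation of the vortex solution $h_{\varphi_t}$. Writing $h_{\varphi_t}=e^{\psi_t}h$ with $\psi_t$ real, so that $|\bm\phi|^2_{h_{\varphi_t}}=e^{\psi_t}|\bm\phi|^2_h$ and $iF_{h_{\varphi_t}}=iF_h+\tfrac12(\Delta_{\omega_{\varphi_t}}\psi_t)\omega_{\varphi_t}$, I would differentiate the vortex equation \eqref{eqn:V0} in $t$. This yields a linear elliptic equation for $\dot\psi_t$ of Schr\"odinger type, whose zeroth-order term is built from $|\bm\phi|^2_{h_{\varphi_t}}$ and is positive, so the operator is invertible; this is what encodes the non-local dependence of $h_\varphi$ on $\omega_\varphi$. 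Substituting $\dot\psi_t$ back, using the geodesic equation to cancel the acceleration terms, and integrating by parts, I expect the outcome to assemble into a sum of squares. This is precisely what the moment-map picture of \S\ref{section:setupabstract} predicts: $\mathcal{K}_\alpha$ is the Kempf--Ness functional for the residual $\cH$-action on $\mathcal{N}$ with moment map \eqref{eq:prop-mutriplesreduced}, and along the geodesics $t\mapsto[he^{ity}]$ the second derivative of such a functional equals the squared norm of the infinitesimal action; the present computation is the rigorous infinite-dimensional counterpart of \cite[Proposition 3.10]{Al-Ga-Ga}.

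The main obstacle I anticipate is exactly this non-locality: in contrast to the cscK case, $\mathcal{M}_\alpha$ cannot be differentiated in closed form without first solving the linearised vortex equation for $\dot\psi_t$, and the delicate step is to check that the cross terms arising from the variations of $S_{\omega_{\varphi_t}}$, of $|\bm\phi|^2_{h_{\varphi_t}}$ and of $F_{h_{\varphi_t}}$, together with the self-adjointness of the linearised vortex operator and the Bochner identity \eqref{eq:Bochner.1}, recombine after the geodesic substitution into a quantity of definite (nonnegative) sign rather than an indefinite one.
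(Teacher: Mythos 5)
Your characterization of the critical points is correct and is essentially the paper's argument: since $\dot\varphi$ ranges over all of $C^\infty(\Sigma,\RR)$, the vanishing of $\sigma_\alpha|_\varphi$ forces the density $\Theta_\varphi$ to vanish, and substituting the vortex equation $iF_{h_\varphi}=-\tfrac12(\lvert\bm\phi\rvert^2_{h_\varphi}-\tau)\omega_\varphi$ turns this into \eqref{eqn:GV0red}. (Implicit here, and needed already to differentiate along paths, is that $\varphi\mapsto h_\varphi$ is smooth; you invoke the invertibility of the linearized vortex operator $\Delta_\omega+\lvert\bm\phi\rvert^2_h$ only later, but this is exactly the implicit-function-theorem input the paper uses, so this is a presentational point, not a gap.)

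The convexity part, however, has a genuine gap: the entire substance of the proof is the verification that the second variation assembles into a sum of nonnegative terms along a geodesic, and your proposal does not carry this out — it states that you ``expect the outcome to assemble into a sum of squares'' and then names precisely this recombination of cross terms as the main anticipated obstacle. That identity is not a routine bookkeeping step; it is the content of the proposition. Concretely, what must be proved is the analogue of the paper's second-variation formula: for the coupled variations one has
\begin{equation*}
4\alpha\langle \dot f,\delta\wp_1(\dot f,\dot\varphi)\rangle+\langle\dot\varphi,\delta\wp_2(\dot f,\dot\varphi)\rangle
=4\alpha\lVert \mathrm{d}\dot f+\eta_{\dot\varphi}\lrcorner(iF_h-\wp_1\omega)\rVert^2
+4\alpha\lVert J\eta_{\dot\varphi}\lrcorner\mathrm{d}_A\bm\phi-\dot f\bm\phi\rVert^2
+2\lVert\bar\partial\nabla^{1,0}\dot\varphi\rVert^2+\cdots,
\end{equation*}
where the remaining terms all carry a factor of $\wp_1$ or $\wp_2$. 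Without establishing this (or an equivalent) identity, the moment-map heuristic and the citation of \cite[Proposition 3.10]{Al-Ga-Ga} are predictions, not a proof.

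It is also worth noting that the paper's route deliberately avoids the step you propose as central, namely solving the linearized vortex equation for $\dot\psi_t$ and ``substituting back''. Instead, the paper computes the second variation of $\widetilde{\mathcal{K}}_\alpha$ on the \emph{extended} space $\widetilde{\mathcal{P}}_{\omega_0}$ along an \emph{arbitrary} smooth curve $(f_t,\varphi_t)$ (Proposition \ref{prop:second}); in that formula the uncontrollable terms — those involving $f''_t$ and the failure of $f_t$ to solve any geodesic equation — are all multiplied by $\wp_1(f_t,\varphi_t)$. Restricting to the curve $(f_{\varphi_t},\varphi_t)$, the vortex equation gives $\wp_1\equiv 0$, the geodesic equation \eqref{eq:geodesiceq} kills the $\wp_2$ term, and only the manifest squares survive, with $f'_t$ appearing inside them but never needing to be computed. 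This is why the non-locality you flag never has to be confronted: one needs only smoothness of $\varphi\mapsto f_\varphi$, not an explicit inversion of $\Delta_\omega+\lvert\bm\phi\rvert^2_h$. Your direct route on $\mathcal{P}_{\omega_0}$, using $\delta\wp_1(f'_t,\varphi'_t)=0$ as a constraint, can be made to work and amounts to the same algebra, but as written the decisive computation is missing.
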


\begin{remark}\label{rem:liftRtoRom}
Consider $\mathcal{P} \subset \mathcal{P}_{\omega_0}$, regarded as the subspace of normalized K\"ahler potentials $\varphi$ with
$$
\int_\Sigma \varphi \omega_\varphi =  - \int_\Sigma \varphi \omega_0
$$
(see Section \ref{sub:metric-completion-Kahler-potentials.1} for the choice of normalization). Then one can prove that the pull-back of $\sigma_\alpha$ via the inclusion map coincides with the 1-form obtained via
the reduced moment map \eqref{eq:prop-mutriplesreduced} (see \cite[Section 3.3]{Al-Ga-Ga1}). The corresponding functional is convex along smooth geodesic paths $\omega_t = \omega_0 + \mathrm{d}\mathrm{d}^c
\varphi_t$ on the space of K\"ahler metrics, given by
\begin{equation*}\label{eq:geodesiceqR}
\mathrm{d}\mathrm{d}^c(\varphi''_t - \lvert\mathrm{d}\varphi'_t\rvert^2_{\omega_t}) = 0.
\end{equation*}
\end{remark}

\subsection{Reduced $\alpha$-K-energy}\label{sec:reducedK}

The goal of this section is to provide a direct and self-contained proof of Proposition \ref{prop:convexalphaK}, which provides an alternative to the abstract argument in the previous section (based on
\cite{Al-Ga-Ga}), and an explicit formula for the reduced $\alpha$-K-energy that will be needed in the rest of this article.

We start by recalling the construction of the $\alpha$-K-energy for gravitating vortices following \cite{Al-Ga-Ga1,Al-Ga-Ga-P}. We fix a compact connected Riemann surface $\Sigma$ (with genus $g(\Sigma)$) endowed
with a holomorphic line bundle $L\to \Sigma$ with holomorphic section $\bm\phi \in H^0(\Sigma,L)$. Let $\widetilde{\mathcal{P}}$ denote the product of the space of K\"ahler metrics on $\Sigma$ with fixed volume $V >
0$ with the space of Hermitian metrics on $L$. It will be useful to work with a covering of this space, given by a choice of K\"ahler potential, namely,
$$
\widetilde{\mathcal{P}}_{\omega_0} = \left\{(h,\varphi) \mid \omega_\varphi := \omega_0 + 2i\partial\bar\partial \varphi>0 \right\},
$$
where by $h$ we denote Hermitian metrics on $L$ and $\omega_0$ is a choice of background K\"ahler form with $\int_\Sigma \omega_0 = V$. We shall also fix a background Hermitian metric $h_0$ on $L$, and denote $h_f =
e^{2f} h_0$ for any $f \in C^\infty(\Sigma,\RR)$. Thus $\widetilde{\mathcal{P}}_{\omega_0}$ can be regarded as an open subset
\begin{equation}\label{eq:sp-metric-potentials.1}
\widetilde{\mathcal{P}}_{\omega_0} \subset C^\infty(\Sigma,\RR) \times C^\infty(\Sigma,\RR).
\end{equation}
We fix a symmetry breaking parameter $\tau$ satisfying the inequality \eqref{eqn:ineqtauV}. Following \cite{Yao}, we define a map
$$
\wp \colon \widetilde{\mathcal{P}}_{\omega_0} \lto C^\infty(\Sigma,\RR) \times C^\infty(\Sigma,\RR)
$$
by
\begin{equation}\label{eq:wp.1}
\begin{split}
\wp_1(f, \varphi) & = i \Lambda_{\omega_\varphi}F_{h_f} +  \frac{1}{2}\lvert\bm\phi\rvert_{h_f}^2 - \frac{\tau}{2},\\
\wp_2(f, \varphi) & =   - S_{\omega_\varphi} -\alpha \Delta_{\omega_\varphi} \lvert\bm\phi\rvert_{h_f}^2 +
2i\alpha\tau \Lambda_{\omega_\varphi} F_{h_f} + c,
\end{split}
\end{equation}
where $\wp = (\wp_1,\wp_2)$, we use the identification $(f,\varphi) \equiv (h_f, \varphi) \in \widetilde{\mathcal{P}}_{\omega_0}$, and $c$ is the topological constant defined in \eqref{eq:constantc}. Define a
$1$-form on the space $\widetilde{\mathcal{P}}_{\omega_0}$ by
\begin{equation}\label{defn:primitiveabs}
\begin{split}
\widetilde{\sigma}_\alpha|_{(f, \varphi)}(\dot f, \dot \varphi)
& =
4\alpha \int_\Sigma \dot f \wp_1(f, \varphi)\omega_\varphi
+
\int_\Sigma \dot \varphi \wp_2(f, \varphi)\omega_\varphi\\
\end{split}
\end{equation}
where $(\dot f, \dot \varphi) \in C^\infty(\Sigma,\RR) \times C^\infty(\Sigma,\RR)$ represents a tangent vector at $(f,\varphi) \in \widetilde{\mathcal{P}}_{\omega_0}$. More explicitly, we have
\begin{align}
\nonumber
\begin{split}
\widetilde{\sigma}_\alpha|_{(f, \varphi)}(\dot f, \dot \varphi) & =
4\alpha
\int_\Sigma \dot f
\left( iF_{h_f} + \frac{1}{2} (\lvert\bm\phi\rvert_{h_f}^2 - \tau) \omega_\varphi
\right)
\\&\quad
-
\int_\Sigma \dot \varphi
\left( \text{Ric }\omega_\varphi - 2\alpha i\partial\bar\partial \lvert\bm\phi\rvert_{h_f}^2 - 2\alpha\tau iF_{h_f}
- c\omega_\varphi
\right)
\end{split}
\\\label{defn:primitive}
\begin{split}
& =
- \int_\Sigma \dot \varphi
\left( \text{Ric }\omega_\varphi - \langle S\rangle \omega_\varphi\right)
+
4\alpha
\int_\Sigma \dot f
\left( iF_{h_f} + \frac{1}{2} (\lvert\bm\phi\rvert_{h_f}^2 - \tau) \omega_\varphi
\right)
\\&\quad
-
\int_\Sigma \dot \varphi
\left( \left( \langle S\rangle - c\right) \omega_\varphi - 2\alpha i\partial\bar\partial \lvert\bm\phi\rvert_{h_f}^2 - 2\alpha\tau iF_{h_f}
\right),
\end{split}
\end{align}
where $\langle S\rangle$ denotes the average of the scalar curvature for any K\"ahler metric $\omega$ with total volume $V$
$$
\langle S\rangle := \frac{1}{V}\int_\Sigma S_\omega \omega = \frac{4\pi(1 - g(\Sigma))}{V}.
$$

\begin{lemma}\label{lem:sigmaclosed}
The 1-form $\widetilde{\sigma}_\alpha$ is closed.
\end{lemma}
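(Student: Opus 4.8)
The plan is to exploit that $\widetilde{\mathcal{P}}_{\omega_0}$ is an open subset of the vector space $C^\infty(\Sigma,\RR)\times C^\infty(\Sigma,\RR)$, so that a $1$-form is closed exactly when its first variation is a symmetric bilinear form. Writing $u_i=(\dot f_i,\dot\varphi_i)$, $i=1,2$, for two \emph{constant} tangent vector fields, we have $[u_1,u_2]=0$ and hence $d\widetilde\sigma_\alpha(u_1,u_2)=\delta_{u_1}\big(\widetilde\sigma_\alpha(u_2)\big)-\delta_{u_2}\big(\widetilde\sigma_\alpha(u_1)\big)$, where $\delta_{u_1}$ denotes the directional derivative on $\widetilde{\mathcal{P}}_{\omega_0}$. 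Thus it suffices to prove that $B(u_1,u_2)\defeq\delta_{u_1}\big(\widetilde\sigma_\alpha(u_2)\big)$ is symmetric in $u_1$ and $u_2$.

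First I would record the elementary variation formulas in the direction $u_1$, namely $\delta_{u_1}\omega_\varphi=2i\partial\dbar\dot\varphi_1$, $\delta_{u_1}\lvert\bm\phi\rvert^2_{h_f}=2\dot f_1\lvert\bm\phi\rvert^2_{h_f}$ (as $h_f=e^{2f}h_0$), and $\delta_{u_1}(iF_{h_f})=-2i\partial\dbar\dot f_1$ (as $F_{h_f}=F_{h_0}-2\partial\dbar f$), together with the standard variation of the Ricci form. I would also use the complex dimension one identity $(\Lambda_{\omega_\varphi}\beta)\,\omega_\varphi=\beta$, valid for every real $(1,1)$-form $\beta$; in particular $(i\Lambda_{\omega_\varphi}F_{h_f})\,\omega_\varphi=iF_{h_f}$, which lets me expand the integrand of (\ref{defn:primitive}) into a sum of six integrals, and which shows that the bundle integral $4\alpha\int_\Sigma\dot f\,iF_{h_f}$ is in fact independent of $\varphi$.

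Then I would check symmetry integral by integral, the main tool being self-adjointness of $i\partial\dbar$ under integration over $\Sigma$. The three ``classical'' pieces are each symmetric after one integration by parts: the Mabuchi term $-\int_\Sigma\dot\varphi\,\Ric\,\omega_\varphi$ yields a multiple of $\int_\Sigma\Delta_{\omega_\varphi}\dot\varphi_1\,\Delta_{\omega_\varphi}\dot\varphi_2\,\omega_\varphi$ (this is just the classical closedness of the K-energy $1$-form), while the topological term $c\int_\Sigma\dot\varphi\,\omega_\varphi$ and the bundle term $4\alpha\int_\Sigma\dot f\,iF_{h_f}$ each reduce to a symmetric integral involving $i\partial\dbar\dot\varphi_i$ or $i\partial\dbar\dot f_i$. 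The genuinely new content is the symmetry of the three coupling integrals $2\alpha\int_\Sigma\dot f(\lvert\bm\phi\rvert^2_{h_f}-\tau)\omega_\varphi$, $2\alpha\int_\Sigma\dot\varphi\,i\partial\dbar\lvert\bm\phi\rvert^2_{h_f}$ and $2\alpha\tau\int_\Sigma\dot\varphi\,iF_{h_f}$.

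The main obstacle is precisely this last symmetry, since none of the three coupling integrals is symmetric on its own. Evaluating $B(u_1,u_2)$ on them, the contribution quadratic in $\dot f$, namely $4\alpha\int_\Sigma\dot f_1\dot f_2\lvert\bm\phi\rvert^2_{h_f}\,\omega_\varphi$, is manifestly symmetric, while the mixed contributions must conspire: after integration by parts, varying $\lvert\bm\phi\rvert^2_{h_f}$ in the second integral produces $4\alpha\int_\Sigma\dot f_1\lvert\bm\phi\rvert^2_{h_f}\,i\partial\dbar\dot\varphi_2$ and varying $F_{h_f}$ in the third produces $-4\alpha\tau\int_\Sigma\dot f_1\,i\partial\dbar\dot\varphi_2$. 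These combine into $4\alpha\int_\Sigma\dot f_1(\lvert\bm\phi\rvert^2_{h_f}-\tau)\,i\partial\dbar\dot\varphi_2$, which is exactly symmetric to the term $4\alpha\int_\Sigma\dot f_2(\lvert\bm\phi\rvert^2_{h_f}-\tau)\,i\partial\dbar\dot\varphi_1$ obtained by varying $\omega_\varphi$ in the first integral. Hence $B$ is symmetric and $\widetilde\sigma_\alpha$ is closed. The key structural point, and the only place where the specific form of the equations rather than generic bookkeeping enters, is that the coefficient $\tau$ of the curvature term and the mass term $\lvert\bm\phi\rvert^2_{h_f}$ assemble into the combination $\lvert\bm\phi\rvert^2_{h_f}-\tau$ dictated by the moment map (\ref{eq:mutriples}); any other relative normalization would break the cancellation.
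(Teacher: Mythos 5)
Your proposal is correct and takes essentially the same route as the paper: there, too, one regards $\widetilde{\mathcal{P}}_{\omega_0}$ as an open subset of a vector space, differentiates $\widetilde{\sigma}_\alpha$ along two constant tangent vectors $(\dot f,\dot\varphi)$ and $(\dot g,\dot u)$, and checks that the resulting bilinear form is symmetric, arriving at the expression $\tfrac{1}{2}\int_\Sigma \Delta_\omega\dot\varphi\,\Delta_\omega\dot u\,\omega + 4\alpha\int_\Sigma\bigl[\nabla_\omega\dot f\cdot\nabla_\omega\dot g + \tfrac{1}{2}(\tau-\lvert\bm\phi\rvert_h^2)(\dot f\,\Delta_\omega\dot u + \dot g\,\Delta_\omega\dot\varphi) + \dot f\dot g\,\lvert\bm\phi\rvert_h^2\bigr]\omega$. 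The cancellation you single out, where the mixed $\dot f$--$\dot\varphi$ contributions assemble into the combination $\lvert\bm\phi\rvert_{h_f}^2-\tau$, is precisely the middle term of that symmetric expression, rewritten in the paper's Laplacian convention.
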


\begin{proof}
Note that, for any Hermitian metric $h_0$ on $L$ and $f\in C^\infty(\Sigma,\mathbb{R})$,
\begin{equation}\label{eq:transformation-Hermitian-metric.1}
F_{h_f}=F_{h_0}-2\partial\bar{\partial}f.
\end{equation}
Using this, one has the following formulae (see \cite[Equation (3.3)]{Yao}):
\begin{align}\label{eq:formulaeYao}\nonumber
\delta\wp_1|_{(f, \varphi)}(\dot f, \dot \varphi)
& =
\Delta_\omega \dot f
+
\frac{1}{2}(\tau - \lvert\bm\phi\rvert_h^2) \Delta_\omega\dot \varphi
+
\dot f \lvert\bm\phi\rvert_h^2
+
\wp_1(f, \varphi)\Delta_\omega \dot \varphi,
\\\nonumber
\delta\wp_2|_{(f, \varphi)}(\dot f,  \dot \varphi)
& =
\frac{1}{2}\Delta_\omega^2 \dot \varphi
-
2\alpha \Delta_\omega (\dot f\lvert\bm\phi\rvert_h^2)
+
2\alpha\tau \Delta_\omega \dot f
+
\wp_2(f, \varphi)\Delta_\omega \dot \varphi,
\\\nonumber
S_\omega
& =
-\wp_2(f, \varphi) + 2\alpha(\tau-\lvert\bm\phi\rvert_h^2)\wp_1(f, \varphi)
+
2\alpha\lvert\mathrm{d}_A\bm\phi\rvert_h^2
+
\alpha(\tau-\lvert\bm\phi\rvert_h^2)^2 + c,
\\\nonumber
4\alpha\langle \dot f, \delta\wp_1(\dot f, \dot \varphi)\rangle
& +
\langle \dot \varphi, \delta\wp_2(\dot f, \dot \varphi)\rangle
\\\nonumber
& =
4\alpha \lVert \mathrm{d}\dot f + \eta_{\dot \varphi}\lrcorner (iF_h - \wp_1\omega)\rVert^2
+
4\alpha \lVert J\eta_{\dot \varphi}\lrcorner \mathrm{d}_A\bm\phi-\dot f\bm\phi\rVert^2
+
2\lVert\bar\partial \nabla^{1,0}\dot \varphi\rVert^2
\\\nonumber
&\quad
+ 4\alpha \int_\Sigma \left(
\dot f\Delta_\omega \dot \varphi
+
(\tau-\lvert\bm\phi\rvert_h^2) \lvert\nabla^{1,0}\dot\varphi\rvert^2
\right)\wp_1(f, \varphi)\omega
\\
&\quad
+\int_\Sigma \left(
\dot \varphi\Delta_\omega \dot \varphi
-2\lvert\nabla^{1,0}\dot\varphi\rvert^2
\right)\wp_2(f, \varphi)\omega,
\end{align}
where $\delta \wp_j$ denotes the variation of $\wp_j$, and $(h_f, \omega_\varphi)$ is abbreviated as $(h,\omega)$ to simplify the notation. Here we use the notation
$$
\left| df\right|_\omega^2 = 2\left|\nabla^{1,0}f\right|_\omega^2=2g^{j\bar k}\frac{\partial f}{\partial z^j}\overline{\frac{\partial f}{\partial z^k}}, \qquad \Delta_\omega f= - \Lambda_\omega \mathrm{d}\mathrm{d}^c
f =  - 2 g^{j\bar k}\frac{\partial^2 f}{\partial z^j\partial \bar z^k},
$$
for any real valued function $f$, where $\omega = i g_{j\bar k}dz^j\wedge d\bar z^k$ in local complex coordinates.

Regarding now $(\dot g,\dot u), (\dot f,\dot \varphi) \in C^\infty(\Sigma,\RR) \times C^\infty(\Sigma,\RR)$ as a pair of constant vector fields on $\widetilde{\mathcal{P}}_{\omega_0}$, the above formulae imply
\begin{align}\label{eq:differential-mathfrak-S.1}
\nonumber
\delta_{(\dot g,\dot u)}&(\widetilde{\sigma}_\alpha\dot f, \dot \varphi))
=
\delta_{(\dot g, \dot u)}
\left(
4\alpha \int_\Sigma \dot f \wp_1(f, \varphi)\omega
+
\int_\Sigma \dot \varphi \wp_2(f, \varphi)\omega
\right)
\\\nonumber
& =
4\alpha
\int_\Sigma \dot f
\Big(
\delta\wp_1(\dot g, \dot u) - \wp_1 \Delta_\omega \dot u
\Big) \omega
+
\int_\Sigma \dot \varphi
\Big(
\delta\wp_2(\dot g, \dot u) - \wp_2\Delta_\omega \dot u \Big) \omega
\\\nonumber
& =
4\alpha
\int_\Sigma \left[ \dot f
\Big(
\Delta_\omega \dot g + \frac{1}{2}(\tau-\lvert\bm\phi\rvert_h^2) \Delta_\omega \dot u
+ \dot g \lvert\bm\phi\rvert_h^2
+ \wp_1 \Delta_\omega \dot u
\Big)
-
\dot f \wp_1 \Delta_\omega \dot u\right]\omega
\\
&
\qquad + \int_\Sigma \left[ \dot \varphi
\Big(
\frac{1}{2}\Delta_\omega^2 \dot u
-2\alpha \Delta_\omega (\dot g \lvert\bm\phi\rvert_h^2)
+2\alpha\tau \Delta_\omega \dot g
+ \wp_2 \Delta_\omega \dot u
\Big)
- \dot \varphi \wp_2 \Delta_\omega \dot u\right]\omega
\\\nonumber
& =
\frac{1}{2}\int_\Sigma \Delta_\omega \dot\varphi \Delta_\omega \dot u \omega
+
4\alpha
\int_\Sigma \left[ \nabla_\omega \dot f\cdot \nabla_\omega \dot g
+ \frac{1}{2} (\tau-\lvert\bm\phi\rvert_h^2)
(\dot f \Delta_\omega \dot u
+ \dot g \Delta_\omega \dot \varphi)
+
\dot f\dot g \lvert\bm\phi\rvert_h^2\right]\omega,
\end{align}
which is symmetric in $(\dot f, \dot \varphi)$ and $(\dot g, \dot u)$, and therefore $\widetilde{\sigma}_\alpha$ is a closed 1-form.
\end{proof}

\begin{remark}
Note that \eqref{defn:primitiveabs} and \cite[Equation (5.2)]{Al-Ga-Ga-P} differ by a sign in the first summand. This is due to a different parametrization of the space of Hermitian metrics on $L$.
\end{remark}

By the previous result and the contractibility of $\widetilde{\mathcal{P}}_{\omega_0}$, the 1-form $\widetilde{\sigma}_\alpha$ integrates to a functional, called the \emph{$\alpha$-K-energy},
$$
\widetilde{\mathcal{K}}_\alpha \colon \widetilde{\mathcal{P}}_{\omega_0} \lto \RR,
$$
normalized such that $\widetilde{\mathcal{K}}_\alpha (0,0) = 0$. It is easily seen that the critical points of $\widetilde{\mathcal{K}}_\alpha$ correspond to gravitating vortices, that is, solutions $(h,\omega)$ of
the gravitating vortex equations \eqref{eqn:GV0}.

\begin{proposition}\label{prop:explicit}
For any choice of background metrics $(h_0,\omega_0)$, we have the following  explicit formula for $\widetilde{\mathcal{K}}_\alpha$:
	\begin{equation*}
	\widetilde{\mathcal{K}}_\alpha(f, \varphi) =
	\mathcal{K}(\varphi)
	+
	\mathcal{M}_\alpha(f,\varphi)
	\end{equation*}
where
	 \[
	\mathcal{K}(\varphi)= \frac{1}{2}\int_\Sigma \log \frac{\omega_\varphi}{\omega_0} \omega_\varphi +
	\frac{\langle S\rangle}{2}\int_\Sigma \varphi \left(\omega_\varphi - \omega_0\right)
	\]
is the standard K-energy functional and
	\begin{align*}
	\mathcal{M}_\alpha(f,\varphi)
	& =
	2\alpha
	\int_\Sigma
	f\left(
	iF_{h_0} + iF_{h_f}
	\right)
	+
	\alpha \int_\Sigma
	\left(
	\lvert\bm\phi\rvert_{h_f}^2 \omega_\varphi
	-
	\lvert\bm\phi\rvert_{h_0}^2\omega_0
	\right)
	-2\alpha\tau
	\int_\Sigma f\omega_\varphi \\
	&\qquad +
	\left( c- \langle S\rangle \right) \int_\Sigma \varphi i\partial\bar\partial \varphi
	+ \int_\Sigma \varphi\left(-\Ric \omega_0+ 2\alpha\tau iF_{h_0} +c\omega_0\right).
	\end{align*}
vanishes.
\end{proposition}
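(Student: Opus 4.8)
The plan is to verify the claimed identity by checking that the right-hand side $\mathcal{K}(\varphi) + \mathcal{M}_\alpha(f,\varphi)$ satisfies the two properties that characterize $\widetilde{\mathcal{K}}_\alpha$: its differential equals the $1$-form $\widetilde{\sigma}_\alpha$, and it vanishes at the basepoint $(0,0)$. Since $\widetilde{\sigma}_\alpha$ is closed by Lemma \ref{lem:sigmaclosed} and $\widetilde{\mathcal{P}}_{\omega_0}$ is contractible, these two conditions determine $\widetilde{\mathcal{K}}_\alpha$ uniquely, so it is enough to check them for the proposed expression. The normalization is immediate: at $(0,0)$ one has $\omega_\varphi = \omega_0$ and $h_f = h_0$, so every summand of $\mathcal{K}(0)$ and of $\mathcal{M}_\alpha(0,0)$ vanishes term by term.

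The content is in computing the first variation and matching it to \eqref{defn:primitive}, which I would do separately in the $\dot f$- and $\dot\varphi$-directions. In the $\dot f$-direction only $\mathcal{M}_\alpha$ contributes; using $F_{h_f} = F_{h_0} - 2\partial\bar\partial f$ and $\lvert\bm\phi\rvert_{h_f}^2 = e^{2f}\lvert\bm\phi\rvert_{h_0}^2$ together with one integration by parts (exploiting the symmetry of $\int_\Sigma g\, i\partial\bar\partial h$ in $g,h$), the three $f$-dependent summands collapse to $4\alpha\int_\Sigma \dot f\bigl(iF_{h_f} + \tfrac12(\lvert\bm\phi\rvert_{h_f}^2 - \tau)\omega_\varphi\bigr)$, which is exactly the first term of $\widetilde{\sigma}_\alpha$.

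The $\dot\varphi$-direction requires more care. For the entropy term of $\mathcal{K}$ I would use $\delta\log(\omega_\varphi/\omega_0) = -\Delta_{\omega_\varphi}\dot\varphi$, drop the resulting total-Laplacian integral, integrate by parts, and invoke $i\partial\bar\partial\log(\omega_\varphi/\omega_0) = \Ric\,\omega_0 - \Ric\,\omega_\varphi$; combined with the variation of the linear term this gives $\delta_{\dot\varphi}\mathcal{K} = -\int_\Sigma \dot\varphi(\Ric\,\omega_\varphi - \langle S\rangle\omega_\varphi) + \int_\Sigma \dot\varphi(\Ric\,\omega_0 - \langle S\rangle\omega_0)$. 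The crucial feature is the presence of the \emph{reference terms} in $\Ric\,\omega_0$ and $\langle S\rangle\omega_0$, which do not appear in $\widetilde{\sigma}_\alpha$. Computing $\delta_{\dot\varphi}\mathcal{M}_\alpha$ (again via $\delta\omega_\varphi = 2i\partial\bar\partial\dot\varphi$ and integration by parts) produces the genuinely new contributions $2\alpha\int_\Sigma \dot\varphi\, i\partial\bar\partial\lvert\bm\phi\rvert_{h_f}^2$, $2\alpha\tau\int_\Sigma\dot\varphi\, iF_{h_f}$ and (after the $\langle S\rangle$ cancellation below) $c\int_\Sigma\dot\varphi\,\omega_\varphi$, together with a matching collection of reference terms arising from the last line of $\mathcal{M}_\alpha$ and from the variation of its quadratic summand $(c-\langle S\rangle)\int_\Sigma\varphi\, i\partial\bar\partial\varphi$.

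The main obstacle is therefore purely organizational: one must verify that all spurious contributions cancel exactly between $\delta\mathcal{K}$ and $\delta\mathcal{M}_\alpha$. Concretely, the $\Ric\,\omega_0$ pieces cancel, the $\langle S\rangle\omega_0$ and $\langle S\rangle\omega_\varphi$ pieces cancel, and the $iF_{h_0}$ pieces cancel because the $\dot\varphi$-variations of the summand $-2\alpha\tau\int_\Sigma f\omega_\varphi$ and of the summand $2\alpha\tau\int_\Sigma\varphi\, iF_{h_0}$ contribute $\mp 2\alpha\tau\int_\Sigma\dot\varphi\, iF_{h_0}$. What survives is precisely $-\int_\Sigma\dot\varphi\bigl(\Ric\,\omega_\varphi - 2\alpha i\partial\bar\partial\lvert\bm\phi\rvert_{h_f}^2 - 2\alpha\tau iF_{h_f} - c\,\omega_\varphi\bigr)$, the $\dot\varphi$-part of \eqref{defn:primitive}. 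Together with the $\dot f$-computation this yields $d(\mathcal{K}+\mathcal{M}_\alpha) = \widetilde{\sigma}_\alpha$, and with the normalization the identity follows.
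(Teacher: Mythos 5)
Your proposal is correct, but it proves the identity by a different route than the paper. The paper \emph{derives} the formula: it integrates $\widetilde{\sigma}_\alpha$ along the straight segment $(f_t,\varphi_t)=(tf,t\varphi)$ and evaluates the resulting $t$-integral in closed form by recognizing total derivatives such as $\frac{\mathrm{d}}{\mathrm{d}t}\bigl(\lvert\bm\phi\rvert_{h_t}^2\,\omega_t\bigr)$ and $\frac{\mathrm{d}}{\mathrm{d}t}\bigl(\log\frac{\omega_t}{\omega_0}\,\omega_t\bigr)$ under the integral sign. You instead \emph{verify} the candidate: you check that $\mathcal{K}+\mathcal{M}_\alpha$ has differential $\widetilde{\sigma}_\alpha$ and vanishes at $(0,0)$, and conclude by uniqueness of normalized primitives on the connected (indeed convex, hence contractible) space $\widetilde{\mathcal{P}}_{\omega_0}$. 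I checked your two variation computations against \eqref{defn:primitive}: the $\dot f$-direction does collapse to $4\alpha\int_\Sigma \dot f\bigl(iF_{h_f}+\tfrac12(\lvert\bm\phi\rvert_{h_f}^2-\tau)\omega_\varphi\bigr)$ via $2i\partial\bar\partial f=iF_{h_0}-iF_{h_f}$, and in the $\dot\varphi$-direction the reference terms $\pm\int_\Sigma\dot\varphi\,\Ric\omega_0$, $\pm\langle S\rangle\int_\Sigma\dot\varphi\,\omega_0$, $\pm\langle S\rangle\int_\Sigma\dot\varphi\,\omega_\varphi$ and $\pm2\alpha\tau\int_\Sigma\dot\varphi\,iF_{h_0}$ cancel exactly as you assert, leaving $-\int_\Sigma\dot\varphi\bigl(\Ric\omega_\varphi-2\alpha i\partial\bar\partial\lvert\bm\phi\rvert_{h_f}^2-2\alpha\tau iF_{h_f}-c\,\omega_\varphi\bigr)$. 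The trade-off between the two approaches: the paper's path integration is constructive and would produce the formula even if one did not know it in advance, at the cost of heavier bookkeeping inside the $t$-integral; your verification is lighter (only first variations and tracking of cancellations) and less error-prone, but it presupposes the answer, so it proves the stated proposition without explaining where the expression for $\mathcal{M}_\alpha$ comes from. Both arguments ultimately rest on the same two facts: closedness of $\widetilde{\sigma}_\alpha$ (Lemma \ref{lem:sigmaclosed}) and contractibility of $\widetilde{\mathcal{P}}_{\omega_0}$.
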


\begin{proof}
This follows by direct integration along the straight line $(f_t, \varphi_t)=(tf,t\varphi)$ connecting $(0,0)$ to $(f, \varphi)$. Using the abbreviations $h_t=h_0e^{2f_t}$, $\omega_t=\omega_0+2i\partial\bar\partial
\varphi_t$, we have
\begin{align*}
	\widetilde{\mathcal{K}}_\alpha&(f, \varphi)
 =
	4\alpha\int_0^1 \mathrm{d}t
	\int_\Sigma
	f\left(
	i F_{h_t} + \frac{1}{2} (\lvert\bm\phi\rvert_{h_t}^2-\tau)
	\omega_t
	\right)
\\&\quad
+	\int_0^1 \mathrm{d}t
	\int_\Sigma \varphi
	\Big(-\text{Ric } \omega_t + 2\alpha i\partial\bar\partial \lvert\bm\phi\rvert_{h_t}^2 + 2\alpha\tau iF_{h_t}
	+
	c\omega_t
	\Big)\\
	& =
	4\alpha\int_0^1 \mathrm{d}t
	\int_\Sigma
	f\left(
	i F_{h_t} + \frac{1}{2} (\lvert\bm\phi\rvert_{h_t}^2-\tau)
	\omega_t
	\right)
\\
	& \quad
	+\int_0^1 \mathrm{d}t\int_\Sigma \varphi \cdot	i\partial\bar\partial\left(\log\frac{\omega_t}{\omega_0} + 2\alpha\lvert\bm\phi\rvert_{h_t}^2 - 4\alpha\tau f_t+2c \varphi_t\right)
\\&\quad
	+ \int_\Sigma \varphi\left(-\Ric \omega_0+ 2\alpha\tau iF_{h_0} +c\omega_0\right)\\
	& =
	4\alpha\int_0^1 \mathrm{d}t
	\int_\Sigma
	f\Big(
	(1-t) iF_{h_0} + t\cdot iF_{h_1}
	\Big)
	+
	\alpha \int_0^1 \mathrm{d}t\int_\Sigma 2f\lvert\bm\phi\rvert_{h_t}^2 \omega_t
	-2\alpha\tau \int_0^1\mathrm{d}t\int_\Sigma f\omega_t\\
	& \quad
	+
	\frac{1}{2}\int_0^1 \mathrm{d}t
	\int_\Sigma \log \frac{\omega_t}{\omega_0} (-\Delta_{\omega_t} \varphi)\omega_t
	+
	\alpha\int_0^1\mathrm{d}t \int_\Sigma \lvert\bm\phi\rvert_{h_t}^2 \cdot 2i\partial\bar\partial \varphi \\
	&\quad
	-
	4\alpha\tau \int_0^1 \mathrm{d}t \int_\Sigma t f \cdot i\partial\bar\partial \varphi
	+
	2c \int_0^1 dt \int_\Sigma \varphi \cdot i\partial\bar\partial \left( t\varphi\right)
	\\
	&\quad
	+
	 \int_\Sigma \varphi\left(-\Ric \omega_0+ 2\alpha\tau iF_{h_0} +c\omega_0\right)
	\\
	& =
	2\alpha
	\int_\Sigma
	f\left(
	iF_{h_0} + iF_{h_1}
	\right)
	+
	\alpha \int_0^1 \mathrm{d}t\int_\Sigma
	\left(
	\frac{\mathrm{d}}{\mathrm{d}t}
	\left(
	\lvert\bm\phi\rvert_{h_t}^2 \omega_t
	\right)
	-
	\lvert\bm\phi\rvert_{h_t}^2 \cdot 2i\partial\bar\partial \varphi
	\right)
	\\
	&\quad
	-\alpha\tau
	\int_\Sigma f\left( \omega_1+\omega_0\right)+
	\frac{1}{2}\int_0^1 \mathrm{d}t
	\int_\Sigma
	\left(
	\frac{\mathrm{d}}{\mathrm{d}t}
	\left( \log \frac{\omega_t}{\omega_0} \omega_t \right)
	+
	(\Delta_{\omega_t} \varphi)\omega_t
	\right)
	\\
	&\quad
	+
	\alpha\int_0^1\mathrm{d}t \int_\Sigma \lvert\bm\phi\rvert_{h_t}^2\cdot 2i\partial\bar\partial \varphi
	-\alpha\tau
	\int_\Sigma  f (\omega-\omega_0)
	+ c \int_\Sigma \varphi i\partial\bar\partial \varphi
	\\
	&\quad
	+
	 \int_\Sigma \varphi\left(-\Ric \omega_0+ 2\alpha\tau iF_{h_0} +c\omega_0\right)
	\\
	& =
	2\alpha
	\int_\Sigma
	f\left(
	iF_{h_0} + iF_{h_1}
	\right)
	+
	\alpha \int_\Sigma
	\left(
	\lvert\bm\phi\rvert_{h_1}^2 \omega_1
	-
	\lvert\bm\phi\rvert_{h_0}^2\omega_0
	\right)
	-\alpha\tau
	\int_\Sigma f\left( \omega_1+\omega_0\right)\\
	& \quad
	+
	\frac{1}{2}
	\int_\Sigma
	\log \frac{\omega_1}{\omega_0} \omega_1
	-
	\alpha\tau
	\int_\Sigma  f (\omega_1-\omega_0)
	+
	c \int_\Sigma \varphi i\partial\bar\partial \varphi
	 +
	 \int_\Sigma \varphi\left(-\Ric \omega_0+ 2\alpha\tau iF_{h_0} +c\omega_0\right)
	\\
	& =
	2\alpha
	\int_\Sigma
	f\left(
	iF_{h_0} + iF_{h_1}
	\right)
	+
	\alpha \int_\Sigma
	\left(
	\lvert\bm\phi\rvert_{h_1}^2 \omega_1
	-
	\lvert\bm\phi\rvert_{h_0}^2\omega_0
	\right)
	-2\alpha\tau
	\int_\Sigma f\omega_1
	+
	\left( c- \langle S\rangle \right) \int_\Sigma \varphi i\partial\bar\partial \varphi\\
	&\quad  + \frac{1}{2}
	\int_\Sigma
	\log \frac{\omega_1}{\omega_0} \omega_1
	+
	\frac{\langle S\rangle}{2} \int_\Sigma \varphi\left(\omega_1 - \omega_0\right)
	 +
	  \int_\Sigma \varphi\left(-\Ric \omega_0+ 2\alpha\tau iF_{h_0} +c\omega_0\right).
&\qedhere
\end{align*}
\end{proof}

Next we calculate the second variation of the functional $\widetilde{\mathcal{K}}_\alpha$ along smooth curves. For this, given a function $\dot \varphi \in C^\infty(\Sigma,\RR)$ and a K\"ahler form $\omega$, we
denote by $\eta_{\dot \varphi}$ the Hamiltonian vector field of $\dot \varphi$ with respect to $\omega$, that is,
$$
\eta_{\dot \varphi} = -i\nabla^{1,0}\dot \varphi+i\nabla^{0,1}\dot \varphi=- i \dot \varphi^k\frac{\partial}{\partial z^k} + i \dot \varphi^{\bar k}\frac{\partial}{\partial \bar z^k},$$
where $\omega=ig_{k\bar l}dz^k\wedge d\bar z^l$ and $\dot \varphi^k=g^{k\bar l} \dot \varphi_{\bar l}$ in holomorphic coordinates.

\begin{proposition}\label{prop:second}
In the notation in Proposition~\ref{prop:explicit}, for any smooth curve $(f_t, \varphi_t) \in \widetilde{\mathcal{P}}_{\omega_0}$, we have
\begin{align*}
\frac{\mathrm{d}^2 \mathcal{K}}{\mathrm{d}t^2}\left(f_t, \varphi_t\right) & = 2 \lVert \bar\partial \nabla^{1,0}\varphi'_t\rVert^2
- \int_\Sigma \left( \varphi_t'' - \left| \mathrm{d}\varphi_t'\right|_{\omega_t}^2\right)\left(S_{\omega_t}- \langle S\rangle \right)\omega_t,\\
\frac{\mathrm{d}^2 \mathcal{M}_\alpha}{\mathrm{d}t^2}(f_t, \varphi_t)
& =
4\alpha \lVert \mathrm{d}f'_t + \eta_{\varphi'_t}\lrcorner  iF_{h_t}
\rVert^2
+ 4\alpha \lVert J\eta_{\varphi'_t}\lrcorner \mathrm{d}_{A_t} \bm\phi - f'_t\bm\phi\rVert^2	\\
&\qquad +
4\alpha
\int_\Sigma \left(
f''_t - 2 J \eta_{\varphi_t'} \lrcorner \mathrm{d} f'_t - iF_{h_t}(\eta_{\varphi_t'}, J\eta_{\varphi_t'})
\right)
\wp_1(f_t, \varphi_t)\omega_t\\
&\qquad
-
\int_\Sigma \left( \varphi_t'' - \left| \mathrm{d}\varphi_t'\right|_{\omega_t}^2\right)
\left(
\alpha \Delta_{\omega_t}\left|\bm\phi\right|_{h_t}^2
- 2\alpha\tau \Lambda_{\omega_t}iF_{h_t}
+
(\langle S\rangle - c) \right)
\omega_t,
\end{align*}	
where $h_{f_t}$, $A_{h_{f_t}}$ and $\omega_{\varphi_t}$ are abbreviated as $h_t$, $A_t$ and $\omega_t$, respectively. Consequently, the second derivative of $\widetilde{\mathcal{K}}_\alpha$ along
smooth curves is given by
\begin{equation}
\begin{split}
\frac{\mathrm{d}^2
\widetilde{\mathcal{K}}_\alpha}{\mathrm{d}t^2}(f_t, \varphi_t)
& = \frac{\mathrm{d}^2 \mathcal{K}}{\mathrm{d}t^2}\left(f_t, \varphi_t\right) + \frac{\mathrm{d}^2 \mathcal{M}_\alpha}{\mathrm{d}t^2}\left(f_t, \varphi_t\right)\\	
& =
4\alpha \lVert \mathrm{d}f'_t + \eta_{\varphi'_t}\lrcorner  iF_{h_t}
\rVert^2
+ 4\alpha \lVert J\eta_{\varphi'_t}\lrcorner \mathrm{d}_{A_t} \bm\phi - f'_t\bm\phi\rVert^2
+
2 \lVert \bar\partial \nabla^{1,0}\varphi'_t\rVert^2 \\
&
\qquad
+
4\alpha
\int_\Sigma \Big(
f''_t - 2 J \eta_{\varphi'_t} \lrcorner \mathrm{d} f'_t - iF_{h_t}(\eta_{\varphi'_t}, J\eta_{\varphi'_t})
\Big)
\wp_1(f_t, \varphi_t)\omega_t\\
 & \qquad 	+
\int_\Sigma \Big(\varphi''_t- \lvert\mathrm{d}\varphi'_t\rvert_{\omega_t}^2 \Big)
\wp_2(f_t, \varphi_t)
\omega_t.
\end{split}
\end{equation}
\end{proposition}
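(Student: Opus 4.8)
The plan is to differentiate twice, using that $\widetilde{\mathcal{K}}_\alpha$ is by construction the primitive of the closed $1$-form $\widetilde{\sigma}_\alpha$ in \eqref{defn:primitiveabs}, so that along any smooth curve $(f_t,\varphi_t)$ the first derivative has the clean form
\begin{equation*}
\frac{\mathrm{d}}{\mathrm{d}t}\widetilde{\mathcal{K}}_\alpha(f_t,\varphi_t) = 4\alpha\int_\Sigma f'_t\,\wp_1(f_t,\varphi_t)\,\omega_{\varphi_t} + \int_\Sigma\varphi'_t\,\wp_2(f_t,\varphi_t)\,\omega_{\varphi_t}.
\end{equation*}
For the K-energy summand $\mathcal{K}$ of Proposition~\ref{prop:explicit} the first variation is $\frac{\mathrm{d}}{\mathrm{d}t}\mathcal{K} = -\int_\Sigma\varphi'_t(S_{\omega_t}-\langle S\rangle)\omega_t$ (using that $\Ric\,\omega_t = S_{\omega_t}\omega_t$ on a surface), and I would simply invoke the classical Mabuchi second-variation identity, which reproduces the first displayed formula. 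Since $\widetilde{\mathcal{K}}_\alpha = \mathcal{K}+\mathcal{M}_\alpha$, it then suffices to compute the total second variation $\frac{\mathrm{d}^2}{\mathrm{d}t^2}\widetilde{\mathcal{K}}_\alpha$ and to read off $\mathcal{M}_\alpha$ as the difference.

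To compute the total second variation I would differentiate the displayed first derivative once more. Three families of terms arise: the acceleration terms $4\alpha\int_\Sigma f''_t\wp_1\omega_t + \int_\Sigma\varphi''_t\wp_2\omega_t$; the diagonal Hessian $4\alpha\int_\Sigma f'_t\,\delta\wp_1\,\omega_t + \int_\Sigma\varphi'_t\,\delta\wp_2\,\omega_t$ obtained by varying the integrands; and the terms coming from $\frac{\mathrm{d}}{\mathrm{d}t}\omega_t = 2i\partial\bar\partial\varphi'_t = -\Delta_{\omega_t}\varphi'_t\,\omega_t$. The decisive input is the pointwise variation identity recorded in the last display of \eqref{eq:formulaeYao}: evaluated on the diagonal $(\dot f,\dot\varphi)=(f'_t,\varphi'_t)$ it rewrites the diagonal Hessian as
\begin{equation*}
4\alpha\lVert\mathrm{d}f'_t + \eta_{\varphi'_t}\lrcorner(iF_{h_t}-\wp_1\omega_t)\rVert^2 + 4\alpha\lVert J\eta_{\varphi'_t}\lrcorner\mathrm{d}_{A_t}\bm\phi - f'_t\bm\phi\rVert^2 + 2\lVert\bar\partial\nabla^{1,0}\varphi'_t\rVert^2
\end{equation*}
plus $\wp_1$- and $\wp_2$-weighted remainders. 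Adding the acceleration terms and using $\frac{\mathrm{d}}{\mathrm{d}t}\omega_t$ to cancel exactly the $f'_t\Delta_{\omega_t}\varphi'_t\,\wp_1$ and $\varphi'_t\Delta_{\omega_t}\varphi'_t\,\wp_2$ remainders, the second variation of $\widetilde{\mathcal{K}}_\alpha$ emerges in a first normal form, in which the $\wp_1$-weighted term is $4\alpha\int_\Sigma(f''_t + (\tau-\lvert\bm\phi\rvert^2)\lvert\nabla^{1,0}\varphi'_t\rvert^2)\wp_1\omega_t$ and the $\wp_2$-weighted term is $\int_\Sigma(\varphi''_t - \lvert\mathrm{d}\varphi'_t\rvert^2_{\omega_t})\wp_2\omega_t$.

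The hard part is then the final algebraic reconciliation bringing this normal form to the stated one, namely moving the factor $\wp_1\omega_t$ out of the first square and converting the leftover $4\alpha\int_\Sigma(\tau-\lvert\bm\phi\rvert^2)\lvert\nabla^{1,0}\varphi'_t\rvert^2\wp_1\omega_t$ into the asserted weight $4\alpha\int_\Sigma(f''_t - 2J\eta_{\varphi'_t}\lrcorner\mathrm{d}f'_t - iF_{h_t}(\eta_{\varphi'_t},J\eta_{\varphi'_t}))\wp_1\omega_t$. This rests on the fact that on a Riemann surface $iF_{h_t} = (\wp_1 + \tfrac12(\tau-\lvert\bm\phi\rvert^2_{h_t}))\omega_t$ is a multiple of $\omega_t$, so that $iF_{h_t}-\wp_1\omega_t = \tfrac12(\tau-\lvert\bm\phi\rvert^2_{h_t})\omega_t$ and every contraction $\eta_{\varphi'_t}\lrcorner iF_{h_t}$ is proportional to $\eta_{\varphi'_t}\lrcorner\omega_t = \mathrm{d}\varphi'_t$, together with $J\eta_{\varphi'_t} = \nabla\varphi'_t$ and $\omega_t(\eta_{\varphi'_t},J\eta_{\varphi'_t}) = \lvert\mathrm{d}\varphi'_t\rvert^2_{\omega_t}$. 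Expanding $\lVert\mathrm{d}f'_t + \eta_{\varphi'_t}\lrcorner(iF_{h_t}-\wp_1\omega_t)\rVert^2$, comparing with $\lVert\mathrm{d}f'_t + \eta_{\varphi'_t}\lrcorner iF_{h_t}\rVert^2$, and using $J\eta_{\varphi'_t}\lrcorner\mathrm{d}f'_t = \langle\mathrm{d}f'_t,\mathrm{d}\varphi'_t\rangle$, all cross-terms reorganise so that the coefficient of $\wp_1$ collapses pointwise to the claimed expression; crucially, the potentially dangerous $\wp_1^2\lvert\mathrm{d}\varphi'_t\rvert^2$ contribution cancels precisely because of the proportionality of $iF_{h_t}$ and $\omega_t$. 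I expect this reconciliation, the only place where the one-dimensionality of $\Sigma$ is essential, to be the main technical obstacle.

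Finally, the separate $\mathcal{K}$ and $\mathcal{M}_\alpha$ formulas and the total are matched by bookkeeping: the classical K-energy Hessian accounts for $2\lVert\bar\partial\nabla^{1,0}\varphi'_t\rVert^2$ and the $(S_{\omega_t}-\langle S\rangle)$-integral, and $\mathcal{M}_\alpha$ is the remainder (so that, consistently, no $\bar\partial\nabla^{1,0}$ term survives in its formula). The two $(\varphi''_t-\lvert\mathrm{d}\varphi'_t\rvert^2_{\omega_t})$-integrals then merge into the single $\wp_2$-integral of the total because
\begin{equation*}
-(S_{\omega_t}-\langle S\rangle) - \bigl(\alpha\Delta_{\omega_t}\lvert\bm\phi\rvert^2_{h_t} - 2\alpha\tau\Lambda_{\omega_t}iF_{h_t} + \langle S\rangle - c\bigr) = \wp_2(f_t,\varphi_t),
\end{equation*}
which is exactly the last line of the asserted identity.
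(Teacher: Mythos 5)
Your proposal is correct and follows essentially the same route as the paper's proof: the same first-derivative formula, the same split into acceleration, diagonal Hessian, and volume-form variation terms, the same invocation of the identity \eqref{eq:formulaeYao} to produce the squares plus $\wp_1$- and $\wp_2$-weighted remainders (with the $f'_t\Delta\varphi'_t$ and $\varphi'_t\Delta\varphi'_t$ cancellations), and the same final re-completion of the square using $iF_{h_t}=(i\Lambda_{\omega_t}F_{h_t})\omega_t$, $\eta_{\varphi'_t}\lrcorner\omega_t=\mathrm{d}\varphi'_t$ and $iF_{h_t}(\eta_{\varphi'_t},J\eta_{\varphi'_t})=\langle\eta_{\varphi'_t}\lrcorner iF_{h_t},\mathrm{d}\varphi'_t\rangle_{\omega_t}$. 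The only cosmetic difference is that the paper computes the total $\widetilde{\mathcal{K}}_\alpha$ and leaves the $\mathcal{K}$/$\mathcal{M}_\alpha$ split implicit, exactly the bookkeeping you describe.
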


\begin{proof}
The derivative of $\widetilde{\mathcal{K}}_\alpha$ along any smooth curve is given by
	\begin{equation}
	\frac{\mathrm{d}\widetilde{\mathcal{K}}_\alpha}{\mathrm{d}t}(f_t, \varphi_t)
	=
	4\alpha \int_\Sigma f'_t \wp_1(f_t, \varphi_t) \omega_t
	+
	\int_\Sigma \varphi_t' \wp_2(f_t, \varphi_t)\omega_t,
	\end{equation}
	and thus the second derivative is
	\begin{align*}
	\frac{\mathrm{d}^2\widetilde{\mathcal{K}}_\alpha}{\mathrm{d}t^2}&(f_t, \varphi_t)
	=
	4\alpha
	\int_\Sigma f''_t \wp_1(f_t, \varphi_t)\omega_t
	+ f'_t\delta\wp_1(f'_t, \varphi_t')\omega_t
	+ f'_t \wp_1(f_t, \varphi_t)\omega_t'\\
	&\quad +
	\int_\Sigma \varphi_t''\wp_2(f_t, \varphi_t)\omega_t
	+ \varphi_t' \delta\wp_2(f'_t, \varphi_t') \omega_t
	+ \varphi_t' \wp_2(f_t, \varphi_t)\omega_t'\\
	& =
	4\alpha \langle f'_t, \delta\wp_1(f'_t, \varphi_t')\rangle_{L^2(\omega_t)}
	+
	\langle \varphi_t', \delta\wp_2(f'_t, \varphi_t')\rangle_{L^2(\omega_t)}\\
	&\quad +
	4\alpha
	\int_\Sigma \left(
	f''_t + f'_t \left(\log\frac{\omega_t}{\omega_0}\right)' \right)
	\wp_1(f_t, \varphi_t)
	\omega_t
	+
	\int_\Sigma
	\left(
	\varphi_t'' + \varphi_t' \left( \log\frac{\omega_t}{\omega_0}\right)'\right)
	\wp_2(f_t, \varphi_t)
	\omega_t\\
	& =
	4\alpha \lVert \mathrm{d}f'_t + \eta_{\varphi'_t}\lrcorner \left( iF_{h_t} - \wp_1\omega_t
	\right)\rVert^2
	+ 4\alpha \lVert J\eta_{\varphi'_t}\lrcorner \mathrm{d}_{A_t} \bm\phi - f'_t\bm\phi\rVert^2
	+
	2 \lVert \bar\partial \nabla^{1,0}\varphi'_t\rVert^2 \\
	&
	\quad +   4\alpha \int_\Sigma\wp_1 \big(
	f'_t \Delta_\omega \varphi'_t
	+
	(\tau - \lvert\bm\phi\rvert_{h_t}^2) \lvert\nabla^{1,0}\varphi'_t\rvert^2
	\big)\omega_t
	+ 4\alpha
	\int \left(
	f''_t + f'_t \left(\log\frac{\omega}{\omega_0}\right)' \right)\wp_1 \omega_t\\
	& \quad
	+
	\int_\Sigma
	\wp_2 (\varphi'_t\Delta_{\omega_t} \varphi'_t - 2\lvert\nabla^{1,0}\varphi'_t\rvert^2) \omega_t
	+
	\int_\Sigma
	\left(
	\varphi_t'' + \varphi_t' \left( \log\frac{\omega_t}{\omega_0}\right)'\right)
	\wp_2(f_t, \varphi_t)
	\omega_t\\
	& =
	4\alpha \lVert \mathrm{d}f'_t + \eta_{\varphi'_t}\lrcorner \left( iF_{h_t} - \wp_1\omega_t
	\right)\rVert^2
	+ 4\alpha \lVert J\eta_{\varphi'_t}\lrcorner \mathrm{d}_{A_t} \bm\phi - f'_t\bm\phi\rVert^2
	+
	2 \lVert \bar\partial \nabla^{1,0}\varphi'_t\rVert^2 \\
	&
	\quad
	+
	4\alpha
	\int_\Sigma \left(
	f''_t +
	(\tau - \lvert\bm\phi\rvert_{h_t}^2) \lvert\nabla^{1,0}\varphi'_t\rvert^2  \right)\wp_1(f_t, \varphi_t) \omega_t\\
	& \quad
	+
	\int_\Sigma
	\left(
	\varphi_t''  - 2\lvert\nabla^{1,0}\varphi'\rvert^2
	\right)
	\wp_2(f_t, \varphi_t)
	\omega_t.
	\end{align*}
Completing the square in the above formula in a different way we obtain the desired result:	
	\begin{align*}
	\left|\mathrm{d}f'_t\right.
	+  \eta_{\varphi'_t}&\left.\lrcorner\left( iF_{h_t} - \wp_1\omega_t
	\right)\right|_{\omega_t}^2
	+
	\Big(
	f''_t +
	(\tau - \lvert\bm\phi\rvert_{h_t}^2) \lvert\nabla^{1,0}\varphi'_t\rvert^2  \Big)\wp_1(f_t, \varphi_t)\\
	 = &
	\left|
	\mathrm{d} f'_t + \eta_{\varphi'_t}\lrcorner iF_{h_t}
	\right|_{\omega_t}^2
	- 2\wp_1(f_t, \varphi_t)
	\langle \mathrm{d}f'_t + \eta_{\varphi'_t}\lrcorner iF_{h_t}, \mathrm{d}\varphi'_t\rangle_{\omega_t}
	+
	\wp_1(f_t, \varphi_t)^2
	\langle \mathrm{d}\varphi'_t, \mathrm{d}\varphi'_t\rangle_{\omega_t} \\
	& +
	\Big(
	f''_t +
	(\tau - \lvert\bm\phi\rvert_{h_t}^2) \lvert\nabla^{1,0}\varphi'_t\rvert^2  \Big)\wp_1(f_t, \varphi_t)\\
	 = &
	\Big(
	f''_t +
	\frac{1}{2} (\tau - \lvert\bm\phi\rvert_{h_t}^2) \lvert\mathrm{d}\varphi'_t\rvert^2
	+ \wp_1 \lvert\mathrm{d}\varphi'_t\rvert^2
	- 2\langle \mathrm{d}f'_t, \mathrm{d}\varphi'_t\rangle_\omega
	- 2
	\langle  \eta_{\varphi'_t}\lrcorner iF_{h_t}, \mathrm{d}\varphi'_t\rangle_{\omega_t}
	\Big)\wp_1(f_t, \varphi_t)\\
	& +
	\left|
	\mathrm{d} f'_t + \eta_{\varphi'_t}\lrcorner iF_{h_t}
	\right|_{\omega_t}^2  \\
	 = &
	\Big(
	f''_t
	- 2\langle \mathrm{d}f'_t, \mathrm{d}\varphi'_t\rangle_{\omega_t}
	-
	\langle  \eta_{\varphi'_t}\lrcorner iF_{h_t}, \mathrm{d}\varphi'_t\rangle_{\omega_t}
	\Big)\wp_1(f_t, \varphi_t)
	+
	\left|
	\mathrm{d} f'_t + \eta_{\varphi'_t}\lrcorner iF_{h_t}
	\right|_{\omega_t}^2.
        &\qedhere
	\end{align*}
\end{proof}

\begin{remark}
The previous result establishes the convexity of the $\alpha$-K-energy along \emph{smooth geodesics} on $\widetilde{\mathcal{P}}_{\omega_0}$, given by smooth solutions $(f_t, \varphi_t) \in
\widetilde{\mathcal{P}}_{\omega_0}$ of the following system of partial differential equations:
\begin{equation}\label{eqn:geodesic}
	\left\{
	\begin{array}{l}
	\varphi''_t- \lvert\mathrm{d}\varphi'_t\rvert_{\omega_{\varphi_t}}^2
    = 0,\\
	f''_t - 2 J \eta_{\varphi'_t} \lrcorner \mathrm{d} f'_t - iF_{h_{f_t}}(\eta_{\varphi'_t}, J\eta_{\varphi'_t})
	 = 0.	
	\end{array}
	\right.
\end{equation}
In particular, this gives an alternative and more direct proof of \cite[Proposition 3.10]{Al-Ga-Ga}, for the K\"ahler--Yang--Mills equations, in the special case of gravitating vortices (see \cite[Proposition
3.4]{Al-Ga-Ga2}). Note that \eqref{eqn:geodesic} and \cite[Equation (5.1)]{Al-Ga-Ga-P} differ by a sign in the third summand of the second equation. This is due to a different parametrization of the space of
Hermitian metrics on $L$.
\end{remark}

We are now ready to prove Proposition \ref{prop:convexalphaK}. By Theorem \ref{th:B-GP}, we can define a map
$$
\mathbf{h} \colon \mathcal{P}_{\omega_0} \lto \widetilde{\mathcal{P}}_{\omega_0},\quad \varphi \longmapsto (h_{\omega_\varphi},\varphi),
$$
where $\mathcal{P}_{\omega_0} = \left\{\varphi\in C^\infty(\Sigma, \mathbb{R}) \mid \omega_\varphi := \omega_0 + 2i\partial\bar\partial \varphi>0 \right\}$ denotes the space of K\"ahler potentials and
$h_{\omega_\varphi}$ is the unique Hermitian metric $h_\omega$ on $L$ solving the vortex equation~\eqref{eqn:V0}.

\begin{proof}[Proof of Proposition \ref{prop:convexalphaK}]
The linearization of the vortex equation~\eqref{eqn:V0} with respect to the Hermitian metric $h$ is given by (see \eqref{eq:formulaeYao})
$$
\delta\wp_1|_{(f, \varphi)}(\dot f) = (\Delta_\omega +  \lvert\bm\phi\rvert_h^2)\dot f,
$$
which is invertible, since $\bm\phi \neq 0$ by assumption and since $\Delta_\omega$ has positive spectrum. Therefore, applying the implicit function theorem on a Hilbert completion of $C^\infty(\Sigma,\RR)$, we
obtain that the map $\mathbf{h}$ is smooth as a map of Fr\'echet spaces. By definition of $\sigma_\alpha$ in \eqref{eq:sigmaGVreduced}, it is then easy to conclude that
$$
\mathbf{h}^* \widetilde{\sigma}_\alpha = \sigma_\alpha.
$$
Hence, $\sigma_\alpha$ is closed and by contractibility of $\mathcal{R}$ it induces the reduced $\alpha$-K-energy
$$
\mathcal{K}_\alpha = \mathbf{h}^* \widetilde{\mathcal{K}}_\alpha,
$$
given explicitly by (see Proposition~\ref{prop:explicit})
\begin{equation*}
\begin{split}
\mathcal{K}_\alpha(\varphi) & = \frac{1}{2}\int_\Sigma \log \frac{\omega_\varphi}{\omega_0} \omega_\varphi +
	\frac{\langle S\rangle}{2}\int_\Sigma \varphi \left(\omega_\varphi - \omega_0\right) \\
	&\qquad + 2\alpha \int_\Sigma f_\varphi \left(iF_{h_0} + iF_{h_\varphi}\right)
	+
	\alpha \int_\Sigma \left(\lvert\bm\phi\rvert_{h_\varphi}^2 \omega_\varphi
	-
	\lvert\bm\phi\rvert_{h_0}^2\omega_0\right)
	-
	2\alpha\tau \int_\Sigma f_\varphi\omega_\varphi\\
	&\qquad
	+
	\left( c- \langle S\rangle \right) \int_\Sigma \varphi i\partial\bar\partial \varphi+
	\int_\Sigma \varphi\left(-\Ric \omega_0+ 2\alpha\tau iF_{h_0} +c\omega_0\right),
\end{split}
\end{equation*}
where $h_\varphi = e^{2f_\varphi}h_0$ is the unique solution of the vortex equation \eqref{eqn:GV0red} associated to $\omega_\varphi$ via Theorem \ref{th:B-GP}. By the explicit formula for $\sigma_\alpha$ in
\eqref{eq:sigmaGVreduced}, critical points of $\mathcal{K}_\alpha$ are given by solutions of the reduced gravitating vortex equation \eqref{eqn:GV0red} on $\mathcal{P}_{\omega_0}$. Furthermore, by Proposition
\ref{prop:second}, the second derivative of the reduced K-energy along a smooth path $\varphi_t$ in $\mathcal{P}_{\omega_0}$ is given by
\begin{equation*}
	\begin{split}
	\frac{\mathrm{d}^2
		\mathcal{K}_\alpha}{\mathrm{d}t^2}(\varphi_t)
		& =
	4\alpha \lVert \mathrm{d}f'_t + \eta_{\varphi'_t}\lrcorner  iF_{h_t}
        \rVert^2
	+ 4\alpha \lVert J\eta_{\varphi'_t}\lrcorner \mathrm{d}_{A_t} \bm\phi - f'_t\bm\phi\rVert^2
	+
	2 \lVert \bar\partial \nabla^{1,0}\varphi'_t\rVert^2 \\
 & \qquad 	+
	\int_\Sigma \Big(\varphi''_t- \lvert\mathrm{d}\varphi'_t\rvert_{\omega_t}^2 \Big)
	\wp_2(f_t, \varphi_t)
	\omega_t,
\end{split}
\end{equation*}
for $\omega_t = \omega_0 + \mathrm{d}\mathrm{d}^c \varphi_t$ and $h_t = e^{2f_t}h_0 = h_{\varphi_t}$, which proves the statement.
\end{proof}

To finish this section, we review in our set-up the well-known construction of smooth geodesics in the space of K\"ahler potentials from 1-parameter subgroups of holomorphic automorphisms. This example will be
applied in Section~\ref{section:Polystability}.

\begin{example}\label{ex:autgeod}
Given a K\"ahler metric $\omega$ on the complex projective line $\Sigma=\PP^1$, a holomorphic vector field on $\PP^1$ whose imaginary part is
Hamiltonian with respect to $\omega$ determines a (smooth) geodesic ray emanating from $\omega$ in the space of K\"ahler potentials.
More precisely, let $\bm v$ be a holomorphic $(1,0)$-vector field on $\PP^1$ such that $\operatorname{Im}\bm v\iprod\omega=\mathrm{d}\Phi$, for a real-valued smooth function $\Phi$.
Let $\{ \sigma_t\}_{t\in\RR}$ be the family of holomorphic automorphisms of $\PP^1$ generated by the real vector field $\operatorname{Re}\bm v=J\operatorname{Im}\bm v$, where $J$ is the almost complex structure of
$\PP^1$.
Then the K\"ahler form
$$
\omega_t=\omega+2i\partial\bar\partial\varphi_t:=\sigma_t^*\omega
$$
satisfies $2i\partial\bar\partial \dot\varphi_t= 2i\partial\bar\partial \sigma_t^*\Phi$.
Normalizing $\varphi_t$ and $\Phi$ by the condition
\[
\int_{\PP^1}\dot{\varphi}_t\omega_t=\int_{\PP^1}\Phi\omega=0,
\]
we have $\dot \varphi_t=\Phi_t:=\sigma_t^*\Phi$.
Using now the equation
$$
\iota_{\text{Re}\bm v}\sigma_t^*\omega = \sigma_t^*\left( \iota_{\text{Re}\bm v} \omega\right)= \sigma_t^*\left(-J\mathrm{d}\Phi\right)  = - J\mathrm{d}\Phi_t
$$
and $\iota_{\text{Im}\bm v} \sigma_t^*\omega = \mathrm{d}\Phi_t$, we get that $\iota_{\bm v} \omega_t = 2i \bar\partial \Phi_t$. This implies $\bm v= 2\uparrow_{\omega_t} \bar\partial \Phi_t$, i.e., $\bm v^i =2
g_t^{i\bar j}\frac{\partial \Phi_t}{\partial \bar z_j}= 2\dot \varphi_t^i$. As a consequence,
$$
\ddot \varphi_t = \mathcal{L}_{\text{Re}\bm v}\Phi_t= \frac{1}{2} \left( \bm v + \overline{\bm v}\right)\Phi_t= \bm v\left(\Phi_t\right)
$$
and, therefore,
\begin{equation}\label{eq:1PS.geodesic-ray.1}
\ddot \varphi_t-2\left|\nabla^{1,0}_{\omega_t}\dot\varphi_t\right|_{\omega_t}^2=0,
\end{equation}
i.e., $\{\varphi_t\}_{t\in \mathbb{R}}$ is a smooth geodesic line in $\mathcal{P}_{\omega_0}$.

The above construction of smooth geodesics will be applied in Theorem~\ref{thm:existence-implies-stability}.
\end{example}

\begin{remark}
If instead of making the normalization $\int_{\PP^1}\dot\varphi_t \omega_t^n = 0$ in Example \ref{ex:autgeod} we take $\dot\varphi_t = \Phi_t + b$ for a fixed constant $b\in \mathbb{R}$, then the identity $\ddot
\varphi_t = \bm v(\Phi_t)$ still holds. Therefore we also get a smooth geodesic line in $\mathcal{P}_{\omega_0}$. This gives a family of geodesic lines $\{\varphi_t + bt\}_{t\in \mathbb{R}}$, labelled by a real
number $b$, all lifting the same geodesic line in the space of K\"ahler metrics to the space of K\"ahler potentials (see Remark \ref{rem:liftRtoRom}).
\end{remark}

\section{Extension of the reduced $\alpha$-K-energy}
\label{sec:extensionKen}

\subsection{The metric completion of the space of K\"ahler potentials}
\label{sub:metric-completion-Kahler-potentials.1}

Over the next three sections, we will construct an extension of the reduced $\alpha$-K-energy, as defined in Proposition \ref{prop:convexalphaK}, to the completion of the space of smooth K\"ahler potentials
$\mathcal{P}_{\omega_0}$ introduced by Darvas~\cite{Darvas0} (following Chen's work~\cite{Chen}). This extension will be used in Section~\ref{section:Properness}, where we will apply the general principles developed
by Darvas--Rubinstein~\cite{Darvas} to prove the properness of the reduced $\alpha$-K-energy in the space of K\"ahler metrics.

We start by reviewing briefly the Finsler-metric completion $(\mathcal{E}_1,d_1)$ of the space of smooth K\"ahler potentials $\mathcal{P}_{\omega_0}$ considered in~\cite{Darvas0}, in the case of our interest. For
this, we fix a compact Riemann surface $\Sigma$ and a smooth background K\"ahler metric $\omega_0$ with total volume
$$
\int_{\Sigma} \omega_0 = V.
$$
We consider the 
functional
defined as (see e.g.~\cite{Darvas17,Darvassurvey,Darvas})
\begin{equation}\label{eq:Aubin-Mabuchi.1}
\AMY(u)\defeq\frac{1}{2V}\int_\Sigma u(\omega_0+\omega_u).
\end{equation}
%
%
Following~\cite{Darvas0} (see also~\cite[Definition 4.2]{Darvas}), we define the \emph{path-length distance} $d_1$ on $\mathcal{P}_{\omega_0}$ by
\[
d_1(\psi_0,\psi_1)\defeq\inf_\psi\int_0^1\mathrm{d}t\int_\Sigma\lvert\psi'_t\rvert\omega_{\psi_t},
\]
for all $\psi_0,\psi_1\in\mathcal{P}_{\omega_0}$, where $\psi$ ranges over all smooth paths $\psi\colon[0,1]\to \mathcal{P}_{\omega_0},\, t\mapsto\psi_t$, from $\psi_0$ to $\psi_1$, that is, smooth maps
$\psi=\psi(t,z)\in C^\infty([0,1]\times\Sigma)$ such that $\psi(0,\cdot)=\psi_0$ and $\psi(1,\cdot)=\psi_1$. We will also consider the subspace $\mathcal{R}\subset\mathcal{P}_{\omega_0}$ given by the smooth K\"ahler
potentials $\psi$ that are normalized so that
\[
\AMY(\psi)=0.
\]

By application of~\cite[Theorem 3.5]{Darvas0}, $d_1$ is a metric on $\mathcal{P}_{\omega_0}$ and we also consider the induced metric in $\mathcal{R}$. In the next result, which follows by direct application of the
results in \cite{Darvas0,GZ}, we describe the metric completion of  $(\mathcal{P}_{\omega_0},d_1)$ and  $(\mathcal{R},d_1)$ in the case of our interest. Recall that the space of $\omega_0$-plurisubharmonic functions
on $\Sigma$ is defined by
$$
\textnormal{PSH}(\Sigma, \omega_0) = \{\psi \equiv - \infty \textrm{ or } \psi \in C^{\uparrow} \textrm{ and } \omega_\psi = \omega_0 + 2i \partial \dbar \psi \geq 0 \textrm{ in the sense of currents} \},
$$
where $C^{\uparrow}$ is the set of upper semi-continuous integrable functions on $(\Sigma,\omega_0)$. Note the standard fact that if $\psi \in \textnormal{PSH}(\Sigma, \omega_0)$, then $\omega_{\psi}$ is a measure.

\begin{theorem}[\cite{Darvas0}]\label{thm:Potentialcompletion}
The metric completion of $(\mathcal{P}_{\omega_0},d_1)$ is given by the set
\begin{equation}\label{ineq:E1def}
\mathcal{E}_1 = \Bigg{\{} \psi \in \textnormal{PSH}(\Sigma, \omega_0)\;\Biggm\vert\;\int_\Sigma\lvert\psi\rvert\omega_{\psi}<\infty \Bigg{\}}.
\end{equation}
The metric on $\mathcal{E}_1$ is defined by
$$
d_1(\psi_0,\psi_1) = \lim_{k \to \infty}d_1(\psi_0^k,\psi_1^k),
$$
for any $\psi_0,\psi_1 \in \mathcal{E}_1$, where $\{\psi_j^k\} \subset \mathcal{P}_{\omega_0}$ is any sequence decreasing pointwise to $\psi_j$. Moreover, the completion $\overline{\mathcal{R}}\subset\mathcal{E}_1$
of $\mathcal{R}$ is
\begin{equation}\label{eq:normalisationofphi}
\overline{\mathcal{R}} = \mathcal{E}_1\cap \AMY^{-1}(0),
\end{equation}
where $\AMY$ is given by the same formula as in \eqref{eq:Aubin-Mabuchi.1}.
\end{theorem}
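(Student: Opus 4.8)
\emph{Proof proposal.} The plan is to deduce the statement as a specialization of Darvas' general metric-completion theory \cite{Darvas0} to a compact Riemann surface, combined with the finite-energy pluripotential theory of Guedj--Zeriahi \cite{GZ}. The key simplification is that $\dim_\CC\Sigma=1$, so the complex Monge--Amp\`ere measure $\omega_\psi^n$ is simply the measure $\omega_\psi$, and the Monge--Amp\`ere (Aubin--Mabuchi) energy
\[
\mathcal{E}(\psi)=\frac{1}{(n+1)V}\sum_{j=0}^{n}\int_\Sigma\psi\,\omega_0^{n-j}\wedge\omega_\psi^j
\]
collapses, for $n=1$, to $\tfrac{1}{2V}\int_\Sigma\psi(\omega_0+\omega_\psi)=\AMY(\psi)$. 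In particular the finite-energy class $\mathcal{E}^1(\Sigma,\omega_0)$ of \cite{GZ}, which is the natural domain for $\mathcal{E}$ and the natural completion in Darvas' theory, is exactly the set $\mathcal{E}_1$ of \eqref{ineq:E1def}, and the energy governing the whole picture is the explicit functional $\AMY$ of \eqref{eq:Aubin-Mabuchi.1}.

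First I would record the monotonicity input that underlies everything: for comparable smooth potentials $u\geq v$, Darvas' rooftop-envelope distance formula \cite{Darvas0} reduces to a fixed positive multiple of the energy difference $\AMY(u)-\AMY(v)$. Combined with the triangle inequality and the Bedford--Taylor comparison principle, this gives two-sided control of $d_1$ by energy differences, and in particular shows that $\AMY$ is $d_1$-continuous on $\mathcal{E}_1$.

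Next I would establish the two halves of the completion assertion. For density, any $\psi\in\mathcal{E}_1$ is a decreasing pointwise limit of smooth potentials $\psi^k\in\mathcal{P}_{\omega_0}$ by standard regularization on $\Sigma$; the finite-energy hypothesis $\int_\Sigma|\psi|\omega_\psi<\infty$ together with monotone convergence for the Monge--Amp\`ere operator gives $\AMY(\psi^k)\to\AMY(\psi)$, hence $\{\psi^k\}$ is $d_1$-Cauchy and converges to $\psi$, which simultaneously shows that the limit $d_1(\psi_0,\psi_1)=\lim_{k\to\infty}d_1(\psi_0^k,\psi_1^k)$ exists and is independent of the chosen decreasing approximations. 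For completeness, a $d_1$-Cauchy sequence in $\mathcal{P}_{\omega_0}$ has bounded energies, so by the compactness of energy sublevel sets \cite{GZ} it subconverges to some $\psi_\infty\in\textnormal{PSH}(\Sigma,\omega_0)$, and the energy bound forces $\psi_\infty\in\mathcal{E}_1$ with $d_1(\psi^k,\psi_\infty)\to 0$. Finally, for $\overline{\mathcal{R}}$, the $d_1$-continuity of $\AMY$ shows that $\AMY^{-1}(0)$ is $d_1$-closed in $\mathcal{E}_1$; since $\mathcal{R}=\mathcal{P}_{\omega_0}\cap\AMY^{-1}(0)$ and any $\psi\in\mathcal{E}_1\cap\AMY^{-1}(0)$ is reached by normalizing its smooth approximants by the constants $-\AMY(\psi^k)\to 0$, one obtains $\overline{\mathcal{R}}=\mathcal{E}_1\cap\AMY^{-1}(0)$ as in \eqref{eq:normalisationofphi}.

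The hard part will lie not in the formal scaffolding above but in the continuity inputs imported from \cite{Darvas0,GZ}: ensuring that the finite-energy condition is preserved under $d_1$-limits, and that $\AMY$ is genuinely continuous (not merely upper semicontinuous) along monotone and $d_1$-convergent sequences. This is exactly where the delicate Bedford--Taylor and Guedj--Zeriahi convergence theorems enter, and the reduction to $\dim_\CC\Sigma=1$ is what makes them directly applicable with the explicit energy $\AMY$; once these continuity properties are in hand, the identification of the completion and of $\overline{\mathcal{R}}$ is essentially formal.
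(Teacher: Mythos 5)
Your proposal diverges structurally from the paper's proof: the paper does not re-derive anything, it simply assembles citations — \cite[Theorem 4.17]{Darvas0} identifies the completion of $(\mathcal{P}_{\omega_0},d_1)$ with the class of $\omega_0$-PSH functions satisfying \emph{both} the full-mass condition $\int_\Sigma\omega_\psi=V$ and the integrability condition in \eqref{ineq:E1def}; then \cite[Lemma 1.2]{GZ} is invoked to show the mass condition is superfluous in complex dimension one; well-definedness of the extended distance comes from \cite[Section 4]{Darvas0}, and \eqref{eq:normalisationofphi} from \cite[Lemma 5.2]{Darvas}. You instead attempt to reprove the completion theorem itself. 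Your density step (decreasing regularization by \cite{BK}, continuity of $\AMY$ along decreasing sequences, and the rooftop formula $d_1(u,v)=\AMY(u)-\AMY(v)$ for $u\geq v$) is sound, and your argument for $\overline{\mathcal{R}}=\mathcal{E}_1\cap \AMY^{-1}(0)$ (closedness of $\AMY^{-1}(0)$ by $1$-Lipschitz continuity of $\AMY$, plus normalization of the smooth approximants) is essentially what \cite[Lemma 5.2]{Darvas} does. A smaller issue: your identification of Guedj--Zeriahi's class $\mathcal{E}^1$ with the set $\mathcal{E}_1$ of \eqref{ineq:E1def} is not justified by the collapse $\omega_\psi^n=\omega_\psi$ alone; $\mathcal{E}^1$ carries the non-pluripolar full-mass requirement, and one must rule out $\omega_\psi$ charging the polar set $\{\psi=-\infty\}$ — this is exactly what the finite-energy condition buys and what the paper's appeal to \cite[Lemma 1.2]{GZ} encodes.

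The genuine gap is your completeness argument. From a $d_1$-Cauchy sequence you extract bounded energies and an $L^1$-subconvergent limit $\psi_\infty\in\textnormal{PSH}(\Sigma,\omega_0)$, and then assert that ``the energy bound forces $\psi_\infty\in\mathcal{E}_1$ with $d_1(\psi^k,\psi_\infty)\to 0$.'' That last clause is precisely the hard content of Darvas' theorem and does not follow from what you have: $d_1$-convergence is equivalent to $L^1$-convergence \emph{together with convergence of the energies} $\AMY(\psi^k)\to\AMY(\psi_\infty)$ (cf. \cite[Lemma 5.1]{Darvas}), whereas $\AMY$ is only semicontinuous under $L^1$-limits, so boundedness of energies along the sequence does not prevent the energy from jumping at the limit, i.e.\ the limit could a priori be at positive $d_1$-distance from the tail of the sequence. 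Darvas closes this gap by a monotone sandwich construction: after passing to a subsequence with $d_1(u_k,u_{k+1})\leq 2^{-k}$, he forms iterated rooftop envelopes $P(u_k,\dots,u_l)$, shows they decrease (in $l$) to elements $v_k\in\mathcal{E}_1$ that increase to the limit, and uses continuity of the energy along monotone sequences to conclude $d_1$-convergence. Without this (or an equivalent mechanism converting Cauchy sequences into monotone ones), your completeness step fails as stated; if you do import it, you are in effect re-citing \cite[Theorem 4.17]{Darvas0}, which is what the paper does directly.
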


\begin{proof}
By~\cite[Theorem 4.17]{Darvas0}, the metric completion of $(\mathcal{P}_{\omega_0},d_1)$ is the set of $\omega_0$-plurisubharmonic functions on $\Sigma$ that satisfy (see \cite{GZ})
\begin{equation}\label{eq:Edef}
\displaystyle \int_{\Sigma}\omega_{\psi} =V,
\end{equation}
jointly with the inequality in \eqref{ineq:E1def}. Using the fact that $\dim_\mathbb{C}\Sigma=1$, by~\cite[Lemma~1.2]{GZ} we have that \eqref{eq:Edef} is superfluous, and hence the first part of the statement
follows. The
well-definition of the distance in $\mathcal{E}_1$ follows from \cite[Section 4]{Darvas0}. The last part of the statement, about the completion $\overline{\mathcal{R}}$, is a consequence of~\cite[Lemma 5.2]{Darvas}.
\end{proof}

In our next result we observe that in complex dimension $1$, $d_1$ is closely related to the familiar convergence in $W^{1,2}$ for PSH functions, where $W^{1,2}=W^{1,2}(\Sigma,\omega_0)$ is the Sobolev space of
$L^2$ functions on $(\Sigma,\omega_0)$ whose first distributional derivatives are in $L^2$. The following lemma is probably well known to experts, but we have not been able to find a precise reference in the
literature and so we include a proof for completeness. We will use the following notation for the average with respect to $\omega_0$ of an integrable function $u$ on $\Sigma$:
$$
\fint  u = \frac{1}{V}\int_\Sigma u \omega_0.
$$

\begin{lemma}\label{lem:metric-completion-Kahler-potentials.1}
On a compact Riemann surface $\Sigma$, we have:
\begin{enumerate}\label{lem:equivalent-norm}
\item[\textup{(1)}]
$\mathcal{E}_1= W^{1,2}\cap \textnormal{PSH}(\Sigma, \omega_0)$ as sets.
\item[\textup{(2)}]
For any sequence $\{u_k\}\subset \overline{\mathcal{R}}$ and any $u\in \overline{\mathcal{R}}$, we have $\displaystyle{\lim_{k\to \infty}d_1(u_k, u)=0}$ if and only if
    $\displaystyle{\lim_{k\to \infty}\lVert u_k-u\rVert_{W^{1,2}}}=0$.
\end{enumerate}
\end{lemma}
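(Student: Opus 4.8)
\emph{Approach.} Everything is driven by the dimension-one integration-by-parts identity
$$
\int_\Sigma \psi\,\omega_\psi = \int_\Sigma\psi\,\omega_0 - 2\int_\Sigma i\partial\psi\wedge\bar\partial\psi ,
$$
valid for $\psi\in\textnormal{PSH}(\Sigma,\omega_0)$, where $E(\psi):=\int_\Sigma i\partial\psi\wedge\bar\partial\psi\geq 0$ is comparable to $\lVert\nabla\psi\rVert_{L^2(\omega_0)}^2$. For part~(1) I would first observe that membership in either $\mathcal{E}_1$ or $W^{1,2}\cap\textnormal{PSH}(\Sigma,\omega_0)$ is insensitive to adding a constant, so after normalizing $\sup_\Sigma\psi=0$ one has $|\psi|=-\psi$ and the identity becomes $\int_\Sigma|\psi|\,\omega_\psi=\lVert\psi\rVert_{L^1(\omega_0)}+2E(\psi)$. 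Since every $\omega_0$-plurisubharmonic function lies in $L^p(\Sigma,\omega_0)$ for all $p<\infty$, the term $\lVert\psi\rVert_{L^1(\omega_0)}$ is finite automatically; hence the left-hand side is finite exactly when $E(\psi)<\infty$, i.e. exactly when $\nabla\psi\in L^2$. As $\psi\in L^2$ is automatic, this yields $\mathcal{E}_1=W^{1,2}\cap\textnormal{PSH}(\Sigma,\omega_0)$ as sets.

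For part~(2) I would invoke the two-sided comparison of Darvas \cite{Darvas0}, $\tfrac{1}{C}d_1(u,v)\le\int_\Sigma|u-v|(\omega_u+\omega_v)\le C\,d_1(u,v)$, together with the polarized identity
$$
2\int_\Sigma i\partial(u-v)\wedge\bar\partial(u-v)=\int_\Sigma(u-v)(\omega_v-\omega_u)\le\int_\Sigma|u-v|(\omega_u+\omega_v).
$$
Assuming $d_1(u_k,u)\to0$, the upper comparison forces $\int_\Sigma|u_k-u|(\omega_{u_k}+\omega_u)\to0$, hence $\lVert\nabla(u_k-u)\rVert_{L^2}\to0$ and in particular $E(u_k)\to E(u)$. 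On $\overline{\mathcal{R}}$ the normalization $\AMY(u)=0$ reads $\fint u=\tfrac{1}{V}E(u)$, so $\fint(u_k-u)=\tfrac1V\bigl(E(u_k)-E(u)\bigr)\to0$; combined with the Poincar\'e inequality $\lVert (u_k-u)-\fint(u_k-u)\rVert_{L^2}\le C\lVert\nabla(u_k-u)\rVert_{L^2}$ this gives $\lVert u_k-u\rVert_{L^2}\to0$, and therefore $\lVert u_k-u\rVert_{W^{1,2}}\to0$.

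Conversely, assuming $\lVert u_k-u\rVert_{W^{1,2}}\to0$, by the lower comparison it suffices to show $\int_\Sigma|u_k-u|(\omega_{u_k}+\omega_u)\to0$. Writing $w=u_k-u$ and splitting $|w|=w_++w_-$ with $w_\pm\in W^{1,2}$, the parts against $\omega_0$ tend to $0$ since $w\to0$ in $L^2\subset L^1$, while the remaining contributions are controlled by
$$
\int_\Sigma w_\pm\,2i\partial\bar\partial u_k=-2\int_\Sigma i\partial w_\pm\wedge\bar\partial u_k ,
$$
which is bounded by $C\lVert\nabla w\rVert_{L^2}\lVert\nabla u_k\rVert_{L^2}\to0$ because $\lVert\nabla u_k\rVert_{L^2}$ stays bounded (it converges to $\lVert\nabla u\rVert_{L^2}$); the identical estimate with $u$ in place of $u_k$ handles $\int_\Sigma|w|\,\omega_u$. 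This gives $d_1(u_k,u)\to0$ and completes~(2).

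The analytic content beyond bookkeeping is the justification of the integration-by-parts identities for \emph{non-smooth} $\omega_0$-plurisubharmonic potentials: the $w_\pm$ above and the limits $\psi\notin\mathcal{E}_1\Rightarrow\nabla\psi\notin L^2$ in part~(1) both require pairing an $L^1$ (possibly unbounded) function against the measure $2i\partial\bar\partial u_k=\omega_{u_k}-\omega_0$. I expect this to be the main obstacle, and I would handle it by approximation with decreasing sequences of smooth $\omega_0$-plurisubharmonic potentials, using monotone convergence together with the lower semicontinuity of the Dirichlet energy $E$, invoking the integration-by-parts machinery for finite-energy classes from \cite{GZ,Darvas0}.
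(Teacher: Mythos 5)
Your proposal is correct, and the interesting divergence from the paper is in part (2). For part (1) you are essentially reproducing the paper's argument: the set equality is forced by the dimension-one integration-by-parts identity, and the only genuine content is justifying that identity for non-smooth potentials, which the paper handles exactly as you propose to --- a decreasing smooth approximation (via \cite{BK}) together with \cite[Lemma 3.47]{Darvassurvey} and weak $W^{1,2}$-compactness for the inclusion $\mathcal{E}_1\subset W^{1,2}$, and a partition-of-unity/local-convolution version of the same, plus monotone convergence, for the reverse inclusion. For part (2), however, your route is genuinely different. Your key input is the two-sided quasi-isometry $d_1(u,v)\asymp\int_\Sigma|u-v|(\omega_u+\omega_v)$ from \cite{Darvas0}, which combined with the polarization identity gives $\lVert\nabla(u_k-u)\rVert_{L^2}\to 0$ \emph{directly} from $d_1(u_k,u)\to 0$; the paper instead uses \cite[Lemma 5.1]{Darvas}, by which $d_1$-convergence on $\overline{\mathcal{R}}$ (where $\AMY\equiv 0$) is equivalent to $L^1$-convergence. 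What each buys: the paper's route makes the implication $W^{1,2}\Rightarrow d_1$ a one-liner, at the cost of a weak-compactness/subsequence argument in the other direction (norm convergence of gradients plus weak convergence implies strong convergence); your route makes the forward implication cleaner, with no compactness argument, but forces real work in the backward one --- the estimates on $\int_\Sigma (u_k-u)_\pm\, 2i\partial\bar\partial u_k$ --- where the test functions $(u_k-u)_\pm$ are $W^{1,2}$ but neither bounded nor $\omega_0$-plurisubharmonic, so the integration by parts there needs a justification slightly beyond the ``$u,v\in\mathcal{E}_1$'' version the paper proves (for instance via the same smoothing device, using that $\nabla u_k^{(i)}\to\nabla u_k$ strongly in $L^2$ for decreasing smooth approximants, or via the fact that $\Delta u_k$ is a finite-energy measure and hence pairs with quasi-continuous $W^{1,2}$ representatives). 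Your closing paragraph flags precisely this point, and the resolution you propose is the one the paper itself uses, so I consider the proposal sound.
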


\begin{proof}
To prove (1), let $u\in \mathcal{E}_1$. By the Poincar\'e inequality, it suffices to show that $\displaystyle \int_{\Sigma} \sqrt{-1}\partial u\wedge\bar{\partial} u <\infty$ for concluding that $u\in W^{1,2}$. To
this end we note that
$$
\displaystyle\Big\lvert\int_{\Sigma} u \sqrt{-1}\partial \bar{\partial} u\Big\rvert < \infty.
$$
Now there exists a sequence of smooth $\omega_0$-PSH functions $u_i$ decreasing to $u$ (see~\cite[Theorem 1]{BK}). From~\cite[Lemma 3.47]{Darvassurvey}, we see that
$$
\displaystyle -\int_{\Sigma} \sqrt{-1}\partial u_i\wedge\bar{\partial}u_i=\int_{\Sigma} u_i \sqrt{-1}\partial \bar{\partial} u_i \rightarrow \int_{\Sigma} u \sqrt{-1}\partial \bar{\partial} u.
$$
Thus a subsequence of $u_i-\fint u_i$ converges weakly in $W^{1,2}$, strongly in $L^2$, and almost everywhere to a $W^{1,2}$ function $v$. Thus $u$ is in $W^{1,2}$.

Now, let $u\in W^{1,2} \cap\textnormal{PSH}(\Sigma,\omega_0)$. Without loss of generality (by subtracting the supremum), assume that $u\leq 0$. Let $\{\rho_j\}$ be a partition of unity subordinate to coordinate
balls. Then
$$
\displaystyle \int_{\Sigma} -u\omega_u = -\sum_j \int_{\Sigma} \rho_j u \omega_u.
$$
Let $v_{j,i}$ be a sequence obtained by convolving $u_{0,j}+u$ where $\sqrt{-1}\partial \bar{\partial} u_{0,j}=\omega_0$. Note that $v_{j,i}$ decreases to $u_{0,j}+u$ and converges to it in $W_{loc}^{1,2}$, and so
does $\rho_j v_{j,i}$. Now
\begin{equation}\label{eq:integrationbypartsE1}
\begin{split}
\displaystyle \sum_j \int_{\Sigma} \rho_j (v_{j,i}-u_{0,j}) \omega_u &= \sum_j \int_{\Sigma}  \left(\rho_j (v_{j,i}-u_{0,j}) \omega_0+ \sqrt{-1} \partial \bar{\partial}(\rho_j (v_{j,i}-u_{0,j}))  u \right)
\\&
= \sum_j \int_{\Sigma}  \left(\rho_j (v_{j,i}-u_{0,j}) \omega_0- \sqrt{-1} \partial (\rho_j (v_{j,i}-u_{0,j}))  \bar{\partial}u \right),
\end{split}\end{equation}
which converges to $\int_{\Sigma} \left(u \omega_0 -\sqrt{-1}\partial u \wedge\bar{\partial} u\right)>-\infty$. Hence using the monotone convergence theorem, we see that $\displaystyle \int_{\Sigma} \vert u \vert
\omega_u<\infty$ and $u\in \mathcal{E}_1$.

Finally, to prove (2), by \cite[Lemma 5.1]{Darvas} we see that $d_1(u_k,u)\rightarrow 0$ iff $\AMY(u_k)=0\rightarrow \AMY(u)=0$ and $u_k\rightarrow u$ in $L^1$. Thus if $u_k\rightarrow u$ in $W^{1,2}$,
$d(u_k,u)\rightarrow 0$. If $d_1(u_k,u)\rightarrow 0$, then $u_k\rightarrow u$ in $L^1$. By the Poincar\'e inequality, the proof will follow if we prove that $\Vert \partial u_k - \partial u\Vert_{L^2}\rightarrow
0$. To this end, we claim that the following integration-by-parts formula holds for all $u,v\in \mathcal{E}_1$:
\begin{equation*}
\displaystyle \int_{\Sigma} u\partial \bar{\partial}v = -\int_{\Sigma} \partial u \wedge\bar{\partial} v.
\end{equation*}
Indeed, proceeding as in the proof of~\ref{eq:integrationbypartsE1}, we see (using the Monotone Convergence Theorem) that
\begin{equation*}\begin{split}
\displaystyle \int_{\Sigma} u \sqrt{-1}\partial \bar{\partial} v&=-\sum_j \lim_{i\rightarrow \infty}\int_{\Sigma} \sqrt{-1} \partial (\rho_j (v_{j,i}-u_{0,j}))\bar{\partial} v \\&
= -\int_{\Sigma} \sqrt{-1}\partial u\wedge \bar{\partial} v.
\end{split}\end{equation*}
Since $\AMY(u_k)=\AMY(u)=0$, we see that $\fint u_k = \frac{1}{2V} \displaystyle \int_{\Sigma} \sqrt{-1}\partial u_k\wedge \bar{\partial} u_k$ and likewise for $u$, and hence
$$
\Vert \partial u_k \Vert_{L^2}\rightarrow \Vert \partial u \Vert_{L^2}.
$$
Moreover, every $L^2$ convergent subsequence of $u_k$ converges to $u$, because $u_k$ converges to $u$ in $L^1$. Lastly, since $\Vert \partial u_k \Vert_{L^2}<C$, every subsequence has a further subsequence that
weakly converges in $W^{1,2}$ and strongly converges in $L^2$ (and hence) to $u$. Thus $\partial u_k$ weakly converges to $\partial u$ in $L^2$. Thus $\Vert \partial u_k - \partial u \Vert_{L^2}\rightarrow 0$.
\end{proof}

\subsection{Weak solutions to the vortex equation}

The next step to extend the reduced $\alpha$-K-energy (see Proposition \ref{prop:convexalphaK}) to the completion $\mathcal{E}_1$ of the space of smooth K\"ahler potentials $\mathcal{P}_{\omega_0}$ (see Lemma
\ref{lem:metric-completion-Kahler-potentials.1}), is to study the vortex equation for background metrics that satisfy low regularity conditions. More precisely, we are interested in (weak) K\"ahler metrics of the
form $\omega_\psi$ for $\psi \in \mathcal{E}_1$.

We start this section by giving a precise definition of the weak solutions of the vortex equation that we will use. As in the previous section, we fix a smooth K\"ahler metric $\omega_0$ on $\Sigma$ with total
volume $V$. Let $L$ be a holomorphic line bundle over $\Sigma$ with degree $N = \int_{\Sigma}c_1(L)>0$ and non-vanishing holomorphic section
$$
0 \neq \bm \phi \in H^0(\Sigma,L).
$$
We also fix a symmetry breaking parameter $\tau > 0$, satisfying (see~\eqref{eqn:ineqtauV})
\begin{equation*}
V>\frac{4\pi N}{\tau}.
\end{equation*}
Furthermore, we fix a background smooth Hermitian metric $h_0$ on $L$.

\begin{definition}\label{def:weak-vortex.1}
Consider the semi-positive current $\omega_{\psi}=\omega_0+2i\partial\bar\partial\psi$, where $\psi\in\mathcal{E}_1$.
A \emph{weak solution} to the vortex equation with background current $\omega_\psi$ is a function $f\in W^{1,2}(\Sigma, \omega_0)$ such that
\begin{equation}\label{def:weak-vortex-inequality.1}
\left|\bm\phi\right|_{h_0}^2e^{2f}\in W^{1,2}(\Sigma, \omega_0), \qquad \tau-\left|\bm\phi\right|_{h_0}^2e^{2f}\geq 0,
\end{equation}
where the inequality is satisfied almost everywhere in $\Sigma$, and the following identity is satisfied for all $v\in C^\infty(\Sigma, \mathbb{R})$:
\begin{equation}
\label{eqn:weak-vortex}
\int_\Sigma v\cdot
iF_{h_0}
+2i\partial v\wedge \bar\partial f
=\int_\Sigma
\frac{1}{2} v \left(\tau-\left|\bm \phi\right|_{h_0}^2e^{2f}\right) \omega_0
-
i \partial \left[ v \left(\tau-\left|\bm \phi\right|_{h_0}^2e^{2f}\right) \right]\wedge  \bar\partial \psi.
\end{equation}
\end{definition}

\begin{remark}\label{rem:weak-vortex.1}
By~\cite[Proposition 5.2]{Brad}, the inequality~\eqref{def:weak-vortex-inequality.1} follows for any smooth $f\in C^\infty(\Sigma,\mathbb{R})$ such that $h_0e^{2f}$ is a solution of the vortex equation with
background metric $\omega_\psi$, for $\psi \in \mathcal{P}_{\omega_0}$. Nonetheless, this inequality may fail if $\psi$ or $f$ have low regularity properties, and hence it is necessary to include it in
Definition~\ref{def:weak-vortex.1}.
\end{remark}

\begin{remark}\label{rem:weak-vortex.2}
Note that the condition that~\eqref{eqn:weak-vortex} is satisfied for all test functions $v\in C^\infty(\Sigma, \mathbb{R})$ is not \emph{a priori} equivalent to the interpretation of the vortex equation
\begin{equation}\label{eqn:current-vortex}
iF_{h_0}-2i\partial\bar\partial f=\frac{1}{2}\left(\tau-\left|\bm \phi\right|_{h_0}^2e^{2f}\right)\omega_\psi
\end{equation}
as an equation for currents. This is due to the fact that~\eqref{eqn:weak-vortex} is obtained multiplying~\eqref{eqn:current-vortex} by $v$ and formally integrating by parts \emph{once}, 
$-2i\int_\Sigma v\partial\bar\partial f$ to $2i\int_\Sigma\partial v\wedge\bar\partial f$,
while an interpretation of~\eqref{eqn:current-vortex} as an equation for currents would lead to formally integrating by parts twice.
$-2i\int_\Sigma(\bar\partial\partial v)f$ (something similar happens with the last term of this equation).\\
\end{remark}

In the next result we establish the existence and uniqueness of weak solutions of the vortex equation, in the sense of Definition \ref{def:weak-vortex.1}. For the proof, we recall that the the Green function
$G_{\omega_0}(P,Q)$ on $\Sigma$ with respect to the metric $\omega_0$ satisfies, by definition,
\begin{equation}\label{eq:Greenformula}
2i\partial\bar\partial G_{\omega_0} (P, Q) = \frac{1}{V}\omega_0(Q) - [P]
\end{equation}
in the sense of currents (our notation for $G_{\omega_0}$ agrees with \cite[Page 108]{Aubin} and \cite{Al-Ga-Ga-P}, but it differs by a negative sign from, e.g.,  \cite{FPY}).

\begin{proposition}\label{prop:W-1-2-estimate}
Given $\psi\in\mathcal{E}_1$, there exists a unique $f\in W^{1,2}$ satisfying the following property: for any sequence $\left\{\psi_n\right\}\subset \mathcal{P}_{\omega_0}$, with $\psi_n\to \psi$ in $W^{1,2}$ as
$n\to \infty$, consider the sequence of K\"ahler metrics $\omega_{\psi_n}$ on $\Sigma$. Then the sequence of solutions $h_n=h_0 e^{2f_n}$ of the vortex equation \eqref{eqn:V0} in Theorem \ref{th:B-GP},
corresponding to the background metrics $\omega_{\psi_n}$, satisfy $f_n\to f$ in $W^{1,2}$ as $n\to \infty$. Furthermore, the unique $f\in W^{1,2}$ satisfying this condition is a weak solution to the vortex equation
with background metric $\omega_\psi$, in the sense of Definition \ref{def:weak-vortex.1}.
\end{proposition}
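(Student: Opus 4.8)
The plan is to realize $f$ as the $W^{1,2}$-limit of the smooth solutions $f_n$ provided by Theorem~\ref{th:B-GP} for the approximating metrics $\omega_{\psi_n}$, to identify this limit with a weak solution in the sense of Definition~\ref{def:weak-vortex.1}, and to pin it down by a uniqueness argument, the whole sequence converging by uniqueness. Throughout I abbreviate $h_n=h_0e^{2f_n}$, write $A_n$ for its Chern connection, and set $g_n\defeq\lvert\bm\phi\rvert^2_{h_n}=\lvert\bm\phi\rvert^2_{h_0}e^{2f_n}$, so that~\eqref{eqn:V0} reads $iF_{h_n}=\tfrac12(\tau-g_n)\omega_{\psi_n}$. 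Integrating this equation and using $\int_\Sigma iF_{h_n}=2\pi N$ gives the fixed positive mass $\int_\Sigma g_n\,\omega_{\psi_n}=\tau V-4\pi N$, the positivity being exactly~\eqref{eqn:ineqtauV}. By~\cite[Proposition 5.2]{Brad} (cf.~Remark~\ref{rem:weak-vortex.1}) one has $g_n\le\tau$ pointwise, hence $iF_{h_n}\ge0$.

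First I would establish uniform bounds. From $2i\partial\bar\partial f_n=iF_{h_0}-iF_{h_n}$ (of vanishing total integral), the Green representation~\eqref{eq:Greenformula} with $iF_{h_n}\ge0$ and the lower boundedness of $G_{\omega_0}$ yields a uniform pointwise lower bound $f_n\ge\fint f_n-C$; the inequality $g_n\le\tau$ gives $f_n\le\tfrac12\log(\tau/\lvert\bm\phi\rvert^2_{h_0})\in L^1(\omega_0)$, so $\fint f_n$ is bounded above, while the possibility $\fint f_n\to-\infty$ is excluded by the fixed positive mass (it would force $g_n\to0$ uniformly). Hence $f_n$ is bounded below by a constant and above by an $L^1$ function, so $\lVert f_n\rVert_{L^1}\le C$. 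Comparing the convex Bradlow energy $\mathcal{V}_{\omega_{\psi_n}}$ (whose minimiser is $f_n$) with the competitor $0$, the resulting inequality involves the exact constant $-\tfrac12\int_\Sigma g_n\omega_{\psi_n}$, the controlled term $\int_\Sigma f_n\,iF_{h_0}$, and $\tau\int_\Sigma f_n\omega_{\psi_n}$, which after one integration by parts is $\le C+C\lVert\partial f_n\rVert_{L^2}$; absorbing the linear term yields $\lVert\partial f_n\rVert_{L^2}\le C$. Finally, integrating the Bochner formula~\eqref{eq:Bochner.1} gives $\int_\Sigma\lvert\partial_{A_n}\bm\phi\rvert^2=\tfrac12\int_\Sigma(\tau-g_n)g_n\,\omega_{\psi_n}\le\tfrac{\tau^2}8V$, and the pointwise bound $\lvert\partial g_n\rvert\le2\sqrt\tau\,\lvert\partial_{A_n}\bm\phi\rvert$ together with the metric-independence of $\int_\Sigma i\partial u\wedge\bar\partial u$ in complex dimension one bounds $\lVert\partial g_n\rVert_{L^2}$; since $0\le g_n\le\tau$, the family $\{g_n\}$ is bounded in $W^{1,2}$ as well.

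Passing to a subsequence I obtain $f_n\rightharpoonup f$ and $g_n\rightharpoonup g$ weakly in $W^{1,2}$, strongly in $L^2$ and a.e.; the a.e.\ limits force $g=\lvert\bm\phi\rvert^2_{h_0}e^{2f}\in W^{1,2}$ and $g\le\tau$, which is precisely~\eqref{def:weak-vortex-inequality.1}. To see that $f$ solves~\eqref{eqn:weak-vortex} I pass to the limit in the weak formulation for $f_n$ tested against a fixed smooth $v$: the crucial point is that this formulation involves only one integration by parts, so every term is admissible. Indeed $\int_\Sigma 2i\partial v\wedge\bar\partial f_n$ converges by the weak $L^2$-convergence of $\partial f_n$; $\int_\Sigma\tfrac12 v(\tau-g_n)\omega_0$ converges by dominated convergence using $g_n\le\tau$ and $g_n\to g$ a.e.; and in $\int_\Sigma i\partial[v(\tau-g_n)]\wedge\bar\partial\psi_n$, expanding the derivative produces $(\tau-g_n)\partial v$ (bounded, a.e.-convergent) and $v\,\partial g_n$ (weakly $L^2$-convergent, $v$ smooth), each paired with $\bar\partial\psi_n$, which converges \emph{strongly} in $L^2$ because $\psi_n\to\psi$ in $W^{1,2}$ by hypothesis (equivalently, in the Darvas metric $d_1$, by Lemma~\ref{lem:metric-completion-Kahler-potentials.1}). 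Thus each pairing is of weak$\times$strong or bounded$\times$convergent type and passes to the limit.

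Uniqueness of weak solutions with background $\omega_\psi$ I would prove by subtracting the two weak equations, testing against truncations $w_T$ of the difference $w=f^{(1)}-f^{(2)}$ (so that all products lie in $W^{1,2}\cap L^\infty$), and letting $T\to\infty$: the left-hand side tends to $2\int_\Sigma i\partial w\wedge\bar\partial w\ge0$, while the right-hand side, by monotonicity of $s\mapsto\lvert\bm\phi\rvert^2_{h_0}e^{2s}$ and $\omega_\psi\ge0$, is $\le0$, so both vanish and, since $\omega_\psi$ charges $\{\bm\phi\neq0\}$ with positive mass, $w=0$. Uniqueness makes every subsequential limit equal, so the full sequence $f_n$ converges and $f$ is independent of the approximating sequence. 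It remains to upgrade weak to \emph{strong} $W^{1,2}$-convergence; here I would avoid direct testing (which produces a borderline triple product $\int_\Sigma(f_n-f)\,\partial g_f\wedge\bar\partial(\psi_n-\psi)$ with all factors only in $L^2$ and $f_n-f$ not in $L^\infty$) and argue instead through the energies. Using $f$ as a competitor gives $\limsup_n\mathcal{V}_{\omega_{\psi_n}}(f_n)\le\lim_n\mathcal{V}_{\omega_{\psi_n}}(f)=\mathcal{V}_{\omega_\psi}(f)$, the convergence holding because $f$ and $g_f$ are fixed $W^{1,2}$-functions paired, after one integration by parts, with the strongly convergent $\bar\partial\psi_n$; the reverse inequality follows from weak lower semicontinuity of the Dirichlet term and the convergence of all lower-order terms, each again a weak$\times$strong $L^2$ pairing, e.g.\ $\int_\Sigma f_n\omega_{\psi_n}=\int_\Sigma f_n\omega_0-\int_\Sigma2i\partial f_n\wedge\bar\partial\psi_n$. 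Hence $\mathcal{V}_{\omega_{\psi_n}}(f_n)\to\mathcal{V}_{\omega_\psi}(f)$, forcing $\lVert\partial f_n\rVert_{L^2}\to\lVert\partial f\rVert_{L^2}$ and thus $f_n\to f$ strongly in $W^{1,2}$. The main obstacle, and the place where the hypothesis $\psi\in\mathcal{E}_1$ is used in full strength, is exactly this reliance on the \emph{strong} $L^2$-convergence $\partial\psi_n\to\partial\psi$ afforded by Lemma~\ref{lem:metric-completion-Kahler-potentials.1}, which, in the absence of any elliptic control over the degenerating backgrounds, is what converts every otherwise-intractable coupling term into an admissible weak$\times$strong pairing.
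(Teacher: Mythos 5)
There is a genuine gap, and it sits precisely at the step where you exclude $\fint_\Sigma f_n\,\omega_0\to-\infty$. You assert this ``would force $g_n\to 0$ uniformly'', but no such implication is available at that stage. The only pointwise control you have is the one-sided bound $f_n\geq \fint_\Sigma f_n\,\omega_0-C$, coming from $iF_{h_n}\geq 0$ and Green's representation; there is no companion bound $f_n\leq \fint_\Sigma f_n\,\omega_0+C$, because $\Delta_{\omega_0}f_n$ contains the factor $\omega_{\psi_n}/\omega_0$, which is \emph{not} uniformly bounded above as the background degenerates. A lower bound relative to the mean can never force $f_n\to-\infty$ anywhere, and the inference ``mean $\to-\infty$ plus $f_n\leq u\in L^1$ implies $\int_\Sigma g_n\,\omega_0\to 0$'' is simply false: $f_n$ may agree with $u$ off a set of tiny measure and be enormously negative there, so the mean diverges while $\int_\Sigma g_n\,\omega_0$ stays bounded away from zero. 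To run the mass argument one needs oscillation control \emph{first}: a uniform bound on $\lVert\nabla(f_n-\fint f_n)\rVert_{L^2}$, the Poincar\'e inequality, and an a.e.-convergent subsequence of $f_n-\fint f_n$, which converts $\fint f_n\to-\infty$ into $g_n\to 0$ a.e.; only then do your Bochner bound on $\nabla g_n$ and the weak$\times$strong pairing with $\bar\partial\psi_n$ give $\int_\Sigma g_n\,\omega_{\psi_n}\to 0$, contradicting $\int_\Sigma g_n\,\omega_{\psi_n}=\tau V-4\pi N>0$. In your ordering that oscillation control is not yet available: your gradient bound comes from the energy comparison, whose term $\int_\Sigma f_n\,iF_{h_0}$ you treat as ``controlled'', i.e.\ bounded via $\lVert f_n\rVert_{L^1}\leq C$ --- but that $L^1$ bound rests on the lower bound for the mean you are in the middle of proving. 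The argument is circular.

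The circularity is repairable, and the repair is essentially the paper's proof: the paper derives the gradient bound on $\tilde f_n=f_n-\fint f_n$ \emph{before} touching the mean, by multiplying the vortex equation by $\tilde f_n+C\geq 0$ and integrating, which gives $\int_\Sigma|\nabla\tilde f_n|^2\omega_0\leq C+C\int_\Sigma|\nabla\psi_n|^2\omega_0$; alternatively your energy comparison survives if you group the linear terms as $\left(\tfrac{\tau}{2}-\tfrac{2\pi N}{V}\right)\int_\Sigma f_n\,\omega_0$, whose coefficient is positive by \eqref{eqn:ineqtauV}, so that only the already-established \emph{upper} bound on the mean is used. Once the bounds are reordered this way, the rest of your proof is sound and genuinely different from the paper's: where the paper subtracts the equations for $f_n$ and $f_m$ and integrates against $f_n-f_m$ to get a Cauchy estimate for $\nabla f_n$ in $L^2$ (so that subsequential limits differ by constants, pinned down by the mass identity with test function $v=1$), you prove uniqueness of weak solutions directly by truncation testing and then upgrade weak to strong $W^{1,2}$ convergence through convergence of the energies. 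Both alternative steps are workable, with the caveat that the truncation argument requires the (standard but not free) justification that $W^{1,2}\cap L^\infty$ test functions are admissible in \eqref{eqn:weak-vortex}, and that the pairing of a nonnegative such function against the current $\omega_\psi$ is nonnegative --- the paper's route avoids this potential-theoretic point entirely.
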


\begin{proof}
We start obtaining uniform $W^{1,2}$ estimates for the sequences $f_n$ and $\left|\bm \phi\right|_{h_0}^2 e^{2f_n}$.
By definition, $f_n\in C^\infty(\Sigma,\mathbb{R})$ satisfies the vortex equation with $C^\infty$ background metric $\omega_{\psi_n}$, that is,
\begin{equation}
\label{eqn:vortex}
iF_{h_0}-2i\partial\bar\partial f_n=\frac{1}{2}\left(\tau-\left|\bm\phi\right|_{h_0}^2e^{2f_n}\right) \omega_{\psi_n},
\end{equation}
so~\cite[Proposition 5.2]{Brad} gives the pointwise inequality
\begin{equation}\label{eq:vortex-pointwise-inequality.W-1-2-estimate.1}
\left|\bm\phi\right|_{h_0}^2e^{2f_n}\leq\tau.
\end{equation}
Green's formula \eqref{eq:Greenformula}, together with~\eqref{eqn:vortex},~\eqref{eq:vortex-pointwise-inequality.W-1-2-estimate.1}
implies
\begin{equation}
\widetilde{f}_n(P)=\int_\Sigma G_{\omega_0}(P,Q)\Delta_{\omega_0} f_n(Q)\omega_0(Q)
\geq
-\int_\Sigma G_{\omega_0}(P,Q)iF_{h_0}(Q)
\geq-C,
\end{equation}
for all $P\in\Sigma$, where $\widetilde{f}_n=f_n-\fint_\Sigma f_n\omega_0$ and $C$ does not depend on $n$. Multiplying~\eqref{eqn:vortex} by $\widetilde f_n+C\geq 0$ and integrating yields (using the fact that
$iF_{h_0}\geq -C\omega_0$)
\begin{equation*}
-CV+ 2\int_\Sigma i \partial \widetilde f_n \wedge \bar\partial \widetilde f_n
\leq
\frac{\tau C}{2}V
-
\tau\int_\Sigma i\partial\widetilde{f}_n\wedge\bar{\partial}\psi_n.
\end{equation*}
It follows that
\[
\int_\Sigma \left|\nabla \widetilde f_n\right|_{\omega_0}^2 \omega_0
\leq
C+C\int_\Sigma\left|\nabla \psi_n \right|_{\omega_0}^2\omega_0,
\]
where $C$ only depends on $N, V, \tau$. Together with the Poincar\'e inequality for $\widetilde{f}_n$, this gives a uniform $W^{1,2}$ bound on the sequence $\widetilde{f}_n$.
We will obtain now a uniform bound for $\fint_\Sigma f_{n}\omega_0$.
Firstly,~\eqref{eq:vortex-pointwise-inequality.W-1-2-estimate.1} gives the \emph{a priori} estimate
$$
2f_n\leq -\log \left|\bm\phi\right|_{h_0}^2 + \log \tau,
$$
and therefore provides a uniform upper bound for $\fint_\Sigma f_n \omega_0$ (recall that $\log\left|\bm\phi\right|_{h_0}^2$ is integrable, as shown by integration in a local polar coordinates, since $\bm\phi$ is
holomorphic).
Suppose now that there exists a subsequence $\{f_{n_k}\}$ such that
$$
\fint_\Sigma f_{n_k}\omega_0 \to -\infty.
$$
Since $\widetilde{f}_{n_k}$ is uniformly bounded in $W^{1,2}$, there exists a further subsequence, still denoted $\{\widetilde{f}_{n_k}\}$, such that $\widetilde f_{n_k}\to \widetilde f$ in $L^2$ and $\widetilde
f_{n_k}\to \widetilde f$ almost everywhere, for some $\widetilde f\in W^{1,2}$. However,
$$
f_{n_k}=\widetilde{f}_{n_k}+\fint_\Sigma f_{n_k} \omega_0 \rightarrow -\infty
$$
almost everywhere. It immediately gives that $\left| \bm \phi\right|_{h_0}^2 e^{2f_{n_k}}\to 0$ almost everywhere. But the Bochner formula~\eqref{eq:Bochner.1} implies
\[
\int_\Sigma \left| \bm \phi\right|_{h_0}^2 e^{2f_n} \cdot i\partial\bar\partial \left( \left| \bm \phi\right|_{h_0}^2 e^{2f_n}\right)
\geq
-\int_\Sigma \left(\left|\bm\phi\right|_{h_0}^2 e^{2f_n}\right)^2 iF_{h_0e^{2f_n}},
\]
that is,
\begin{equation}
\label{eqn:nablaphi-L2-bound}
\frac{1}{2}\int_\Sigma \left| \nabla \left( \left| \bm \phi\right|_{h_0}^2 e^{2f_n} \right)\right|_{\omega_0}^2 \omega_0
\leq
\int_\Sigma \(\left|\bm\phi\right|_{h_0}^2 e^{2f_n}\)^2 iF_{h_0e^{2f_n}}
\leq
2\pi\tau^2 N.
\end{equation}
As a consequence, we can assume $ \left| \bm \phi\right|_{h_0}^2 e^{2f_{n_k}} $ converges to $0$ weakly in $W^{1,2}$ as $k\to +\infty$.  But this leads to a contradiction, because integrating~\eqref{eqn:vortex}, we
obtain
\begin{align*}
\tau \left( V- \frac{4\pi N}{\tau}\right)
&=
\int_\Sigma  \left| \bm \phi\right|_{h_0}^2 e^{2f_{n_k}}\! \left(\omega_0+ 2i\partial\bar\partial \psi_{n_k}\right)
\\&=
\int_\Sigma  \left| \bm \phi\right|_{h_0}^2 e^{2f_{n_k}} \omega_0
- 2
\int_\Sigma  i\partial \left(  \left| \bm \phi\right|_{h_0}^2 e^{2f_{n_k}} \right)\wedge \bar\partial \psi_{n_k},
\end{align*}
which converges to $0$, since the first term converges to $0$ by the Dominated Convergence Theorem and the second term converges to $0$ by the weak convergence of $ \left|\bm \phi\right|_{h_0}^2 e^{2f_{n_k}}$ and
the strong convergence of $\psi_{n_k}$ in $W^{1,2}$. Therefore $\fint_\Sigma f_n \omega_0$ is uniformly bounded from below and hence $f_n$ is uniformly bounded in $W^{1,2}$.

The above $W^{1,2}$ estimates for $f_n$ and $\left|\bm \phi\right|_{h_0}^2 e^{2f_n}$ imply that there exists a subsequence $f_{n_k}$ such that $f_{n_k}\to f$ weakly in $W^{1,2}$ and strongly in $L^2$, while
$\left|\bm \phi\right|_{h_0}^2 e^{2f_{n_k}}\to \left|\bm \phi\right|_{h_0}^2 e^{2f}$ weakly in $W^{1,2}$ and strongly in $L^2$. Since  $f_{n_k}\to f$ strongly in $L^2$, there exists a further subsequence, still
denoted by $f_{n_k}$, that converges to $f$ almost everywhere, so the inequalities~\eqref{eq:vortex-pointwise-inequality.W-1-2-estimate.1} imply~\eqref{def:weak-vortex-inequality.1} almost everywhere. Moreover,
multiplying the vortex equation~\eqref{eqn:vortex} for the pair $f_{n_k}, \psi_{n_k}$ by any $v\in C^\infty(\Sigma)$, integrating, and taking the limit $k\to\infty$, we see that $f$ satisfies the testing equation
\eqref{eqn:weak-vortex}. Therefore $f$ is a \emph{weak} solution to the vortex equation with background metric $\omega_{\psi}$.

Taking the difference of equation \eqref{eqn:vortex} for $f_n$ and $f_m$, and integrating against $f_n-f_m$, we obtain
\begin{align*}
\int_\Sigma \left|\nabla (f_n-f_m)\right|_{\omega_0}^2\omega_0
& = \tau\int_\Sigma\left(f_n - f_m\right) i\partial\bar\partial \left(\psi_n - \psi_m\right)\\
&\qquad  -
\int_0^1 \mathrm{d}t \int_\Sigma \left(f_n-f_m\right)^2 \left|\bm\phi\right|_{h_0}^2 e^{2\left[(1-t)f_m+tf_n\right]}\left[ (1-t)\omega_{\psi_m} + t\omega_{\psi_n} \right]\\
&\qquad - \int_0^1 \mathrm{d}t \int_\Sigma \left|\bm\phi\right|_{h_0}^2 e^{2\left[(1-t)f_m+tf_n\right]} (f_n-f_m)i\partial\bar\partial \left(\psi_n-\psi_m\right)\\
& \leq
\frac{1}{2}\int_\Sigma \left|\nabla (f_n-f_m)\right|_{\omega_0}^2\omega_0
+
\frac{\tau^2}{8}\int_\Sigma \left| \nabla (\psi_n-\psi_m)\right|_{\omega_0}^2\omega_0 \\
&\qquad  -
\frac{1}{2}\int_\Sigma \left(
\left|\bm\phi\right|_{h_0}^2 e^{2f_n} - \left|\bm\phi\right|_{h_0}^2 e^{2f_m}\right) i\partial\bar\partial \left(\psi_n-\psi_m\right)\\
& \leq
\frac{1}{2}\int_\Sigma \left|\nabla (f_n-f_m)\right|_{\omega_0}^2\omega_0
+
\frac{\tau^2}{8}\int_\Sigma \left| \nabla (\psi_n-\psi_m)\right|_{\omega_0}^2\omega_0 \\
& \qquad +
\frac{1}{2}\tau\sqrt{2\pi N}\left(\int_\Sigma\left|\nabla\left(\psi_n-\psi_m\right)\right|_{\omega_0}^2\omega_0\right)^\frac{1}{2}.
\end{align*}
Notice that in the last inequality we used the Cauchy--Schwarz inequality and \eqref{eqn:nablaphi-L2-bound}. Therefore,
\begin{equation}
\label{est:gradient-L2}
\int_\Sigma \left|\nabla \left(f_n - f_m\right)\right|_{\omega_0}^2\omega_0
\leq
\frac{\tau^2}{4}\int_\Sigma  \left| \nabla (\psi_n-\psi_m)\right|_{\omega_0}^2\omega_0
 +
\tau\sqrt{2\pi N} \left(  \int_\Sigma \left|\nabla \left(\psi_n-\psi_m\right)\right|_{\omega_0}^2 \omega_0\right)^\frac{1}{2}.
\end{equation}
By the above argument, any subsequence $f_{n_k}$ of $f_n$ has a further subsequence $f_{n_{k_l}}$ that converges strongly in $W^{1,2}$ to some $f\in W^{1,2}$, which is a weak solution to the vortex equation with
background metric $\omega_\psi$. The estimate~\eqref{est:gradient-L2} now implies that any two such subsequential limits $f$ and $f'$, which are in $W^{1,2}$, must differ by a constant.
But \eqref{eqn:weak-vortex} applied to the test function $v=1$ shows that any weak solution $f$ to the vortex equation with background metric $\omega_{\psi}$ satisfies the identity
\[
\int_\Sigma \left|\bm\phi\right|_{h_0}^2 e^{2f} \omega_0 -
2  i\partial \left( \left|\bm\phi\right|_{h_0}^2 e^{2f}\right)\wedge\bar\partial \psi
=
\tau \left( V - \frac{4\pi N}{\tau}\right)>0.
\]
Therefore, if two weak solutions $f$ and $f'$ with the same background metric differ by a constant $c$, i.e., $f'=f+c$, then it must hold that $c=0$. In conclusion, there exists a unique $f\in W^{1,2}$, which is a
weak solution to the vortex equation with background metric $\omega_\psi$, such that any subsequence $f_{n_k}$ of $f_n$ has a further subsequence that converges strongly in $W^{1,2}$ to $f$. Since $\{f_n\}$ is
uniformly bounded in $W^{1,2}$, this implies $f_n\to f$ in $W^{1,2}$ too. As this holds for any subsequence $f_{n_k}$, the same $f\in W^{1,2}$ works for any $\{\psi_n\} \subset \mathcal{P}_{\omega_0}$ with
$\psi_n\to \psi$ in $W^{1,2}$, as required.
\end{proof}

\subsection{The extended reduced $\alpha$-K-energy}
\label{sub:ext-grav-vortex-K-energy}

We address next the promised extension of the reduced $\alpha$-K-energy in Proposition \ref{prop:convexalphaK}, to the completion $\mathcal{E}_1$ of the space of smooth K\"ahler potentials $\mathcal{P}_{\omega_0}$.
We will see that, when further restricted to the space of normalized K\"ahler potentials $\overline{\mathcal{R}}$, the reduced $\alpha$-K-energy inherits good semicontinuity properties.
We keep the notation from the previous two sections. Following Proposition~\ref{prop:explicit}, the reduced $\alpha$-K-energy $\mathcal{K}_\alpha\colon \mathcal{P}_{\omega_0} \to\mathbb{R}$ is given by
\begin{equation}\label{def:K-energy.before-extension}
\mathcal{K}_\alpha(\varphi)=\widetilde{\mathcal{K}}_\alpha(f_\varphi,\varphi)=\mathcal{K}(\varphi)+\mathcal{M}_\alpha(f_\varphi,\varphi).
\end{equation}
Following Berman--Darvas--Lu~\cite{BDL1}, we extend the first term in~\eqref{def:K-energy.before-extension} to a functional (the extended K-energy)
$$
\mathcal{K}\colon\mathcal{E}_1\lto\mathbb{R}\cup\{+\infty\},
$$
using Chen--Tian's explicit formula in terms of the K\"ahler potential, that is,
\begin{equation}\label{def:extendedK-energy}
\mathcal{K}(\psi)
=\frac{V}{2}\text{Ent}(\omega_0,\omega_\psi)+\frac{\langle S\rangle}{2}\int_\Sigma\psi\left(\omega_\psi-\omega_0\right),
\end{equation}
for all $\psi\in\mathcal{E}_1$, where $\text{Ent}(\omega_0, \omega_\psi)$ is the entropy of the measure $\omega_\psi$ with respect to the measure $\omega_0$ defined via a Legendre transformation.

To construct an appropriate extension of the second summand $\mathcal{M}_\alpha\colon \mathcal{P}_{\omega_0} \to\mathbb{R}$ in~\eqref{def:K-energy.before-extension}, given $\psi\in \mathcal{E}_1$
we consider the weak solution $f_\psi\in W^{1,2}$ of the vortex equation with background metric $\omega_\psi$ given by Proposition~\ref{prop:W-1-2-estimate}. Using the explicit formula for $\mathcal{M}_\alpha$ in
Proposition~\ref{prop:explicit}, we define
$$
\mathcal{M}_\alpha\colon \mathcal{E}_1 \lto\mathbb{R}
$$
by the formula
\begin{equation}\label{eqn:M-extension}
\begin{split}
\mathcal{M}_\alpha(\psi)
& =
2\alpha \int_\Sigma \left( f_\psi \cdot 2 iF_{h_0} +  2 i \partial f_\psi\wedge \bar \partial f_\psi\right)\\
&\quad - 2\alpha\tau \int_\Sigma \left( f_\psi\omega_0 - 2i \partial f_\psi\wedge \bar\partial \psi \right)
-
\left( c - \langle S\rangle\right) \int_\Sigma i\partial \psi \wedge\bar\partial \psi\\
&\quad + \alpha\tau \left( V- \frac{4\pi N}{\tau} \right)
-
\alpha \int_\Sigma \left|\bm\phi\right|_{h_0}^2 \omega_0
+
\int_\Sigma \psi\left(-\Ric \omega_0+ 2\alpha\tau iF_{h_0} +c\omega_0\right).
\end{split}
\end{equation}
Notice that \eqref{eqn:M-extension} is a well-defined extension of $\mathcal{M}_\alpha \colon \mathcal{P}_{\omega_0} \to \mathbb{R}$, by Proposition~\ref{prop:explicit}, combined with
\eqref{ineq:E1def} and $f_\psi\in W^{1,2}$.
While the extended K-energy~\eqref{def:extendedK-energy} is only known to be lower semicontinuous (see \cite[Theorem 4.7]{BDL1}), the next result shows that \eqref{eqn:M-extension} is continuous with respect to the
metric $d_1$ when restricted to the completion $\overline{\mathcal{R}} \subset \mathcal{E}_1$ of the space of normalized K\"ahler potentials (see Theorem \ref{thm:Potentialcompletion}).

\begin{lemma}\label{lem:continuityofMalphaoncompletion}
The functional $\mathcal{M}_\alpha \colon \overline{\mathcal{R}} \to \mathbb{R}$ defined by \eqref{eqn:M-extension} is continuous with respect to the Finsler metric $d_1$.
\end{lemma}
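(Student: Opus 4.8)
The plan is to transfer the problem to the Sobolev setting and then treat $\mathcal{M}_\alpha$ summand by summand. By Lemma~\ref{lem:metric-completion-Kahler-potentials.1}(2), on $\overline{\mathcal{R}}$ the metric $d_1$ induces the same topology as the $W^{1,2}$-norm, so it suffices to prove that $\mathcal{M}_\alpha$ is sequentially continuous along any sequence $\psi_k\to\psi$ in $W^{1,2}$ with $\psi_k,\psi\in\overline{\mathcal{R}}$. The whole argument then hinges on upgrading Proposition~\ref{prop:W-1-2-estimate} to the statement that the weak-vortex assignment $\psi\mapsto f_\psi$ is continuous from $(\overline{\mathcal{R}},W^{1,2})$ into $W^{1,2}$.

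To establish this continuity I would first extend the a priori gradient estimate~\eqref{est:gradient-L2} from smooth backgrounds to arbitrary potentials in $\mathcal{E}_1$. Given $\psi,\psi'\in\mathcal{E}_1$, choose smooth $\psi_n\to\psi$ and $\psi'_n\to\psi'$ in $W^{1,2}$; Proposition~\ref{prop:W-1-2-estimate} guarantees that the corresponding smooth vortex solutions converge to $f_\psi$ and $f_{\psi'}$ in $W^{1,2}$, so passing to the limit in~\eqref{est:gradient-L2} yields
$$
\int_\Sigma\lvert\nabla(f_\psi-f_{\psi'})\rvert_{\omega_0}^2\,\omega_0\leq\frac{\tau^2}{4}\int_\Sigma\lvert\nabla(\psi-\psi')\rvert_{\omega_0}^2\,\omega_0+\tau\sqrt{2\pi N}\left(\int_\Sigma\lvert\nabla(\psi-\psi')\rvert_{\omega_0}^2\,\omega_0\right)^{1/2}.
$$
In particular $\lVert\nabla(f_{\psi_k}-f_\psi)\rVert_{L^2}\to0$. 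It remains to control the additive constants $\fint f_{\psi_k}\omega_0$: the pointwise bound $\lvert\bm\phi\rvert_{h_0}^2 e^{2f_{\psi_k}}\leq\tau$ gives a uniform upper bound, while the contradiction argument of Proposition~\ref{prop:W-1-2-estimate}, based on the integral identity obtained by testing~\eqref{eqn:weak-vortex} against $v\equiv1$, excludes $\fint f_{\psi_k}\omega_0\to-\infty$. Hence $\{f_{\psi_k}\}$ is bounded in $W^{1,2}$; any subsequence admits a further subsequence converging weakly in $W^{1,2}$ and strongly in $L^2$ to some $g$ with $\nabla g=\nabla f_\psi$ (by the estimate above) which is again a weak solution with background $\omega_\psi$. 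Uniqueness in Proposition~\ref{prop:W-1-2-estimate} forces $g=f_\psi$, so $f_{\psi_k}\to f_\psi$ in $W^{1,2}$.

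With both $\psi_k\to\psi$ and $f_{\psi_k}\to f_\psi$ strongly in $W^{1,2}$, I would conclude by passing to the limit term by term in~\eqref{eqn:M-extension}. The summands linear in $f_\psi$ or in $\psi$, namely $\int_\Sigma f_\psi\,iF_{h_0}$, $\int_\Sigma f_\psi\,\omega_0$, and $\int_\Sigma\psi(-\Ric \omega_0+2\alpha\tau iF_{h_0}+c\omega_0)$, pair the convergent sequences against fixed smooth forms and so converge by $L^1$ (indeed $L^2$) convergence. The purely quadratic terms $\int_\Sigma i\partial f_\psi\wedge\bar\partial f_\psi$ and $\int_\Sigma i\partial\psi\wedge\bar\partial\psi$, together with the bilinear term $\int_\Sigma i\partial f_\psi\wedge\bar\partial\psi$, are continuous quadratic and bilinear functions of the \emph{strongly} $L^2$-convergent gradients $\partial f_{\psi_k}$ and $\partial\psi_k$ (here strong convergence is essential, as weak convergence would not preserve the squared norms), and therefore converge to their values at the limit. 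The two remaining summands are constants, so $\mathcal{M}_\alpha(\psi_k)\to\mathcal{M}_\alpha(\psi)$.

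The main obstacle is the second paragraph: extending the a priori estimate~\eqref{est:gradient-L2} to weak solutions and, above all, pinning down the additive constant of $f_\psi$ through the uniqueness and the integral identity of Proposition~\ref{prop:W-1-2-estimate}. Once $\psi\mapsto f_\psi$ is known to be $W^{1,2}$-continuous, the remaining verification is routine, the only point deserving care being that strong (not merely weak) $W^{1,2}$-convergence is needed to handle the quadratic gradient terms.
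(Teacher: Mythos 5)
Your proposal is correct, but it reaches the conclusion by a genuinely different route than the paper. The paper never proves that the solution map $\psi\mapsto f_\psi$ is continuous on all of $\overline{\mathcal{R}}$: it first notes that for \emph{smooth} sequences $\psi_n\to\psi$ the claim follows from Lemma~\ref{lem:metric-completion-Kahler-potentials.1}, Proposition~\ref{prop:W-1-2-estimate} and formula \eqref{eqn:M-extension}, and then treats arbitrary sequences in $\overline{\mathcal{R}}$ by a soft diagonal argument: approximate each $\psi_n$ by smooth $\psi_{n,j}$, choose $j_n$ with $\lVert\psi_{n,j_n}-\psi_n\rVert_{W^{1,2}}\leq 1/n$ and $\lvert\mathcal{M}_\alpha(\psi_{n,j_n})-\mathcal{M}_\alpha(\psi_n)\rvert\leq 1/n$, and apply the smooth case once more to $\psi_{n,j_n}\to\psi$. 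You instead upgrade Proposition~\ref{prop:W-1-2-estimate} to genuine $W^{1,2}$-continuity of $\psi\mapsto f_\psi$ on $\overline{\mathcal{R}}$, which is more work but yields a stronger intermediate result (a quantitative modulus of continuity for $\nabla f_\psi$). Two points in your hard step need the same care the paper takes inside Proposition~\ref{prop:W-1-2-estimate}: the contradiction argument excluding $\fint f_{\psi_k}\omega_0\to-\infty$, and the claim that the subsequential limit is a weak solution with background $\omega_\psi$, both require the uniform bound \eqref{eqn:nablaphi-L2-bound} to be extended to weak solutions (it does extend, by weak lower semicontinuity along the smooth approximations Proposition~\ref{prop:W-1-2-estimate} provides) together with weak--strong pairing of $\partial\bigl(\lvert\bm\phi\rvert_{h_0}^2e^{2f_{\psi_k}}\bigr)$ against $\bar\partial\psi_k\to\bar\partial\psi$ in $L^2$; these are fixable gaps, not errors. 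Note finally that the paper's diagonal trick would also give continuity of $\psi\mapsto f_\psi$ for free (pick smooth $\psi_{k,j_k}$ with $\lVert f_{\psi_{k,j_k}}-f_{\psi_k}\rVert_{W^{1,2}}\leq 1/k$ and apply Proposition~\ref{prop:W-1-2-estimate} to $\psi_{k,j_k}\to\psi$), bypassing all of your weak-solution analysis; this is what makes the paper's proof shorter.
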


\begin{proof}
Let $\psi\in\overline{\mathcal{R}}$ and $\left\{\psi_n\right\}$ be a sequence in $\overline{\mathcal{R}}$ satisfying $d_1(\psi_n, \psi)\to 0$ as $n\to \infty$. If all the functions $\psi_n$ are smooth
$\omega_0$-PSH and satisfy $d_1(\psi_n, \psi)\to 0$, then the $W^{1,2}$ convergence $f_{\psi_n}\to f_\psi$ (given by Lemma~\ref{lem:metric-completion-Kahler-potentials.1} and Proposition~\ref{prop:W-1-2-estimate})
and the explicit formula \eqref{eqn:M-extension} imply $\mathcal{M}_\alpha(\psi_n)\to \mathcal{M}_\alpha(\psi)$ as $n\to \infty$. More generally, if we only assume that $\left\{
\psi_n\right\}\subset\overline{\mathcal{R}}$, for each fixed $n$ we can find a sequence $\left\{\psi_{n,j}\right\}\subset\mathcal{R}$ such that $d_1\left( \psi_{n,j}, \psi_n\right)\to 0$ as $j\to \infty$. By the
already established convergence, applied to the smooth $\omega_0$-PSH functions $\psi_{n,j}$, we can pick a subsequence $\psi_{n,j_n}$ such that for all $n$,
\[
\lVert  \psi_{n,j_n} - \psi_n\rVert_{W^{1,2}}\leq \frac{1}{n}, \quad
\left| \mathcal{M}_\alpha(\psi_{n,j_n}) - \mathcal{M}_\alpha(\psi_n) \right|\leq \frac{1}{n}.
\]
The sequence $\psi_{n,j_n}$ of smooth $\omega_0$-PSH functions converges to $\psi$ in $W^{1,2}$ as $n\to \infty$ and so $\left| \mathcal{M}_\alpha(\psi_{n,j_n})- \mathcal{M}_\alpha(\psi)\right|\to 0$ as $n\to
\infty$. Therefore we conclude that $\lim_{n\to \infty}\mathcal{M}_\alpha(\psi_n) = \mathcal{M}_\alpha(\psi)$.
\end{proof}

Summing up the previous discussion, we consider now the extension
\begin{equation}\label{def:extended-alpha-K-energy.0}
\mathcal{K}_\alpha\colon\overline{\mathcal{R}}\to\mathbb{R}\cup\{+\infty\}
\end{equation}
of the functional~\eqref{def:K-energy.before-extension} given by
\begin{equation}\label{def:extended-alpha-K-energy}
\mathcal{K}_\alpha(\psi)=\mathcal{K}(\psi)+\mathcal{M}_\alpha(\psi),
\end{equation}
for all $\psi\in \overline{\mathcal{R}}$, with $\mathcal{K}$ and $\mathcal{M}_\alpha$ defined in \eqref{def:extendedK-energy} and \eqref{eqn:M-extension}, respectively.

\begin{theorem}\label{thm:extended-vortex-K-energy}
The functional $\mathcal{K}_\alpha\colon\overline{\mathcal{R}} \to\mathbb{R}\cup\{+\infty\}$ is the largest $d_1$-lower semicontinuous extension of the reduced $\alpha$-K-energy $\mathcal{K}_\alpha$ from
$\mathcal{R}$ to $\overline{\mathcal{R}}$.
\end{theorem}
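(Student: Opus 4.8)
The plan is to reduce the statement to two facts already available to us. The first is that the extended K-energy $\mathcal{K}$ of \eqref{def:extendedK-energy} is the largest $d_1$-lower semicontinuous extension of the Mabuchi K-energy from $\mathcal{R}$ to $\overline{\mathcal{R}}$; this is the content of \cite[Theorem 4.7]{BDL1}. The second is that $\mathcal{M}_\alpha$ of \eqref{eqn:M-extension} is $d_1$-continuous on $\overline{\mathcal{R}}$, which is exactly Lemma \ref{lem:continuityofMalphaoncompletion}. Once these are in hand, the theorem follows from a soft stability property of the notion of largest lsc extension under addition of a continuous functional, which I would isolate as an abstract lemma.

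Concretely, I would first record the following principle. Let $(X,d)$ be a metric space, $Y\subset X$ a dense subset, $F\colon Y\to\mathbb{R}$ a functional, $\bar F\colon X\to\mathbb{R}\cup\{+\infty\}$ its largest $d$-lsc extension, and $g\colon X\to\mathbb{R}$ a $d$-continuous functional. Then $\bar F+g$ is the largest $d$-lsc extension of the functional $F+g|_Y$ on $Y$. The verification is immediate in both directions: $\bar F+g$ is a sum of an lsc and a continuous functional, hence lsc, and it restricts to $F+g|_Y$ on $Y$, so it is an lsc extension; conversely, if $H$ is any $d$-lsc extension of $F+g|_Y$, then $H-g$ is lsc and restricts to $F$ on $Y$, so $H-g\le\bar F$ by maximality of $\bar F$, i.e. $H\le\bar F+g$.

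I would then apply this principle with $X=\overline{\mathcal{R}}$, $Y=\mathcal{R}$ (dense by Theorem \ref{thm:Potentialcompletion}), $F$ the Mabuchi K-energy, $\bar F=\mathcal{K}$, and $g=\mathcal{M}_\alpha$. Since $\mathcal{K}_\alpha|_{\mathcal{R}}=\mathcal{K}|_{\mathcal{R}}+\mathcal{M}_\alpha|_{\mathcal{R}}$ by \eqref{def:K-energy.before-extension}, the principle gives that $\mathcal{K}_\alpha=\mathcal{K}+\mathcal{M}_\alpha$ is the largest $d_1$-lsc extension of $\mathcal{K}_\alpha|_{\mathcal{R}}$, which is precisely the assertion.

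The step I expect to demand the most care is ensuring that $\mathcal{K}$ is the largest $d_1$-lsc extension on the normalized slice $\overline{\mathcal{R}}$, and not merely on the ambient completion $\mathcal{E}_1$ where \cite[Theorem 4.7]{BDL1} is most naturally stated; the lower semicontinuity itself passes to $\overline{\mathcal{R}}$ trivially, but the maximality must be transported. Here I would use that the Mabuchi K-energy is invariant under adding constants, together with the description $\overline{\mathcal{R}}=\mathcal{E}_1\cap\AMY^{-1}(0)$ of \eqref{eq:normalisationofphi} and the $d_1$-continuity of $\AMY$: the normalization $u\mapsto u-\AMY(u)$ is a $d_1$-continuous retraction of $\mathcal{E}_1$ onto $\overline{\mathcal{R}}$, and any $d_1$-lsc extension $H$ on $\overline{\mathcal{R}}$ pulls back along it to a $d_1$-lsc extension of the K-energy on all of $\mathcal{E}_1$, whose maximality forces $H\le\mathcal{K}|_{\overline{\mathcal{R}}}$. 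With this transfer in place everything else is formal, and the genuine analytic substance of the theorem is carried entirely by the continuity statement of Lemma \ref{lem:continuityofMalphaoncompletion}.
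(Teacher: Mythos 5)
Your proof is correct and follows essentially the same route as the paper's: the decomposition $\mathcal{K}_\alpha=\mathcal{K}+\mathcal{M}_\alpha$, the $d_1$-continuity of $\mathcal{M}_\alpha$ (Lemma~\ref{lem:continuityofMalphaoncompletion}), and the maximality statement of \cite[Theorem 4.7]{BDL1}. The paper compresses into one sentence what you spell out — the abstract principle that adding a continuous functional preserves largest lower semicontinuous extensions, and the transfer of maximality from $\mathcal{E}_1$ to the normalized slice $\overline{\mathcal{R}}$ via the retraction $u\mapsto u-\AMY(u)$ — and your explicit treatment of these two formal steps is a valid filling-in of what the paper leaves implicit.
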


\begin{proof}
This follows from the decomposition~\eqref{def:extended-alpha-K-energy}, Lemma~\ref{lem:continuityofMalphaoncompletion} and the fact that the functional $\mathcal{K}\colon\mathcal{E}_1\to\mathbb{R}\cup\{+\infty\}$
is the largest
$d_1$-lower semicontinuous  extension of the K-energy $\mathcal{K}$ from $\mathcal{P}_{\omega_0}$ to $\mathcal{E}_1$, by~\cite[Theorem 4.7]{BDL1}.
\end{proof}

\section{Convexity of the reduced $\alpha$-K-energy}
\label{section:Laplacianestimate}

\subsection{Finite-energy geodesics}\label{sec:finenergy}

The present section is devoted to study the convexity properties of the (extended) reduced $\alpha$-K-energy defined by  Theorem \ref{thm:extended-vortex-K-energy}. For simplicity, we will abuse notation and refer
to this functional simply as the $\alpha$-K-energy. We build on~\cite[Theorem~4.7]{BDL1}, which proves that the extended K-energy in~\eqref{def:extendedK-energy} 
Theorem~\ref{thm:uniqueness})
has good convexity properties in the completion $\mathcal{E}_1$ of the space of K\"ahler potentials described in Theorem \ref{thm:Potentialcompletion}.

To start, we summarize Darvas' construction of special geodesics on the completion $\mathcal{E}_1$, for the case of Riemann surfaces (see Theorem \ref{thm:finiteengeod}).
Even though $(\mathcal{E}_1,d_1)$ is a geodesic metric space by \cite[Theorem 4.17]{Darvas0}, that is, any two points in $\in\mathcal{E}_1$ can be connected by a geodesic segment, geodesics connecting different
points may not be unique (see~\cite[p. 208]{Darvas0}).
Nonetheless, one can obtain a distinguished geodesic joining any two given points in $(\mathcal{E}_1,d_1)$, via a sequence of weak geodesic segments, in the sense of Chen \cite{Chen}.
This type of geodesics are known as \emph{finite-energy geodesics}, and play a key role in the present paper.
To recall their construction note that, as discovered by Semmes and Donaldson~\cite{Don,Semmes}, the geodesic equation on the space of K\"ahler potentials \eqref{eq:geodesiceq} can be written as a Homogeneous
Complex Monge--Amp\`ere equation. Let $\varphi\colon[0,1]\to \mathcal{P}_{\omega_0},\, t\mapsto\varphi_t=\varphi(t,\cdot)$ be a smooth map,
$$
S = \{s \; | \; 0 < \textrm{ Re } s < 1 \} \subset \mathbb{C},
$$
and $X = S \times \Sigma$. Let $\varphi^c \in C^\infty(X)$ be the complexification of $\varphi$, defined by $\varphi^c(s,z) = \varphi(\textrm{Re } s,z)$. Then $\varphi_t$ is a smooth geodesic if and only if
$\varphi^c$ satisfies the equation
$$
\Omega_{\varphi^c}^2 = 0,
$$
where $\Omega_{\varphi^c} : = \tilde \omega_0 + \mathrm{d}\mathrm{d}^c \varphi^c$ and $\tilde \omega_0$ is the pullback of $\omega_0$ to $X$. Given two K\"ahler potentials, there may be no smooth
geodesic connecting them but, as shown by Chen \cite{Chen}, there exist certain weak geodesics connecting points in $\mathcal{P}_{\omega_0}$.

\begin{theorem}[\cite{Chen}]\label{thm:Chen.1}
Given $\varphi_0,\varphi_1 \in \mathcal{P}_{\omega_0}$, there exists a unique weak solution of the Dirichlet problem
\begin{itemize}

\item $(\tilde \omega_0 + \mathrm{d}\mathrm{d}^c \varphi^c)^2 = 0$,

\item $\varphi^c(s + ir,z) = \varphi^c(s,z)$ for all $z \in \Sigma$ and $r \in \mathbb{R}$,

\item $\lim_{s \to j} \varphi^c(s,\cdot) = \varphi_j$ uniformly on $\Sigma$, for $j = 0,1$,

\end{itemize}
such that $\varphi^c$ has bounded Laplacian on $X = S \times \Sigma$. Furthermore, $\varphi_t = \varphi^c(t,\cdot)$, for $t \in [0,1]$, gives a well-defined curve
$$
[0,1] \to \{\psi \in \textnormal{PSH}(\Sigma,\omega_0) \; | \; \Delta_{\omega_0} \psi \in L^\infty(\Sigma) \}
$$
which we will refer to as Chen's $C^{1,\bar{1}}$ geodesic.
\end{theorem}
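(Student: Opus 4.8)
The plan is to follow Chen's original strategy \cite{Chen}: recast the geodesic equation as the Dirichlet problem for a degenerate (homogeneous) complex Monge--Amp\`ere equation on the complex surface $X = S\times\Sigma$, solve a family of non-degenerate perturbations, and extract the desired solution as a limit through uniform a priori estimates. Concretely, I would first fix smooth $S^1$-invariant extensions to $X$ of the boundary data $\varphi_0,\varphi_1$, and for $\epsilon>0$ consider the non-degenerate equation
\begin{equation*}
\Omega_{u_\epsilon}^2 = \epsilon\,\tilde\omega_0^2, \qquad \Omega_{u_\epsilon} = \tilde\omega_0 + \mathrm{d}\mathrm{d}^c u_\epsilon > 0,
\end{equation*}
with the prescribed boundary values on the two ends $\{\mathrm{Re}\,s\to 0\}$ and $\{\mathrm{Re}\,s\to 1\}$. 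For each fixed $\epsilon$, the existence of a smooth solution $u_\epsilon$ follows from the solvability of the Dirichlet problem for the complex Monge--Amp\`ere equation on a compact manifold with boundary (Caffarelli--Kohn--Nirenberg--Spruck, Guan), and uniqueness of $u_\epsilon$ together with the $S^1$-symmetry of the data forces $u_\epsilon$ to be $S^1$-invariant.

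The core of the argument is to establish a priori estimates that are \emph{uniform in $\epsilon$}. First I would prove a $C^0$ bound by the maximum principle, sandwiching $u_\epsilon$ between explicit sub- and supersolutions built from $\varphi_0$ and $\varphi_1$ (for instance, affine interpolation in $\mathrm{Re}\,s$). Next come the gradient estimates: an interior $C^1$ bound together with a boundary gradient estimate, the latter obtained from barrier functions adapted to the product geometry of $X$. The crucial and most delicate step is a uniform bound on the Laplacian, i.e.\ the $C^{1,\bar1}$ estimate. I would run an Aubin--Yau type maximum-principle argument on a quantity of the form $\log \operatorname{tr}_{\tilde\omega_0}\Omega_{u_\epsilon} - A\,u_\epsilon$, absorbing the curvature terms of the product metric $\tilde\omega_0$ using the bounded geometry of $X$ and controlling the boundary contribution via the already-established $C^1$ bounds.

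I expect this Laplacian estimate to be the main obstacle, exactly as in Chen's paper: the right-hand side $\epsilon\,\tilde\omega_0^2$ collapses in the limit, so one must extract a bound on $\Delta u_\epsilon$ whose constant is stable under this degeneration, and the maximum-principle computation must be arranged so that the terms involving the (degenerating) Monge--Amp\`ere measure enter with a favourable sign. With the uniform $C^0$, $C^1$ and $C^{1,\bar1}$ bounds in hand, standard elliptic compactness ($W^{2,p}$ and $C^{1,\beta}$ estimates) yields, along a subsequence $\epsilon\to 0$, a limit $\varphi^c$ with bounded Laplacian solving $\Omega_{\varphi^c}^2 = 0$ in the weak (pluripotential) sense and attaining the boundary data uniformly, while $S^1$-invariance passes to the limit.

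Finally, uniqueness follows from the comparison principle for the degenerate complex Monge--Amp\`ere operator: given two solutions with bounded Laplacian and the same boundary data, an integration-by-parts argument, valid precisely because both Laplacians are bounded so that the associated currents pair well, combined with the maximum principle, forces them to coincide. Restricting the resulting $S^1$-invariant potential $\varphi^c$ to the slices $\varphi_t = \varphi^c(t,\cdot)$ produces the asserted curve into $\{\psi\in\textnormal{PSH}(\Sigma,\omega_0)\mid \Delta_{\omega_0}\psi\in L^\infty(\Sigma)\}$, which is Chen's $C^{1,\bar1}$ geodesic.
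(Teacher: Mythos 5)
Your proposal is correct and follows exactly the strategy of Chen's original paper \cite{Chen} --- regularize the homogeneous Monge--Amp\`ere equation by the non-degenerate family $\Omega_{u_\epsilon}^2=\epsilon\,\tilde\omega_0^2$ (the $\epsilon$-geodesics), prove $C^0$, gradient and Laplacian estimates uniform in $\epsilon$, pass to the limit, and conclude uniqueness by the comparison principle --- which is precisely the approach this paper defers to, since it cites \cite{Chen} without giving a proof and its remark following the theorem statement confirms the $\epsilon$-geodesic approximation as the underlying argument. No gaps to report.
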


Actually, Chen \cite{Chen} proved that the curve $t \mapsto \varphi_t$ can be approximated by smooth solutions of the Dirichlet problem for \emph{$\epsilon$-geodesics}, given by
\begin{itemize}

\item $\left( \varphi_\epsilon'' - \left| \mathrm{d}\varphi_\epsilon'\right|_{\omega_{\varphi_\epsilon}}^2\right) \omega_{\varphi_\epsilon}
= \epsilon\cdot \omega_0$,

\item $\lim_{t \to j} \varphi_\epsilon(t,\cdot) = \psi_j$ uniformly on $\Sigma$, for $j = 0,1$.

\end{itemize}
We are ready to state Darvas' construction of \emph{finite-energy geodesics} on $(\mathcal{E}_1,d_1)$ (see \cite[Theorem 4.17]{Darvas0}), in the case of our interest. Given $\psi_0,\psi_1 \in \mathcal{E}_1$, we
consider, as in \cite[Section 4]{Darvas0}, sequences $\{\psi_j^k\} \subset \mathcal{P}_{\omega_0}$ decreasing pointwise to $\psi_j$. Let $t \mapsto \varphi^k_t$ be Chen's (unique) $C^{1,\bar{1}}$ geodesic joining
$\psi_0^k$ with $\psi_1^k$. Define $t \mapsto \varphi_t$ as the decreasing pointwise limit
\begin{equation}\label{eq:finiteengeod}
\varphi_t = \lim_{k \to \infty}\varphi^k_t, \qquad t \in [0,1].
\end{equation}

\begin{theorem}[\cite{Darvas0}]\label{thm:finiteengeod}
The space $(\mathcal{E}_1,d_1)$ is a geodesic metric space. Furthermore, given arbitrary $\psi_0,\psi_1 \in \mathcal{E}_1$, the curve \eqref{eq:finiteengeod} takes values in $\mathcal{E}_1$, is independent of
choices, and defines a geodesic connecting $\psi_0$ with $\psi_1$.
\end{theorem}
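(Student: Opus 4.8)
The plan is to construct the geodesic by monotone approximation from Chen's $C^{1,\bar 1}$ geodesics (Theorem~\ref{thm:Chen.1}) and then verify the four assertions in turn — that the curve \eqref{eq:finiteengeod} takes values in $\mathcal{E}_1$, is independent of the approximating sequences, is a metric geodesic, and that consequently $(\mathcal{E}_1,d_1)$ is a geodesic metric space. The engine throughout will be the comparison principle for the homogeneous complex Monge--Amp\`ere Dirichlet problem of Theorem~\ref{thm:Chen.1}, combined with the metric structure recorded in Theorem~\ref{thm:Potentialcompletion} and the $W^{1,2}$-description of $\mathcal{E}_1$ in Lemma~\ref{lem:metric-completion-Kahler-potentials.1}.

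First I would treat smooth endpoints. For $\varphi_0,\varphi_1\in\mathcal{P}_{\omega_0}$, I would recall that Chen's $C^{1,\bar 1}$ geodesic $t\mapsto\varphi_t$ is an honest $d_1$-geodesic: approximating it by the $\epsilon$-geodesics of Theorem~\ref{thm:Chen.1} and differentiating, one checks that $t\mapsto\int_\Sigma\lvert\varphi_t'\rvert\,\omega_{\varphi_t}$ is constant up to an error that vanishes as $\epsilon\to 0$ thanks to the uniform bound on $\Delta_{\omega_0}\varphi^c$, so the curve has constant $d_1$-speed equal to $d_1(\varphi_0,\varphi_1)$; this is the content of \cite{Chen,Darvas0}. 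In particular $d_1(\varphi_s,\varphi_t)=\lvert t-s\rvert\,d_1(\varphi_0,\varphi_1)$ for all $s,t\in[0,1]$.

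Next, for $\psi_0,\psi_1\in\mathcal{E}_1$, I would fix decreasing sequences $\{\psi_j^k\}\subset\mathcal{P}_{\omega_0}$ with $\psi_j^k\searrow\psi_j$ (available by the Blocki--Kolodziej regularization \cite{BK} already invoked in the proof of Lemma~\ref{lem:metric-completion-Kahler-potentials.1}), and let $t\mapsto\varphi_t^k$ be Chen's geodesic joining $\psi_0^k$ with $\psi_1^k$. The comparison principle for the Dirichlet problem of Theorem~\ref{thm:Chen.1} shows that decreasing the boundary data decreases the unique bounded solution, so $\varphi_t^k$ is nonincreasing in $k$ for each fixed $t$ and the pointwise limit $\varphi_t$ in \eqref{eq:finiteengeod} exists and lies in $\textnormal{PSH}(\Sigma,\omega_0)$. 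To see that $\varphi_t\in\mathcal{E}_1$ I would derive a uniform energy bound: by the geodesic property of each $\varphi^k$ and the triangle inequality, $d_1(\psi_0^k,\varphi_t^k)\leq d_1(\psi_0^k,\psi_1^k)$, and the right-hand side stays bounded since $\psi_j^k\to\psi_j$ in $d_1$ by Theorem~\ref{thm:Potentialcompletion}. Translating this into the $W^{1,2}$ description of Lemma~\ref{lem:metric-completion-Kahler-potentials.1} yields $\sup_k\lVert\varphi_t^k\rVert_{W^{1,2}}<\infty$ after the normalization of $\overline{\mathcal{R}}$, whence the decreasing limit $\varphi_t$ belongs to $\mathcal{E}_1=W^{1,2}\cap\textnormal{PSH}(\Sigma,\omega_0)$.

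Finally I would establish the geodesic property and independence of choices. Passing to the limit $k\to\infty$ in $d_1(\varphi_s^k,\varphi_t^k)=\lvert t-s\rvert\,d_1(\psi_0^k,\psi_1^k)$, and using that by the very definition of the extended metric in Theorem~\ref{thm:Potentialcompletion} the distance between decreasing limits is the limit of the distances, gives $d_1(\varphi_s,\varphi_t)=\lvert t-s\rvert\,d_1(\psi_0,\psi_1)$, so $t\mapsto\varphi_t$ is a metric geodesic. Independence of the approximating sequences follows from the monotonicity: given a second choice $\{\tilde\psi_j^k\}$, a standard interlacing argument that compares both geodesic families through a common decreasing refinement of the boundary data (using monotonicity of Chen's geodesics in their endpoints, as in \cite{Darvas0}) forces the two limits to coincide. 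Since any two points of $\mathcal{E}_1$ are now joined by such a geodesic, $(\mathcal{E}_1,d_1)$ is a geodesic metric space. The main obstacle is the uniform finite-energy (equivalently $W^{1,2}$) estimate guaranteeing that $\varphi_t$ does not leave $\mathcal{E}_1$, together with the semicontinuity needed to preserve the length-minimizing property under the monotone limit; the comparison principle is precisely what makes this limiting scheme well defined.
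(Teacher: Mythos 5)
The first thing to note is that the paper does not prove this statement at all: it is imported verbatim from Darvas (see \cite[Theorem 4.17]{Darvas0}), and the surrounding text merely recalls the construction \eqref{eq:finiteengeod}. So your proposal is really an attempt to reconstruct Darvas's proof, and judged on its own terms it has a genuine gap at its core. In your first step you claim that Chen's $C^{1,\bar 1}$ geodesic between smooth potentials is a $d_1$-geodesic because an $\epsilon$-geodesic approximation shows that the speed $t\mapsto\int_\Sigma\lvert\varphi_t'\rvert\,\omega_{\varphi_t}$ is constant. Constant speed only computes the \emph{length} of the curve; it does not show that the curve is length-\emph{minimizing}, i.e.\ that this length equals $d_1(\varphi_0,\varphi_1)$, which is by definition an infimum over all competing smooth paths. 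The inequality $d_1\leq\mathrm{length}$ is the easy direction; lower-bounding the length of an \emph{arbitrary} path joining the endpoints is precisely the hard analytic content of \cite{Darvas0} (it goes through comparison of $d_1$ with Monge--Amp\`ere-energy-type functionals, not through a speed computation), and nothing in your sketch supplies it. Since everything downstream — the identity $d_1(\varphi_s^k,\varphi_t^k)=\lvert t-s\rvert\,d_1(\psi_0^k,\psi_1^k)$ and its limit — rests on this, the proposal as written assumes the crux of the theorem. If instead you intend to quote the smooth-endpoint case as a black box from \cite{Darvas0}, that is a legitimate structure (it is how Darvas argues), but then the sketch of its proof should be deleted rather than offered as justification.

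The limit passage also misuses the paper's tools. You invoke ``the very definition of the extended metric in Theorem~\ref{thm:Potentialcompletion}'' to conclude $d_1(\varphi_s,\varphi_t)=\lim_k d_1(\varphi_s^k,\varphi_t^k)$. But that definition applies to sequences in $\mathcal{P}_{\omega_0}$ decreasing to the given potentials, whereas Chen's geodesics $\varphi_t^k$ are only $C^{1,\bar 1}$ (bounded Laplacian, not smooth K\"ahler potentials), so they are not admissible approximants in the sense of that theorem. What is actually needed is monotone continuity of the extended $d_1$ on $\mathcal{E}_1$ (if $u_k\in\mathcal{E}_1$ decreases to $u\in\mathcal{E}_1$ then $d_1(u_k,u)\to 0$), which is again a nontrivial ingredient of \cite{Darvas0}, not a definition. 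The remaining steps — the comparison principle giving monotonicity in $k$, the uniform $W^{1,2}$ bound via Lemma~\ref{lem:metric-completion-Kahler-potentials.1} (though you should drop the reference to the normalization of $\overline{\mathcal{R}}$, since the $\varphi_t^k$ are not normalized; one bounds $d_1(0,\varphi_t^k)$ by the triangle inequality and converts that to a $W^{1,2}$ bound directly), and the interlacing argument for independence of choices — are sound in outline.
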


The main goal of this section is to prove the convexity of the reduced $\alpha$-K-energy in Theorem \ref{thm:extended-vortex-K-energy} along the finite-energy geodesics constructed in Theorem \ref{thm:finiteengeod}
(Theorem \ref{thm:convexfinen}).

\subsection{A priori $W^{2,p}$ estimates along approximate geodesics}\label{sect:W2p}

In order to achieve our goal, we need to prove refined estimates for the curve of weak solutions of the vortex equation along a finite-energy geodesic, obtained in Proposition \ref{prop:W-1-2-estimate}.
For this, we will derive a priori estimates along a family of $\epsilon$-geodesics, as considered in Section \ref{sec:finenergy}.

We fix a pair of smooth K\"ahler potentials $\varphi_0, \varphi_1 \in \mathcal{P}_{\omega_0}$. Let $\{\varphi_\epsilon=\varphi_\epsilon(t)\}_{t\in [0,1]}$ be Chen's $\epsilon$-geodesic connecting $\varphi_0$ and
$\varphi_1$, i.e., solving
\begin{equation}
\label{eqn:Chen's-geodesic}
\left( \varphi_\epsilon'' - \left| \mathrm{d}\varphi_\epsilon'\right|_{\omega_{\varphi_\epsilon}}^2\right) \omega_{\varphi_\epsilon}
=
\epsilon\cdot \omega_0,
\end{equation}
and such that $\lim_{t \to j} \varphi_\epsilon(t,\cdot) = \varphi_j$ uniformly on $\Sigma$, for $j = 0,1$. Recall that $\varphi_{\epsilon}(t)$ has a uniform $C^{1,\bar 1}$ bound, that is,
$\Delta_{\omega_0}\varphi_{\epsilon}(t)$ is bounded, for a uniform constant independent of $\epsilon$. In particular, there exists a uniform constant $C >0$  such that
$$
0\leq \omega_{\varphi_\epsilon(t)}\leq C\omega_{\omega_0}.
$$
In the sequel, all (positive) uniform constants are denoted by $C$. Assume inequality \eqref{eqn:ineqtauV} and, for each $t \in [0,1]$, let $f_\epsilon(t) \in C^\infty(\Sigma,\RR)$ be the unique solution of the
vortex equation
\[
i F_{h_{\epsilon}(t)} + \frac{1}{2}\left( \left|\bm\phi\right|_{h_{\epsilon}(t)}^2 - \tau\right) \omega_\epsilon(t) =0
\]
given by Theorem \ref{th:B-GP}, where $h_{\epsilon}(t)=h_0 e^{2f_{\epsilon}(t)}$  and $\omega_\epsilon(t) = \omega_{\varphi_\epsilon(t)}$. Note that $f_{\epsilon}(t)$ satisfies
\begin{equation}
i\Lambda_{\omega_0} F_{h_0}+\Delta_{\omega_0} f_{\epsilon}(t)
=\frac{1}{2} \(\tau-\vert \bm\phi \vert^2_{h_{\epsilon}(t)}\)
\frac{\omega_{\epsilon}(t)}{\omega_0}.
\label{eq:forfepsilon(t)}
\end{equation}
for each $t\in [0,1]$.  The parameter $t$ will be suppressed if no confusion arises.

We start by proving a uniform a priori $W^{2,p}$-estimate for $f_\epsilon$. The absolute bound $\vert \bm\phi \vert^2_{h_{\epsilon}}\leq \tau$ (see ~\cite[Proposition 5.2]{Brad}) implies now that $2f_\epsilon \leq
-\log \lvert\bm\phi\rvert_{h_0}^2 + \log \tau$ and therefore
\[
\fint f_{\epsilon} \omega_0\leq C,
\]
and the uniform $C^{1,\bar 1}$ bound of $\omega_\epsilon(t)$ yields
\begin{equation}
-C\leq \Delta_{\omega_0} f_{\epsilon} \leq C.
\end{equation}
Using Green's representation formula,
\begin{equation}
f_{\epsilon}(P)=\fint f_{\epsilon} \omega_0+\int G_{\omega_0} (P,Q) \Delta_{\omega_0} f_{\epsilon}(Q) \omega_0(Q) \leq C,
\label{eq:greenformulaforf}
\end{equation}
where $0\leq G_{\omega_0}(P,Q)\leq C(1+\vert\ln(d_0 (P,Q)) \vert)$, we obtain that
\[
\Vert \tilde{f}_{\epsilon} \Vert_{C^0}
\leq
\sup_{P\in \Sigma}
\int_{\Sigma} G_{\omega_0}(P,Q) \left(C+C\right)\omega_0(Q)
\leq C,
\]
where $\tilde{f}_{\epsilon}=f_{\epsilon}-\fint f_{\epsilon} \omega_0$.  If $\fint f_{\epsilon_n}\omega_0$ tends to $-\infty$ for a sequence $\epsilon_n$ converging to $0$, then $f_{\epsilon_n}$ goes to $-\infty$
uniformly on $\Sigma$. Therefore, the inequality
\[
0< \tau V - 4\pi N
=
\int \vert \bm\phi \vert_{h_{\epsilon_n}}^2 \omega_{\epsilon_n}
\leq
C \int \lvert\bm\phi\rvert_{h_0}^2 e^{2f_{\epsilon_n}(t)}\omega_0
\]
(obtained by integrating the vortex equation) gives a contradiction as the RHS goes to $0$.  Together with the above uniform upper bound of $\fint f_\epsilon \omega_0$, we obtain the uniform $C^0$ bound of
$f_\epsilon$ and the standard $W^{2,p}$ estimate for the Equation \eqref{eq:forfepsilon(t)} implies
\begin{equation}
\label{est:W2pestimate}
\Vert f_{\epsilon} \Vert_{W^{2,p}}\leq C_p
\end{equation}
for any $p\geq 1$ and a constant $C_p>0$ independent of $\epsilon$ and $t$. The consequence of this bound is that a subsequence of $ f_{\epsilon_n}$ converges (using a diagonal argument) to $f$ in weak sense which
is in $W^{2,p}$ for all $p\geq 1$, and the Sobolev inequality actually implies the convergence happens in $C^{1,\alpha}$ for any $\alpha\in (0,1)$. Moreover, $\Delta_{\omega_0} f$ is an $L^{\infty}$ current.

\subsection{$C^1$ estimate of $f_\epsilon'$}\label{sect:C1estimate}

We need to get a similar regularity on $f$ in terms of the complex time parameter as well. Going back to Equation \eqref{eq:forfepsilon(t)} and differentiating with respect to the real time $t$ (in any case,
$\omega_{\epsilon}$ and hence $f_{\epsilon}$ is $S^1$-invariant), we get the following equation:
\begin{equation}
\Delta_{\omega_0} f'_{\epsilon}
=-\vert \bm\phi \vert_{h_{\epsilon}}^2 f_{\epsilon} '\frac{\omega_{\epsilon}}{\omega_0}-\frac{1}{2} (\tau-\vert \bm\phi \vert^2_{h_{\epsilon}})\Delta_{\omega_0} \varphi'_{\epsilon}.
\label{eq:firsttemporalderivative}
\end{equation}
To simplify the notation, the norms and differential operators associated to the metric $\omega_0$ will be labelled with a $0$-subscript. Multiplying by $f_{\epsilon}'$ and integrating by parts, we obtain
\begin{align}\nonumber
\int \left| \nabla_0 f'_{\epsilon} \right|^2_0 &\omega_0
+ \int \vert \bm\phi \vert_{h_{\epsilon}}^2 (f_{\epsilon} ')^2\omega_{\epsilon}
=-\frac{1}{2} \int f'_{\epsilon} (\tau-\vert \bm\phi \vert^2_{h_{\epsilon}})\Delta_0 \varphi'_{\epsilon} \omega_0
\\\nonumber
& = -\frac{1}{2} \int \nabla_0 \Big( f'_{\epsilon} (\tau-\vert \bm\phi \vert^2_{h_{\epsilon}})\Big)\cdot (\nabla_0 \varphi'_{\epsilon}) \omega_0
\\\label{ineq:integrationf'epsilonbeforegeodesic}
&\leq  \frac{1}{4} \int \vert \nabla_0 f'_{\epsilon} \vert^2 \omega_0
+ \frac{\tau^2}{4} \int \left| \nabla_0 \varphi_\epsilon'\right|_0^2 \omega_0
+ \frac{1}{4} \int \left| f'_{\epsilon}\right|\cdot \left|  \nabla_0 \vert \bm\phi \vert^2_{h_{\epsilon}} \right|_0 \cdot \vert \nabla_0 \varphi_{\epsilon}'\vert_0 \omega_0.
\end{align}
In particular, for the last term we can use the Cauchy--Schwarz inequality
\begin{equation}
\label{ineq:cs}
\begin{split}
\int \left| f'_{\epsilon}\right|\cdot \left|  \nabla_0 \vert \bm\phi \vert^2_{h_{\epsilon}} \right|_0 \cdot \vert \nabla_0 \varphi_{\epsilon}'\vert_0
\omega_0
\leq
\sqrt{ \int \left(f_\epsilon'\right)^2 \left|\bm\phi\right|_{h_\epsilon}^2 \cdot \left|\nabla_0\varphi_\epsilon'\right|_0^2 \omega_0}
\sqrt{
	\int \frac{\left| \nabla_0\left|\bm\phi\right|_{h_\epsilon}^2\right|_0^2}{\left|\bm\phi\right|_{h_\epsilon}^2}\omega_0.
}
\end{split}
\end{equation}
At this point, the $\epsilon$-geodesic equation $\eqref{eqn:Chen's-geodesic}$ and the uniform $C^{1,\bar 1}$ estimates for $\varphi_\epsilon$ imply
	\[
	\vert \nabla_0 \varphi'_{\epsilon} \vert_0 ^2 \omega_0 =
		\vert \nabla_{\omega_\epsilon} \varphi'_{\epsilon} \vert_{\omega_\epsilon} ^2 \omega_\epsilon
		=
		\left(\varphi_\epsilon'' - \epsilon \Lambda_{\omega_\epsilon}\omega_0\right)\omega_\epsilon
		 \leq C\omega_{\epsilon}.
	\]
Using this inequality, it follows that the right-hand side of \eqref{ineq:cs} is controlled by
\[
\frac{1}{2}\int \vert \bm\phi \vert_{h_{\epsilon}}^2 (f_{\epsilon} ')^2\omega_{\epsilon}
+
C 	\int \frac{\left| \nabla_0\left|\bm\phi\right|_{h_\epsilon}^2\right|_0^2}{\left|\bm\phi\right|_{h_\epsilon}^2}\omega_0,
\]
for some $C>0$. We thus arrive at the inequalities
\begin{equation}
\label{ineq:firstestimateforf'}
\begin{split}
\frac{3}{4}\int \vert \nabla_0 f'_{\epsilon} \vert^2 \omega_0 + \frac{1}{2}\int \vert \bm\phi \vert_{h_{\epsilon}}^2 (f_{\epsilon} ')^2\omega_{\epsilon}
&   \leq
\frac{\tau^2}{4}\int \left| \nabla_0 \varphi'_{\epsilon} \right|_0 ^2 \omega_0
+
C \int \frac{\left| \nabla_0 \vert \bm\phi \vert_{h_{\epsilon}}^2 \right|_0^2}{\vert \bm\phi \vert_{h_{\epsilon}}^2}\omega_0 \leq C,
\end{split}
\end{equation}
by the $W^{2,p}$ estimate of $f_\epsilon$ and the $W^{1,2}$ estimate of $\varphi_\epsilon'$. Now, the Poincar\'e inequality implies
\begin{equation}
\left\lVert f_{\epsilon}' -\fint f'_{\epsilon} \omega_0 \right\rVert_{L^2} \leq C.
\label{ineq:Lpestimateforaveragefreef'}
\end{equation}
Using \eqref{ineq:firstestimateforf'} and \eqref{ineq:Lpestimateforaveragefreef'}, we see that
\begin{equation}
\begin{split}
\displaystyle C
& \geq  \int\vert \bm\phi \vert_{h_{\epsilon}}^2 (f_{\epsilon} ')^2\omega_{\epsilon}
= \int \vert \bm\phi \vert_{h_{\epsilon}}^2 \left(f_{\epsilon} '-\fint f'_{\epsilon}\omega_0 + \fint f'_{\epsilon}\omega_0\right)^2\omega_{\epsilon}   \\
& \geq \int \vert \bm\phi \vert_{h_{\epsilon}}^2 \left[ \left( \fint f'_{\epsilon}\omega_0 \right)^2
+2 \left(  f_{\epsilon} '-\fint f'_{\epsilon}\omega_0\right) \fint f'_{\epsilon}\omega_0   \right] \omega_{\epsilon}  \\
& \geq \frac{1}{2} \left( \fint f'_{\epsilon} \omega_0 \right)^2  \int \vert \bm\phi \vert_{h_{\epsilon}}^2 \omega_{\epsilon}
-2 \int \vert \bm\phi \vert_{h_{\epsilon}}^2 \left(  f_{\epsilon} '-\fint f'_{\epsilon} \omega_0 \right)^2\omega_{\epsilon} \\
& \geq \frac{1}{2} \left( \fint f'_{\epsilon} \omega_0\right)^2 \int \vert \bm\phi \vert_{h_{\epsilon}}^2  \omega_{\epsilon}
-2\tau C\int  \left(  f_{\epsilon} '-\fint f'_{\epsilon} \omega_0 \right)^2  \omega_0\\
& \geq \frac{1}{2} \left( \fint f'_{\epsilon} \omega_0\right)^2 \int \vert \bm\phi \vert_{h_{\epsilon}}^2  \omega_{\epsilon}
-C,
\end{split}
\end{equation}
where for the inequality in the second line we simply use $2ab\geq -\frac{1}{2}a^2 -2b^2$. This, together with the fact that $\displaystyle \int \vert \bm\phi \vert_{h_{\epsilon}}^2\omega_{\epsilon}=\tau V-4\pi
N>0$, gives
\begin{equation}
\begin{split}
\left\lvert\fint f'_{\epsilon}\omega_0\right\rvert\leq C.
\end{split}
\label{ineq:estimateforintegralf'}
\end{equation}
Going back to \eqref{eq:firsttemporalderivative}, using Green's representation formula we arrive at the following representation of $f_\epsilon'$:
\begin{equation}
\begin{split}
f'_{\epsilon}(P)
& = \fint f'_{\epsilon} \omega_0+\int G_0 (P,Q) \Delta_0 f'_{\epsilon}(Q) \omega_0(Q)  \\
& = \fint f'_{\epsilon} \omega_0-\int G_0 (P,Q) \vert \bm\phi \vert_{h_{\epsilon}}^2 (Q) f_{\epsilon} '(Q)\omega_{\epsilon}(Q)
\\&\quad
-\frac{1}{2} \int G_0 (P,Q)  (\tau-\vert \bm\phi \vert^2_{h_{\epsilon}})(Q)\Delta_0
\varphi'_{\epsilon}(Q) \omega_0(Q).
\end{split}
\end{equation}
This further transforms to
\begin{align}
\nonumber
f'_{\epsilon}(P) & - \fint f'_{\epsilon} \omega_0
\\\nonumber
= &   -\int G_0 (P,Q) \vert \bm\phi \vert_{h_{\epsilon}}^2 (Q) f_{\epsilon} '(Q)\omega_{\epsilon}(Q)
-\frac{1}{2} \int G_0 (P,Q) \Bigg(\Delta_0 \Big( (\tau-\vert \bm\phi \vert^2_{h_{\epsilon}}) \varphi'_{\epsilon} \Big)
\\\nonumber
&\qquad
- \varphi_{\epsilon}'\Delta_0  (\tau-\vert \bm\phi \vert^2_{h_{\epsilon}})
+ 2\nabla_0  (\tau-\vert \bm\phi \vert^2_{h_{\epsilon}}) \cdot \nabla_0 \varphi'_{\epsilon}\Bigg) (Q) \omega_0(Q)
\\\nonumber
= &  - \int G_0 (P,Q) \vert \bm\phi \vert_{h_{\epsilon}}^2 (Q) f_{\epsilon} '(Q)\omega_{\epsilon}(Q)
+ \frac{1}{2}\left(\fint  (\tau-\vert \bm\phi \vert^2_{h_{\epsilon}}) \varphi'_{\epsilon} \omega_0 -  (\tau-\vert \bm\phi \vert^2_{h_{\epsilon}})(P) \varphi'_{\epsilon} (P)\right)
\\\label{eq:Greenforf'}
& +\frac{1}{2}\int G_0 (P,Q) \Bigg(\varphi_{\epsilon}'\Delta_0  (\tau-\vert \bm\phi \vert^2_{h_{\epsilon}})
- 2\nabla_0  (\tau-\vert \bm\phi \vert^2_{h_{\epsilon}}) \cdot \nabla_0 \varphi'_{\epsilon}\Bigg) (Q) \omega_0(Q).
\end{align}
Write $\Phi_0=\left|\bm\phi\right|_{h_0}^2$ and $\Phi_\epsilon = \left|\bm\phi\right|_{h_\epsilon}^2=\Phi_0 e^{2f_\epsilon}$. For each term we can estimate as
\begin{equation}
\label{eqn:various-estimates}
\begin{split}
& \left|
\int G_0(P, Q)   \left|\bm\phi\right|_{h_\epsilon}^2(Q) f_\epsilon'(Q) \omega_\epsilon(Q)
\right|^2
\leq
\int G_0(P,Q)^2 \left|\bm\phi\right|_{h_\epsilon}^2(Q) \omega_\epsilon(Q) \cdot\!
\int \left|\bm\phi\right|_{h_\epsilon}^2 \left(f_\epsilon'\right)^2 \omega_\epsilon\!\\
& \qquad \leq
\tau C\int G_0(P,Q)^2\omega_0(Q) \cdot C\\
& \qquad \leq C, \\
&\left| \varphi_\epsilon'\right|
\leq C, \;\; \left| \nabla_0 \varphi_\epsilon'\right|_0\leq C, \\
& \left| \nabla_0 \Phi_\epsilon\right|_0
= \left| \nabla_0 \Phi_0 e^{2f_\epsilon} + 2 \Phi_\epsilon \nabla_0 f_\epsilon\right|
\leq C, \\
&  \left|
\Delta_0 \Phi_\epsilon\right|
=
\left|
\Delta_0 \Phi_0 e^{2f_\epsilon}
- 4 e^{2f_\epsilon}\nabla_0\Phi_0\cdot \nabla_0 f_\epsilon
+
2\Phi_\epsilon \Delta_0 f_\epsilon
-
4\Phi_\epsilon \left|\nabla_0 f_\epsilon\right|_0^2
\right|
\leq
C.
\end{split}
\end{equation}
Using \eqref{eq:Greenforf'} and \eqref{ineq:estimateforintegralf'}, we see that
\begin{equation}
\left|  f'_{\epsilon} \right| \leq C.
\label{ineq:C0estimateforf'}
\end{equation}
Differentiating \eqref{eq:Greenforf'} with respect to the $P$ variable:
\begin{equation}\begin{split}
\nabla_0 f_\epsilon'(P)
&= - \int \nabla_0 G_0 (P,Q) \vert \bm\phi \vert_{h_{\epsilon}}^2 (Q) f_{\epsilon} '(Q)\omega_{\epsilon}(Q)
-  \frac{1}{2} \nabla_0\Big( (\tau-\vert \bm\phi \vert^2_{h_{\epsilon}})(P) \varphi'_{\epsilon} (P) \Big)
\\&
+\frac{1}{2}\int \nabla_0 G_0 (P,Q) \Big(\varphi_{\epsilon}'\Delta_0  (\tau-\vert \bm\phi \vert^2_{h_{\epsilon}})
- 2\nabla_0  (\tau-\vert \bm\phi \vert^2_{h_{\epsilon}}) \cdot \nabla_0 \varphi'_{\epsilon}\Big) (Q) \omega_0(Q).
\end{split}\end{equation}
and using the fact that
$$
\left|\nabla_0 G_0(P,Q)\right|_0\leq C\left( 1+ \frac{1}{d_0(P,Q)}\right)
$$
(in particular this implies $\nabla_0 G_0\in L^q$ for any $q\in [1,2)$), we obtain
\begin{equation}
\left| \nabla_0 f'_{\epsilon} \right|_0\leq C.
\label{ineq:C1estimateforf'}
\end{equation}

\subsection{$L^\infty$ estimate of $f_{\epsilon}''$}
As in the case of the $C^1$-estimate, differentiating~\eqref{eq:firsttemporalderivative} with respect to the real time $t$, we get the following equation for $f_{\epsilon}''$:
\begin{equation}
\begin{split}
\Delta_0 f_{\epsilon}''
= -2\vert \bm\phi \vert_{h_{\epsilon}}^2 (f_{\epsilon} ')^2\frac{\omega_{\epsilon}}{\omega_0}-\vert \bm\phi \vert_{h_{\epsilon}}^2 f_{\epsilon} ''\frac{\omega_{\epsilon}}{\omega_0}-\frac{1}{2} (\tau-\vert \bm\phi
\vert^2_{h_{\epsilon}})\Delta_{\omega_0} \varphi''_{\epsilon}
+2 \vert \bm\phi \vert^2_{h_{\epsilon}} f_{\epsilon}'\Delta_{\omega_0} \varphi'_{\epsilon}.
\end{split}
\label{eq:secondtemporalderivative}
\end{equation}
It is convenient to rewrite the equation as
\begin{equation}
\left( \Delta_0 + \left| \bm\phi\right|_{h_\epsilon}^2 \frac{\omega_\epsilon}{\omega_0}\right) f_\epsilon''
=
-2\vert \bm\phi \vert_{h_{\epsilon}}^2 (f_{\epsilon} ')^2\frac{\omega_{\epsilon}}{\omega_0}
-\frac{1}{2} (\tau-\vert \bm\phi \vert^2_{h_{\epsilon}})\Delta_{\omega_0} \varphi''_{\epsilon}
+ 2 \vert \bm\phi \vert^2_{h_{\epsilon}} f_{\epsilon}'\Delta_{\omega_0} \varphi'_{\epsilon}.
\end{equation}
Next, we prove a lemma about the \emph{Green function} for the second-order linear elliptic differential operator
$$
L_\epsilon = \Delta_0+\left| \bm \phi\right|_{h_\epsilon}^2 \frac{\omega_\epsilon}{\omega_0}.
$$

\begin{lemma}
Let $\{v_{\epsilon}\}_{\epsilon\in (0,1]}$ be a set of smooth functions on the compact Riemann surface $\left(\Sigma, \omega_0\right)$ such that $0\leq v_{\epsilon}\leq C$ and $\int_\Sigma v_{\epsilon}\omega_0 \geq
\frac{1}{C}$, where $C$ is a constant independent of $\epsilon$ (varying on a compact set). Then there exists a family of functions $\{G_{\epsilon}(P,Q)\}_{\epsilon\in (0,1]}$, each of which
is smooth everywhere except on the diagonal, such that for any $1 < s < 2$ one has
\begin{enumerate}
\item[\textup{(1)}]
$0\leq G_{\epsilon}(P,Q) \leq C_s(1+\log \vert d_{\omega_0} (P,Q)\vert)$.
\item[\textup{(2)}]
$\int_\Sigma \vert \nabla_0 G_{\epsilon} \vert_0^{s} \omega_0 \leq C_s$.
\item[\textup{(3)}]
$(\Delta_0)_Q G_{\epsilon} (P,Q)+v_{\epsilon}(Q) G_{\epsilon} (P,Q)=\delta_P (Q)$ in the sense of distributions (where the Dirac delta function is defined with respect to $\omega_0$).
\end{enumerate}
\label{lem:Greenforperturbation}
\end{lemma}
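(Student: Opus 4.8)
The plan is to realize $G_\epsilon$ as the Schwartz kernel of $L_\epsilon^{-1}$ and then extract all three properties by comparison with the Green function $G_0\defeq G_{\omega_0}$ of $\Delta_0$. First I would verify that $L_\epsilon=\Delta_0+v_\epsilon$ is invertible: if $L_\epsilon u=0$ then $\int_\Sigma(\lvert\nabla_0 u\rvert_0^2+v_\epsilon u^2)\omega_0=0$, so $u$ is constant and $u^2\int_\Sigma v_\epsilon\omega_0=0$; since $\int_\Sigma v_\epsilon\omega_0\geq 1/C>0$ this forces $u=0$. As $L_\epsilon$ is elliptic, formally self-adjoint, and differs from $\Delta_0$ by the zeroth-order term $v_\epsilon$, it is Fredholm of index zero with the same principal symbol as $\Delta_0$; being injective it is invertible, and a parametrix construction produces a kernel $G_\epsilon(P,Q)$ that is smooth off the diagonal and carries the same logarithmic singularity as $G_0$. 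This yields property (3) by construction, and the lower half of (1), namely $G_\epsilon\geq 0$, follows from the maximum principle for $L_\epsilon$, valid because $v_\epsilon\geq 0$.

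The key algebraic identity comes from integrating (3) in $Q$: since $\int_\Sigma\Delta_0 G_\epsilon(P,\cdot)\,\omega_0=0$, one gets $\int_\Sigma v_\epsilon(Q)G_\epsilon(P,Q)\,\omega_0(Q)=1$ for every $P$. Writing $\Delta_0 G_\epsilon=\delta_P-v_\epsilon G_\epsilon$ and applying Green's representation formula to the $\omega_0$-average-free part $\widetilde G_\epsilon=G_\epsilon-m_\epsilon$, where $m_\epsilon(P)=\fint_\Sigma G_\epsilon(P,Q)\,\omega_0(Q)$, I would derive
\[
G_\epsilon(P,Q)=m_\epsilon(P)+G_0(P,Q)-\int_\Sigma G_0(Q,Q')\,v_\epsilon(Q')\,G_\epsilon(P,Q')\,\omega_0(Q').
\]
Every factor in the last integral is nonnegative, so this gives $G_\epsilon(P,Q)\leq m_\epsilon(P)+G_0(P,Q)$, and it remains only to bound $m_\epsilon$ uniformly. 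Here the hypothesis $\int_\Sigma v_\epsilon\omega_0\geq 1/C$ is decisive: from $1=\int v_\epsilon G_\epsilon\,\omega_0=\int v_\epsilon\widetilde G_\epsilon\,\omega_0+m_\epsilon\int v_\epsilon\,\omega_0$ we solve $m_\epsilon=(1-\int v_\epsilon\widetilde G_\epsilon\,\omega_0)/\int v_\epsilon\,\omega_0$, so it suffices to control $\lVert\widetilde G_\epsilon\rVert_{L^1}$. Using the representation above, the fact that $\int_\Sigma G_0(Q,Q')\,\omega_0(Q)=\gamma$ is a constant (harmonic in $Q'$), and the normalizing identity $\int v_\epsilon G_\epsilon\,\omega_0=1$, Fubini gives $\lVert\widetilde G_\epsilon\rVert_{L^1}\leq 2\gamma$ uniformly in $\epsilon$. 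Hence $\lvert m_\epsilon\rvert\leq C$, which together with $0\leq G_0\leq C(1+\lvert\log d_{\omega_0}(P,Q)\rvert)$ completes (1).

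For the gradient bound (2), I would differentiate the representation in $Q$, obtaining
\[
\nabla_0 G_\epsilon(P,Q)=\nabla_0 G_0(P,Q)-\int_\Sigma \nabla_{0,Q} G_0(Q,Q')\,v_\epsilon(Q')\,G_\epsilon(P,Q')\,\omega_0(Q').
\]
The first term lies in $L^s$ uniformly for $1<s<2$, since $\lvert\nabla_0 G_0(P,Q)\rvert_0\leq C(1+1/d_{\omega_0}(P,Q))$ and $d_{\omega_0}^{-1}\in L^s$ on a surface precisely in this range. For the second term, Minkowski's integral inequality gives $\lVert\,\cdot\,\rVert_{L^s_Q}\leq \int_\Sigma\lVert\nabla_{0,Q}G_0(\cdot,Q')\rVert_{L^s}\,v_\epsilon(Q')G_\epsilon(P,Q')\,\omega_0(Q')\leq C_s\int_\Sigma v_\epsilon G_\epsilon\,\omega_0=C_s$, once more using the normalizing identity. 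This proves (2).

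I expect the main obstacle to be keeping every constant uniform in $\epsilon$ and justifying rigorously the differentiation under the integral and the integrations by parts against the singular kernels $G_0$ and $\nabla_0 G_0$ (a routine but delicate weak-$L^s$/Riesz-potential argument). The essential structural input throughout is the lower bound $\int_\Sigma v_\epsilon\omega_0\geq 1/C$: it is exactly what keeps $L_\epsilon$ uniformly invertible and $m_\epsilon$ bounded as $\epsilon\to 0$, preventing the degeneration that would occur if $v_\epsilon$ collapsed and $L_\epsilon$ approached the non-invertible operator $\Delta_0$.
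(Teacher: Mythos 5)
Your proposal is correct, but it reaches the three estimates by a genuinely different mechanism than the paper. The paper constructs $G_\epsilon=G_0+u_{\epsilon,P}$ explicitly, by solving the auxiliary equation $(\Delta_0+v_\epsilon)u_{\epsilon,P}=\tfrac{1}{V}-v_\epsilon G_0(P,\cdot)$, and obtains uniformity in $\epsilon$ through two compactness--contradiction arguments: first a uniform lower bound $\lambda_\epsilon\geq 1/C$ on the first eigenvalue of $\Delta_0+v_\epsilon$ (giving uniform $W^{2,2}$, hence $L^\infty$, bounds on $u_{\epsilon,P}$ and thus item (1)), and second a uniform resolvent-type estimate $\lVert w\rVert_{L^s}\leq C\lVert h\rVert_{L^s}$ for $(\Delta_0+v_\epsilon)w=v_\epsilon h$, applied to $d_P u_{\epsilon,P}$ with $h=-d_PG_0$ (giving item (2), via the $P$-derivative). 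You instead invert $L_\epsilon$ abstractly and exploit the exact normalization $\int_\Sigma v_\epsilon(Q)\,G_\epsilon(P,Q)\,\omega_0(Q)=1$, the positivity of $G_0$ and $G_\epsilon$, and the representation
\[
G_\epsilon(P,Q)=m_\epsilon(P)+G_0(P,Q)-\int_\Sigma G_0(Q,Q')\,v_\epsilon(Q')\,G_\epsilon(P,Q')\,\omega_0(Q'),
\]
so that Tonelli bounds $\lVert G_\epsilon(P,\cdot)-m_\epsilon(P)\rVert_{L^1}$ by $2\gamma$, the hypothesis $\int_\Sigma v_\epsilon\omega_0\geq 1/C$ enters only algebraically to solve for and bound $m_\epsilon$, and Minkowski's integral inequality gives the gradient bound in the $Q$-variable. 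What each approach buys: yours is more quantitative (all constants explicit, no argument by contradiction) and it directly estimates $\nabla_{0,Q}G_\epsilon(P,\cdot)$, which is the quantity actually used later in the $f_\epsilon''$ estimate (the paper's version bounds the $P$-derivative and implicitly passes through the symmetry of the kernel); on the other hand, your argument leans on kernel positivity at every step, so it is tied to scalar second-order operators, whereas the paper's spectral-gap and $L^s$ estimates make no use of positivity in the bounds and are reusable in more general situations. Two points you should still nail down, both of which the paper handles: the nonnegativity $G_\epsilon\geq 0$ requires the strong maximum principle (or, as in the paper, approximation of $v_\epsilon$ by $v_\epsilon+\delta_j>0$), since $v_\epsilon$ may vanish on open sets; and the Green representation for $G_\epsilon(P,\cdot)$, whose Laplacian is only a finite measure, together with the differentiation under the integral sign, needs the routine approximation argument you flag.
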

\begin{proof}
Fix an $1<s<2$. We shall drop the subscript $s$ on constants $C_s$ that appear in this proof.  Recall that the standard Green's function $G_0 = G_{\omega_0}$ satisfies (see \eqref{eq:Greenformula})
$$
(\Delta_0)_Q G_0 (P,Q) = \delta_P (Q)- \frac{1}{V}.
$$
For every $\epsilon\in (0,1], P\in \Sigma$, let $u_{\epsilon,P}$ be the unique distributional solution of the PDE
\begin{gather}\label{eq:auxilliaryPDEforGreen}
(\Delta_0 u_{\epsilon,P}+v_{\epsilon} u_{\epsilon,P})(Q) = \frac{1}{V}-v_{\epsilon}(Q)G_0(P,Q).	
\end{gather}
This equation can be solved uniquely for $u_{\epsilon,P} \in W^{2,2}$, because the right-hand side is in $L^2$, and the kernel of $\Delta_0+v_{\epsilon}$ is trivial because of the assumptions on $v_{\epsilon}$. We
claim that
$$
\Vert u_{\epsilon,P} \Vert_{W^{2,2}}\leq C
$$
(where $C$ is independent of $\epsilon$ and $P$). To see this, we study the first eigenvalue $\lambda_{\epsilon}\geq 0$ of the operator $\Delta_0+v_{\epsilon}$. Suppose there exists a sequence of values of
$\epsilon$ such that $\lambda_{\epsilon}\rightarrow 0$. Then normalising an eigenvector $e_{\epsilon}$ to have $L^2$ norm $1$, we see that
$$
\Vert \nabla_0 e_{\epsilon}\Vert_{L^2}^2+ \int v_{\epsilon} e_{\epsilon}^2\omega_0\rightarrow 0.
$$
Therefore (up to a subsequence) $e_{\epsilon}\rightarrow \tilde{e}$ strongly in $L^2$ and weakly in $W^{1,2}$. Moreover, $\tilde{e}$ must be a constant (that is non-zero since it has $L^2$ norm $1$). However,
	\[
	\frac{\tilde e^2}{C}
	\leq
	\int v_\epsilon \tilde e^2\omega_0
	\leq
	2\int v_\epsilon \left( (\tilde e-e_\epsilon)^2 + e_\epsilon^2\right) \omega_0
	\rightarrow 0,
	\]
	which is a contradiction. Thus $\lambda_{\epsilon}\geq \frac{1}{C}>0$. This implies
	\begin{equation}
	\begin{split}
	\lVert u_{\epsilon, P}\rVert_{L^2}^2
	& \leq
	\frac{1}{\lambda_\epsilon}\int u_{\epsilon, P}\left( \Delta_0 + v_\epsilon\right) u_{\epsilon, P}\omega_0\\
	& \leq
	C\int h_{\epsilon,P} u_{\epsilon, P}\omega_0\\
	& \leq
	C \lVert h_{\epsilon,P}\rVert_{L^2} \lVert u_{\epsilon, P}\rVert_{L^2}
	\end{split}
	\end{equation}
	where $h_{\epsilon,P}(\cdot)= 1- v_\epsilon(\cdot) G_0(P, \cdot)$.
	This immediately implies the desired uniform $W^{2,2}$ bound:
	\begin{equation}
	\begin{split}
	\lVert u_{\epsilon, P}\rVert_{W^{2,2}}
	&  \leq
	C\left( \lVert u_{\epsilon, P}\rVert_{L^2} + \lVert\Delta_0 u_{\epsilon, P}\rVert_{L^2}\right)\\
	& \leq
	C\left( \lVert u_{\epsilon, P}\rVert_{L^2} + \lVert h_{\epsilon,P} \rVert_{L^2} + \lVert v_\epsilon \rVert_{L^\infty} \lVert u_{\epsilon, P}\rVert_{L^2}\right)\\
	& \leq
	C\lVert h_{\epsilon,P}\rVert_{L^2}\\
	& \leq
	C.
	\end{split}
\end{equation}
As a consequence (using the Sobolev embedding theorem), there is a uniform $L^\infty$ bound on $u_{\epsilon,P}$ (which is independent of $\epsilon, P$). Since $G_0(P, \cdot)$ is smooth on $\Sigma\backslash
\{P\}$, we know by elliptic regularity theory that $u_{\epsilon, P}$ is smooth on $\Sigma\backslash \{P\}$ for each fixed $\epsilon$. Let $d_P u_{\epsilon, P}$ be the exterior differential of $u_{\epsilon, P}$ with
respect to $P$. This 1-form satisfies an equation of the form
\begin{equation}
	\begin{split}
	\left( \Delta_0 + v_\epsilon \right) d_P u_{\epsilon,P}
	& = - v_\epsilon d_P G_0.
	\end{split}
\end{equation}
Since $d_P G_0\in L^p$ for any $p<2$, the 1-form $d_P G_0$ can be approximated by smooth 1-forms in the sense of $L^{s}$, for any $1 < s < 2$, and the approximating sequence has a uniform $L^{s}$ bound. For this
approximating family the above equation can be solved uniquely and $d_P u_{\epsilon,P}$ can be understood as the limit of such solutions.
	
Regarding the equation
\[
\left( \Delta_0 + v_\epsilon\right) w = v_\epsilon h,
\]
with $h$ smooth, we claim that the following inequality holds for a uniform constant $C$:
\begin{equation}\label{eqn:L1.9}
	\lVert w\rVert_{L^{s}}\leq C \lVert h\rVert_{L^{s}}.
\end{equation}
Suppose this is not true. Then there exists a sequence $\epsilon_n\to 0$ and $\left( w_n, h_n, v_{\epsilon_n}\right)$ such that $\lVert w_n\rVert_{L^{s}}=1$ for all $n$ but $\lVert h_n\rVert_{L^{s}}\to 0$ as $n\to
+\infty$, while
satisfying
	\[
	\Delta_0 w_n = v_{\epsilon_n}\left( h_n - w_n\right).
	\]
The standard $W^{2,s}$-estimate shows that
\[
\lVert w_n\rVert_{W^{2,s}}\leq C \left( \lVert w_n\rVert_{L^{s}} + \lVert v_{\epsilon_n}\rVert_{L^\infty} \left( \lVert h_n\rVert_{L^{s}}+ \lVert w_n\rVert_{L^{s}}\right)\right)
\leq C.
\]
We now have
	\[
	\int \left|\nabla_0 w_n\right|_0^2 \omega_0
	=
	\int v_{\epsilon_n} \left( h_n w_n - w_n^2\right)\omega_0
    \leq
    C \lVert h_n\rVert_{L^{s}}\lVert w_n\rVert_{L^{s^*}},
	\]
where we use the H\"older inequality and $s^*$ represents the conjugate index of $s$. The Sobolev inequality implies $\lVert w_n\rVert_{L^{s^*}}\leq \lVert w_n\rVert_{W^{2,s}}\leq C$, and $\lVert w_n - \fint w_n
\omega_0\rVert_{L^2}^2 \leq \int
\left|\nabla_0 w_n\right|_0^2 \omega_0\to 0$ by the Poincar\'e inequality. This in particular implies that (up to a subsequence) $w_n\to \tilde w$ in $L^2$ as $n\to \infty$ for some nonzero constant $\tilde w$ (this
is because $\lVert\tilde w \rVert_{L^{s}}=\lim_{n\to \infty}\lVert w_n\rVert_{L^{s}}=1$). Then we obtain a contradiction, since
 \[
\int v_{\epsilon_n} w_n^2\omega_0
=
\int v_{\epsilon_n}h_n w_n\omega_0 - \int |\nabla_0 w_n|_0^2\omega_0\to 0,
 \]
 and
 \[
\liminf_{n\to \infty}\int\! v_{\epsilon_n} w_n^2 \omega_0
=
\liminf_{n\to \infty}\left\{\int\! v_{\epsilon_n} \tilde w^2 \omega_0
+
2\tilde w \!\int\! v_{\epsilon_n} (w_n-\tilde w)\omega_0
+
\!\int\! v_{\epsilon_n}(w_n-\tilde w)^2\omega_0
\right\}
\geq
\frac{\tilde w^2}{C}.
 \]
 This contradiction establishes the estimate \eqref{eqn:L1.9} and thus further implies that $\lVert d_P u_{\epsilon,P}\rVert_{L^{s}}$ is uniformly bounded, and hence so is $u_{\epsilon,P}$ in $W^{1,s}$.

Define next
$$
G_{\epsilon}(P,Q) = G_0(P,Q)+u_{\epsilon}(P, Q).
$$
Then one can easily verify that the statement holds. In particular, the non-negativity of $G_\epsilon$ in item (1) could be obtained by approximating $v_\epsilon$ using $v_\epsilon+\delta_j$ with a sequence
$\delta_j$ decreasing to $0$, and using the maximum principle  for the equation
\[
\left( \Delta_0 \right)_Q G_\epsilon(P,Q) + \left(v_\epsilon+\delta_j\right)G_\epsilon(P,Q) = \delta_P(Q).
\qedhere\]
\end{proof}

Applying Lemma \eqref{lem:Greenforperturbation} to Equation \eqref{eq:secondtemporalderivative}, we see that
\begin{align}\nonumber
&f''_{\epsilon}(P)
=
\int G_{\epsilon} (P,Q) \left(-2\vert \bm\phi \vert_{h_{\epsilon}}^2 (f_{\epsilon} ')^2\frac{\omega_{\epsilon}}{\omega_0}-\frac{1}{2} (\tau-\vert \bm\phi \vert^2_{h_{\epsilon}})\Delta_{\omega_0} \varphi''_{\epsilon}
+2 \vert \bm\phi \vert^2_{h_{\epsilon}} f_{\epsilon}'\Delta_{\omega_0} \varphi'_{\epsilon} \right)(Q) \omega_0 (Q)
\\&\nonumber
=
-2\int G_{\epsilon} \vert \bm\phi \vert_{h_{\epsilon}}^2 (f_{\epsilon} ')^2\omega_{\epsilon}
-\frac{1}{2} \int \Delta_0 \left(G_\epsilon (\tau-\left|\bm\phi\right|_{h_\epsilon}^2)\right) \varphi''_{\epsilon}\omega_0
+ 2
\int \nabla_0 \left(  G_\epsilon \vert \bm\phi \vert^2_{h_{\epsilon}} f_{\epsilon}'\right)\cdot \nabla_0 \varphi'_{\epsilon}\omega_0
\\
\begin{split}
& =
- 2\int G_\epsilon \Phi_\epsilon \left(f_\epsilon'\right)^2 \omega_\epsilon
\\&\;\,
- \frac{1}{2}\int \left[ \left( \delta_P- \Phi_\epsilon\frac{\omega_\epsilon}{\omega_0} G_\epsilon\right)\left(\tau-\Phi_\epsilon\right)
+ G_\epsilon \Delta_0\left(\tau-\Phi_\epsilon\right) - 2\nabla_0 G_\epsilon \cdot \nabla_0\left(\tau-\Phi_\epsilon\right)
\right]
\varphi_\epsilon''\omega_0
\\&\;\,
+ 2 \int \left(  \nabla_0G_\epsilon \Phi_\epsilon f_\epsilon' + G_\epsilon \nabla_0\Phi_\epsilon f_\epsilon' + G_\epsilon \Phi_\epsilon \nabla_0 f_\epsilon'\right)\cdot \nabla_0\varphi_\epsilon'\omega_0.
\end{split}
\end{align}
Finally, by the a priori estimates for $\lVert\Phi_\epsilon\rVert_{L^\infty}$, $\lVert f_\epsilon'\rVert_{L^\infty}$, $\lVert\nabla_0 f_\epsilon'\rVert_{L^\infty}$,  $\lVert\Delta_0\Phi_\epsilon\rVert_{L^\infty}$,
$\lVert\varphi_\epsilon''\rVert_{L^\infty}$, $\lVert\nabla_0\Phi_\epsilon\rVert_{L^\infty}$, $\lVert\nabla_0\varphi_\epsilon'\rVert_{L^\infty}$, and $\lVert\nabla_0 G_\epsilon\rVert_{L^1}$, we arrive at a uniform
$L^\infty$ estimate of
$f_\epsilon''$, i.e.,
\begin{equation}
\lVert f_\epsilon''\rVert_{L^\infty}\leq C.
\end{equation}

\subsection{Convexity}\label{section:Convexity}
Next we apply the above estimates to prove the convexity of the reduced $\alpha$-K-energy $\mathcal{K}_\alpha$ along Chen's $C^{1,\bar 1}$ geodesic connecting any two smooth K\"ahler potentials $\varphi_0,\varphi_1
\in \mathcal{P}_{\omega_0}$. Following the notation from previous sections, for any $t \in [0,1]$, we define
\begin{equation}\label{eq:functional-along-epsilon-geodesic.1}
\mathcal{M}_{\alpha, \epsilon}(t)= \mathcal{M}_\alpha\left(f_\epsilon(t), \varphi_\epsilon(t)\right).
\end{equation}

\begin{proposition}
$\mathcal{M}_{\alpha, \epsilon}'(t)$ is uniformly bounded on $[0,1]$ for $\epsilon\in (0,1]$.
\end{proposition}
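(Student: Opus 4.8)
The plan is to differentiate $\mathcal{M}_{\alpha,\epsilon}(t)$ directly, exploiting the decomposition $\mathcal{M}_\alpha=\widetilde{\mathcal{K}}_\alpha-\mathcal{K}$ from Proposition~\ref{prop:explicit} in order to cancel the single term that cannot be controlled uniformly, namely the scalar curvature $S_{\omega_\epsilon}$. Writing $\mathcal{M}_{\alpha,\epsilon}(t)=\widetilde{\mathcal{K}}_\alpha(f_\epsilon(t),\varphi_\epsilon(t))-\mathcal{K}(\varphi_\epsilon(t))$, I note that $(f_\epsilon(t),\varphi_\epsilon(t))$ is a smooth curve, so the first-variation formula recorded at the start of the proof of Proposition~\ref{prop:second} applies, giving $\frac{\mathrm{d}}{\mathrm{d}t}\widetilde{\mathcal{K}}_\alpha=4\alpha\int_\Sigma f'_\epsilon\wp_1(f_\epsilon,\varphi_\epsilon)\omega_\epsilon+\int_\Sigma\varphi'_\epsilon\wp_2(f_\epsilon,\varphi_\epsilon)\omega_\epsilon$. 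Since $f_\epsilon(t)$ solves the vortex equation with background $\omega_\epsilon$, we have $\wp_1(f_\epsilon,\varphi_\epsilon)=0$, and combining with the standard variation $\frac{\mathrm{d}}{\mathrm{d}t}\mathcal{K}(\varphi_\epsilon)=-\int_\Sigma\varphi'_\epsilon(S_{\omega_\epsilon}-\langle S\rangle)\omega_\epsilon$ yields
\[
\mathcal{M}_{\alpha,\epsilon}'(t)=\int_\Sigma\varphi'_\epsilon\,\wp_2(f_\epsilon,\varphi_\epsilon)\,\omega_\epsilon+\int_\Sigma\varphi'_\epsilon(S_{\omega_\epsilon}-\langle S\rangle)\omega_\epsilon.
\]
Because $\wp_2=-S_{\omega_\epsilon}-\alpha\Delta_{\omega_\epsilon}\Phi_\epsilon+2i\alpha\tau\Lambda_{\omega_\epsilon}F_{h_\epsilon}+c$, the two occurrences of $S_{\omega_\epsilon}$ cancel exactly, leaving
\[
\mathcal{M}_{\alpha,\epsilon}'(t)=\int_\Sigma\varphi'_\epsilon\left(-\alpha\Delta_{\omega_\epsilon}\Phi_\epsilon+2i\alpha\tau\Lambda_{\omega_\epsilon}F_{h_\epsilon}+c-\langle S\rangle\right)\omega_\epsilon,
\]
where $\Phi_\epsilon=\lvert\bm\phi\rvert_{h_\epsilon}^2$.

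The second step is to bound the three surviving terms using the a priori estimates of Sections~\ref{sect:W2p}--\ref{sect:C1estimate}. The constant term $c-\langle S\rangle$ is harmless, since $\lVert\varphi'_\epsilon\rVert_{L^\infty}\leq C$ by~\eqref{eqn:various-estimates} and $\int_\Sigma\omega_\epsilon=V$. For the curvature term, the vortex equation gives $2i\alpha\tau\Lambda_{\omega_\epsilon}F_{h_\epsilon}=\alpha\tau(\tau-\Phi_\epsilon)$, which is uniformly bounded by the absolute estimate $0\leq\Phi_\epsilon\leq\tau$ of~\cite[Proposition 5.2]{Brad}. For the remaining, least immediate term, I would use that $(\Delta_{\omega_\epsilon}\Phi_\epsilon)\omega_\epsilon=-2i\partial\bar\partial\Phi_\epsilon$ is metric-independent, so that after a single integration by parts
\[
-\alpha\int_\Sigma\varphi'_\epsilon\,\Delta_{\omega_\epsilon}\Phi_\epsilon\,\omega_\epsilon=2\alpha\int_\Sigma\varphi'_\epsilon\,i\partial\bar\partial\Phi_\epsilon=-2\alpha\int_\Sigma i\partial\varphi'_\epsilon\wedge\bar\partial\Phi_\epsilon,
\]
which is controlled pointwise by $\lVert\nabla_0\varphi'_\epsilon\rVert_{L^\infty}\lVert\nabla_0\Phi_\epsilon\rVert_{L^\infty}V\leq C$, both gradient bounds being available from~\eqref{eqn:various-estimates}. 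This gives the asserted uniform bound $\lvert\mathcal{M}_{\alpha,\epsilon}'(t)\rvert\leq C$ on $[0,1]$ for all $\epsilon\in(0,1]$.

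The main obstacle, and the conceptual heart of the argument, is that $S_{\omega_\epsilon}$ admits no uniform bound as the geodesic degenerates (this is exactly why one cannot expect smooth geodesics), so a direct differentiation of any functional still containing the scalar curvature would be useless. The whole estimate rests on the exact cancellation produced by subtracting the K-energy, after which $\mathcal{M}'_{\alpha,\epsilon}$ depends only on quantities governed by the vortex solution---the norm $\Phi_\epsilon$ and its first $\omega_0$-derivatives---all of which are uniformly bounded by the work of Sections~\ref{sect:W2p}--\ref{sect:C1estimate}. Once this cancellation is secured, the remaining bounds are routine, the only care being the integration by parts that trades the $\omega_\epsilon$-Laplacian of $\Phi_\epsilon$ for the metric-independent pairing $i\partial\varphi'_\epsilon\wedge\bar\partial\Phi_\epsilon$, measured in the fixed background $\omega_0$.
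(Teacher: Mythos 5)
Your proof is correct and takes essentially the same route as the paper: the paper reads $\mathcal{M}_{\alpha,\epsilon}'(t)$ directly off the variation formula \eqref{defn:primitive}, in which the scalar-curvature (K-energy) part is already split off and the $\dot f$-term vanishes because $f_\epsilon(t)$ solves the vortex equation, arriving at exactly your final expression (with $2i\alpha\tau\Lambda_{\omega_\epsilon}F_{h_\epsilon}$ rewritten as $\alpha\tau(\tau-\lvert\bm\phi\rvert^2_{h_\epsilon})$ and the Laplacian term integrated by parts), and then invokes the same $C^{1,\bar 1}$ estimate on $\varphi_\epsilon$ and uniform $C^1$ estimate on $f_\epsilon$. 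Your explicit cancellation of $S_{\omega_\epsilon}$ via the decomposition $\mathcal{M}_\alpha=\widetilde{\mathcal{K}}_\alpha-\mathcal{K}$ is just a re-derivation of that splitting, so the two arguments coincide in substance.
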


\begin{proof}
According to the definition \eqref{defn:primitive},
\begin{align*}
&\mathcal{M}_{\alpha, \epsilon}'(t)
=
-\int_\Sigma \varphi_\epsilon'(t)
\left( \left( \langle S\rangle - c\right) \omega_{\varphi_\epsilon(t)}
-2\alpha i\partial\bar\partial \lvert\bm\phi\rvert_{h_\epsilon(t)}^2
+ \alpha\tau \left( \left| \bm\phi\right|_{h_\epsilon(t)}^2 - \tau\right) \omega_{\varphi_\epsilon(t)}
\right)
\\&\quad
=\left( c-  \langle S\rangle \right)
\int_\Sigma \varphi_\epsilon'(t) \omega_{\varphi_\epsilon(t)}
-
2\alpha\int_\Sigma  i\partial \varphi_\epsilon'(t) \wedge\bar\partial \lvert\bm\phi\rvert_{h_\epsilon(t)}^2
-
\alpha\tau \int_\Sigma \varphi_\epsilon(t) \left( \left| \bm\phi\right|_{h_\epsilon(t)}^2
 - \tau\right) \omega_{\varphi_\epsilon(t)}.
\end{align*}
By the $C^{1,\bar 1}$ estimate of $\varphi_\epsilon$ and the uniform $C^1$ estimate of $f_\epsilon$ (see Section \ref{sect:C1estimate}), we conclude that
\[
\left| \mathcal{M}_{\alpha, \epsilon}'(t)\right|\leq C.
\qedhere\]
\end{proof}

We obtain next a uniform estimate for the second time-derivative along the curve.

\begin{proposition}\label{prop:estimate-second-time-derivative}
$\mathcal{M}_{\alpha, \epsilon}''(t)$ is uniformly bounded on $[0,1]$ for $\epsilon \in (0,1]$. Moreover, it is almost nonnegative at the endpoints $t=0,1$.
\end{proposition}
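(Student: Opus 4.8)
The plan is to exploit the structural simplification that comes from working along the \emph{reduced} curve, where $f_\epsilon(t)$ solves the vortex equation for $\omega_{\varphi_\epsilon(t)}$ at every time, i.e.\ $\wp_1(f_\epsilon(t),\varphi_\epsilon(t))=0$ for all $t$. Feeding this into the second-variation formula of Proposition~\ref{prop:second} annihilates the entire third line there (the one carrying $f_\epsilon''$ and the curvature Hessian), leaving
\begin{equation*}
\mathcal{M}_{\alpha,\epsilon}''(t)=4\alpha\lVert \mathrm{d}f_\epsilon'+\eta_{\varphi_\epsilon'}\lrcorner iF_{h_\epsilon}\rVert^2+4\alpha\lVert J\eta_{\varphi_\epsilon'}\lrcorner \mathrm{d}_{A_\epsilon}\bm\phi-f_\epsilon'\bm\phi\rVert^2+T_\epsilon,
\end{equation*}
where $T_\epsilon=-\int_\Sigma(\varphi_\epsilon''-\lvert \mathrm{d}\varphi_\epsilon'\rvert^2_{\omega_\epsilon})(\alpha\Delta_{\omega_\epsilon}\lvert\bm\phi\rvert^2_{h_\epsilon}-2\alpha\tau\Lambda_{\omega_\epsilon}iF_{h_\epsilon}+\langle S\rangle-c)\omega_\epsilon$. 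The first two summands are manifestly nonnegative, so the whole difficulty concentrates in bounding them from above and in controlling $T_\epsilon$.

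For the two norm terms I would invoke the dimension-one fact that, for any $(1,0)$-form $\gamma$, the density $\lvert\gamma\rvert^2_\omega\,\omega=i\gamma\wedge\bar\gamma$ is independent of $\omega$; hence the $L^2(\omega_\epsilon)$-norm of a \emph{fixed} form equals its $L^2(\omega_0)$-norm. Using the vortex equation $iF_{h_\epsilon}=\tfrac12(\tau-\lvert\bm\phi\rvert^2_{h_\epsilon})\omega_\epsilon$ together with $\iota_{\eta_{\varphi_\epsilon'}}\omega_\epsilon=\mathrm{d}\varphi_\epsilon'$, the first integrand becomes $\mathrm{d}f_\epsilon'+\tfrac12(\tau-\lvert\bm\phi\rvert^2_{h_\epsilon})\mathrm{d}\varphi_\epsilon'$, a form whose coefficients are controlled by the a priori $C^1$ bounds on $f_\epsilon'$, $\lvert\bm\phi\rvert^2_{h_\epsilon}$ and $\varphi_\epsilon'$ from Sections~\ref{sect:W2p}--\ref{sect:C1estimate}, so its norm is uniformly bounded. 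For the second term I would split off $f_\epsilon'\bm\phi$ (bounded since $\int_\Sigma\lvert\bm\phi\rvert^2_{h_\epsilon}\omega_\epsilon=\tau V-4\pi N$), and for $J\eta_{\varphi_\epsilon'}\lrcorner\partial_{A_\epsilon}\bm\phi$ combine the pointwise bound $\lvert \mathrm{d}\varphi_\epsilon'\rvert^2_{\omega_\epsilon}\leq\varphi_\epsilon''\leq C$ (from the $\epsilon$-geodesic equation and the uniform $C^{1,\bar1}$ bound) with the conformal invariance of $\lvert\partial_{A_\epsilon}\bm\phi\rvert^2_{\omega_\epsilon,h_\epsilon}\omega_\epsilon$ and the Bochner identity~\eqref{eq:Bochner.1}, which gives $\int_\Sigma\lvert\partial_{A_\epsilon}\bm\phi\rvert^2_{\omega_0,h_\epsilon}\omega_0=\tfrac12\int_\Sigma(\tau-\lvert\bm\phi\rvert^2_{h_\epsilon})\lvert\bm\phi\rvert^2_{h_\epsilon}\omega_\epsilon\leq C$.

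The genuinely delicate point, and the one I expect to be the main obstacle, is $T_\epsilon$: the quantity $\Delta_{\omega_\epsilon}\lvert\bm\phi\rvert^2_{h_\epsilon}$ is \emph{not} uniformly bounded where $\omega_\epsilon$ degenerates in the interior, and the naive substitution $(\varphi_\epsilon''-\lvert \mathrm{d}\varphi_\epsilon'\rvert^2_{\omega_\epsilon})\omega_\epsilon=\epsilon\,\omega_0$ leaves an unintegrable factor $\omega_0/\omega_\epsilon$. My plan to circumvent this is to treat the three pieces of $T_\epsilon$ separately: the parts built from $iF_{h_\epsilon}$ and $\langle S\rangle-c$ are bounded multiples of $\omega_\epsilon$ and collapse, via the geodesic relation, to $\epsilon\int_\Sigma(\cdots)\omega_0=O(\epsilon)$; in the remaining part I replace $\Delta_{\omega_\epsilon}\lvert\bm\phi\rvert^2_{h_\epsilon}$ through the Bochner identity by $(\tau-\lvert\bm\phi\rvert^2_{h_\epsilon})\lvert\bm\phi\rvert^2_{h_\epsilon}-2\lvert\partial_{A_\epsilon}\bm\phi\rvert^2_{\omega_\epsilon,h_\epsilon}$, so that after pairing with $\lvert \mathrm{d}\varphi_\epsilon'\rvert^2_{\omega_\epsilon}\leq C$ and $\varphi_\epsilon''$, and using once more the conformal invariance of $\lvert\partial_{A_\epsilon}\bm\phi\rvert^2_{\omega_\epsilon,h_\epsilon}\omega_\epsilon$, every resulting integrand is dominated by a bounded multiple of $\omega_0$. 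Equivalently, one may differentiate the explicit formula for $\mathcal{M}_{\alpha,\epsilon}'(t)$ obtained in the previous proposition and integrate by parts once, turning the only dangerous term $-2\alpha\int_\Sigma i\partial\varphi_\epsilon''\wedge\bar\partial\lvert\bm\phi\rvert^2_{h_\epsilon}$ into $-\alpha\int_\Sigma\varphi_\epsilon''\,\Delta_{\omega_0}\lvert\bm\phi\rvert^2_{h_\epsilon}\,\omega_0$, which is bounded by the $L^\infty$ bounds on $\varphi_\epsilon''$ and $\Delta_{\omega_0}\lvert\bm\phi\rvert^2_{h_\epsilon}$. Either route yields $\lvert\mathcal{M}_{\alpha,\epsilon}''(t)\rvert\leq C$ on $[0,1]$ uniformly in $\epsilon$.

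Finally, for the endpoint statement I would return to the displayed second-variation formula and discard the two nonnegative norm terms, so that $\mathcal{M}_{\alpha,\epsilon}''(0)\geq T_\epsilon(0)$ and likewise at $t=1$. At $t=0,1$ the potential $\varphi_\epsilon$ equals the fixed smooth potential $\varphi_0$ (resp.\ $\varphi_1$), so there $\omega_\epsilon$ is a fixed nondegenerate K\"ahler metric and $f_\epsilon$ equals the corresponding (fixed) vortex solution; hence $\omega_0/\omega_\epsilon$ and the bracket in $T_\epsilon$ are bounded independently of $\epsilon$, while the geodesic relation forces $\varphi_\epsilon''-\lvert \mathrm{d}\varphi_\epsilon'\rvert^2_{\omega_\epsilon}=\epsilon\,\omega_0/\omega_\epsilon=O(\epsilon)$. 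Therefore $\lvert T_\epsilon(0)\rvert\leq C\epsilon$, giving $\mathcal{M}_{\alpha,\epsilon}''(0),\,\mathcal{M}_{\alpha,\epsilon}''(1)\geq -C\epsilon$, which is the asserted almost nonnegativity at the endpoints.
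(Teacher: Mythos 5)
Your proposal is correct, and for the uniform bound it follows essentially the paper's own argument: Proposition~\ref{prop:second} with the $\wp_1$-terms annihilated by the vortex equation, the dimension-one invariance of the densities $i\gamma\wedge\bar\gamma$ and $\Delta_\omega u\,\omega$, the substitution $\eta_{\varphi_\epsilon'}\lrcorner iF_{h_\epsilon}=\tfrac12(\tau-\Phi_\epsilon)\mathrm{d}\varphi_\epsilon'$, and the a priori estimates \eqref{eqn:various-estimates}, \eqref{ineq:C0estimateforf'}, \eqref{ineq:C1estimateforf'}. The only variation there is in the section term: the paper bounds its integrand \emph{pointwise} via the Cauchy--Schwarz inequality and the expansion $\mathrm{d}_{A_\epsilon}\bm\phi=\mathrm{d}_{A_0}\bm\phi+2\bm\phi\,\partial f_\epsilon$, while you bound it \emph{integrally} using the Bochner identity \eqref{eq:Bochner.1} and the conformal invariance of $\lvert\partial_{A}\bm\phi\rvert^2_{h,\omega}\,\omega$; both routes rest on the same estimates and are equally valid. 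Where you genuinely diverge is the almost-nonnegativity claim. Your argument is local: at $t=0,1$ the potential equals the fixed smooth $\varphi_0$ (resp.\ $\varphi_1$), so $\omega_{\varphi_\epsilon}$ is a fixed nondegenerate metric, $f_\epsilon$ is the fixed vortex solution by uniqueness in Theorem~\ref{th:B-GP}, and \eqref{eqn:Chen's-geodesic} forces $\varphi_\epsilon''-\lvert\mathrm{d}\varphi_\epsilon'\rvert^2_{\omega_{\varphi_\epsilon}}=\epsilon\,\omega_0/\omega_{\varphi_\epsilon}=O(\epsilon)$, whence $T_\epsilon=O(\epsilon)$ at the endpoints. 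This is correct and more elementary than what the paper does, but it is strictly weaker: the paper (in the proof of Theorem~\ref{thm:K-alpha-convex}) uses the Bochner identity $i\partial\bar\partial\lvert\bm\phi\rvert^2_h=i\,\mathrm{d}_{A_h}\bm\phi\wedge_h\overline{\mathrm{d}_{A_h}\bm\phi}-\lvert\bm\phi\rvert^2_h\,iF_h$ to split off a manifestly nonnegative term, which together with the vortex equation yields the \emph{interior} lower bound $\mathcal{M}_{\alpha,\epsilon}''(t)\geq-4\pi\alpha\tau N\epsilon$ for \emph{all} $t\in[0,1]$. That global bound, not just the endpoint one, is what is needed to conclude distributional convexity of $\mathcal{K}_\alpha$ against interior test functions in Theorem~\ref{thm:K-alpha-convex}; so while your proof settles the proposition as stated, it cannot substitute for the Bochner argument in the sequel.
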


\begin{proof}
According to the formula for second-order derivatives in Proposition \ref{prop:second}, we have
\begin{align*}
\mathcal{M}_{\alpha, \epsilon}''(t)
& =
4\alpha \int_\Sigma \left| \mathrm{d}f_\epsilon'(t) + \frac{1}{2}\left( \tau - \Phi_\epsilon(t)\right)\mathrm{d}\varphi_\epsilon'(t)\right|_{\omega_0}^2 \omega_0
\\&\qquad
+
4\alpha\int_\Sigma \left| J \eta_{\varphi_\epsilon'(t)}\lrcorner \mathrm{d}_{A_\epsilon(t)}\bm\phi
- f_\epsilon'(t) \bm\phi\right|_{h_0}^2 e^{2f_\epsilon(t)}\omega_{\varphi_\epsilon(t)}
\\&\qquad
-
\int_\Sigma \left( \varphi_\epsilon''(t) - \left| \mathrm{d}\varphi_\epsilon'(t)\right|_{\omega_{\varphi_\epsilon(t)}}^2\right)
\left(\alpha \Delta_{\omega_0}\left|\bm\phi\right|_{h_\epsilon(t)}^2 \right)
\omega_0
\\&\qquad
-\int_\Sigma \left(  \alpha\tau \left( \left| \bm\phi\right|_{h_\epsilon(t)}^2-\tau\right)
+(\langle S\rangle - c) \right) \cdot \epsilon \omega_0,
\end{align*}
where we use the fact that $\dim_\mathbb{C}\Sigma=1$ for the integrands in the first and the third terms, and \eqref{eqn:Chen's-geodesic} for the last one. The integrand in the first term is uniformly
bounded by \eqref{eqn:various-estimates} and \eqref{ineq:C1estimateforf'}.	By the Cauchy--Schwarz inequality
$$
\lvert \langle a,c\rangle+b\vert^2\leq 2\vert a\rvert^2\lvert c\rvert^2+2\lvert b\rvert^2
$$
combined with \eqref{eqn:Chen's-geodesic}, the integrand in the second term is uniformly bounded by
\begin{equation}
\begin{split}
2& \left| \nabla^{1,0}\varphi_\epsilon'(t)\right|_{\omega_{\varphi_\epsilon(t)}}^2 \left| \mathrm{d}_{A_\epsilon(t)}\bm\phi\right|^2_{{h_\epsilon(t)}, \omega_0}\omega_0
+
2 f_\epsilon'(t)^2 \Phi_\epsilon(t) \omega_{\varphi_\epsilon(t)}\\
 &\qquad
 \leq
\varphi_\epsilon''(t) \left| \mathrm{d}_{A_0}\bm\phi + 2\bm\phi \partial f_\epsilon(t)\right|_{h_0,\omega_0}^2e^{2f_\epsilon(t)} \omega_{\varphi_\epsilon(t)}
+ 2  f_\epsilon'(t)^2\cdot \tau \omega_{\varphi_\epsilon(t)},
 \end{split}
\end{equation}
which is uniformly bounded from above by the previously established estimates on $f_\epsilon(t)$, $f_\epsilon'(t)$, $\left|\nabla_0 f_\epsilon(t)\right|_{\omega_0}$  (see \eqref{eqn:various-estimates}) and the
$C^{1,\bar 1}$ estimate of $\varphi_\epsilon$.
	
For the third term, using the bound
$$
\left| \mathrm{d}\varphi_\epsilon'(t)\right|_{\omega_{\varphi_\epsilon(t)}}^2\leq \varphi_\epsilon''(t)
$$
obtained from \eqref{eqn:Chen's-geodesic} and the $C^{1,\bar 1}$ estimate on $\varphi_\epsilon(t)$,  together with the uniform bound of $\left| \Delta_{\omega_0}\Phi_\epsilon(t)\right|$ (see
\eqref{eqn:various-estimates}), we obtain
 \begin{equation*}
 \begin{split}
\left|  -  \left( \varphi_\epsilon''(t) - \left| \mathrm{d}\varphi_\epsilon'(t)\right|_{\omega_{\varphi_\epsilon(t)}}^2\right)
 \left(
 \alpha \Delta_{\omega_0}\left|\bm\phi\right|_{h_\epsilon(t)}^2 \right)\right|
 & \leq
\alpha \varphi_\epsilon''(t)\left| \Delta_{\omega_0}\Phi_\epsilon(t)\right|
\leq
C.
\end{split}
\end{equation*}
The integrand in the last term is clearly uniformly bounded, and so the proof follows.
\end{proof}

Consider now a sequence $\epsilon_n \to 0$ as $n \to +\infty$. By the uniform $W^{2,p}$ bounds on $f_{\epsilon}, \varphi_{\epsilon}$ (\eqref{est:W2pestimate}) and the Sobolev embedding, a subsequence converges in
$C^{1,\alpha}(\Sigma)$ for any fixed $t\in [0,1]$. Actually, by the uniqueness of $\varphi(t)$ and the corresponding solution to the vortex equation of $\omega_{\varphi(t)}$, the above convergence happens for the
whole sequence. We now see (using formula \ref{eqn:M-extension} and the $C^{1,\alpha}$ convergence of $\varphi_{\epsilon_n}$) that
\begin{equation}\label{eq:functional-along-epsilon-geodesic.2}
\mathcal{M}_\alpha(t):= \mathcal{M}_\alpha(\varphi(t)) = \lim_{\epsilon\to 0} \mathcal{M}_{\alpha,\epsilon}(t)
\end{equation}
for any $t\in [0,1]$.

\begin{lemma}
\label{lem:C1continuity}
The function $t\mapsto \mathcal{M}_{\alpha}(t)$ is $C^1$.
\label{lem:c1dependenceformalpha}
\end{lemma}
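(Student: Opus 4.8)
The plan is to upgrade the pointwise convergence $\mathcal{M}_{\alpha,\epsilon}(t)\to\mathcal{M}_\alpha(t)$ recorded in \eqref{eq:functional-along-epsilon-geodesic.2} to convergence in $C^1([0,1])$, using the uniform bounds on the first and second time-derivatives of $\mathcal{M}_{\alpha,\epsilon}$ that were just established. All of the genuine analytic work is already contained in those two propositions, so the content of this lemma is purely a soft functional-analytic passage to the limit.

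First I would recall that, by the preceding proposition and by Proposition~\ref{prop:estimate-second-time-derivative}, there is a constant $C>0$ independent of $\epsilon\in(0,1]$ such that $\lvert\mathcal{M}_{\alpha,\epsilon}'(t)\rvert\leq C$ and $\lvert\mathcal{M}_{\alpha,\epsilon}''(t)\rvert\leq C$ for all $t\in[0,1]$. In particular, the family $\{\mathcal{M}_{\alpha,\epsilon}'\}_{\epsilon\in(0,1]}$ is uniformly bounded, and the uniform bound on $\mathcal{M}_{\alpha,\epsilon}''$ makes it equi-Lipschitz, hence equicontinuous, on $[0,1]$. This is the one point that requires a little care: the equicontinuity of the \emph{derivatives} must be extracted from the second-derivative bound rather than assumed.

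Next I would fix a sequence $\epsilon_n\to 0$ and apply the Arzel\`a--Ascoli theorem to $\{\mathcal{M}_{\alpha,\epsilon_n}'\}$, extracting a subsequence (still denoted $\epsilon_n$) such that $\mathcal{M}_{\alpha,\epsilon_n}'\to h$ uniformly on $[0,1]$ for some continuous function $h$. Combining the fundamental theorem of calculus, the uniform convergence of the integrands, and the pointwise convergence \eqref{eq:functional-along-epsilon-geodesic.2}, I would then pass to the limit in $\mathcal{M}_{\alpha,\epsilon_n}(t)=\mathcal{M}_{\alpha,\epsilon_n}(0)+\int_0^t\mathcal{M}_{\alpha,\epsilon_n}'(s)\,\mathrm{d}s$ to obtain
\[
\mathcal{M}_\alpha(t)=\mathcal{M}_\alpha(0)+\int_0^t h(s)\,\mathrm{d}s\qquad\text{for all }t\in[0,1].
\]
Since $h$ is continuous, this identity shows that $\mathcal{M}_\alpha$ is $C^1$ on $[0,1]$ with $\mathcal{M}_\alpha'=h$, which is the assertion. (One could add that, as the limit $\mathcal{M}_\alpha$ is already determined by \eqref{eq:functional-along-epsilon-geodesic.2} independently of the subsequence, the full family $\mathcal{M}_{\alpha,\epsilon}$ in fact converges to $\mathcal{M}_\alpha$ in $C^1$, but this refinement is not needed for the stated conclusion.) Because the estimates carrying the argument are already in hand, I do not expect any serious obstacle here beyond the correct invocation of equicontinuity and the identification of the $C^1$-limit with $\mathcal{M}_\alpha$.
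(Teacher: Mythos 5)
Your proposal is correct and follows essentially the same route as the paper: the paper's own proof is a one-line appeal to the uniform second-derivative estimate of Proposition~\ref{prop:estimate-second-time-derivative}, and your Arzel\`a--Ascoli/fundamental-theorem-of-calculus argument (uniform bound on $\mathcal{M}_{\alpha,\epsilon}''$ giving equicontinuity of $\mathcal{M}_{\alpha,\epsilon}'$, then identification of the limit derivative via the pointwise convergence \eqref{eq:functional-along-epsilon-geodesic.2}) is precisely the standard argument that one-liner invokes. Indeed, the paper later uses exactly this mechanism explicitly, in the proof of Proposition~\ref{thm:lower-bound}, where uniform $C^2([0,1])$ bounds plus pointwise convergence are used to conclude uniform convergence of $\mathcal{M}_{\alpha,\epsilon}'$ to $\mathcal{M}_\alpha'$.
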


\begin{proof}
This is a direct consequence of the second derivative estimate in Proposition \ref{prop:estimate-second-time-derivative}.
\end{proof}

We are ready to prove the first main result of this section.

\begin{theorem}\label{thm:K-alpha-convex}
The distributional second derivative of $t\mapsto\mathcal{M}_{\alpha}(t)$ satisfies
$$
\mathcal{M}_{\alpha, \epsilon}''(t)\geq -4\pi \alpha \tau N\epsilon.
$$
Therefore the function $t\mapsto\mathcal{K}_\alpha(t):=\mathcal{K}(t)+\mathcal{M}_\alpha(t)$ is weakly convex (i.e. the distributional second derivative is nonnegative) and continuous on $[0,1]$. In particular,
it is convex.
\end{theorem}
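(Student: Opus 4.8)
The plan is to begin from the closed second-variation formula of Proposition~\ref{prop:second}, specialised to the $\epsilon$-geodesic $(f_\epsilon(t),\varphi_\epsilon(t))$, and to exploit that along this curve the Hermitian metric $h_\epsilon(t)$ solves the vortex equation, so that $\wp_1(f_\epsilon(t),\varphi_\epsilon(t))=0$. This annihilates the entire $\wp_1$-line of the formula, leaving only the two manifestly nonnegative norm-square terms together with
\[
-\int_\Sigma\Big(\varphi_\epsilon''-|\mathrm{d}\varphi_\epsilon'|_{\omega_\epsilon}^2\Big)\,B\,\omega_\epsilon,\qquad B=\alpha\Delta_{\omega_\epsilon}\Phi_\epsilon-2\alpha\tau\Lambda_{\omega_\epsilon}iF_{h_\epsilon}+(\langle S\rangle-c),
\]
where $\Phi_\epsilon=|\bm\phi|_{h_\epsilon}^2$. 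Since the $\epsilon$-geodesic $\varphi_\epsilon(t)$ is smooth and $f_\epsilon(t)$ depends smoothly on $t$ (implicit function theorem, as in the proof of Proposition~\ref{prop:convexalphaK}), this is a classical identity and $\mathcal{M}_{\alpha,\epsilon}$ is $C^2$ in $t$.

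First I would invoke the $\epsilon$-geodesic equation \eqref{eqn:Chen's-geodesic}, rewritten as $(\varphi_\epsilon''-|\mathrm{d}\varphi_\epsilon'|_{\omega_\epsilon}^2)\,\omega_\epsilon=\epsilon\,\omega_0$. As $B$ is a function, the displayed term collapses to the genuinely $O(\epsilon)$ quantity $-\epsilon\int_\Sigma B\,\omega_0$, which I then evaluate explicitly. The crux is the evaluation of $\int_\Sigma B\,\omega_0$: for the curvature term the vortex equation gives $i\Lambda_{\omega_\epsilon}F_{h_\epsilon}=\tfrac12(\tau-\Phi_\epsilon)$; for the Laplacian term the Bochner formula \eqref{eq:Bochner.1} combined with the vortex equation yields $\Delta_{\omega_\epsilon}\Phi_\epsilon=(\tau-\Phi_\epsilon)\Phi_\epsilon-2|\partial_{A_\epsilon}\bm\phi|_{\omega_\epsilon}^2$; and the constant term integrates to $(\langle S\rangle-c)V=4\pi\alpha\tau N$, using $\langle S\rangle-c=4\pi\alpha\tau N/V$, which follows from \eqref{eq:constantc} and $\chi(\Sigma)=2-2g$. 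Substituting and regrouping, the pieces proportional to $\Phi_\epsilon$ and to $\tau$ assemble into a perfect square, giving
\[
\mathcal{M}_{\alpha,\epsilon}''(t)=\big(\text{two nonnegative norm-squares}\big)+\alpha\epsilon\!\int_\Sigma(\tau-\Phi_\epsilon)^2\omega_0+2\alpha\epsilon\!\int_\Sigma|\partial_{A_\epsilon}\bm\phi|_{\omega_\epsilon}^2\,\omega_0-4\pi\alpha\tau N\epsilon,
\]
and since every term but the last is nonnegative this proves $\mathcal{M}_{\alpha,\epsilon}''(t)\geq-4\pi\alpha\tau N\epsilon$.

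I expect this exact cancellation — producing precisely the constant $4\pi\alpha\tau N$ together with a nonnegative remainder, rather than an uncontrolled $O(\epsilon)$ error — to be the main obstacle, since it hinges on combining the Bochner identity, the vortex relation and the topological value of $\langle S\rangle-c$ with exactly the right signs; the absolute bound $\Phi_\epsilon\le\tau$ and the a priori estimates of Sections~\ref{sect:W2p}--\ref{section:Convexity} are what make all the integrands legitimately finite along the only-$C^{1,\bar 1}$ limiting geodesic.

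Finally I would pass to the limit and assemble. The function $t\mapsto\mathcal{M}_{\alpha,\epsilon}(t)+2\pi\alpha\tau N\epsilon\,t^2$ has nonnegative classical second derivative, hence is convex, and by \eqref{eq:functional-along-epsilon-geodesic.2} it converges pointwise on $[0,1]$ to $\mathcal{M}_\alpha(t)$ as $\epsilon\to0$; since pointwise limits of convex functions are convex, $\mathcal{M}_\alpha$ is convex and in particular has nonnegative distributional second derivative. As the extended K-energy $\mathcal{K}$ is convex along the finite-energy geodesic by \cite[Theorem~4.7]{BDL1}, the sum $\mathcal{K}_\alpha=\mathcal{K}+\mathcal{M}_\alpha$ is weakly convex. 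Continuity on $[0,1]$ follows from the $d_1$-continuity of $t\mapsto\varphi(t)$ (Theorem~\ref{thm:finiteengeod}) and of $\mathcal{M}_\alpha$ on $\overline{\mathcal{R}}$ (Lemma~\ref{lem:continuityofMalphaoncompletion}), together with the continuity of $\mathcal{K}$ along the geodesic; a continuous, weakly convex function on $[0,1]$ is convex, which yields the final assertion.
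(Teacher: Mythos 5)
Your proof is correct and takes essentially the same route as the paper's: the second-variation formula of Proposition~\ref{prop:second} with $\wp_1(f_\epsilon,\varphi_\epsilon)=0$, the Bochner identity~\eqref{eq:Bochner.1} combined with the vortex equation to evaluate the remaining integrand, the $\epsilon$-geodesic equation~\eqref{eqn:Chen's-geodesic} to collapse that term to $-\epsilon\int_\Sigma B\,\omega_0$, and the topological identity $(\langle S\rangle-c)V=4\pi\alpha\tau N$. Two cosmetic differences: your exact perfect-square identity is slightly sharper than the paper's chain of inequalities (the paper instead discards the nonnegative form $i\,\mathrm{d}_{A}\bm\phi\wedge_h\overline{\mathrm{d}_{A}\bm\phi}$ against $\varphi_\epsilon''-\lvert\mathrm{d}\varphi_\epsilon'\rvert^2\geq 0$, and in doing so even misplaces a factor of $\alpha$ that your computation gets right), and your passage to the limit via ``pointwise limits of convex functions are convex'' replaces the paper's pairing of $\mathcal{M}_{\alpha,\epsilon}''$ against nonnegative test functions. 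One small caveat in your continuity step: Lemma~\ref{lem:continuityofMalphaoncompletion} is stated only on $\overline{\mathcal{R}}$, whereas Chen's geodesic between arbitrary smooth potentials need not consist of normalized potentials; this is repaired either by normalizing (using that $\AMY$ is affine along the geodesic and that $\mathcal{M}_\alpha$ is invariant under additive constants, since $\int_\Sigma(-\Ric\omega_0+2\alpha\tau iF_{h_0}+c\,\omega_0)=0$), or, as the paper does, by deducing continuity of $\mathcal{M}_\alpha(t)$ on all of $[0,1]$ directly from the uniform bounds of Proposition~\ref{prop:estimate-second-time-derivative} (Lemma~\ref{lem:C1continuity}).
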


\begin{proof}
Recall the formula
\[
i\partial\bar\partial  \left| \bm\phi\right|_h^2
=
i \mathrm{d}_{A_h}\bm\phi\wedge_h\overline{\mathrm{d}_{A_h}\bm\phi}
-
 \left| \bm\phi\right|_h^2\cdot iF_h.
\]
Using the fact that $h_\epsilon(t)\defeq h_0e^{2f_\epsilon(t)}$ satisfies the vortex equation for $\omega_{\varphi_\epsilon(t)}$, we derive that
\begin{align*}
\mathcal{M}_{\alpha, \epsilon}''(t)
& =
4\alpha \lVert \mathrm{d}f'_\epsilon(t) + \eta_{\varphi'_\epsilon(t)}\lrcorner  iF_{h_\epsilon(t)}
\rVert^2
+ 4\alpha \lVert J\eta_{\varphi'_\epsilon(t)}\lrcorner \mathrm{d}_{A_{h_\epsilon(t)}} \bm\phi - f'_\epsilon(t)\bm\phi\rVert^2	\\
&\quad
+
2\alpha \int_\Sigma  \left( \varphi''_\epsilon(t) - \left| \mathrm{d}\varphi'_\epsilon(t)\right|_{\omega_{\varphi_\epsilon(t)}}^2\right)\cdot i\partial\bar\partial \left| \bm\phi\right|_{h_\epsilon(t)}^2\\
&\quad	-
\int_\Sigma \left( \varphi''_\epsilon(t) - \left| \mathrm{d}\varphi'_\epsilon(t)\right|_{\omega_{\varphi_\epsilon(t)}}^2\right)
\left(
 \alpha\tau \left( \left| \bm\phi\right|_{h_\epsilon(t)}^2-\tau\right)
+
(\langle S\rangle - c) \right)
\omega_{\varphi_\epsilon(t)}\\
& \geq
 \alpha \int_\Sigma \left( \varphi''_\epsilon(t) - \left| \mathrm{d}\varphi'_\epsilon(t)\right|_{\omega_{\varphi_\epsilon(t)}}^2\right) \left[ \left( \tau - \left| \bm\phi\right|_{h_\epsilon(t)}^2\right)^2
 - (\langle S\rangle - c)
 \right] \omega_{\varphi_\epsilon(t)}\\
& =
\alpha\epsilon \int_\Sigma  \left[ \left( \tau - \left| \bm\phi\right|_{h_\epsilon(t)}^2\right)^2
- (\langle S\rangle - c)
\right] \omega_0\\
& \geq
\alpha \left(c-\langle S\rangle\right)V\epsilon\\
& =
-4\pi \alpha\tau N \epsilon.
\end{align*}
Using the decomposition, $\mathcal{K}_\alpha = \mathcal{K}+\mathcal{M}_\alpha$, for any nonnegative test function $T\in C_0^\infty[0,1]$,
\begin{align*}
\langle \mathcal{K}_\alpha'', T\rangle
&=
\langle \mathcal{K}'', T\rangle
+
\langle \mathcal{M}_\alpha'', T\rangle
=\langle \mathcal{K}'', T\rangle
+
\lim_{\epsilon\to 0}\,\langle \mathcal{M}_{\alpha, \epsilon}'', T\rangle\\
& \geq
\lim_{\epsilon\to 0}\,\langle -4\pi\alpha\tau N \epsilon, T\rangle
= 0,
\end{align*}
where we have used the \emph{convexity} of the K-energy along $C^{1,\bar 1}$ geodesics established in \cite{BeBern,CLP}.
Since the term $\mathcal{M}_\alpha(t)$ is continuous by Lemma \ref{lem:C1continuity}, $\mathcal{K}_\alpha(t)$ is also continuous using the continuity of the usual K-energy \cite{BeBern, CLP}.
Convexity of $\mathcal{K}_\alpha(t)$ follows now from the basic fact that weak convexity and continuity imply convexity.
Indeed, if $f$ is such a one-variable function, then its convolution with an approximate of identity is convex and smooth. Hence $f(ta+(1-t)b)\leq tf(a)+(1-t)f(b)$ by making the convolution parameter go
to zero.
\end{proof}

To finish this section, in the next result we establish the convexity of the reduced $\alpha$-K-energy of Theorem \ref{thm:extended-vortex-K-energy}, along finite-energy geodesics.

\begin{theorem}\label{thm:convexfinen}
Given $\psi_0,\psi_1\in \overline{\mathcal{R}}$, let $\{\psi_t\}_{t\in [0,1]}\subset\mathcal{E}_1$ be the unique finite-energy geodesic connecting $\psi_0$ and $\psi_1$.
\begin{enumerate}
\item[\textup{(1)}]
The curve $\{\psi_t\}_{t\in [0,1]}$  is entirely contained in $\overline{\mathcal{R}}\subset\mathcal{E}_1$.
\item[\textup{(2)}]
The map
$$
[0,1]\lto\mathbb{R}\cup\{+\infty\},\quad t\longmapsto\mathcal{K}_\alpha(\psi_t),
$$
defined by Theorem \ref{thm:extended-vortex-K-energy}, is convex on $[0,1]$, and continuous at the endpoints $t=0,1$.
\end{enumerate}
\end{theorem}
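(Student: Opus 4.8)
The plan is to prove both statements by approximating the finite-energy geodesic $\{\psi_t\}$ by Chen's $C^{1,\bar1}$ geodesics, along which the required convexity is already available (Theorem \ref{thm:K-alpha-convex}), and then passing to the limit. Following Theorem \ref{thm:finiteengeod}, fix sequences $\{\psi_0^k\},\{\psi_1^k\}\subset\mathcal{P}_{\omega_0}$ of smooth $\omega_0$-plurisubharmonic functions decreasing pointwise to $\psi_0$ and $\psi_1$, and let $t\mapsto\varphi_t^k$ be Chen's $C^{1,\bar1}$ geodesic joining $\psi_0^k$ to $\psi_1^k$ (Theorem \ref{thm:Chen.1}). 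By the comparison principle for the homogeneous complex Monge--Amp\`ere equation these geodesics decrease in $k$, and by Theorem \ref{thm:finiteengeod} their pointwise limit is exactly the finite-energy geodesic, $\psi_t=\lim_{k\to\infty}\varphi_t^k$.

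For part (1) I would use that the Aubin--Mabuchi functional $\AMY$ of \eqref{eq:Aubin-Mabuchi.1} is affine along $C^{1,\bar1}$ geodesics, a direct consequence of the geodesic equation \eqref{eq:geodesiceq} (which forces $\tfrac{\mathrm{d}^2}{\mathrm{d}t^2}\AMY(\varphi_t^k)=0$), so that $\AMY(\varphi_t^k)=(1-t)\AMY(\psi_0^k)+t\,\AMY(\psi_1^k)$. Since $\AMY$ is continuous under decreasing limits in $\mathcal{E}_1$ and $\AMY(\psi_j^k)\to\AMY(\psi_j)=0$ because $\psi_j\in\overline{\mathcal{R}}=\mathcal{E}_1\cap\AMY^{-1}(0)$, letting $k\to\infty$ yields $\AMY(\psi_t)=0$ for every $t$, i.e.\ $\psi_t\in\overline{\mathcal{R}}$ by Theorem \ref{thm:Potentialcompletion}.

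For part (2), along each approximating geodesic the map $t\mapsto\mathcal{K}_\alpha(\varphi_t^k)=\mathcal{K}(\varphi_t^k)+\mathcal{M}_\alpha(\varphi_t^k)$ is convex by Theorem \ref{thm:K-alpha-convex}, so it suffices to prove the pointwise convergence $\mathcal{K}_\alpha(\varphi_t^k)\to\mathcal{K}_\alpha(\psi_t)$ and invoke the elementary fact that a pointwise limit of convex functions is convex. For the term $\mathcal{M}_\alpha$ I would use that it is invariant under adding a constant to the potential (the functional descends to the space of K\"ahler metrics; equivalently, $\sigma_\alpha$ in \eqref{eq:sigmaGVreduced} annihilates the constant direction, cf.\ Remark \ref{rem:liftRtoRom}). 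Writing $\hat\varphi_t^k=\varphi_t^k-\AMY(\varphi_t^k)\in\overline{\mathcal{R}}$, we then have $\mathcal{M}_\alpha(\varphi_t^k)=\mathcal{M}_\alpha(\hat\varphi_t^k)$; since $\varphi_t^k\searrow\psi_t$ gives $d_1(\varphi_t^k,\psi_t)\to0$ and the affineness formula together with $\AMY(\psi_j^k)\to0$ gives $\AMY(\varphi_t^k)\to0$, we obtain $d_1(\hat\varphi_t^k,\psi_t)\to0$ with all terms in $\overline{\mathcal{R}}$, so Lemma \ref{lem:continuityofMalphaoncompletion} yields $\mathcal{M}_\alpha(\varphi_t^k)\to\mathcal{M}_\alpha(\psi_t)$. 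For the K-energy term I would invoke Berman--Darvas--Lu \cite{BDL1}: along the finite-energy geodesic the extended K-energy is convex and $\mathcal{K}(\varphi_t^k)\to\mathcal{K}(\psi_t)$ pointwise. Summing the two convergences gives the desired pointwise convergence, and hence convexity of $t\mapsto\mathcal{K}_\alpha(\psi_t)$.

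Finally, continuity at $t=0,1$ would follow by treating the two summands separately: the curve $t\mapsto\psi_t$ is $d_1$-continuous and, by part (1), valued in $\overline{\mathcal{R}}$, so $t\mapsto\mathcal{M}_\alpha(\psi_t)$ is continuous on all of $[0,1]$ by Lemma \ref{lem:continuityofMalphaoncompletion}, while the endpoint continuity of the extended K-energy along finite-energy geodesics is again provided by \cite{BDL1}. The main obstacle is precisely this K-energy input: because the extended K-energy \eqref{def:extendedK-energy} is only known to be $d_1$-lower semicontinuous on $\mathcal{E}_1$ (Theorem \ref{thm:extended-vortex-K-energy}), the pointwise convergence $\mathcal{K}(\varphi_t^k)\to\mathcal{K}(\psi_t)$ along the decreasing approximation — and thus the upgrade of convexity from $C^{1,\bar1}$ geodesics to finite-energy geodesics — cannot be deduced from a soft semicontinuity argument and genuinely rests on the fine results of \cite{BDL1}. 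By contrast, the new term $\mathcal{M}_\alpha$ is comparatively harmless once its constant-shift invariance and its $d_1$-continuity on $\overline{\mathcal{R}}$ (Lemma \ref{lem:continuityofMalphaoncompletion}) are in hand.
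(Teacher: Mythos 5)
Your part (1) and your treatment of the $\mathcal{M}_\alpha$ term are correct and essentially match the paper: linearity of $\AMY$ along weak geodesics gives (1), and constant-shift invariance of $\mathcal{M}_\alpha$ combined with Lemma \ref{lem:continuityofMalphaoncompletion} and the $d_1$-convergence of the approximating $C^{1,\bar1}$ geodesics does give $\mathcal{M}_\alpha(\varphi_t^k)\to\mathcal{M}_\alpha(\psi_t)$. The gap is in the K-energy step. The convergence $\mathcal{K}(\varphi_t^k)\to\mathcal{K}(\psi_t)$ at interior times is \emph{not} a result of \cite{BDL1}, and it does not follow from what is available: $d_1$-lower semicontinuity only yields $\liminf_k\mathcal{K}(\varphi_t^k)\geq\mathcal{K}(\psi_t)$, and upper semicontinuity along decreasing approximations fails in general because the entropy term can jump up in the limit --- this is precisely why \cite{BDL1} must \emph{construct} special approximating sequences to prove that the Chen--Tian formula is the greatest lsc extension, rather than use arbitrary decreasing ones. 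The same defect already appears at the endpoints of your construction: for arbitrary decreasing sequences $\psi_j^k\searrow\psi_j$ there is no reason why $\mathcal{K}(\psi_j^k)\to\mathcal{K}(\psi_j)$. Hence the scheme ``pointwise limit of convex functions is convex'' cannot be run as stated.

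The repair, which is what the paper does, is to observe that interior pointwise convergence is never needed. Fix $t_0<t_1$ and use the \emph{greatest}-lsc-extension property of Theorem \ref{thm:extended-vortex-K-energy} to choose $\psi_{t_j}^k\in\mathcal{P}_{\omega_0}$ with $d_1(\psi_{t_j}^k,\psi_{t_j})\to 0$ \emph{and} $\mathcal{K}_\alpha(\psi_{t_j}^k)\to\mathcal{K}_\alpha(\psi_{t_j})$, $j=0,1$; let $\{\psi_t^k\}$ be the $C^{1,\bar1}$ geodesics joining them, so that $d_1(\psi_t^k,\psi_t)\to0$ by \cite[Proposition 4.3]{BDL1}. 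Then lower semicontinuity at the interior point together with convexity along each $C^{1,\bar1}$ geodesic (Theorem \ref{thm:K-alpha-convex}) gives
\[
\mathcal{K}_\alpha(\psi_t)\leq\liminf_{k\to\infty}\mathcal{K}_\alpha(\psi_t^k)
\leq\frac{t_1-t}{t_1-t_0}\,\mathcal{K}_\alpha(\psi_{t_0})+\frac{t-t_0}{t_1-t_0}\,\mathcal{K}_\alpha(\psi_{t_1}),
\]
which is the convexity inequality on every subinterval; continuity at $t=0,1$ then follows from this convexity combined with lsc along the $d_1$-continuous curve $t\mapsto\psi_t$. Alternatively, your decomposition can be salvaged by citing \cite{BDL1} only for what it actually proves --- convexity of the extended $\mathcal{K}$ along finite-energy geodesics --- and establishing convexity of $\mathcal{M}_\alpha$ along finite-energy geodesics separately via your continuity argument (note that the proof of Theorem \ref{thm:K-alpha-convex} shows $\mathcal{M}_{\alpha,\epsilon}''\geq-4\pi\alpha\tau N\epsilon$, so $\mathcal{M}_\alpha$ alone is convex along $C^{1,\bar1}$ geodesics), and then adding the two convex functions. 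Either way, the step you yourself flagged as ``genuinely resting on the fine results of \cite{BDL1}'' is exactly the step where your proposal asserts something those results do not provide.
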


\begin{proof}
Part (1) follows because the functional $\AMY$ is linear along weak geodesics (see~\cite[Lemma 5.2(ii)]{Darvas}), so the condition $\AMY(\psi)=0$ in~\eqref{eq:normalisationofphi} is preserved along a finite-energy
geodesic connecting any two points in $\overline{\mathcal{R}}$.
For part (2), we fix $t_0,t_1\in [0,1]$ with $t_0<t_1$. By Theorem~\ref{thm:extended-vortex-K-energy}, there exist sequences $\{\psi_{t_0}^k\}$ and $\{\psi_{t_1}^k\}$ in $\mathcal{P}_{\omega_0}$ such that
$d_1(\psi_{t_0}^k, \psi_{t_0})\to 0$ and $d_1(\psi_{t_1}^k, \psi_{t_1})\to 0$ as $k\to\infty$, and
\[
\mathcal{K}_\alpha(\psi_{t_0})=\lim_{k\to \infty} \mathcal{K}_\alpha(\psi_{t_0}^k), \;\;
\mathcal{K}_\alpha(\psi_{t_1})=\lim_{k\to \infty} \mathcal{K}_\alpha(\psi_{t_1}^k).
\]
Let $\{\psi_t^k\}_{t\in [t_0,t_1]}$ be the unique $C^{1,\bar{1}}$ geodesic connecting $\psi_{t_0}^k$ and $\psi_{t_1}^k$. Then $d_1(\psi_t^k, \psi_t)\to 0$ as $k\to
+\infty$ for all $t\in [t_0,t_1]$, by~\cite[Proposition 4.3]{BDL1}. By the  convexity of $\mathcal{K}_\alpha$ along $C^{1,\bar{1}}$ geodesics (Theorem~\ref{thm:K-alpha-convex}) and the $d_1$-lower semicontinuity of
$\mathcal{K}_\alpha$ (Theorem~\ref{thm:extended-vortex-K-energy}), for any $t\in [t_0, t_1]$, we have
\begin{equation*}
\begin{split}
\mathcal{K}_\alpha(\psi_t)
& \leq
\liminf_{k\to +\infty} \mathcal{K}_\alpha(\psi_t^k)\\
& \leq
\frac{t_1-t}{t_1-t_0}\lim_{k\to +\infty} \mathcal{K}_\alpha(\psi_{t_0}^k)
+
\frac{t-t_0}{t_1-t_0}\lim_{t\to +\infty}
\mathcal{K}_\alpha(\psi_{t_1}^k)\\
& =
\frac{t_1-t}{t_1-t_0} \mathcal{K}_\alpha(\psi_{t_0})
+
\frac{t-t_0}{t_1-t_0}
\mathcal{K}_\alpha(\psi_{t_1}).
\end{split}
\end{equation*}
Consequently, the result follows.
\end{proof}

\section{Existence and uniqueness of gravitating vortices}\label{sec:geometric-applications}

\subsection{Uniqueness of gravitating vortices}\label{section:Uniqueness}

Here we prove uniqueness of solutions to the gravitating vortex equations in any admissible K\"ahler class, in the absence of automorphisms.
As a preliminary result, we also prove that the reduced $\alpha$-K-energy is bounded from below on the space of K\"ahler potentials, provided that these equations have solutions.

We fix a compact Riemann surface $\Sigma$ of genus $g=g(\Sigma)$, a holomorphic line bundle $L$ over $\Sigma$ with positive degree
\[
0 < \int_{\Sigma}c_1(L) = N \in \mathbb{N},
\]
a non-zero holomorphic section $\bm\phi\in H^0(\Sigma,L)$, and positive real constants $V$ and $\tau$ satisfying the inequality \eqref{eqn:ineqtauV}. We also fix a real coupling constant $\alpha > 0$ and consider
the gravitating vortex equations \eqref{eqn:GV0} for pairs $(\omega,h)$ with total volume
\[
\int_\Sigma \omega = V.
\]
By Theorem~\ref{th:B-GP}, the inequality \eqref{eqn:ineqtauV} is the condition required for the existence of a (unique) solution to the first equation in~\eqref{eqn:GV0} for any given background K\"ahler metric
$\omega$.
Following our approach to solve the gravitating vortex equations using symplectic reduction by stages, we consider the system~\eqref{eqn:GV0} as a single equation
\begin{equation}\label{eq:GVagain}
S_\omega + \alpha\Delta_\omega  \lvert\bm\phi\rvert_{h_\omega}^2 -2i\alpha \tau \Lambda_\omega F_{h_\omega} = c,
\end{equation}
for $c \in \mathbb{R}$ as in \eqref{eq:constantcintro}, where the unknown is a K\"ahler metric $\omega$ with volume $V$ and $h_\omega$ is the unique Hermitian metric on $L$ solving the vortex equation (see Theorem
\ref{th:B-GP})
\begin{equation}\label{eq:Vagain}
i\Lambda_\omega F_{h_\omega} + \frac{1}{2}(\lvert\bm\phi\rvert_{h_\omega}^2-\tau)=0.
\end{equation}

We start by proving lower boundedness of the reduced $\alpha$-K-energy if~\eqref{eq:GVagain} has solutions.

\begin{proposition}\label{thm:lower-bound}
Let $\varphi_0, \varphi_1\in \mathcal{P}_{\omega_0}$ and $\{\varphi(t)\}_{t\in [0,1]}$ be the unique $C^{1,\bar 1}$-geodesic connecting $\varphi_0$ and $\varphi_1$. Then the extended reduced $\alpha$-K-energy
$\mathcal{K}_\alpha(t)$ along this $C^{1,\bar 1}$ geodesic, defined in Theorem~\ref{thm:K-alpha-convex}, satisfies
\begin{align*}
\lim_{t\to 0^+}\frac{\mathcal{K}_\alpha(t)-\mathcal{K}_\alpha(0)}{t}
&
 \geq -
\int_\Sigma  \varphi'(0)
\left(\Ric\omega_{\varphi_0}-2\alpha i\partial\bar\partial\lvert\bm\phi\rvert_{h_0e^{2f_0}}^2 - 2\alpha\tau iF_{h_0e^{2f_0}}-c\omega_{\varphi_0}\right).
\end{align*}
As a consequence, the functional $\mathcal{K}_\alpha$ is bounded from below on $\mathcal{P}_{\omega_0}$ if the triple $(\Sigma,L,\bm{\phi})$ admits a smooth solution of the reduced gravitating vortex equation
\eqref{eq:GVagain}.
\end{proposition}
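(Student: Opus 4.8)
The plan is to establish the slope inequality first and then read off the lower bound. Along the $C^{1,\bar1}$ geodesic I would use the decomposition $\mathcal{K}_\alpha(t)=\mathcal{K}(t)+\mathcal{M}_\alpha(t)$ from Theorem~\ref{thm:K-alpha-convex} and treat the two summands separately. Since $t\mapsto\mathcal{K}_\alpha(t)$ is convex and continuous on $[0,1]$ (Theorem~\ref{thm:K-alpha-convex}), the limit defining the left-hand side exists and equals the right derivative at $0$, which by convexity is the infimum of the slopes $t^{-1}(\mathcal{K}_\alpha(t)-\mathcal{K}_\alpha(0))$; I record this monotonicity for the second part of the argument. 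Because $\mathcal{M}_\alpha$ is $C^1$ on $[0,1]$ (Lemma~\ref{lem:C1continuity}), the quotient $t^{-1}(\mathcal{M}_\alpha(t)-\mathcal{M}_\alpha(0))$ converges to the genuine derivative $\mathcal{M}_\alpha'(0)$, so the two slopes add and
\[
\lim_{t\to 0^+}\frac{\mathcal{K}_\alpha(t)-\mathcal{K}_\alpha(0)}{t}=\lim_{t\to 0^+}\frac{\mathcal{K}(t)-\mathcal{K}(0)}{t}+\mathcal{M}_\alpha'(0).
\]

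For the second summand I would compute $\mathcal{M}_\alpha'(0)$ explicitly. Using $\mathcal{M}_\alpha'(t)=\lim_{\epsilon\to0}\mathcal{M}_{\alpha,\epsilon}'(t)$ (the $C^1$-convergence underlying Lemma~\ref{lem:C1continuity}), the formula for $\mathcal{M}_{\alpha,\epsilon}'$ from Section~\ref{section:Convexity}, and the fact that at the smooth endpoint $\varphi_\epsilon(0)=\varphi_0$ and $h_\epsilon(0)=h_0e^{2f_0}$ are independent of $\epsilon$, one obtains
\[
\mathcal{M}_\alpha'(0)=-\int_\Sigma\varphi'(0)\left((\langle S\rangle-c)\omega_{\varphi_0}-2\alpha i\partial\bar\partial\lvert\bm\phi\rvert^2_{h_0e^{2f_0}}-2\alpha\tau iF_{h_0e^{2f_0}}\right),
\]
where I have used $\varphi_\epsilon'(0)\to\varphi'(0)$ tested against the fixed smooth density in the integrand. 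For the K-energy term I would invoke the slope inequality at a smooth endpoint of a $C^{1,\bar1}$ geodesic, namely $\lim_{t\to0^+}t^{-1}(\mathcal{K}(t)-\mathcal{K}(0))\geq-\int_\Sigma\varphi'(0)(\Ric\omega_{\varphi_0}-\langle S\rangle\omega_{\varphi_0})$, a consequence of the convexity and differentiability properties of the K-energy along such geodesics established in \cite{BeBern,CLP,BDL1}. Adding the two contributions gives exactly the asserted inequality, since on a Riemann surface $\Ric\omega_{\varphi_0}-\langle S\rangle\omega_{\varphi_0}=(S_{\omega_{\varphi_0}}-\langle S\rangle)\omega_{\varphi_0}$.

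The lower bound then follows immediately. If $\omega_*$ is a smooth solution of \eqref{eq:GVagain}, then, all volume-$V$ Kähler forms on $\Sigma$ being cohomologous, the $i\partial\bar\partial$-lemma writes $\omega_*=\omega_0+2i\partial\bar\partial\varphi_*$ for some $\varphi_*\in\mathcal{P}_{\omega_0}$, and $\varphi_*$ is a critical point of $\mathcal{K}_\alpha$; that is, the whole form $\Ric\omega_{\varphi_*}-2\alpha i\partial\bar\partial\lvert\bm\phi\rvert^2_{h_{\varphi_*}}-2\alpha\tau iF_{h_{\varphi_*}}-c\omega_{\varphi_*}$ vanishes. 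Taking $\varphi_0=\varphi_*$ and an arbitrary $\varphi_1\in\mathcal{P}_{\omega_0}$ in the slope inequality, the right-hand side vanishes, so the right derivative at $0$ is nonnegative; by the monotonicity of slopes recorded above,
\[
\mathcal{K}_\alpha(\varphi_1)-\mathcal{K}_\alpha(\varphi_*)=\mathcal{K}_\alpha(1)-\mathcal{K}_\alpha(0)\geq\lim_{t\to0^+}\frac{\mathcal{K}_\alpha(t)-\mathcal{K}_\alpha(0)}{t}\geq 0.
\]
Hence $\mathcal{K}_\alpha\geq\mathcal{K}_\alpha(\varphi_*)$ on $\mathcal{P}_{\omega_0}$, which is the claimed lower bound.

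The main obstacle is the K-energy slope inequality at the smooth endpoint, whose proof is genuinely analytic and which I would import from \cite{BeBern,CLP,BDL1}; within our own setup the remaining delicate point is the identification of $\mathcal{M}_\alpha'(0)$ with the exact reduced $\alpha$-K-energy gradient at $\varphi_0$, which rests on the convergence of the endpoint velocities $\varphi_\epsilon'(0)\to\varphi'(0)$ of Chen's $\epsilon$-geodesics, a consequence of their $C^{1,\alpha}$-convergence up to $t=0$.
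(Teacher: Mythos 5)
Your proposal is correct and follows essentially the same route as the paper's own proof: decompose $\mathcal{K}_\alpha=\mathcal{K}+\mathcal{M}_\alpha$, use convexity (Theorem~\ref{thm:K-alpha-convex}) for existence of the limit, import the K-energy slope inequality from Berman--Berndtsson, and identify $\mathcal{M}_\alpha'(0)$ via the uniform $C^2$ bounds on $\mathcal{M}_{\alpha,\epsilon}$ and the endpoint data of the $\epsilon$-geodesics, exactly as in Section~\ref{section:Convexity}. The only difference is cosmetic: you spell out the monotonicity-of-slopes argument for the lower-bound consequence, which the paper leaves implicit.
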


\begin{proof}
The limit exists because the function $t\mapsto\mathcal{K}_\alpha(t):=\mathcal{K}(t)+\mathcal{M}_\alpha(t)$ is convex, by Theorem~\ref{thm:K-alpha-convex}. Furthermore, by definition,
\begin{align*}
\lim_{t\to 0^+}\frac{\mathcal{K}_\alpha(t) - \mathcal{K}_\alpha(0)}{t}
& =
\lim_{t\to 0^+}\frac{\mathcal{K}(t)-\mathcal{K}(0)}{t}
+
\lim_{t\to 0^+}\frac{\mathcal{M}_\alpha(t)-\mathcal{M}_\alpha(0)}{t}.
\end{align*}
According to \cite[Lemma 3.5]{BeBern},
\[
\lim_{t\to 0^+}\frac{\mathcal{K}(t)-\mathcal{K}(0)}{t}
\geq
- \int_\Sigma \varphi'(0)\left(\text{Ric }\omega_{\varphi_0} - \langle S\rangle \omega_{\varphi_0}\right).
\]
By the results of Section~\ref{section:Convexity}, the function $\mathcal{M}_{\alpha,\epsilon}(t)$ defined in~\eqref{eq:functional-along-epsilon-geodesic.1} is uniformly bounded in $C^2([0,1])$ and converges to the
function $\mathcal{M}_\alpha(t)$ defined in~\eqref{eq:functional-along-epsilon-geodesic.2} pointwise as $\epsilon\to 0$, so we obtain that $\mathcal{M}_{\alpha,\epsilon}'(t)$ converges to
$\mathcal{M}_{\alpha}'(t)$ uniformly on $[0,1]$ and
\begin{align*}&
\mathcal{M}_\alpha'(0)=\lim_{\epsilon\to 0}\mathcal{M}_{\alpha,\epsilon}'(0)
\\&\,
=\lim_{\epsilon\to 0}
\left[
\left(c-\langle S\rangle\right)
\!\int_\Sigma\varphi_\epsilon'(0)\omega_{\varphi_0}
-2\alpha\!\int_\Sigma i\partial \varphi_\epsilon'(0)\wedge\bar\partial \lvert\bm\phi\rvert_{h_0e^{2f_0}}^2
- \alpha\tau\!\int_\Sigma \varphi_\epsilon'(0) \left( \left| \bm\phi\right|_{h_0e^{2f_0}}^2 - \tau\right) \omega_{\varphi_0}
\right]
\\&\,
=\left( c-  \langle S\rangle \right)
\int_\Sigma  \varphi'(0) \omega_{\varphi_0}
+2\alpha\int_\Sigma \varphi'(0)\cdot i \partial\bar\partial \lvert\bm\phi\rvert_{h_0e^{2f_0}}^2
+2 \alpha\tau \int_\Sigma \varphi'(0)\cdot
iF_{h_0e^{2f_0}}.
\end{align*}
The formula claimed in the proposition follows.
\end{proof}

Proposition~\ref{thm:lower-bound} is used to prove uniqueness of solutions of \eqref{eq:GVagain} with fixed total volume $V$.
Our strategy follows closely the approach of Berman--Berndtsson to the uniqueness of constant scalar curvature K\"ahler metrics in a fixed K\"ahler class~\cite[Section 4]{BeBern}. The basic idea is to perturb the
reduced $\alpha$-K-energy $\mathcal{K}_\alpha$, adding a term $\lambda\mathcal{J}_\xi$ for small $\lambda>0$, where the functional $\mathcal{J}_\xi$ is defined in terms of a \emph{twisting form} $\xi$. As we will
see, for all $\lambda>0$ the perturbation $\lambda\mathcal{J}_\xi$ makes the twisted functional $\mathcal{K}_\alpha^\lambda:=\mathcal{K}_\alpha+\lambda\mathcal{J}_\xi$ strictly convex along $C^{1,\overline{1}}$
geodesics, so it can have at most one critical point. Uniqueness of solutions of \eqref{eq:GVagain} will follow because, in the absence of infinitesimal automorphisms of $(\Sigma,L,\bm\phi)$, the Implicit Function
Theorem implies that near any smooth critical point of $\mathcal{K}_\alpha$, there exists a critical point of
$\mathcal{K}_\alpha+\lambda\mathcal{J}_\xi$ with $0<\lambda\ll 1$, and so $\mathcal{K}_\alpha$ can have at most one critical point as well.
To apply this strategy, next we establish a perturbation argument.

\begin{lemma}\label{lem:perturb}
Suppose either \textup{(i)} $g(\Sigma)\geq 1$, or \textup{(ii)} $g(\Sigma)=0$ and $\bm\phi$ vanishes at least at $3$ points. Let $(\omega_0,h_0)$ be a solution of the gravitating vortex equations~\eqref{eqn:GV0}
with total volume $V>\frac{4\pi N}{\tau}$. Let $\xi$ be a strictly positive smooth $(1,1)$-form on $\Sigma$ such that $\int_\Sigma\xi=V$. Then there exists $\delta>0$ such that for any real constant $\lambda$ such
that $-\delta<\lambda<\delta$, there exists a solution $(\omega_\lambda,h_\lambda)$ of the \emph{twisted gravitating vortex equations}
\begin{equation}\label{eq:twisted-moment-mapeq}
\begin{split}
i\Lambda_\omega F_h + \frac{1}{2}\left(\left|\bm\phi\right|_h^2 - \tau\right) = 0,\\
S_\omega + \alpha \Delta_\omega \left|\bm\phi\right|_h^2 - 2\alpha\tau \Lambda_\omega iF_h - \lambda\left(\Lambda_\omega \xi - 1\right) = c.
\end{split}
\end{equation}
Moreover, $(\omega_\lambda,h_\lambda)$ is the unique solution to \eqref{eq:twisted-moment-mapeq} sufficiently close to $(\omega_0,h_0)$ in $C^{k,\alpha}$-norm.
\end{lemma}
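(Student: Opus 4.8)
The plan is to reduce the twisted system \eqref{eq:twisted-moment-mapeq} to a single scalar equation for a K\"ahler potential and to solve it by the Implicit Function Theorem, exploiting that $(\omega_0,h_0)$ solves it at $\lambda=0$ and that the relevant linearization is invertible precisely because hypotheses (i)/(ii) rule out infinitesimal automorphisms. First I would invoke reduction by stages: by Theorem~\ref{th:B-GP} every K\"ahler potential $\varphi$ with $\int_\Sigma\omega_\varphi=V>4\pi N/\tau$ determines a unique Hermitian metric $h_{\omega_\varphi}$ solving the first equation of \eqref{eq:twisted-moment-mapeq}, and this dependence is smooth (as in the proof of Proposition~\ref{prop:convexalphaK}, via the Implicit Function Theorem applied to the invertible linearized vortex operator $\Delta_{\omega_\varphi}+\lvert\bm\phi\rvert^2$). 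Thus solving \eqref{eq:twisted-moment-mapeq} is equivalent to solving the single reduced equation
\begin{equation*}
\mathcal{G}(\varphi,\lambda):=S_{\omega_\varphi}+\alpha\Delta_{\omega_\varphi}\lvert\bm\phi\rvert_{h_{\omega_\varphi}}^2-2\alpha\tau\Lambda_{\omega_\varphi}iF_{h_{\omega_\varphi}}-\lambda\left(\Lambda_{\omega_\varphi}\xi-1\right)-c=0,
\end{equation*}
with $c$ as in \eqref{eq:constantcintro}. Since $(\omega_0,h_0)$ solves \eqref{eqn:GV0}, we have $\mathcal{G}(0,0)=0$. I would regard $\mathcal{G}$ as a smooth map $U\times(-\delta_0,\delta_0)\to C^{k,\alpha}(\Sigma,\RR)$, where $U$ is a neighbourhood of $0$ in the normalized potentials $\{\varphi\in C^{k+4,\alpha}:\int_\Sigma\varphi\,\omega_0=0\}$, smoothness following from that of $\varphi\mapsto h_{\omega_\varphi}$ together with standard elliptic regularity.

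Two facts make the argument go through. The first is an integral identity: integrating $\mathcal{G}(\varphi,\lambda)$ against $\omega_\varphi$ and using $\int_\Sigma S_{\omega_\varphi}\omega_\varphi=2\pi\chi(\Sigma)$, $\int_\Sigma iF_{h_{\omega_\varphi}}=2\pi N$, the definition of $c$, and crucially $\int_\Sigma\xi=V=\int_\Sigma\omega_\varphi$ (so that $\int_\Sigma(\Lambda_{\omega_\varphi}\xi-1)\omega_\varphi=V-V=0$), one finds $\int_\Sigma\mathcal{G}(\varphi,\lambda)\,\omega_\varphi=0$ for all $(\varphi,\lambda)$. The second is the identification of the linearization $\mathcal{L}:=D_\varphi\mathcal{G}(0,0)$ with the Jacobi operator of the reduced $\alpha$-K-energy at the solution: by Proposition~\ref{prop:second} with $\lambda=0$, the associated quadratic form is $\langle\mathcal{L}\dot\varphi,\dot\varphi\rangle_{L^2(\omega_0)}=4\alpha\lVert\mathrm{d}f'+\eta_{\dot\varphi}\lrcorner iF_{h_0}\rVert^2+4\alpha\lVert J\eta_{\dot\varphi}\lrcorner\mathrm{d}_{A_0}\bm\phi-f'\bm\phi\rVert^2+2\lVert\bar\partial\nabla^{1,0}\dot\varphi\rVert^2\geq0$. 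Hence $\mathcal{L}$ is a self-adjoint, nonnegative, fourth-order elliptic operator whose kernel consists exactly of those $\dot\varphi$ for which $\nabla^{1,0}\dot\varphi$ is a holomorphic vector field lifting to an infinitesimal automorphism of $(L,\bm\phi)$, together with the constants.

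The key step is to show $\ker\mathcal{L}$ reduces to the constants. Under hypothesis (i) $g(\Sigma)\geq1$: for $g\geq2$ there are no nonzero holomorphic vector fields, while for $g=1$ any nonzero holomorphic vector field is nowhere vanishing, so its flow fixes no point and cannot preserve the nonempty zero divisor $D$ of $\bm\phi$; thus no nontrivial automorphism exists. Under hypothesis (ii) $g(\Sigma)=0$ with $\bm\phi$ vanishing at $\geq3$ distinct points: a nonzero holomorphic vector field on $\PP^1$ lies in $\mathfrak{sl}(2,\CC)$ and has at most two zeros, hence cannot fix the $\geq3$ points of $\mathrm{supp}\,D$ that any $\bm\phi$-preserving automorphism must fix. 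In either case $\nabla^{1,0}\dot\varphi=0$, so $\dot\varphi$ is constant. Combined with self-adjointness, $\mathcal{L}$ then restricts to an isomorphism $\mathcal{L}\colon C^{k+4,\alpha}_0\to C^{k,\alpha}_0$ between the subspaces of zero $\omega_0$-average.

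Finally I would apply the Implicit Function Theorem to $F(\varphi,\lambda):=\mathcal{G}(\varphi,\lambda)-\fint\mathcal{G}(\varphi,\lambda)\,\omega_0\in C^{k,\alpha}_0$, which satisfies $F(0,0)=0$ and $D_\varphi F(0,0)=\mathcal{L}$, invertible onto $C^{k,\alpha}_0$. This produces $\delta>0$ and a unique small $\varphi(\lambda)\in C^{k+4,\alpha}_0$ with $F(\varphi(\lambda),\lambda)=0$ for $\lvert\lambda\rvert<\delta$, i.e.\ $\mathcal{G}(\varphi(\lambda),\lambda)$ is a constant; the integral identity then forces that constant to vanish, giving $\mathcal{G}(\varphi(\lambda),\lambda)=0$. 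Setting $\omega_\lambda=\omega_{\varphi(\lambda)}$ and $h_\lambda=h_{\omega_\lambda}$ solves \eqref{eq:twisted-moment-mapeq}, the uniqueness clause of the Implicit Function Theorem yields the stated local uniqueness in $C^{k,\alpha}$, and elliptic bootstrapping upgrades the solution to a smooth one. The main obstacle is the second step: rigorously matching the linearization $\mathcal{L}$ with the Jacobi operator of Proposition~\ref{prop:second} in the chosen Hölder spaces — in particular controlling the nonlocal dependence of $h_{\omega_\varphi}$ on $\varphi$ and verifying ellipticity and self-adjointness — and then converting the vanishing of the second variation into the geometric statement about infinitesimal automorphisms so that hypotheses (i)/(ii) close the argument.
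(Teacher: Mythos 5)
Your proposal is correct, but it runs along a genuinely different track than the paper's own proof. The paper does \emph{not} reduce by stages in this lemma: it keeps the pair $(f,\varphi)$ as unknowns, adds the twisting term to $\wp_2$ as in \eqref{eq:twisted-moment-map.1}, and applies the Implicit Function Theorem to the coupled map $\wp=(\wp_1,\wp_2)$ on $\widetilde{\mathcal{P}}_{\omega_0}\times\RR$, quoting \cite[Lemmas 6.3 and 6.4]{Al-Ga-Ga-P} and \cite{Yao} for invertibility of the linearization $D\wp$ acting between $L^2_{k+4}\times L^2_{k+4}/\RR$ and $L^2_{k+2}\times L^2_{k}/\RR$: surjectivity follows from self-adjointness (the moment-map structure), and injectivity from the fact that a kernel element satisfies $\mathrm{d}\dot f+\eta_{\dot\varphi}\lrcorner iF_{\hat h}=0$, $J\eta_{\dot\varphi}\lrcorner\mathrm{d}_{A_{\hat h}}\bm\phi-\dot f\bm\phi=0$, $\bar\partial\nabla^{1,0}\dot\varphi=0$, hence produces an infinitesimal automorphism of $(\Sigma,L,\bm\phi)$, which hypotheses (i)/(ii) kill — exactly the kernel analysis you give. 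You instead eliminate $h$ first via Theorem~\ref{th:B-GP} and run the IFT on the single scalar equation $\mathcal{G}(\varphi,\lambda)=0$; your projection-plus-integral-identity device (using $\int_\Sigma\xi=V$ so that $\int_\Sigma\mathcal{G}\,\omega_\varphi\equiv 0$, which indeed checks against \eqref{eq:constantcintro}) is a clean substitute for the paper's quotient by constants. What the paper's formulation buys is that $D\wp$ is a local elliptic system, so the Fredholm theory is standard and already in the cited references; what yours buys is one unknown and consistency with the reduction-by-stages philosophy, at the price of the issue you flag yourself: $\mathcal{L}$ is nonlocal, and you must show it is Fredholm of index $0$. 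That gap is fillable: by \eqref{eq:formulaeYao}, along the vortex locus $f'=-(\Delta_{\omega_0}+\lvert\bm\phi\rvert^2)^{-1}\bigl(\tfrac12(\tau-\lvert\bm\phi\rvert^2)\Delta_{\omega_0}\dot\varphi\bigr)$, so $\mathcal{L}=\tfrac12\Delta_{\omega_0}^2+(\text{operators of order}\le 2)$, a compact perturbation of an invertible fourth-order elliptic operator, and self-adjointness at the critical point is automatic since $\mathcal{L}$ is the Hessian of $\mathcal{K}_\alpha$ there. Two cosmetic corrections: since $\wp_2=-\mathcal{G}$ at $\lambda=0$, the nonnegative Hessian form of Proposition~\ref{prop:second} is $-\langle\mathcal{L}\dot\varphi,\dot\varphi\rangle$, not $+\langle\mathcal{L}\dot\varphi,\dot\varphi\rangle$ (this changes nothing); and in case (i) with $g=1$ you should note explicitly that $D\neq\emptyset$ because $N>0$ and $\bm\phi\neq 0$, so the nowhere-vanishing translation field indeed cannot preserve the divisor.
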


\begin{proof}
Let $\mathcal{P}_{\omega_0}$ be the space of K\"ahler potentials (see~\eqref{eq:sp-Kahler-potentials.1}) and $\widetilde{\mathcal{P}}_{\omega_0}$ the space of pairs $(\varphi,f)$ with
$\varphi\in\mathcal{P}_{\omega_0}$, $f\in C^\infty(\Sigma,\RR)$ (see~\eqref{eq:sp-metric-potentials.1}). For $\varphi \in \mathcal{P}_{\omega_0}$, $f\in C^\infty(\Sigma,\RR)$, we denote
\[
\omega_\varphi = \omega_0 + 2i\partial\bar\partial \varphi,\quad h_f=h_0e^{2f}.
\]
Adding the twisting term $\lambda(\Lambda_\omega\xi-1)$ to~\eqref{eq:wp.1}, we define the map
$$
\wp=(\wp_1,\wp_2)\colon \widetilde{\mathcal{P}}_{\omega_0} \times \RR \lto C^\infty(\Sigma,\RR) \times C^\infty(\Sigma,\RR)
$$
by the following formulae for all $(f,\varphi,\lambda)\in\widetilde{\mathcal{P}}_{\omega_0} \times\RR$, where $(\omega,h):=(\omega_\varphi,h_f)$:
\begin{equation}\label{eq:twisted-moment-map.1}
\begin{split}
\wp_1(f,\varphi,\lambda)
& = \wp_1(f,\varphi) = i\Lambda_\omega F_h + \frac{1}{2}\left(\left|\bm\phi\right|_h^2 - \tau\right),\\
\wp_2(f,v,\lambda)
& = -S_\omega -\alpha \Delta_\omega \left|\bm\phi\right|_h^2 + 2\alpha\tau \Lambda_\omega iF_h + \lambda\left(\Lambda_\omega \xi - 1\right) + c.
\end{split}
\end{equation}
By the proof of \cite[Lemma 6.4]{Al-Ga-Ga-P} or \cite[Proposition 3.3, Proposition 3.4]{Yao}, the linearized operator
\begin{equation}\label{eq:twisted-moment-map.2}
D\wp|_{(\hat f, \hat \varphi, 0)}: L_{k+4}^2(\Sigma)\times L_{k+4}^2(\Sigma)/\mathbb{R}\longrightarrow L_{k+2}^2(\Sigma)\times L_{k}^2(\Sigma)/\mathbb{R}
\end{equation} is invertible. Assuming injectivity, surjectivity is a direct consequence of the moment-map interpretation of the gravitating vortex equations~\eqref{eqn:GV0} given in
Proposition~\ref{prop:momentmap-inttriples}, which implies that the operator is self-adjoint (cf. the proof of Lemma \ref{lem:sigmaclosed} and \cite[Proposition 4.6]{Al-Ga-Ga}). Alternatively, one can use the fact that
$$
S_{\omega_0}= 2\alpha\left|\mathrm{d}_{h_0}\bm\phi\right|_{\omega_0,h_0}^2 + \alpha\left(\tau -
\left|\bm\phi\right|_{h_0}^2\right)^2 + c,
$$
which guarantees that the first eigenvalue of $\frac{1}{2}\Delta_{\omega_0}$ is strictly greater than $c$. This guarantees the second equation in \cite[Equation
(3.15)]{Yao} is solvable, and hence the first equation is also solvable. As for the injectivity, the proof of \cite[Lemma 6.3]{Al-Ga-Ga-P} shows that, any element of the linearization of
\eqref{eq:twisted-moment-map.1} at $(0,0,0)$, given by
\begin{align*}
\mathrm{d}\dot f + \eta_{\dot \varphi}\lrcorner iF_{\hat h}=0,\\
J\eta_{\dot \varphi}\lrcorner \mathrm{d}_{A_{\hat h}}\bm\phi - \dot f \bm\phi =0,\\
\bar\partial \nabla^{1,0}_{\hat \omega}\dot \varphi=0,
\end{align*}
implies that $\zeta = A_{h_0}^\perp \eta_{\dot \varphi} + i \dot f$ is an infinitesimal automorphism of $(\Sigma,L,\bm\phi)$. Hence, by hypthesis, $\zeta$ must vanish identically and therefore $\dot \varphi$ is
constant and $\dot f=0$. Note that, in the case that $g(\Sigma) = 0$, the fact that $\zeta$ fixes $\bm\phi$ implies that the (real) holomorphic vector field $\eta_{\dot \varphi}$ generates a one-parameter family of
holomorphic automorphism of $\Sigma \cong \mathbb{P}^1$ that fixes the zeroes of $\bm\phi$; this implies the vector field must be zero except for the case when $\bm\phi$ vanishes exactly at two distinct points,
which is ruled out by hypothesis. The proof follows by application of the Implicit Function Theorem  (see \cite{Al-Ga-Ga-P,Yao}).
\end{proof}

We are ready to prove the main result of this section.

\begin{theorem}\label{thm:uniqueness}
Suppose either \textup{(i)} $g(\Sigma)\geq 1$, or \textup{(ii)} $g(\Sigma)=0$ and $\bm\phi$ vanishes at least at $3$ points.
Then smooth solutions to the gravitating vortex equations~\eqref{eq:GVagain} with fixed volume $V>\frac{4\pi N}{\tau}$ are unique.
\end{theorem}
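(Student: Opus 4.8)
The plan is to follow the Berman--Berndtsson strategy for uniqueness of cscK metrics~\cite{BeBern}, as outlined before the statement, exploiting the convexity of the reduced $\alpha$-K-energy established in Theorem~\ref{thm:K-alpha-convex}. Suppose $(\omega_0,h_0)$ and $(\omega_1,h_1)$ are two smooth solutions of~\eqref{eq:GVagain}--\eqref{eq:Vagain} with the same volume $V$. By the uniqueness part of Theorem~\ref{th:B-GP}, the Hermitian metric is determined by the K\"ahler form through the vortex equation, so $h_i=h_{\omega_i}$ and it suffices to prove $\omega_0=\omega_1$. Writing $\omega_1=\omega_0+2i\partial\bar\partial\varphi_1$ for a normalized $\varphi_1\in\mathcal{P}_{\omega_0}$, both potentials $0$ and $\varphi_1$ are critical points of $\mathcal{K}_\alpha$ on $\mathcal{P}_{\omega_0}$, since its critical points are exactly the solutions of~\eqref{eq:GVagain}. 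As $\mathcal{K}_\alpha$ is only convex, not strictly convex, along $C^{1,\bar 1}$ geodesics, the idea is to perturb it into a strictly convex functional whose critical points can be controlled by the Implicit Function Theorem.

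First I would fix a strictly positive smooth $(1,1)$-form $\xi$ on $\Sigma$ with $\int_\Sigma\xi=V$ and introduce the twisting functional $\mathcal{J}_\xi\colon\mathcal{P}_{\omega_0}\to\mathbb{R}$ whose differential is $\dot\varphi\mapsto\int_\Sigma\dot\varphi(\Lambda_{\omega_\varphi}\xi-1)\omega_\varphi$, so that the critical points of the twisted functional $\mathcal{K}_\alpha^\lambda:=\mathcal{K}_\alpha+\lambda\mathcal{J}_\xi$ are precisely the solutions of the twisted gravitating vortex equations~\eqref{eq:twisted-moment-mapeq}. The key point is that for $\lambda>0$ the term $\lambda\mathcal{J}_\xi$ is \emph{strictly} convex along $C^{1,\bar1}$ geodesics: running the second-variation computation along the approximating $\epsilon$-geodesics exactly as in Section~\ref{section:Convexity}, the contribution of $\mathcal{J}_\xi$ produces a term proportional to $\int_\Sigma \xi\wedge i\partial\varphi'_t\wedge\bar\partial\varphi'_t\geq 0$, which vanishes only when $\varphi'_t$ is constant. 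Combined with the convexity of $\mathcal{K}_\alpha$ from Theorem~\ref{thm:K-alpha-convex}, this makes $t\mapsto\mathcal{K}_\alpha^\lambda(\varphi_t)$ strictly convex along any nontrivial $C^{1,\bar1}$ geodesic.

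With this in hand I would argue as follows. Applying Lemma~\ref{lem:perturb} at both solutions $(\omega_0,h_0)$ and $(\omega_1,h_1)$ (the hypotheses (i) or (ii) guarantee the absence of infinitesimal automorphisms of $(\Sigma,L,\bm\phi)$ and hence the invertibility~\eqref{eq:twisted-moment-map.2}), there is $\delta>0$ such that for each $0<\lambda<\delta$ the twisted equations~\eqref{eq:twisted-moment-mapeq} admit solutions $(\omega_\lambda^{(0)},h_\lambda^{(0)})$ and $(\omega_\lambda^{(1)},h_\lambda^{(1)})$ lying $C^{k,\alpha}$-close to $(\omega_0,h_0)$ and $(\omega_1,h_1)$, respectively. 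Since $(\omega_0,h_0)\neq(\omega_1,h_1)$ and the perturbed solutions converge to the unperturbed ones as $\lambda\to 0$, after shrinking $\delta$ these two twisted solutions are distinct for every $0<\lambda<\delta$. On the other hand, they are both critical points of the strictly convex functional $\mathcal{K}_\alpha^\lambda$: connecting the corresponding potentials by a $C^{1,\bar1}$ geodesic and using the endpoint derivative formula of Proposition~\ref{thm:lower-bound} adapted to $\mathcal{K}_\alpha^\lambda$, whose Euler--Lagrange term vanishes at each twisted solution, gives nonnegative right derivative at one end and nonpositive left derivative at the other; strict convexity then forces the geodesic to be trivial, i.e.\ the two twisted solutions coincide, a contradiction. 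Hence $\omega_0=\omega_1$, and $h_0=h_1$ by Theorem~\ref{th:B-GP}.

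The main obstacle I expect is the rigorous verification of the strict convexity of $\mathcal{K}_\alpha^\lambda$ along finite-regularity $C^{1,\bar1}$ geodesics, where the second derivative exists only in the distributional sense: one must show that the strict positivity coming from $\xi>0$ survives the passage $\epsilon\to 0$ and is not absorbed by the degenerate terms, and that a distributionally strictly convex continuous function which is affine along the geodesic must in fact be constant. The remaining subtlety is essentially bookkeeping, namely ensuring that a single $\lambda$ simultaneously yields distinct twisted solutions near both $(\omega_i,h_i)$ and that these are genuine critical points of $\mathcal{K}_\alpha^\lambda$, which follows from the moment-map interpretation already used in Lemma~\ref{lem:perturb}.
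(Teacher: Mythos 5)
Your proposal is correct and follows essentially the same route as the paper's proof: perturb $\mathcal{K}_\alpha$ by $\lambda\mathcal{J}_\xi$, use the Implicit Function Theorem (Lemma~\ref{lem:perturb}) to produce critical points of $\mathcal{K}_\alpha+\lambda\mathcal{J}_\xi$ near each of the two solutions, join them by a $C^{1,\bar 1}$ geodesic, and play the endpoint sub-slope inequalities against the convexity of $\mathcal{K}_\alpha$ (Theorem~\ref{thm:K-alpha-convex}) together with the strict convexity of $\mathcal{J}_\xi$ --- the point you flag as your ``main obstacle'' is exactly what the paper settles by invoking \cite[Proposition 4.1]{BeBern}, which bounds the difference of endpoint derivatives of $\mathcal{J}_\xi$ below by $C\,d\left(\omega_{\varphi_0^\lambda},\omega_{\varphi_1^\lambda}\right)^2$ and applies here because both twisted endpoints are smooth. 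One slip to correct: on a Riemann surface your second-variation term $\int_\Sigma \xi\wedge i\partial\varphi'_t\wedge\bar\partial\varphi'_t$ is a $4$-form and vanishes identically; the correct expression along a geodesic is $\int_\Sigma \lvert\mathrm{d}\varphi'_t\rvert^2_{\omega_{\varphi_t}}\,\xi$, which is nonnegative and vanishes only when $\varphi'_t$ is constant.
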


\begin{proof}
We fix a background K\"ahler metric $\omega_0$ on $\Sigma$ with total volume $V$ and consider the space of K\"ahler potentials $\mathcal{P}_{\omega_0}$ (see~\eqref{eq:sp-Kahler-potentials.1}). Without loss of
generality we can assume that $\omega_0$ solves the reduced gravitating vortex equation \eqref{eq:GVagain}. Hence, by Proposition \ref{prop:convexalphaK}, we have a critical point of the reduced $\alpha$-K-energy at
$0 \in \mathcal{P}_{\omega_0}$. Taking any strictly positive smooth $(1,1)$-form $\xi$ on $\Sigma$ such that $\int_\Sigma\xi=V$, we define the functional
$$
\mathcal{J}_\xi \colon \mathcal{P}_{\omega_0}\lto \mathbb{R},
$$
by
\[
\mathcal{J}_\xi(\varphi)
= \int_0^1 dt \int_\Sigma \dot\varphi_t \left( \xi-\omega_{\varphi_t}\right)
=\int_\Sigma \varphi \xi - \frac{1}{2}\int_\Sigma \varphi\left(\omega_0+\omega_\varphi\right),
\]
where $\varphi_t$ is any smooth path connecting $0$ with $\varphi$. Applying now Lemma \ref{lem:perturb}, we obtain a one-parameter family of critical points $\varphi_{0}^\lambda$ (parametrized by small real
constants $-\delta<\lambda<\delta$) of the the family of functionals
$$
\mathcal{K}_\alpha^\lambda:=\mathcal{K}_\alpha+\lambda\mathcal{J}_\xi.
$$
Suppose we have another solution $(\omega_1,h_1)$ of the gravitating vortex equation \eqref{eqn:GV0}. Then $\omega_1$ solves the reduced gravitating vortex equation \eqref{eq:GVagain}, and hence by Proposition
\ref{prop:convexalphaK} we have two critical points of the reduced $\alpha$-K-energy on $\mathcal{P}_{\omega_0}$. Applying again Lemma \ref{lem:perturb}, we obtain a second one-parameter family of critical points
$\varphi_{1}^\lambda$ of the functional $\mathcal{K}_\alpha^\lambda$.

Let $\varphi_{t}^\lambda$ be the $C^{1,\bar 1}$ geodesic connecting $\varphi_0^\lambda$ and $\varphi_1^\lambda$ in the space of K\"ahler potentials, and $f_t^\lambda= f_{\varphi_t^\lambda}$ the weak solution of the
vortex equation, given by Proposition \ref{prop:W-1-2-estimate}, corresponding to $\omega_{\varphi_t^\lambda}$.
By construction, the functional $\mathcal{J}_\xi(\varphi_t^\lambda)$ is differentiable along the $C^{1,\bar 1}$ geodesic $\varphi_t^\lambda$, since its derivative is
\[
\frac{\mathrm{d}}{\mathrm{d}t}\bigg\rvert_{t=0}\mathcal{J}_\xi(\varphi_t^\lambda)=\int_\Sigma \dot\varphi_t^\lambda\bigg|_{t=0}\cdot \left(\xi - \omega_{\varphi_0^\lambda}\right).
\]
(cf., e.g.,~\cite[Section 3.1.1]{BeBern}). Therefore, as in Theorem \ref{thm:K-alpha-convex}, it makes sense to consider the functional $\mathcal{K}_\alpha^\lambda:=\mathcal{K}_\alpha+\lambda\mathcal{J}_\xi$ along
the  $C^{1,\bar 1}$ geodesic, and we have `sub-slope inequality' analogous to Proposition \ref{thm:lower-bound}, i.e.,
 \[
 \lim_{t\to 0^+} \frac{\mathcal{K}_\alpha^\lambda(\varphi_t^\lambda) - \mathcal{K}_\alpha^\lambda(\varphi_0^\lambda)}{t}
 \geq
\int_\Sigma \dot\varphi_t^\lambda\bigg\rvert_{t=0^+}\cdot \wp_2(f_{\varphi_0^\lambda}, \varphi_0^\lambda,\lambda)\omega_{\varphi_0^\lambda}
=0,
 \]
and similarly
 \[
 \lim_{t\to 1^-}\frac{\mathcal{K}_\alpha^\lambda(\varphi_1^\lambda) - \mathcal{K}_\alpha^\lambda(\varphi_t^\lambda)}{1-t}
 \leq
 \int_\Sigma \dot\varphi_t^\lambda\bigg\rvert_{t=1^-}\cdot \wp_2(f_{\varphi_1^\lambda}, \varphi_1^\lambda,\lambda)\omega_{\varphi_1^\lambda}
 =0.
 \]
As a consequence,
\begin{align*}
0&\geq\frac{\mathrm{d}}{\mathrm{d}t}\bigg|_{t=1}^-\mathcal{K}_\alpha^\lambda\left(\varphi_t^\lambda\right)
-\frac{\mathrm{d}}{\mathrm{d}t}\bigg|_{t=0}^+\mathcal{K}_\alpha^\lambda\left(\varphi_t^\lambda\right)
\\
&=\frac{\mathrm{d}}{\mathrm{d}t}\bigg|_{t=1}^-\mathcal{K}_\alpha\left(\varphi_t^\lambda\right)
-\frac{\mathrm{d}}{\mathrm{d}t}\bigg|_{t=0}^+\mathcal{K}_\alpha\left(\varphi_t^\lambda\right)
+\lambda\left[\frac{\mathrm{d}}{\mathrm{d}t}\bigg|_{t=1}\mathcal{J}_\xi\left(\varphi_t^\lambda\right)
-\frac{\mathrm{d}}{\mathrm{d}t}\bigg|_{t=0}\mathcal{J}_\xi\left(\varphi_t^\lambda\right)
\right]\\
& \geq
\lambda\left[\frac{\mathrm{d}}{\mathrm{d}t}\bigg|_{t=1}\mathcal{J}_\xi\left(\varphi_t^\lambda\right)
-\frac{\mathrm{d}}{\mathrm{d}t}\bigg|_{t=0}\mathcal{J}_\xi\left(\varphi_t^\lambda\right)
\right].
\end{align*}
Finally, we apply the \emph{strict convexity} of the functional $\mathcal{J}_\xi$.
More precisely, the Mabuchi distance $d\left( \omega_{\varphi_0^\lambda}, \omega_{\varphi_1^\lambda}\right)$ has an upper bound~\cite[Proposition 4.1]{BeBern}
\begin{equation}\label{eq:thm:uniqueness.1}
\frac{\mathrm{d}}{\mathrm{d}t}\bigg|_{t=1}\mathcal{J}_\xi\left(\varphi_t^\lambda\right)
-
\frac{\mathrm{d}}{\mathrm{d}t}\bigg|_{t=0}\mathcal{J}_\xi\left(\varphi_t^\lambda\right)
\geq
C \cdot d\left( \omega_{\varphi_0^\lambda}, \omega_{\varphi_1^\lambda}\right)^2,
\end{equation}
where $C>0$ depends on the upper bound of $\omega_{\varphi_t^\lambda}$ (which can be taken uniformly independent of $t$ and $\lambda$) and lower bound of $\xi$ in terms of the fixed background K\"ahler metric
$\omega_0$.
Therefore $d\left(\omega_{\varphi_0^\lambda}, \omega_{\varphi_1^\lambda}\right)=0$ and so $\omega_{\varphi_0^\lambda}=\omega_{\varphi_1^\lambda}$ for all $\lambda\in (0,\delta)$. We conclude now that $\omega_1 =
\omega_{\varphi_1^0}  = \omega_{\varphi_0^0} = \omega_0$ by taking the limit $\lambda\to 0^+$.
\end{proof}

We finish this section with an explicit example of the strict convexity of the functional $\mathcal{J}_\xi$ on the Riemann sphere.

\begin{example}\label{ex:strict-convexity}
Suppose $\Sigma=\PP^1$ is the complex projective line and $\omega_0=\frac{V}{2\pi}\omega_{FS}$, where $\omega_{FS}$ is the Fubini--Study metric on $\mathbb{P}^1$ (with total volume $2 \pi$).
Let $\{\sigma_t\}_{t\in\RR}$ be the 1-parameter subgroup of automorphisms of $\PP^1$ given by $\sigma_t([z_0:z_1])=[e^{-t}z_0: e^t z_1]$ in homogeneous coordinates $[z_0:z_1]$, and $\left\{\varphi_t\right\}_{t\in
\mathbb{R}}$ the corresponding geodesic line in the space of K\"ahler potentials $\mathcal{P}_{\omega_0}$ (with $\varphi_0=0$), as in Example~\ref{ex:autgeod}.
Choosing coordinates $z=z_1/z_0$ on $\PP^1$, the automorphisms are $\sigma_t(z)=e^{2t}z$ and the geodesic on $\mathcal{P}_{\omega_0}$ is given by
\begin{equation}\label{eq:ex:strict-convexity.1}
\varphi_t= \frac{V}{4\pi}\left( \log \frac{1+e^{4t}\lvert z\rvert^2}{1+\lvert z\rvert^2}-2t\right).
\end{equation}
Then
\[
\frac{\mathrm{d}^2}{\mathrm{d}t^2}\mathcal{J}_{\omega_0}(\varphi_t)
=
\left( \frac{V}{2\pi}\right)^2\int_\mathbb{C}
\frac{8 e^{4t}|z|^2}{\left(1+e^{4t}\lvert z\rvert^2\right)^2} \frac{i\mathrm{d}z\wedge\mathrm{d}\bar{z}}{\left(1+\lvert z\rvert^2\right)^2},
\]
and
\[
\frac{\mathrm{d}}{\mathrm{d}t}\mathcal{J}_{\omega_0}(\varphi_t)\bigg|_0^{t=+\infty}=
\frac{V^2}{2\pi}.
\]

\end{example}

\subsection{Properness of the reduced $\alpha$-K-energy}\label{section:Properness}

Our first result in this section is uniqueness and regularity of minimizers of the extended reduced $\alpha$-K-energy $\mathcal{K}_\alpha$, provided that the gravitating vortex equations have smooth solutions. More
precisely, we show that the following $\mathcal{E}_1$-minimizer set of the extended reduced $\alpha$-K-energy $\mathcal{K}_\alpha$ has a unique element:
\begin{equation}\label{eq:E_1-minimizer.1}
\mathfrak{M}^1:=\{\psi\in\overline{\mathcal{R}} \mid \mathcal{K}_\alpha(\psi)=\inf_{\varphi\in\overline{\mathcal{R}}} \mathcal{K}_\alpha(\varphi)\}.
\end{equation}
Note here that $\inf_{\mathcal{E}_1}\mathcal{K}_\alpha=\inf_{\overline{\mathcal{R}}}\mathcal{K}_\alpha$, because $\mathcal{K}_\alpha$ is invariant under addition of constants.
We follow the approach of Berman--Darvas--Lu~\cite{BDL2}, with the K-energy replaced by $\mathcal{K}_\alpha$.

\begin{theorem}\label{thm:uniqueness-of-weak-minimizer}
Suppose either \textup{(i)} $g(\Sigma)\geq 1$, or \textup{(ii)} $g(\Sigma)=0$ and $\bm\phi$ vanishes at least at $3$ points.
If there exists a smooth solution $\omega_\psi=\omega_0+2i\partial\bar\partial \psi$ to the gravitating vortex equation~\eqref{eq:GVagain} on $(\Sigma,L,\bm\phi)$, where $\psi\in\mathcal{R}$, then
$\mathfrak{M}^1=\{\psi\}$.
\end{theorem}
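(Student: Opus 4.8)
The plan is to prove the two inclusions $\psi\in\mathfrak{M}^1$ and $\mathfrak{M}^1\subset\{\psi\}$ separately. Throughout I normalize so that $\psi\in\mathcal{R}$, which costs nothing since $\mathcal{K}_\alpha$ is invariant under the addition of constants.

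That $\psi$ is a minimizer is essentially contained in Proposition~\ref{thm:lower-bound}. Since $\omega_\psi$ solves the reduced gravitating vortex equation~\eqref{eq:GVagain}, the $(1,1)$-form $\Ric\omega_{\varphi_0}-2\alpha i\partial\bar\partial\lvert\bm\phi\rvert^2-2\alpha\tau iF-c\omega_{\varphi_0}$ paired with $\varphi'(0)$ on the right-hand side of the sub-slope inequality vanishes identically, so for every $\varphi\in\mathcal{P}_{\omega_0}$ convexity of $\mathcal{K}_\alpha$ along the $C^{1,\bar1}$ geodesic from $\psi$ to $\varphi$ (Theorem~\ref{thm:K-alpha-convex}) gives $\mathcal{K}_\alpha(\varphi)\geq\mathcal{K}_\alpha(\psi)$. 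Thus $\psi$ minimizes over $\mathcal{P}_{\omega_0}$, hence over $\mathcal{R}$. To pass to $\overline{\mathcal{R}}$ I would use that $\mathcal{K}_\alpha$ is the largest $d_1$-lower semicontinuous extension (Theorem~\ref{thm:extended-vortex-K-energy}): for any $u\in\overline{\mathcal{R}}$ there is a recovery sequence $\{u_j\}\subset\mathcal{R}$ with $u_j\to u$ in $d_1$ and $\mathcal{K}_\alpha(u_j)\to\mathcal{K}_\alpha(u)$, whence $\mathcal{K}_\alpha(u)=\lim_j\mathcal{K}_\alpha(u_j)\geq\mathcal{K}_\alpha(\psi)$. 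Therefore $\psi\in\mathfrak{M}^1$.

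For the reverse inclusion, take any $u\in\mathfrak{M}^1$ and let $\{\psi_t\}_{t\in[0,1]}$ be the finite-energy geodesic from $\psi$ to $u$ (Theorem~\ref{thm:finiteengeod}). By Theorem~\ref{thm:convexfinen} it stays in $\overline{\mathcal{R}}$, and $t\mapsto\mathcal{K}_\alpha(\psi_t)$ is convex on $[0,1]$ and continuous at the endpoints; since $\psi_0,\psi_1$ both attain $\inf_{\overline{\mathcal{R}}}\mathcal{K}_\alpha$ and every $\psi_t\in\overline{\mathcal{R}}$, a convex function equal to its minimum at both endpoints must be constant, so $\mathcal{K}_\alpha$ is affine along $\{\psi_t\}$. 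Writing $\mathcal{K}_\alpha=\mathcal{K}+\mathcal{M}_\alpha$ with $\mathcal{K}$ convex and $\mathcal{M}_\alpha$ weakly convex (Theorems~\ref{thm:K-alpha-convex} and~\ref{thm:convexfinen}), affinity of the sum forces each summand to be affine. Following Berman--Darvas--Lu~\cite{BDL2}, I would then upgrade $u$ to a \emph{smooth} minimizer: once $u$ is smooth it is a critical point of $\mathcal{K}_\alpha$ and hence a smooth solution of~\eqref{eq:GVagain}, so Theorem~\ref{thm:uniqueness} yields $\omega_u=\omega_\psi$ and, by the normalization, $u=\psi$. Concretely, the rigidity underlying this step is that vanishing of the second variation in Proposition~\ref{prop:second} along the (now smooth) geodesic forces $\mathrm{d}f'+\eta_{\varphi'}\lrcorner iF_h=0$, $J\eta_{\varphi'}\lrcorner\mathrm{d}_A\bm\phi-f'\bm\phi=0$ and $\bar\partial\nabla^{1,0}\varphi'=0$, i.e. that $\zeta=A_{h_0}^\perp\eta_{\varphi'}+if'$ is an infinitesimal automorphism of $(\Sigma,L,\bm\phi)$; under hypothesis \textup{(i)} or \textup{(ii)}, Lemma~\ref{lem:perturb} shows $\zeta=0$, so $\varphi'_t$ is constant and the geodesic is trivial.

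The main obstacle is precisely the regularity statement, that every $\mathcal{E}_1$-minimizer of $\mathcal{K}_\alpha$ is smooth. The finite-energy geodesic of Theorem~\ref{thm:finiteengeod} is only a decreasing limit of Chen's $C^{1,\bar1}$ geodesics, so the clean second-variation identity of Proposition~\ref{prop:second} and the infinitesimal-automorphism equations cannot be read off directly; one genuinely needs to promote the weak minimizer to a classical solution. This is where the Berman--Darvas--Lu program must be adapted to the coupled setting: the K-energy part is governed by the pluripotential regularity theory of~\cite{BDL2}, while the vortex part $\mathcal{M}_\alpha$ requires the uniform a priori bounds for the weak solutions $f_\psi$ of the vortex equation established in Sections~\ref{sect:W2p}--\ref{section:Convexity} (the $W^{2,p}$, $C^1$ and $L^\infty$ estimates along approximate geodesics, together with the continuity of $\mathcal{M}_\alpha$ in Lemma~\ref{lem:continuityofMalphaoncompletion}). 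Once regularity is secured, the no-automorphism hypothesis and the smooth uniqueness Theorem~\ref{thm:uniqueness} close the argument.
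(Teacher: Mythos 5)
Your first half (that $\psi\in\mathfrak{M}^1$) is fine: Proposition~\ref{thm:lower-bound} plus convexity along $C^{1,\bar 1}$ geodesics makes $\psi$ a minimizer on $\mathcal{R}$, and recovery sequences for the largest $d_1$-lower semicontinuous extension (Theorem~\ref{thm:extended-vortex-K-energy}) push this to $\overline{\mathcal{R}}$. The genuine gap is in the reverse inclusion, and it is exactly the step you flag as ``the main obstacle'': promoting a weak minimizer $u\in\mathfrak{M}^1$ to a smooth solution so that Theorem~\ref{thm:uniqueness} applies. You offer no argument for this, only the assertion that the Berman--Darvas--Lu program ``must be adapted'', together with a rigidity computation (vanishing of the second variation in Proposition~\ref{prop:second}, producing an infinitesimal automorphism killed by hypotheses (i)/(ii)) that you yourself concede is inapplicable because the finite-energy geodesic is not smooth. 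The ingredients you cite cannot close this: the $W^{2,p}$, $C^1$ and $L^\infty$ bounds of Sections~\ref{sect:W2p}--\ref{section:Convexity} are estimates along $\epsilon$-geodesics with \emph{smooth} endpoints and say nothing about an arbitrary $u\in\mathcal{E}_1$, and \cite{BDL2} contains no direct ``pluripotential regularity theory'' for weak minimizers that one could transplant; in the cscK analogue such direct regularity is only known through much heavier machinery.

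The point of \cite{BDL2}, and of the paper's actual proof, is to \emph{bypass} regularity rather than establish it. The paper approximates $v\in\mathfrak{M}^1$ by $v_j\in\mathcal{R}$ and, for $\lambda>0$, considers the twisted functional $\mathcal{K}_\alpha+\lambda\mathcal{J}_{\xi_j}$ with $\xi_j=\omega_{v_j}$. By \cite[Proposition 3.1]{BDL2} this has an $\mathcal{E}_1$-minimizer $v_j^\lambda$ satisfying the stability bound $I(v_j^\lambda,v_j)\leq 2I(w,v_j)$; by Lemma~\ref{lem:perturb} (the implicit function theorem, which is where hypotheses (i)/(ii) actually enter) the given \emph{smooth} solution $\psi$ perturbs to a \emph{smooth} critical point $\psi_j^\lambda$ of the twisted equation. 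Connecting $v_j^\lambda$ to $\psi_j^\lambda$ by a finite-energy geodesic and comparing endpoint slopes, strict convexity of $\mathcal{J}_{\xi_j}$ --- justified along a geodesic with one non-smooth endpoint via \cite[Theorem 4.12]{BDL1} and an entropy/subordination argument, a point your proposal never touches --- forces $v_j^\lambda=\psi_j^\lambda$. Thus the weak minimizer of the \emph{twisted} functional is identified with an already-smooth object, and no regularity theorem for minimizers of $\mathcal{K}_\alpha$ itself is ever needed. Letting $\lambda\to 0$ and then $j\to\infty$ in $I(\psi_j^\lambda,v)\leq\frac{3}{c_1}I(v_j,v)$ gives $I(\psi,v)=0$, hence $v=\psi$. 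Your outline instead reduces the theorem to an unproved regularity statement, so as written it does not constitute a proof.
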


\begin{proof}
Let $v\in \mathfrak{M}^1$. We need to show that $v=\psi$. Let $\{v_j\}\subset\mathcal{R}$ be a sequence such that $d_1(v, v_j)\to 0$ as $j\to\infty$.
As in the proof of Theorem~\ref{thm:uniqueness}, we consider the twisted reduced $\alpha$-K-energy $\mathcal{K}_\alpha + \lambda \mathcal{J}_{\xi_j}$, where the twisting form is $\xi_j=\omega_{v_j}:=\omega_0+2i\partial\bar\partial v_j$.

Following the proof of \cite[Proposition 3.1]{BDL2}, one can easily show that for any $\lambda>0$ there exists $v_j^\lambda\in\overline{\mathcal{R}}$ that belongs to the $\mathcal{E}_1$-minimizer set of the
functional $\mathcal{K}_\alpha+\lambda\mathcal{J}_{\xi_j}$ (this set is defined as in~\eqref{eq:E_1-minimizer.1}, replacing $\mathcal{K}_\alpha$ by $\mathcal{K}_\alpha+\lambda\mathcal{J}_{\xi_j}$; note again that $\inf_{\mathcal{E}_1}(\mathcal{K}_\alpha+\lambda\mathcal{J}_{\xi_j})=\inf_{\overline{\mathcal{R}}}(\mathcal{K}_\alpha+\lambda\mathcal{J}_{\xi_j})$, because $\mathcal{K}_\alpha$ and $\mathcal{J}_{\xi_j}$ are invariant under addition of constants).
Additionally, for all $w\in \mathfrak{M}^1$ and $\lambda\geq 0$,
\[
I(v_j^\lambda, v_j)\leq 2I(w, v_j),
\]
where the Aubin--Mabuchi--Yau functional (extended to $\mathcal{E}_1$) is given for all $u_0,u_1\in\mathcal{E}_1$ by
\[
I(u_0,u_1)=V^{-1}\int_\Sigma(u_0-u_1)(\omega_{u_1}-\omega_{u_0})
\]
(cf. \cite[Section 2.3]{BDL2}).
Furthermore, by Lemma~\ref{lem:perturb}, there exists $\delta>0$ such that for all $\lambda\in(-\delta,\delta)$, the smooth solution $\psi\in\mathcal{R}$ of the gravitating vortex equation can be perturbed to a
smooth solution $\psi_j^\lambda\in\mathcal{R}$ of the twisted gravitating vortex equation
\begin{equation}\label{eq:thm:uniqueness-of-weak-minimizer.2}
\wp_{j,2}(f_{\psi_j^\lambda},\psi_j^\lambda,\lambda)=0,
\end{equation}
where $\wp_{j,2}$ is given by~\eqref{eq:twisted-moment-map.1} with $\xi$ replaced by $\xi_j$.

Now, when $0<\lambda<\delta$, the functional-theoretic minimizer $v_j^\lambda\in\overline{\mathcal{R}}$ and the (smooth) twisted gravitating vortex solution $\psi_j^\lambda\in\mathcal{R}$ can be joined by a
finite-energy geodesic $\{u_{j,t}^\lambda\}_{t\in [0,1]}$, that is contained in $\overline{\mathcal{R}}$, by Theorem~\ref{thm:convexfinen}.
By Theorem~\ref{thm:convexfinen} and the positivity of $\xi_j$, the functional $\mathcal{K}_\alpha + \lambda \mathcal{J}_{\xi_j}$ is convex along this finite-energy geodesic, so
\begin{equation}\label{eq:thm:uniqueness-of-weak-minimizer.1}
\frac{\mathrm{d}}{\mathrm{d}t}\bigg|_{t=0^+}\left( \mathcal{K}_\alpha + \lambda \mathcal{J}_{\xi_j}\right)(u_{j,t}^\lambda)
\leq
\frac{\mathrm{d}}{\mathrm{d}t}\bigg|_{t=1^-}\left( \mathcal{K}_\alpha + \lambda \mathcal{J}_{\xi_j}\right)(u_{j,t}^\lambda).
\end{equation}
The left-hand side of~\eqref{eq:thm:uniqueness-of-weak-minimizer.1} is non-negative, as $u_{j,0}^\lambda = v_j^\lambda$ is a minimizer of the functional $\mathcal{K}_\alpha+\lambda \mathcal{J}_{\xi_j}$ on
$\mathcal{E}_1$.
For the right-hand side, we first observe that for any two (smooth) K\"ahler potentials $\varphi_0,\varphi_1\in\mathcal{R}\subset\mathcal{P}_{\omega_0}$, the ``sub-slope'' inequality for the convex function
$\left(\mathcal{K}_\alpha + \lambda \mathcal{J}_{\xi_j}\right)(\varphi_t)$ along the $C^{1,\bar{1}}$ geodesic $\{\varphi_t\}_{t\in [0,1]}$ connecting $\varphi_0$ and $\varphi_1$  gives
 \[
\frac{\left( \mathcal{K}_\alpha+\lambda \mathcal{J}_{\xi_j}\right)(\varphi_1) - \left(\mathcal{K}_\alpha + \lambda \mathcal{J}_{\xi_j}\right)(\varphi_t)}{1-t}
\leq
\int_\Sigma \dot\varphi_t\bigg\lvert_{t=1^-}\cdot \wp_{j,2}(f_{\varphi_1}, \varphi_1, \lambda)\omega_{\varphi_1}.
\]
Actually, in our case the endpoint $\varphi_0=v_j^\lambda$ has only $\mathcal{E}_1$ regularity, while the endpoint $\varphi_1=\psi_j^\lambda$ is smooth, so we first take a sequence
$\{v_{j,k}^\lambda\}\subset\mathcal{R}$ such that $v_{j,k}^\lambda\to v_j^\lambda$ as $k\to\infty$ and then apply the above inequality to the $C^{1,\bar{1}}$ geodesic $\{\varphi^\lambda_{j,k,t}\}_{t\in [0,1]}$
joining $v_{j,k}^\lambda$ and $\psi_j^\lambda$.
By~\cite[Lemma 3.4]{BDL2}, there exists a subsequence of $C^{1,\bar{1}}$ geodesics, still denoted $\{\varphi^\lambda_{j,k,t}\}_{t\in [0,1]}$, such that the velocities at $t=1$ satisfy
$\dot{\varphi}^\lambda_{j,k,1}\to\dot{u}_{j,1}^\lambda$ as $k\to\infty$ almost everywhere with respect to the smooth measure $\omega_{\varphi_1}$, where $\varphi_1=\psi_j^\lambda$.
Then the above ``sub-slope" inequality can be passed to the limit finite-energy geodesic $\{u_{j,t}^\lambda\}_{t\in [0,1]}$ by the Dominated Convergence Theorem, yielding the following inequality for the right-hand
side of~\eqref{eq:thm:uniqueness-of-weak-minimizer.1}:
\[
\frac{\mathrm{d}}{\mathrm{d}t}\bigg|_{t=1^-}\left( \mathcal{K}_\alpha + \lambda \mathcal{J}_{\xi_j}\right)(u_{j,t}^\lambda)
\leq
\int_\Sigma \dot u_{j,t}^\lambda\bigg\lvert_{t=1^-}\cdot \wp_{j,2}(f_{\psi_j^\lambda}, \psi_j^\lambda, \lambda)\omega_{\psi_j^\lambda}
=
0.
\]
 For $\lambda>0$, the functional $\mathcal{K}_\alpha+\lambda \mathcal{J}_{\xi_j}$ is strictly convex, since $\mathcal{K}_\alpha$ is convex and $\mathcal{J}_{\xi_j}$ is strictly convex. The fact that $\mathcal{J}_{\xi_j}$ is strictly convex along geodesics with \emph{smooth} endpoints is a consequence of \cite[Proposition 4.1]{BeBern}. However, in the present situation, the geodesic is a finite-energy geodesic connecting a smooth endpoint with a non-smooth endpoint. To justify strict convexity, we appeal to Theorem 4.12 of \cite{BDL1}. This theorem  requires a hypothesis of $\omega_{\varphi_t}$ being subordinate to $\omega_0$ (in the sense of measures). This hypothesis holds in our case. Indeed, if $\mathcal{K}_{\alpha}$ is linear, then since we are considering a finite-energy geodesic (with the $I$-functional being zero), $\mathcal{M}_{\alpha}$ is bounded and hence $\mathcal{K}$ is bounded. Hence, the entropy term is bounded. This can only happen if
$\omega_{\varphi_t}$ is subordinate to $\omega$, i.e., it has a Radon-Nikodym density. Otherwise, by definition, the entropy is $+\infty$.
The strictly convexity implies that $d_1$-distance between $\omega_{v_j^\lambda}$ and $\omega_{\psi_j^\lambda}$ vanishes, i.e. $v_j^\lambda=\psi_j^\lambda+C$ for some $C\in \mathbb{R}$, that must be $0$ because
$\AMY(v_j^\lambda)=\AMY(\psi_j^\lambda)=0$.
Now, by \cite[Equation (3.1)]{BDL2},
\[
I(\psi_j^{\lambda},v)=I(v_j^\lambda, v)
\leq
\frac{3}{c_1}I(v_j,v),
\]
where $c_1>0$ is a constant such that $c_1I(u_0,u_1)\leq I(u_0,v)+I(u_1,v)$ for all $u_0,u_1,v\in\mathcal{E}_1$ (see~\cite[(2.16)]{BDL2}).
Taking the limit $\lambda\to 0$ for any fixed $j$, we have
\[
I(\psi, v)\leq\frac{3}{c_1}I(v_j,v).
\]
This implies $\psi=v+C$ by taking $j\to +\infty$ and using the $d_1$-continuity of the functional $I$. Finally, we conclude that $C=0$, by the normalization condition $\AMY(v)=\AMY(\psi)$, so $v=\psi$.
\end{proof}

Theorem~\ref{thm:uniqueness-of-weak-minimizer} is now used to prove properness of the reduced $\alpha$-K-energy.

\begin{theorem}\label{thm:K-alpha-proper}
Suppose either \textup{(i)} $g(\Sigma)\geq 1$, or \textup{(ii)} $g(\Sigma)=0$ and $\bm\phi$ vanishes at least at $3$ points.
If there exists a smooth solution $\omega_\psi=\omega_0+2i\partial\bar\partial\psi$ to the gravitating vortex equation~\eqref{eq:GVagain} on $(\Sigma,L,\bm\phi)$, where $\psi\in\mathcal{R}$, then the reduced
$\alpha$-K-energy $\mathcal{K}_\alpha\colon\mathcal{R}\to\mathbb{R}$ given by~\eqref{def:K-energy.before-extension} is proper, that is, there exists constants $C,D>0$ such that for all $u\in\mathcal{R}$,
\[
\mathcal{K}_\alpha(u)\geq Cd_1(0,u)-D.
\]
\end{theorem}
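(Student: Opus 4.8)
The plan is to deduce properness from the variational principle of Darvas--Rubinstein \cite{Darvas}, arguing by contradiction and feeding in the three structural properties already at our disposal: the reduced $\alpha$-K-energy extends to a $d_1$-lower semicontinuous functional on $\overline{\mathcal{R}}$ (Theorem \ref{thm:extended-vortex-K-energy}), it is convex along the finite-energy geodesics of $(\mathcal{E}_1,d_1)$ (Theorem \ref{thm:convexfinen}), it is bounded below once a solution exists (Proposition \ref{thm:lower-bound}), and its $\mathcal{E}_1$-minimizer set reduces to the single point $\mathfrak{M}^1=\{\psi\}$ (Theorem \ref{thm:uniqueness-of-weak-minimizer}). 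Since $\mathcal{K}_\alpha$ is invariant under addition of constants we may work on $\overline{\mathcal{R}}$ and assume $\mathcal{K}_\alpha(\psi)=\inf_{\overline{\mathcal{R}}}\mathcal{K}_\alpha$.

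First I would set up the contradiction. If $\mathcal{K}_\alpha$ were not proper, then applying the negation of the desired estimate with $C=1/j$ and $D=j$ produces $u_j\in\mathcal{R}$ with $\mathcal{K}_\alpha(u_j)<\tfrac{1}{j}d_1(0,u_j)-j$. The lower bound $\mathcal{K}_\alpha\geq -M$ from Proposition \ref{thm:lower-bound} then forces $d_1(0,u_j)\to+\infty$ and $\mathcal{K}_\alpha(u_j)/d_1(0,u_j)\to 0$. Writing $R_j:=d_1(\psi,u_j)\to\infty$, I would join $\psi$ to $u_j$ by the (constant-speed) finite-energy geodesic $\{\psi^j_t\}_{t\in[0,1]}$ of Theorem \ref{thm:finiteengeod}, which remains in $\overline{\mathcal{R}}$ by Theorem \ref{thm:convexfinen}(1), and set $v_j:=\psi^j_{1/R_j}$, the point at $d_1$-distance $1$ from $\psi$. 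Convexity of $t\mapsto\mathcal{K}_\alpha(\psi^j_t)$ on $[0,1]$ (Theorem \ref{thm:convexfinen}(2)) yields the sub-slope inequality
\begin{equation*}
\mathcal{K}_\alpha(v_j)-\mathcal{K}_\alpha(\psi)\leq\frac{1}{R_j}\bigl(\mathcal{K}_\alpha(u_j)-\mathcal{K}_\alpha(\psi)\bigr).
\end{equation*}
Since $d_1(0,u_j)/R_j\to 1$ and $\mathcal{K}_\alpha(u_j)/d_1(0,u_j)\to 0$, the right-hand side tends to $0$, and as $\psi$ is a minimizer I conclude $\mathcal{K}_\alpha(v_j)\to\inf_{\overline{\mathcal{R}}}\mathcal{K}_\alpha$.

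The decisive step is then a compactness argument. The $v_j$ lie on the unit $d_1$-sphere about $\psi$, so $d_1(0,v_j)$ is uniformly bounded, whence $\{v_j\}$ is uniformly bounded in $W^{1,2}$ by the relation between $d_1$ and $W^{1,2}$ established in the proof of Lemma \ref{lem:metric-completion-Kahler-potentials.1}. Using the explicit formula \eqref{eqn:M-extension} together with the $W^{1,2}$-estimates of Proposition \ref{prop:W-1-2-estimate}, the two quadratic terms $\int i\partial f_{v_j}\wedge\bar\partial f_{v_j}$ and $-(c-\langle S\rangle)\int i\partial v_j\wedge\bar\partial v_j$ (both nonnegative) dominate the remaining linear terms, giving a uniform bound on $\mathcal{M}_\alpha(v_j)$. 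From $\mathcal{K}_\alpha=\mathcal{K}+\mathcal{M}_\alpha$ and $\mathcal{K}_\alpha(v_j)\to\inf\mathcal{K}_\alpha$, the entropy $\text{Ent}(\omega_0,\omega_{v_j})$ is then uniformly bounded above. By the compactness of $d_1$-bounded sequences of uniformly bounded entropy \cite{BDL1,Darvas}, a subsequence $d_1$-converges to some $v_\infty\in\overline{\mathcal{R}}$; the $d_1$-lower semicontinuity of $\mathcal{K}_\alpha$ (Theorem \ref{thm:extended-vortex-K-energy}) gives $\mathcal{K}_\alpha(v_\infty)\leq\liminf_j\mathcal{K}_\alpha(v_j)=\inf\mathcal{K}_\alpha$, so $v_\infty\in\mathfrak{M}^1$. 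By Theorem \ref{thm:uniqueness-of-weak-minimizer} we get $v_\infty=\psi$, contradicting $d_1(\psi,v_\infty)=\lim_j d_1(\psi,v_j)=1$. Hence $\mathcal{K}_\alpha$ is proper.

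The main obstacle I anticipate is precisely this compactness step: one must verify that the $d_1$-boundedness of $\{v_j\}$ and the upper bound on $\mathcal{K}_\alpha(v_j)$ genuinely translate into a bound on the entropy, so that the Berman--Darvas--Lu compactness for $(\mathcal{E}_1,d_1)$ applies. This is where the $d_1$-continuity of the new functional $\mathcal{M}_\alpha$ (Lemma \ref{lem:continuityofMalphaoncompletion}), rather than mere lower semicontinuity, is essential: it shows that the vortex perturbation cannot absorb entropy growth, so the coercivity is governed entirely by the classical K-energy term, for which the pluripotential-theoretic compactness is available. A secondary technical point to check carefully is the passage from convergence-equivalence in Lemma \ref{lem:metric-completion-Kahler-potentials.1} to genuine $W^{1,2}$-boundedness of $d_1$-bounded sets, which rests on the normalization $\fint u=\tfrac{1}{2V}\|\partial u\|_{L^2}^2$ valid on $\overline{\mathcal{R}}$.
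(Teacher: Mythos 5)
Your proposal is correct and rests on exactly the same pillars as the paper's proof: the $d_1$-lower semicontinuous extension (Theorem \ref{thm:extended-vortex-K-energy}), convexity along finite-energy geodesics (Theorem \ref{thm:convexfinen}), the entropy bound extracted from the $W^{1,2}$ estimates of Proposition \ref{prop:W-1-2-estimate} and formula \eqref{eqn:M-extension} followed by the pluripotential compactness theorem, and the uniqueness of minimizers (Theorem \ref{thm:uniqueness-of-weak-minimizer}). The only difference is one of packaging: the paper verifies properties (P1)--(P7) of \cite[Hypothesis 3.2]{Darvas} and invokes \cite[Theorem 3.4]{Darvas} as a black box, whereas you inline the proof of that abstract principle as an explicit contradiction/geodesic-rescaling argument, so the mathematical content coincides.
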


\begin{proof}
It suffices to check that $\mathcal{K}_\alpha\colon\mathcal{R}\to\mathbb{R}$ and its largest $d_1$-lower semicontinuous extension $\mathcal{K}_\alpha\colon\overline{\mathcal{R}}\to\mathbb{R}\cup\{+\infty\}$ given
by~\eqref{def:extended-alpha-K-energy} (see Theorem~\ref{thm:extended-vortex-K-energy}), satisfy the conditions in Theorem 3.4 of Darvas--Rubinstein~\cite{Darvas}, listed in~\cite[Hypothesis 3.2]{Darvas}:
\begin{enumerate}
\item[(i)] Property (P1) follows because any $\varphi_0,\varphi_1\in\mathcal{R}$ can be joined by Chen's $C^{1,\bar 1}$ geodesic $\{\varphi_t\}_{t\in [0,1]}$, this geodesic is contained in
    $\overline{\mathcal{R}}$, and $\mathcal{K}_{\alpha}(\varphi_t)$ is continuous and convex, by Theorems~\ref{thm:Chen.1}, \ref{thm:K-alpha-convex} and~\ref{thm:convexfinen}.
\item[(ii)] To prove property (P2), we fix a sequence $\{u_j\}\subset\overline{\mathcal{R}}$ such that $\lim_{j\to +\infty}\mathcal{K}_\alpha(u_j)= \inf_{\overline{\mathcal{R}}}\mathcal{K}_\alpha$ and there exists
    $C>0$ such that $d_1(0,u_j)\leq C$ for all $j$.
Then $\lVert u_j\rVert_{W^{1,2}}\leq C$ by Lemma \ref{lem:equivalent-norm}.
By the uniform $W^{1,2}$ estimates for the sequences $f_{u_j}$ and $\vert\bm\phi\vert_{h_0e^{2f_{u_j}}}^2=\vert\bm\phi\vert_{h_0}^2 e^{2f_{u_j}}$ obtained in the proof of Proposition \ref{prop:W-1-2-estimate}, we
have $\Vert f_{u_j} \Vert_{W^{1,2}}\leq C$ and $\big\lVert\vert\bm\phi\vert_{h_0}^2e^{2f_{u_j}}\big\rVert_{W^{1,2}}\leq C$.
Therefore, applying formulae~\eqref{def:extendedK-energy} and~\eqref{eqn:M-extension} in~\eqref{def:extended-alpha-K-energy}, it is easy to see that all the terms for $\mathcal{K}_\alpha(u_j)$ other than the
entropy term are bounded.
Hence the entropy term $\text{Ent}(\omega_0,\omega_{u_j})$ is uniformly bounded. Green's formula shows that $\lvert\sup_\Sigma u_j \rvert\leq C$.
By~\cite[Theorem 5.6]{Darvas}, these last two uniform estimates imply that there exists a $d_1$-convergent subsequence $\{u_{j_k}\}$ such that $d_1(u_{j_k}, u)\to 0$ for some $u\in\overline{\mathcal{R}}$.
By the lower semicontinuity of $\mathcal{K}_\alpha$ on $\overline{\mathcal{R}}$ (Theorem~\ref{thm:extended-vortex-K-energy}), we have
\[
\mathcal{K}_\alpha(u)\leq \liminf_{n\to +\infty} \mathcal{K}_\alpha(u_{j_n})= \inf_{\overline{\mathcal{R}}}\mathcal{K}_\alpha,
\]
i.e., $u$ is a minimizer in $\overline{\mathcal{R}}$, so $u=\psi$ by Theorem~\ref{thm:uniqueness-of-weak-minimizer}, and hence $u\in\mathcal{R}$, as required.
\item[(iii)] Properties (P3) and (P5) follow from Theorem~\ref{thm:uniqueness-of-weak-minimizer}.
\item[(iv)] Properties (P4), (P6), and (P7) are automatic because $G$ is trivial.
\qedhere
\end{enumerate}
\end{proof}

\subsection{Stability in genus $g = 0$}\label{section:Polystability}

Now that we have established properness of the reduced $\alpha$-K-energy (Theorem~\ref{thm:K-alpha-proper}), we are in a position to prove the main theorem of this paper.

\begin{theorem}\label{thm:existence-implies-stability}
If there exists a smooth solution to the gravitating vortex equation~\eqref{eq:GVagain} on $(\mathbb{P}^1,L,\bm\phi)$, then the effective divisor $D=\sum_j n_jp_j$ corresponding to the pair $(L,\bm\phi)$ is GIT
polystable.
\end{theorem}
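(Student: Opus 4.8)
The plan is to verify the Hilbert--Mumford numerical criterion for the canonically linearized $\operatorname{SL}(2,\CC)$-action on the space $S^N\PP^1$ of effective divisors of degree $N$. Since every nontrivial $1$-parameter subgroup of $\operatorname{SL}(2,\CC)$ is conjugate to a diagonal one and the Mumford weight is conjugation-invariant, it suffices to treat, for each $\lambda$, a diagonal generator. After conjugating I would take $\omega_0=\frac{V}{2\pi}\omega_{FS}$, which is $\SU(2)$-invariant; the holomorphic vector field $\bm v$ generating $\lambda$ then satisfies the hypothesis of Example~\ref{ex:autgeod} (so that $\operatorname{Im}\bm v\iprod\omega_0$ is exact), and Example~\ref{ex:autgeod} produces a smooth geodesic ray $\{\varphi_t\}_{t\ge 0}\subset\mathcal{P}_{\omega_0}$ with $\omega_{\varphi_t}=\sigma_t^*\omega_0$ and $\varphi'_t=\sigma_t^*\Phi$.

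By Proposition~\ref{prop:convexalphaK} the function $t\mapsto\mathcal{K}_\alpha(\varphi_t)$ is convex, and since a smooth solution exists, Proposition~\ref{thm:lower-bound} makes $\mathcal{K}_\alpha$ bounded below on $\mathcal{P}_{\omega_0}$. A convex, bounded-below function on $[0,\infty)$ has a well-defined asymptotic slope $w(\lambda):=\lim_{t\to\infty}\frac{\mathrm d}{\mathrm dt}\mathcal{K}_\alpha(\varphi_t)\ge 0$. The crux is to identify $w(\lambda)$ with the Mumford weight: using the explicit primitive $\sigma_\alpha=\mathrm d\mathcal{K}_\alpha$ of \eqref{eq:sigmaGVreduced}, the identity $\varphi'_t=\sigma_t^*\Phi$, and the a priori estimates of Section~\ref{sect:W2p} to control the family $h_{\varphi_t}$ of vortex metrics as the divisor concentrates, I would show $w(\lambda)=c_0\,(N-2\operatorname{ord}_{p_\infty}D)$ for an explicit positive constant $c_0=c_0(\alpha,\tau,V,N)$, where $p_\infty$ is the attracting fixed point of $\lambda$. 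The inequality $w(\lambda)\ge 0$ for all $\lambda$ then reads $\operatorname{ord}_pD\le N/2$ for every $p\in\PP^1$, i.e.\ $D$ is GIT semistable.

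To upgrade semistability to polystability I would organize the argument by the number of distinct zeros of $\bm\phi$, matching the hypotheses under which properness is available. If $\bm\phi$ vanishes at three or more points, Theorem~\ref{thm:K-alpha-proper} gives properness of $\mathcal{K}_\alpha$; since $\sigma_t$ is a non-compact flow, $d_1(0,\varphi_t)$ grows linearly in $t$, so $w(\lambda)=\lim_{t\to\infty}\mathcal{K}_\alpha(\varphi_t)/t\ge C\lim_{t\to\infty}d_1(0,\varphi_t)/t>0$ for every nontrivial $\lambda$, whence $\operatorname{ord}_pD<N/2$ for all $p$ and $D$ is stable, a fortiori polystable. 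If $\bm\phi$ vanishes at exactly two points, $D=n_1p_1+n_2p_2$ with $n_1+n_2=N$, and semistability forces $n_1,n_2\le N/2$, hence $n_1=n_2=N/2$ and $D$ is polystable. Finally, if $\bm\phi$ vanishes at a single point, $D=Np_0$ has $\operatorname{ord}_{p_0}D=N>N/2$, so $w(\lambda)<0$ for the $1$-PS attracting at $p_0$; this contradicts the lower bound of Proposition~\ref{thm:lower-bound}, so no solution exists and the statement holds vacuously.

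The main obstacle is the slope computation of the second paragraph, namely establishing $w(\lambda)=c_0\,(N-2\operatorname{ord}_{p_\infty}D)$ with the correct sign and a positive constant. Along the automorphism ray the background metric $\sigma_t^*\omega_0$ degenerates and the vortex solution $h_{\varphi_t}$ concentrates at $p_\infty$, so the delicate point is to justify passing the $t$-derivative to the limit: one must show that the pulled-back solutions converge, in $W^{2,p}$ after normalization, to the solution of the limiting vortex problem on the concentrated divisor $\lim_{t\to\infty}\sigma_t\cdot D$, and that the localized integrals in $\sigma_\alpha|_{\varphi_t}(\varphi'_t)$ converge to the asserted weight. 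The uniform estimates of Section~\ref{sect:W2p} are tailored precisely to this purpose, and the uniqueness of weak vortex solutions (Proposition~\ref{prop:W-1-2-estimate}) pins down the limit, so I expect the convergence to follow from those ingredients once the bookkeeping of the boundary terms at $p_\infty$ is carried out.
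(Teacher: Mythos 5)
Your proposal is correct and its analytic core coincides with the paper's proof: a geodesic ray generated by a diagonal $1$-PS as in Example~\ref{ex:autgeod}, convexity of $\mathcal{K}_\alpha$ along it (Theorem~\ref{thm:K-alpha-convex}), properness from Theorem~\ref{thm:K-alpha-proper} to force a strictly positive asymptotic slope, and identification of that slope with the gravitating vortex Futaki invariant $2\alpha\tau(N-2n_1)\left(V-\frac{4\pi N}{\tau}\right)$ by proving $C^k$ convergence of the pulled-back, renormalized vortex potentials $\widehat f_t=\widetilde f_t+(N-2n_1)t$ to the vortex solution of the limit divisor $n_1[0]+(N-n_1)[\infty]$, with uniqueness (Theorem~\ref{th:B-GP}) upgrading subsequential to full convergence. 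Where you genuinely diverge is in the organization of the case analysis: the paper treats only the case of at least three distinct zeros and cites \cite{Al-Ga-Ga-P-Y} for one or two zeros, whereas you make the whole statement self-contained by first extracting semistability from mere lower-boundedness of $\mathcal{K}_\alpha$ (Proposition~\ref{thm:lower-bound}, which needs only the existence of a solution and no hypothesis on the zero set), and then disposing of the one- and two-zero cases by elementary GIT; this is a cleaner logical structure, at the cost of having to run the slope computation also in those degenerate configurations. Two points of bookkeeping deserve care. First, the weight is governed by the multiplicity at the \emph{repelling} fixed point of the flow (where $\sigma_t^*\omega_0$ concentrates), not the attracting one as you wrote; since you quantify over all $1$-PS this does not affect the conclusion, but the sign matters inside the computation. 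Second, the estimates of Section~\ref{sect:W2p} are stated for background metrics with uniform $C^{1,\bar 1}$ bounds, which $\sigma_t^*\omega_0$ does not satisfy in a fixed gauge; the correct move---which you do gesture at and the paper carries out---is to pull everything back by the lifted automorphism so the background becomes the fixed metric $\omega_0$ and the $t$-dependence migrates into the Higgs field $e^{(N-2n_1)t}\bm\phi(t)$, after which only the fixed-background $W^{2,p}$ estimate from the proof of \eqref{est:W2pestimate}, Schauder theory, and bootstrapping are needed, together with the explicit evaluation of the Futaki invariant on the limit triple as in \cite[Lemma 4.7]{Al-Ga-Ga-P}.
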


This result provides an algebro-geometric obstruction to the existence of solutions to the gravitating vortex equation on the complex projective line (see~\cite[Proposition 2.6]{Al-Ga-Ga-P} for an explicit
formulation of GIT polystability). The proof given here rectifies the proof in~\cite[Theorem 1.3]{Al-Ga-Ga-P}.

A basic link between the gravitating vortex equation and Geometric Invariant Theory, used in the proof of Theorem~\ref{thm:existence-implies-stability}, is given by the \emph{gravitating vortex Futaki
invariant}~\cite[Section 4.2]{Al-Ga-Ga-P}.
To review its construction, we fix a K\"ahler metric $\omega$ on $\PP^1$ and a hermitian metric $h$ on $L$.
Then the value of this invariant on any $(1,0)$-holomorphic vector field $\bm v^{1,0}$ whose real part generates holomorphic automorphisms of the triple $(\Sigma, L, \bm\phi)$ is given by
\begin{equation}
\label{def:Gravitating vortexFutaki}
\begin{split}
\langle\cF_{\alpha,\tau},\bm v^{1,0}\rangle
&\defeq
8\alpha \int_\Sigma i\psi_h
\left(iF_h + \frac{1}{2}\left(\left|\bm\phi\right|_h^2-\tau\right)\omega\right)\\&\quad
-2\int_\Sigma \varphi_\omega \left(\text{Ric }\omega - 2\alpha i\partial\bar\partial \left|\bm\phi\right|_h^2 - 2\alpha\tau \cdot iF_h
-c\omega
\right).
\end{split}\end{equation}
Here, $\varphi_\omega$ and $\psi_h$ are the potential functions such that
\begin{gather*}
\iota_{\bm v^{1,0}}\omega
= \bar\partial \varphi_\omega, \qquad \int_\Sigma \varphi_\omega \omega=0,\\
\iota_{\bm v^{1,0}}\mathrm{d}_{A_h} \bm\phi
= \psi_h\bm\phi,
\end{gather*}
where $A_h$ is the Chern connection of $h$ on $L$.
One can show that $\langle\cF_{\alpha,\tau},\bm v^{1,0}\rangle$ does not depend on the choice of $(\omega, h)$, for $\omega$ with fixed volume $V$ (see~\cite[Proposition 4.4]{Al-Ga-Ga-P} or \cite[Lemma
2.5]{Al-Ga-Ga-P-Y}), so it vanishes if $(\Sigma, L, \bm\phi)$ has a gravitating vortex solution $(\omega,h)$ with volume $V$.

\begin{proof}[Proof of Theorem~\ref{thm:existence-implies-stability}]
We will show that the asymptotic slope of $\mathcal{K}_\alpha$ along a particularly chosen geodesic ray in $\mathcal{P}_{\omega_0}$, generated by a $\mathbb{C}^*$-action and  initiated from the (rescaled)
Fubini--Study metric, is the gravitating vortex Futaki invariant for the limiting Higgs field, in much a way similar to Ding--Tian's generalized Futaki invariant in the study of K\"ahler--Einstein metrics.
The result will then follow by combining an explicit calculation for the Futaki invariant and the properness of $\mathcal{K}_\alpha$ on $\mathcal{R}$.

Since this result is already known when $\bm\phi$ vanishes at no more than two points~\cite[Theorem 1.1]{Al-Ga-Ga-P-Y}, we only need to show that $\bm\phi$ is stable when it vanishes at least at three points.
It is enough to prove that $n_1<\frac{N}{2}$.
Without loss of generality, we can assume that
\[
\bm\phi=z_1^{n_1}\left(p_2z_0-z_1\right)^{n_2}\cdots \left( p_dz_0- z_1\right)^{n_d},
\]
where we use homogeneous coordinates $[z_0:z_1]$ on $\mathbb{P}^1$, so $\bm\phi$ has distinct zero set $\{p_1=0,p_2,\ldots,p_d\}\subset\mathbb{C}\subset\mathbb{P}^1$ and the multiplicities are $n_1,n_2,\ldots,n_d$,
respectively.

Let $\omega_0= \frac{V}{2\pi}\omega_{FS}$, where $\omega_{FS}$ is the Fubini--Study metric on $\mathbb{P}^1$ (with total volume $2 \pi$).
We consider the 1-parameter subgroup $\{\sigma_t\}_{t\in\RR}\subset\operatorname{PSL}(2,\mathbb{C})$ of automorphisms of $\PP^1$ given by $\sigma_t([z_0:z_1])=[e^{-t}z_0: e^t
z_1]$ as in Example~\ref{ex:strict-convexity}, and the corresponding geodesic line $\left\{\varphi_t\right\}_{t\in \mathbb{R}}$ (with $\varphi_0=0$) such that $\omega_t\defeq\sigma_t^*\omega_0=\omega_0+2i\partial\bar\partial\varphi_t$, as in
Example~\ref{ex:autgeod}. Choosing coordinates $z=z_1/z_0$ on $\PP^1$, the 1-parameter subgroup $\{\sigma_t\}_{t\in\mathbb{R}}$ is generated by the real holomorphic vector field $\text{Re}
\left(4z\frac{\partial}{\partial z}\right)$, and the geodesic ray $\{\varphi_t\}_{t\in [0, +\infty)}$ on $\mathcal{P}_{\omega_0}$ is given by~\eqref{eq:ex:strict-convexity.1}.
Recall that $\left\{ \varphi_t\right\}_{t\in [0,+\infty)}\subset \mathcal{R}$
because $\dot\varphi_t=\sigma_t^*\dot\varphi_0$ so $\frac{\mathrm{d}}{\mathrm{d}t}I(\varphi_t)=\int_{\mathbb{P}^1}\dot\varphi_t
\omega_t=\int_{\mathbb{P}^1}\dot\varphi_0\omega_0=0$ and $I(\varphi_0)=0$ (cf. Theorem~\ref{thm:convexfinen}(1)).

For each $\omega_t$, let $h_t=h_0 e^{2f_t}$ be the solution to the $t$-dependent vortex equation
\begin{equation}
iF_{h_t}+\frac{1}{2}\left( \left|\bm\phi\right|_{h_t}^2-\tau\right)\omega_t=0.
\end{equation}
Note that the line bundle $L$ has a canonical $\operatorname{GL}(2,\CC)$-linearization.
Let $\widetilde{\sigma}_t$ be the lift of the action $\sigma_t$ to $L$. Define $\widetilde h_t=(\widetilde{\sigma}_t^{-1})^*h_t=h_0 e^{2\widetilde f_t}$ and
\[
\widetilde{\bm\phi(t)}=(\widetilde{\sigma}_t^{-1})^*\bm\phi = e^{\left(N-2n_1\right)t}z_1^{n_1}\left(p_2 z_0 - e^{-2t}z_1\right)^{n_2}\cdots \left(p_d z_0-
  e^{-2t}z_1\right)^{n_d}=e^{\left(N-2n_1\right)t}\bm\phi(t).
\]
Then
\begin{equation}
iF_{\widetilde h_t} + \frac{1}{2} \left( \left|\widetilde{ \bm\phi(t)}\right|_{\widetilde h_t}^2 - \tau\right) \omega_0
=
iF_{\widetilde h_t}
+
\frac{1}{2}\left( \left| \bm\phi(t)\right|_{\widetilde h_t}^2 e^{2\left(N-2n_1\right)t} -\tau\right)\omega_0
=0.
\end{equation}
Define $\widehat f_t=\widetilde f_t + \left(N-2n_1\right)t$. Then the standard $W^{2,p}$ estimate about $\Delta_{\omega_0}$ as above, i.e. the proof of \eqref{est:W2pestimate},  shows that for any $p\geq 1$,
$\lVert\widehat f_t\rVert_{W^{2,p}}\leq C$ for $C>0$
independent of $t$. This implies that $\lVert\widehat f_t\rVert_{C^{1,\beta}}\leq C$ for any $\beta\in (0,1)$. By Schauder estimates and a bootstrapping argument, for any $k\in \mathbb{N}_+$ there exists $C_k>0$
independent of $t$ such that
\[
\lVert\widehat f_t\rVert_{C^k}\leq C_k.
\]
This uniform estimate on the pulled-back and normalized Hermitian potential $\widehat f_t$ shows that for any sequence $\{t_i\}$ such that $t_i\to +\infty$, there exists a subsequence $\{t_{i_l}\}$ such that
$\widehat f_{t_{i_l}}\to \widehat f_\infty$ (in any $C^k$ sense) which is a solution to the vortex equation
\[
iF_{h_0e^{2\widehat f_\infty}}+ \frac{1}{2}\left( \left| \bm\phi_\infty\right|_{h_0e^{2\widehat f_\infty}}^2-\tau\right) \omega_0=0,
\]
where $\bm\phi_\infty =\lim_{t\to +\infty} \bm\phi(t)=p_2^{n_2}\cdots p_d^{n_d}\cdot z_0^{N-n_1}z_1^{n_1}$ corresponds to the divisor $n_1[0]+(N-n_1)[\infty]$. By the uniqueness of the vortex solution with a given
Higgs field (see Theorem~ \ref{th:B-GP}), the subsequential convergence can be strengthened to convergence of the whole family as $t\to +\infty$.
	
The slope of $\mathcal{K}_\alpha(t)=\widetilde{\mathcal{K}}_\alpha(f_t, \varphi_t)$ at time $t$ is
\begin{equation*}
\begin{split}
\mathcal{K}_\alpha'(t)
& =
4\alpha
\int_\Sigma \dot f_t
\left( iF_{h_t} + \frac{1}{2} (\lvert\bm\phi\rvert_{h_t}^2 - \tau) \omega_t
\right)
-
\int_\Sigma \dot \varphi_t
\left( \text{Ric }\omega_t - 2\alpha i\partial\bar\partial \lvert\bm\phi\rvert_{h_t}^2 - 2\alpha\tau iF_{h_t}
- c\omega_t
\right)\\
& =
-
\int_\Sigma \dot \varphi_t
\left( \text{Ric }\omega_t - 2\alpha i\partial\bar\partial \lvert\bm\phi\rvert_{h_t}^2 - 2\alpha\tau iF_{h_t}
- c\omega_t
\right)\\
& =
-
\int_\Sigma \dot \varphi_0
\left(\text{Ric }\omega_0  -  2\alpha i\partial\bar\partial |\bm\phi(t)|_{\widehat h_t}^2 - 2\alpha\tau iF_{\widehat h_t}
- c\omega_0 \right).
\end{split}
\end{equation*}
As a consequence,
\begin{align*}
\lim_{t\to +\infty} \mathcal{K}_\alpha'(t)
&=-\int_\Sigma \dot\varphi_0
  \left(\text{Ric }\omega_0  -  2\alpha i\partial\bar\partial |\bm\phi_\infty|_{h_0 e^{2\widehat f_\infty}}^2 - 2\alpha\tau iF_{h_0 e^{2\widehat f_\infty}}
	- c\omega_0 \right)
\\&
=-i\left\langle\cF_{\alpha,\tau},z\frac{\partial}{\partial z}\right\rangle
=2\alpha\tau \left(N-2n_1\right)\left(V -\frac{4\pi N}{\tau}\right)
\end{align*}
where the last line follows by an explicit calculation~\cite[Lemma 4.7]{Al-Ga-Ga-P} of the gravitating vortex Futaki invariant on the triple $(\omega_0, h_0e^{2\widehat f_\infty}, \bm\phi_\infty)$ (see
also~\cite[Lemma 2.8]{Al-Ga-Ga-P-Y}, where~\eqref{def:Gravitating vortexFutaki} is denoted $\mathcal{F}_{n_1, N, V, \sqrt{\tau}}$, with $\alpha,\tau$ in this article corresponding to $\frac{a}{2\tau^2},\tau^2$
in~\cite{Al-Ga-Ga-P-Y}, and $\dot\varphi_0= \frac{V}{2\pi}\frac{|z|^2-1}{|z|^2+1}=-2i\varphi_\eta$ where $\varphi_\eta$ is the potential function in \cite[Def. 2.2]{Al-Ga-Ga-P-Y}).
In conclusion, the asymptotic slope of the reduced $\alpha$-K-energy along the geodesic ray in $\mathcal{R}$ which is initiated from $0$ and generated by
$\text{Re}\left(4z\frac{\partial}{\partial z}\right)$ is $-i\langle\cF_{\alpha,\tau},z\frac{\partial}{\partial z}\rangle$.

Since $\bm\phi$ vanishes at least at 3 points by assumption, the existence of a solution to the gravitating vortex equations implies that $\mathcal{K}_\alpha$ is proper on $\mathcal{R}$, by Theorem
\ref{thm:K-alpha-proper}. It is clear by~\cite[Theorem 4.3]{Darvas} that $d_1(0, \varphi_t) = t\int_\Sigma \left|\dot \varphi_0\right|\omega_0\to +\infty$ as $t\to +\infty$ and therefore
$\mathcal{K}_\alpha(\varphi_t)\to +\infty$ as $t\to +\infty$. This implies the asymptotic slope is strictly positive, since $\mathcal{K}_\alpha(t)$ is convex on $[0, +\infty)$ (Theorem \ref{thm:K-alpha-convex}).
This further implies that $2n_1<N$, because we assumed
$V-\frac{4\pi N}{\tau}>0$. We conclude that $\left[\bm\phi=0\right]$ is stable.
\end{proof}

\subsection{Existence of gravitating vortices in genus $g\geq 1$}
%
%
Our next theorem is similar to, but logically independent from, another existence result in~\cite[Theorem 6.1]{Al-Ga-Ga-P} under different numerical conditions (see Remark~\ref{rem:thm:existenceforggeq1.1} for
details).
The main difference comes from the use of convexity properties of the reduced $\alpha$-K-energy $\mathcal{K}_\alpha$ in the proof of Theorem~\ref{thm:existenceforggeq1}

\begin{theorem}\label{thm:existenceforggeq1}
%
Let $\Sigma$ be a compact Riemann surface of genus $g\geq 1$, and $L$ a holomorphic line bundle over $\Sigma$ of degree $N>0$ equipped with a holomorphic section $\bm\phi\neq 0$. Suppose the following three
numerical conditions are satisfied, where $m$ is the maximum multiplicity of the zeroes of $\bm\phi$:
\begin{enumerate}
\item[\textup{(1)}] $V- \frac{4\pi N}{\tau}>0$,
\item[\textup{(2)}] $\alpha\tau \left( \frac{8\pi N}{\tau} - V\right) < \frac{2g-2}{N}\left( V- \frac{4\pi N}{\tau}\right)$,
\item[\textup{(3)}] $2\alpha\tau m<1$.
\end{enumerate}
Then the gravitating vortex equation~\eqref{eq:GVagain} has a unique smooth solution of volume $V$.
\end{theorem}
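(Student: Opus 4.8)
The plan is to prove existence by the variational method, producing a solution as a minimizer of the extended reduced $\alpha$-K-energy $\mathcal{K}_\alpha$ on the completed space $\overline{\mathcal{R}}$, and then to obtain uniqueness directly from Theorem~\ref{thm:uniqueness} (case (i), $g\geq 1$). Since the convexity and lower semicontinuity of $\mathcal{K}_\alpha$ along finite-energy geodesics are already available (Theorems~\ref{thm:K-alpha-convex}, \ref{thm:convexfinen}, \ref{thm:extended-vortex-K-energy}), the crux is to establish that $\mathcal{K}_\alpha$ is \emph{proper} on $\mathcal{R}$ directly from the numerical conditions (1)--(3), and then to run the Darvas--Rubinstein machinery of Theorem~\ref{thm:K-alpha-proper} in reverse: properness, together with lower semicontinuity and the compactness of $(\mathcal{E}_1,d_1)$, yields a minimizer in $\overline{\mathcal{R}}$, which after a regularity argument becomes a smooth solution of~\eqref{eq:GVagain}.

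First I would derive an explicit coercivity estimate for $\mathcal{K}_\alpha=\mathcal{K}+\mathcal{M}_\alpha$. Using the formula of Proposition~\ref{prop:explicit} and integrating by parts, one isolates the manifestly nonnegative terms: the entropy $\tfrac{V}{2}\mathrm{Ent}(\omega_0,\omega_\varphi)$, the Dirichlet energy $4\alpha\lVert\partial f_\varphi\rVert^2_{L^2}$, and a multiple of $\lVert\partial\varphi\rVert^2_{L^2}$ with coefficient $\tfrac{4\pi\alpha\tau N}{V}-\langle S\rangle=\tfrac{2\pi(2\alpha\tau N+2g-2)}{V}$, which is strictly positive precisely because $g\geq 1$. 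The remaining contributions are controlled using the two integral consequences of the vortex equation, namely the pointwise bound $\lvert\bm\phi\rvert^2_{h_\varphi}\leq\tau$ of~\cite[Proposition 5.2]{Brad} and the identity $\int_\Sigma\lvert\bm\phi\rvert^2_{h_\varphi}\omega_\varphi=\tau V-4\pi N$ obtained by integrating~\eqref{eq:Vagain}; combined with the fact that the linear coefficient $\beta=-\mathrm{Ric}\,\omega_0+2\alpha\tau iF_{h_0}+c\omega_0$ integrates to zero, these reduce all lower-order terms to quantities dominated by the positive quadratic part via the Poincar\'e and Young inequalities. Conditions (2) and (3) enter here to guarantee that the net estimate is genuinely coercive in $d_1$ (using the equivalence with the $W^{1,2}$-norm on $\overline{\mathcal{R}}$ from Lemma~\ref{lem:metric-completion-Kahler-potentials.1}): condition~(2), which for $g=1$ sharpens~(1) to $V>\tfrac{8\pi N}{\tau}$, balances the average of $f_\varphi$ against the volume, while condition~(3), $2\alpha\tau m<1$, controls the behaviour of $\lvert\bm\phi\rvert^2_{h_\varphi}$ and $f_\varphi$ near the high-multiplicity zeroes of $\bm\phi$.

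With properness in hand, the compactness argument from the proof of Theorem~\ref{thm:K-alpha-proper} (verifying Hypothesis~3.2 of~\cite{Darvas}) applies: the coercivity estimate forces a minimizing sequence to have a uniform entropy bound and, via Green's formula, a uniform $\sup$-bound, hence a $d_1$-convergent subsequence whose limit $\psi\in\overline{\mathcal{R}}$ minimizes $\mathcal{K}_\alpha$. The final step is regularity. The minimizer satisfies the Euler--Lagrange equation of $\mathcal{K}_\alpha$ weakly, and the plan is to bootstrap it to a smooth solution of~\eqref{eq:GVagain} by combining the a priori $W^{2,p}$ estimates for the associated vortex potential $f_\varphi$ established in Section~\ref{sect:W2p} with the Chen--Cheng-type regularity theory for the K-energy part (as in~\cite{BDL1,BDL2}), the coupling term $\mathcal{M}_\alpha$ being of lower order. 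Being a smooth critical point, $\omega_\psi$ solves~\eqref{eq:GVagain}, and uniqueness then follows from Theorem~\ref{thm:uniqueness}.

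The main obstacle will be the regularity of the weak minimizer. Unlike the cscK setting, the reduced equation~\eqref{eq:GVagain} is non-local, since it depends on $\varphi$ through the solution $h_\varphi$ of the vortex equation, and the coupling $\lvert\bm\phi\rvert^2_{h_\varphi}$ degenerates near the zeroes of $\bm\phi$, where $f_\varphi$ is only controlled on average; taming these singularities, which is exactly the role of the multiplicity condition~(3), is the delicate point. A secondary difficulty is to confirm that the explicit coercivity estimate survives under the sharp form of condition~(2), ensuring in particular that no cancellation between the $f_\varphi$-dependent linear terms and the positive quadratic part is lost.
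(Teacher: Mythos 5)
Your overall strategy (direct minimization of the extended $\mathcal{K}_\alpha$ over $\overline{\mathcal{R}}$) is genuinely different from the paper's proof, which runs a continuity method in the coupling constant $\alpha$, starting at $\alpha=0$ from the constant curvature metric and the vortex solution, with openness given by Lemma~\ref{lem:existenceforggeq1.1} and closedness given by a priori estimates on smooth solutions along the path. The coercivity half of your plan is sound in substance: the estimates you sketch are essentially the ones the paper derives in its closedness argument --- condition (3) enters through the splitting of $\Sigma$ into the region $S=\{\omega_\varphi\leq\lvert\bm\phi\rvert_{h_0}^{-(2-\epsilon)/m}\omega_0\}$ and its complement, and condition (2) through balancing the two energy inequalities \eqref{ineq:energyestimate2} and \eqref{ineq:secondenergyestimate2} --- and since these estimates use only the vortex relation between $f_\varphi$ and $\varphi$ (not the second equation), they can indeed be organized into a properness statement under (1)--(3). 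Your uniqueness step (Theorem~\ref{thm:uniqueness}) coincides with the paper's.

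The genuine gap is the final step, regularity of the weak minimizer, and it cannot be closed with the tools you cite. A minimizer produced by properness plus $d_1$-lower semicontinuity is a priori only an element of $\overline{\mathcal{R}}\subset\mathcal{E}_1$: one cannot even write down an Euler--Lagrange equation for it, since two-sided variations inside $\mathcal{E}_1$ leave the finite-entropy locus and $\mathcal{K}_\alpha$ is not differentiable there. Moreover, the BDL-type machinery in this paper runs in the opposite direction to what you need: Lemma~\ref{lem:perturb} and Theorem~\ref{thm:uniqueness-of-weak-minimizer} take the existence of a smooth solution as a \emph{hypothesis} and use it (via the implicit function theorem around that solution) to identify the weak minimizer with it --- they do not upgrade a weak minimizer to a smooth solution. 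Likewise, the $W^{2,p}$ estimates of Section~\ref{sect:W2p} are proved for the vortex potentials $f_\epsilon$ along Chen's $\epsilon$-geodesics, whose background forms $\omega_{\varphi_\epsilon}$ are smooth K\"ahler metrics with uniform $C^{1,\bar 1}$ control; for a bare $\mathcal{E}_1$-minimizer $\psi$ the current $\omega_\psi$ is merely a measure, the reduced equation \eqref{eq:GVagain} is fourth order and non-local in $\psi$, and no analogue of the Chen--Cheng a priori estimates exists for this coupled system. This is exactly why the paper takes the continuity-method route, where every object along the path is already a smooth solution, so that the energy bound of Proposition~\ref{thm:lower-bound} can be converted into $C^0$ bounds via the maximum principle \eqref{ineq:maxprinciplev} and then bootstrapped through the elliptic system \eqref{eq:rewrittenGVcontmethod}. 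To rescue your approach you would need a regularity theorem for $\mathcal{E}_1$-minimizers of $\mathcal{K}_\alpha$, which is an open problem of Chen--Cheng type, not a routine bootstrap.
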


\begin{remark}\label{rem:thm:existenceforggeq1.1}
Condition (1) is the bound~\eqref{eqn:ineqtauV}, that we already assumed to have solutions to the vortex equation~\eqref{eq:Vagain}. We record it to make it clear that Theorem~\ref{th:B-GP} applies.
Condition (2) can be compared with the hypotheses of a similar existence result~\cite[Theorem 6.1]{Al-Ga-Ga-P} that required $g\geq 2$ and the inequality $c+\alpha\tau^2\leq 0$ (see~\cite[(6.2)]{Al-Ga-Ga-P}),
or equivalently,
\[
\alpha\tau^2\left( V- \frac{4\pi N}{\tau} \right) \leq 2\pi \left(2g-2\right).
\]
This is different from our condition (2) in Theorem \ref{thm:existenceforggeq1}. In particular, condition (2) is vacuous when $V-\frac{8\pi N}{\tau}>0$, so Theorem~\ref{thm:existenceforggeq1} implies the existence
of solutions for any
coupling constant $\alpha\in [0, \frac{1}{2\tau m})$ in the regime of volume $V\in \left(\frac{8\pi N}{\tau}, +\infty\right)$. It is reasonable to conjecture that existence holds with only the first admissible
volume condition (1).
\label{rem:afterexistence}
\end{remark}

\begin{lemma}\label{lem:existenceforggeq1.1}
The set of real numbers $\alpha\geq 0$ for which there exists a smooth solution to the gravitating vortex equation~\eqref{eq:GVagain} with fixed volume $V>\frac{4\pi N}{\tau}$ is open.
\end{lemma}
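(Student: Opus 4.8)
The plan is to prove the lemma by the Implicit Function Theorem, taking $\alpha$ as the deformation parameter and borrowing the invertibility of the relevant linearised operator from the analysis already carried out in the proof of Lemma~\ref{lem:perturb}. Suppose $\alpha_0\geq 0$ admits a smooth solution, represented by a pair $(f_0,\varphi_0)\in\widetilde{\mathcal{P}}_{\omega_0}$ solving the coupled system $\wp_1(f_0,\varphi_0)=0$ and $\wp_2(f_0,\varphi_0)=0$, where $\wp=(\wp_1,\wp_2)$ is as in~\eqref{eq:wp.1} and the topological constant $c=c(\alpha)$ is given by~\eqref{eq:constantc}. Making the $\alpha$-dependence explicit, I would consider the smooth map
\begin{equation*}
\mathcal{F}(f,\varphi,\alpha)=\bigl(\wp_1(f,\varphi),\,\wp_2(f,\varphi)\bigr)\in L^2_{k+2}(\Sigma)\times L^2_{k}(\Sigma)/\RR,
\end{equation*}
defined on the completion of $\widetilde{\mathcal{P}}_{\omega_0}$ modelled on $L^2_{k+4}(\Sigma)\times L^2_{k+4}(\Sigma)/\RR$ as in~\eqref{eq:twisted-moment-map.2}, whose zeros are exactly the solutions of~\eqref{eq:GVagain}--\eqref{eq:Vagain} at the given value of $\alpha$. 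Since $\wp_2$ and $c(\alpha)$ depend affinely on $\alpha$, the map $\mathcal{F}$ is jointly smooth.

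The decisive point is the invertibility of the partial linearisation $D_{(f,\varphi)}\mathcal{F}|_{(f_0,\varphi_0,\alpha_0)}$. This is precisely the operator~\eqref{eq:twisted-moment-map.2} evaluated at the solution and at twisting parameter $\lambda=0$, and its invertibility is established in the proof of Lemma~\ref{lem:perturb}: any element of its kernel produces a nonzero infinitesimal automorphism $\zeta$ of $(\Sigma,L,\bm\phi)$, which must vanish because $g(\Sigma)\geq 1$ together with $N>0$ forces the zero divisor of $\bm\phi$ to be nonempty, obstructing all nontrivial holomorphic automorphisms preserving $\bm\phi$. In the degenerate case $\alpha_0=0$ the system decouples and the self-adjointness coming from the moment-map interpretation is unavailable, but there the linearisation is block triangular with diagonal blocks $\Delta_{\omega_{\varphi_0}}+\lvert\bm\phi\rvert_{h_{f_0}}^2$ and the linearised scalar curvature operator; both are invertible for $g\geq 1$ (the latter because on a Riemann surface of genus $\geq 1$ every holomorphy potential is constant), so the conclusion persists. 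Granting this, the Implicit Function Theorem yields an open interval around $\alpha_0$ and a smooth family $(f_\alpha,\varphi_\alpha)$ with $\mathcal{F}(f_\alpha,\varphi_\alpha,\alpha)=0$ and $(f_{\alpha_0},\varphi_{\alpha_0})=(f_0,\varphi_0)$.

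It then remains to verify two routine points. First, the deformations stay admissible: since $\varphi_\alpha\to\varphi_0$ in $L^2_{k+4}$, hence in $C^2$ for $k$ large, the open condition $\omega_{\varphi_\alpha}=\omega_0+2i\partial\bar\partial\varphi_\alpha>0$ persists for $\alpha$ near $\alpha_0$, and likewise $\tau-\lvert\bm\phi\rvert_{h_{f_\alpha}}^2\geq 0$ is preserved. Second, elliptic regularity applied to~\eqref{eq:Vagain} and~\eqref{eq:GVagain}, together with a bootstrap exactly as in the a~priori estimates of Section~\ref{sect:W2p}, upgrades each $(f_\alpha,\varphi_\alpha)$ to a smooth solution. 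Intersecting the interval furnished by the Implicit Function Theorem with $\RR_{\geq 0}$ produces a relative neighbourhood of $\alpha_0$ contained in the set, which is therefore open. I expect the only genuine obstacle to be the invertibility of the linearisation, and this is already resolved by the absence of infinitesimal automorphisms of $(\Sigma,L,\bm\phi)$ in genus $g\geq 1$; the remaining steps are the standard packaging of the continuity method.
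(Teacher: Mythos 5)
Your proposal is correct and takes essentially the same route as the paper: openness follows from the Implicit Function Theorem in $\alpha$, with invertibility of the linearised operator~\eqref{eq:twisted-moment-map.2} at $\lambda=0$ deduced from the absence of infinitesimal automorphisms of $(\Sigma,L,\bm\phi)$ in genus $g\geq 1$ --- the paper merely compresses this into a citation of the moment-map interpretation (Proposition~\ref{prop:momentmap-inttriples}) and of \cite[Lemma 6.4]{Al-Ga-Ga-P}, extended to $g=1$ via \cite[Proposition 4.2]{Al-Ga-Ga-P}. Your explicit block-triangular treatment of the endpoint $\alpha_0=0$, where the K\"ahler structure on the space of triples degenerates and the self-adjointness argument of Lemma~\ref{lem:perturb} is unavailable, is a correct and worthwhile refinement of a point the paper leaves implicit in that citation (and it matters, since the continuity method of Theorem~\ref{thm:existenceforggeq1} starts precisely at $\alpha=0$).
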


\begin{proof}
This is a direct consequence of the moment-map interpretation of the gravitating vortex equations~\eqref{eqn:GV0} given in Proposition~\ref{prop:momentmap-inttriples} (cf. the proof of Lemma \ref{lem:perturb}).
Actually, the proof is in~\cite[Lemma~6.4]{Al-Ga-Ga-P} for $g\geq 2$, because the equation~\eqref{eq:GVagain} is equivalent to the system~\eqref{eqn:GV0}, by Theorem~\ref{th:B-GP}. 
For genus $g=1$, the proof of Lemma~6.4 of~\cite{Al-Ga-Ga-P} still works, because the condition $g\geq 2$ was only used there to conclude that the triple $(\Sigma,L,\bm{\phi})$ has no infinitesimal automorphisms,
and this still occurs when $g=1$ and $\bm{\phi}\neq 0$, by~\cite[Proposition 4.2]{Al-Ga-Ga-P}.
\end{proof}

\begin{proof}[Proof of Theorem~\ref{thm:existenceforggeq1}]
To prove existence, we apply the method of continuity, with $\alpha$ as the continuity parameter, to the gravitating vortex equation~\eqref{eq:GVagain}. Recall~\eqref{eq:GVagain} is equivalent to the system
\begin{equation}\label{eq:GVsystem-again}
\begin{split}
i\Lambda_\omega F_{h} + \frac{1}{2}(|\bm\phi|_{h}^2-\tau)&=0,
\\
S_\omega + \alpha\Delta_\omega  |\bm\phi|_{h}^2 -2i\alpha \tau \Lambda_\omega F_{h}&=c,
\end{split}
\end{equation}
where the unknown is $(\omega,h)$.
Let $\omega_0$ be the constant curvature K\"ahler metric on $\Sigma$ with volume $V$, and $h_0$ the constant curvature Hermitian metric on $L$, normalized so that  $|\bm \phi |_{h_0}^2\leq 1$.
Let $h_{f_0}=h_0e^{2f_0}$ be the unique solution to the vortex equation (see Theorem~\ref{th:B-GP})
\begin{equation}\label{eq:vortex-0.1}
i\Lambda_{\omega_0} F_{h_{f_0}}+\frac{1}{2}(\lvert\bm\phi\rvert_{h_{f_0}}^2-\tau) =0.
\end{equation}
At $\alpha=0$, the pair $(\omega_0,h_{f_0})$ solves~\eqref{eq:GVsystem-again}, so by Lemma~\ref{lem:existenceforggeq1.1}, we only need to prove that the set of $\alpha\geq 0$ for which there exists a smooth solution
to~\eqref{eq:GVagain} of volume $V$ is closed.
%

Since $\omega_0$ is the constant curvature K\"ahler metric on $\Sigma$ with volume $V$, the system~\eqref{eq:GVsystem-again} for $\omega=\omega_0+2i\partial\bar\partial\varphi=\omega_0(1-\Delta_{\omega_0}\varphi)$
and $h=h_0e^{2f}$ is equivalent to the system
\begin{align}\label{eq:rewrittenGVcontmethod}
\begin{split}
\Delta_{\omega_0} f +\frac{1}{2}(\lvert\bm \phi\rvert_{h}^2-\tau)e^{4\alpha \tau f-2\alpha\lvert\bm \phi\rvert_{h}^2-2c\varphi} &= -\frac{ 2\pi N}{V},\\
\Delta_{\omega_0} \varphi+e^{4\alpha \tau f-2\alpha\lvert\bm \phi\rvert_{h}^2-2c\varphi}&=1,
\end{split}
\end{align}
where the unknowns are $\varphi,f\in C^\infty(\Sigma)$ (see~\cite[(2.6)]{Al-Ga-Ga-P}), and $\lvert\bm\phi\rvert_{h}^2=e^{2f}\lvert\bm\phi\rvert_{h_0}^2$. It is easy to see that it is enough to prove $C^0$ \emph{a
priori} estimates
for $f$ and $\varphi$. By the Bochner identity
\begin{align}\label{ineq:Bochnerestimate}
\begin{split}
\lvert\bm \phi\rvert_{h}^2&\leq \tau, \\
\fint_\Sigma f\omega_0&\leq C
\end{split}\end{align}
(see~\cite[Proposition 5.2]{Brad}). Integrating on both sides of the equations~\eqref{eq:rewrittenGVcontmethod}, we obtain
\begin{align}\label{eq:normalisations}
\begin{split}
\displaystyle \int_\Sigma \left(\lvert\bm\phi\rvert_{h}^2-\tau \right)e^{4\alpha \tau f-2\alpha\lvert\bm\phi\rvert_{h}^2 -2c\varphi} \omega_0&=- 4\pi N , \\
\displaystyle \int_\Sigma e^{4\alpha \tau f-2\alpha\lvert\bm \phi\rvert_{h}^2 -2c\varphi} \omega_0&= V.
\end{split}
\end{align}
For the remainder of the proof, we assume that $\alpha$ is in a compact set $[\alpha_0, \alpha_*]$, where $\alpha_0, \alpha_*>0$. All the constants (usually denoted by $C$) will depend on this compact set. Using the
Green representation formula, we obtain
\begin{equation}\label{ineq:Greenrepforfandv}
\begin{split}
f&\geq \fint_\Sigma f\omega_0 -C, \\
\varphi&\leq \fint_\Sigma \varphi\omega_0 +C.
\end{split}\end{equation}
Applying the maximum principle for the second equation in \eqref{eq:rewrittenGVcontmethod}, we see that
\begin{equation}
-4\alpha \tau \max_\Sigma f \leq -2c\varphi \leq -4\alpha \tau \min_\Sigma f +2\alpha\tau \leq -4\alpha \tau \fint_\Sigma f\omega_0 +C.
\label{ineq:maxprinciplev}
\end{equation}
At this point we recall that any smooth solution $\omega=\omega_0+2i\partial\bar\partial\varphi$ to the gravitating vortex equation~\eqref{eq:GVagain} minimizes the reduced $\alpha$-K-energy
$\mathcal{K}_{\alpha}=\mathcal{K}+\mathcal{M}_{\alpha}$ (Proposition~\ref{thm:lower-bound}). Thus,
\begin{equation}
\begin{split}
\mathcal{K}_{\alpha}(\varphi)
& \leq
 \mathcal{K}_{\alpha}(0) \\
 & =
 2\alpha \int_\Sigma f_0\left( iF_{h_0} + iF_{h_{f_0}} \right)
 +
 \alpha \int_\Sigma \left( \lvert\bm\phi\rvert_{h_{f_0}}^2 - \left| \bm \phi\right|_{h_0}^2\right )\omega_0
 -
 2\alpha\tau \int_\Sigma f_0 \omega_0,
\end{split}
\end{equation}
where $h_{f_0}=h_0e^{2f_0}$ is the unique solution to the vortex equation~\eqref{eq:vortex-0.1}.
Using the explicit formula for $\mathcal{K}_\alpha(\varphi)=\widetilde{\mathcal{K}}_\alpha(f, \varphi)$ (Proposition~\ref{prop:explicit}
), this implies
\begin{equation}
 C
 \geq
   \frac{1}{2} \int_\Sigma \log  \frac{\omega_\varphi}{\omega_0} \cdot\omega_\varphi
 -c \int_\Sigma i\partial \varphi \wedge\bar{\partial}\varphi
 -2\alpha \tau \int_\Sigma f\omega_\varphi
 +2\alpha \int_\Sigma f\left( iF_{h_0}+ iF_{h_f}\right),
\label{ineq:lowerboundKenergy}
 \end{equation}
 which further implies
 \begin{equation}
C-8\pi\alpha N  \fint_\Sigma  f\omega_0
 \geq
  -c \int i\partial \varphi \wedge\bar{\partial}\varphi
  +4\alpha \int i\partial f\wedge \bar{\partial} f
  +\frac{1}{2} \int_\Sigma \log \frac{\omega_\varphi}{\omega_0} \cdot\omega_\varphi
  -2\alpha \tau \int_\Sigma f\omega_\varphi.
\label{ineq:energyestimate1}
\end{equation}
Recall now that $f\leq -\frac{1}{2}\log\lvert\bm\phi\rvert_{h_0}^2 + \frac{1}{2}\log \tau$, by~\eqref{ineq:Bochnerestimate}. Since we are assuming $2\alpha\tau m<1$, we can fix a real number $\epsilon$ such that
$0<\epsilon<2(1-2\alpha \tau m)$, and consider the region $S=\left\{\omega_\varphi\leq\lvert\bm \phi\rvert_{h_0}^{-\frac{2-\epsilon}{m}}\omega_0\right\}\subset\Sigma$. Decomposing the last two integrals
in~\eqref{ineq:energyestimate1} into integration on the region $S$ and its complement $S^c$, we obtain the following estimates:
\begin{equation*}
\begin{split}
\frac{1}{2} \int_\Sigma \log \frac{\omega_\varphi}{\omega_0} \cdot\omega_\varphi
& \geq
- \frac{e^{-1}}{2}V - \frac{2-\epsilon}{4m}\int_{S^c}\log \lvert\bm\phi\rvert_{h_0}^2 \omega_\varphi,\\
-2\alpha\tau \int_\Sigma f\omega_\varphi
&\geq - 2\alpha\tau \int_\Sigma \left( -\frac{1}{2}\log\lvert\bm \phi\rvert_{h_0}^2 + \frac{1}{2}\log \tau \right) \omega_\varphi \\
& =
\alpha\tau \int_S \log \lvert\bm\phi\rvert_{h_0}^2 \omega_\varphi
+
\alpha\tau \int_{S^c} \log \lvert\bm\phi\rvert_{h_0}^2 \omega_\varphi
-
V\alpha\tau\log \tau \\
& \geq
\alpha\tau \int_S \log \lvert\bm\phi\rvert_{h_0}^2\cdot\lvert\bm \phi\rvert_{h_0}^{-\frac{2-\epsilon}{m}}\omega_0
+
\alpha\tau \int_{S^c} \log \lvert\bm\phi\rvert_{h_0}^2 \omega_\varphi
-
V\alpha\tau\log \tau.
\end{split}\end{equation*}
Using the normalization condition $\lvert\bm\phi\rvert_{h_0}^2\leq 1$ and the integrability of $\log \lvert\bm\phi\rvert_{h_0}^2\cdot\lvert\bm \phi\rvert_{h_0}^{-\frac{2-\epsilon}{m}}$ (that follows from the
definition of $m$ and integration in polar coordinates), it follows easily that
\begin{gather}
C-8\pi\alpha N  \fint_\Sigma  f\omega_0
\geq -c \int_\Sigma i\partial \varphi \wedge\bar{\partial}\varphi +4\alpha \int_\Sigma i \partial f\wedge \bar{\partial} f.
\label{ineq:energyestimate2}
\end{gather}
Using the fact that $f$ satisfies the vortex equation, we can derive a different form of \eqref{ineq:energyestimate2}. Indeed, going back to the inequality \ref{ineq:lowerboundKenergy} and using the vortex equation
(i.e., the first equation of \eqref{eq:GVsystem-again}), we see that the following inequality holds:
\begin{align*}
C\geq&\frac{1}{2} \int_\Sigma \log  \frac{\omega_\varphi}{\omega_0} \cdot\omega_\varphi
 -c \int_\Sigma i\partial \varphi \wedge\bar{\partial}\varphi
 -\alpha  \int_\Sigma f\cdot \lvert\bm\phi\rvert_h^2 \omega_\varphi
\\&
-\alpha\tau \left(V -\frac{4\pi N}{\tau}\right) \fint_\Sigma f \omega_0
 -2\alpha \tau \int_\Sigma fi\partial \bar{\partial} \varphi.
\end{align*}
By calculations similar to those used above to estimate the term $\int_\Sigma f\omega_\varphi$, we have
\[
-\alpha \int_\Sigma f \cdot |\bm \phi|_h^2 \omega_\varphi
\geq
\frac{\alpha\tau}{2}\int_S \log |\bm \phi|_{h_0}^2 \omega_\varphi
+
\frac{\alpha\tau}{2}\int_{S^c} \log \lvert\bm\phi\rvert_{h_0}^2 \omega_\varphi
-
\frac{\alpha\log \tau}{2}\int_\Sigma \lvert\bm\phi\rvert_h^2 \omega_\varphi.
\]
It follows (by completing the square) that
\begin{align}\nonumber
C & \geq  -c \int_\Sigma i\partial \varphi \wedge\bar{\partial}\varphi
-\alpha\tau \left(V -\frac{4\pi N}{\tau}\right) \fint_\Sigma f \omega_0
 +2\alpha \tau \int_\Sigma i\partial f\wedge \bar{\partial} \varphi
\\\nonumber&
 = -\alpha\tau \left(V -\frac{4\pi N}{\tau}\right) \fint_\Sigma f \omega_0
 - c \int_\Sigma i\partial\left(\varphi-\frac{\alpha\tau}{c}f\right)\wedge \bar\partial \left(\varphi-\frac{\alpha\tau}{c}f\right)
 +
 \frac{\left(\alpha\tau\right)^2}{c}\int_\Sigma i\partial f\wedge\bar\partial f
\\\label{ineq:secondenergyestimate2}&
\geq
  -\alpha\tau \left(V -\frac{4\pi N}{\tau}\right) \fint_\Sigma f \omega_0
  +
   \frac{\left(\alpha\tau\right)^2}{c}\int_\Sigma i\partial f\wedge\bar\partial f.
\end{align}
Comparing the inequalities \eqref{ineq:energyestimate2} and \eqref{ineq:secondenergyestimate2}, we observe that if the numerical condition
\begin{equation}
\alpha\tau \left( \frac{8\pi N}{\tau} - V\right)
<
\frac{2g-2}{N}\left( V- \frac{4\pi N}{\tau}
\right)
\end{equation}
holds, then they imply $\fint_\Sigma f\omega_0\geq -C$, $\int_\Sigma i\partial f\wedge\bar\partial f\leq C$ and $\int_\Sigma i\partial \varphi\wedge \bar\partial \varphi \leq C$.

These inequalities imply that $\varphi\leq C$ and $f\geq -C$, by \eqref{ineq:Greenrepforfandv} and \eqref{ineq:maxprinciplev}. In particular, the $W^{1,2}$ norm of $\tilde f= f-\fint_\Sigma
f\omega_0$ is uniformly bounded. By the Moser--Trudinger inequality \cite{Li}, for any $\gamma>0$, there exists $C_\gamma>0$ such that
\[
\int_\Sigma e^{\gamma \tilde f}\omega_0\leq C_\gamma.
\]
The first equation in \eqref{eq:rewrittenGVcontmethod} implies that the $L^\gamma$ norm of $\Delta_{\omega_0} f$ is uniformly bounded, and $W^{2,\gamma}$ estimates for elliptic equations imply that the
$W^{2,\gamma}$  norm of $f$ is uniformly bounded. Since $\gamma$ can be taken arbitrarily large, for any $\beta\in (0,1)$ the $C^{1,\beta}$ norm of $f$ is uniformly bounded by Sobolev's inequality.

Inequality \eqref{ineq:maxprinciplev} implies $\varphi$ is uniformly bounded in $C^0$. The second Poisson equation in \eqref{eq:rewrittenGVcontmethod} then implies $\varphi$ is uniformly bounded in $W^{2,p}$ for any
$p>1$, and therefore it is uniformly bounded in $C^{1,\beta}$ for any $\beta\in(0,1)$. The standard bootstrap arguments applied to the elliptic system, \eqref{eq:rewrittenGVcontmethod} shows all higher order
estimates. Hence the closedness of the continuity method is proven, and so the existence of solutions is established.

Finally, uniqueness follows from Theorem~\ref{thm:uniqueness}. Alternatively, this is similar to the proof of the uniqueness part of~\cite[Theorem 6.1]{Al-Ga-Ga-P}, following an approach inspired by work of Bando
and Mabuchi.
\end{proof}

\appendix

\section{A variational approach to the vortex equation}
\label{sub:variational-approach-vortex-eq}

In this appendix, we revisit the existence and uniqueness problem for the vortex equation
\begin{equation}\label{eq:app.vortex.1}
i\Lambda_{\omega} F_{h}+\frac{1}{2}(\lvert\bm\phi\rvert_{h}^2-\tau)=0,
\end{equation}
providing a self-contained elementary proof of Theorem~\ref{th:B-GP}. Although our proof does not involve new ideas (cf., e.g.,~\cite{Mundet}), it serves as an introduction to our variational approach to the gravitating vortex equations, without the complexity associated to coupling the K\"ahler form $\omega$ on the Riemann surface and the Hermitian metric $h$ on the line bundle.

Let $\omega$ and $h_0$ be a smooth background K\"ahler metric of volume $V$ and a smooth Hermitian metric on a holomorphic line bundle $L$ over a compact Riemann surface $\Sigma$, respectively. We define a 1-form
$\widehat{\sigma}$ on the space of Hermitian potentials $\mathcal{P}_{h_0}=C^\infty(\Sigma,\mathbb{R})$ on $L$ by
\begin{equation}\label{eq:vortex-functional.0}
\widehat{\sigma}|_f(\dot{f})
=\int_\Sigma\dot{f}\left(iF_{h_0e^{2f}}+\frac{1}{2}\left( \lvert\bm\phi\rvert_{h_0e^{2f}}^2-\tau\right)\omega\right),
\end{equation}
for all $f,\dot{f}\in C^\infty(\Sigma)$, where $\dot{f}$ represents a tangent vector at $f\in \mathcal{P}_{h_0}$ (cf.~\eqref{defn:primitive}).
One can show as in the calculation~\eqref{eq:differential-mathfrak-S.1} that the 1-form $\widehat{\sigma}$ is closed, and so integrates to a functional
$\widehat{\mathcal{M}}_{\omega}\colon\mathcal{P}_{h_0}\to\mathbb{R}$.
Choosing the Hermitian metric $h_0$ on $L$ that satisfies $iF_{h_0}=\frac{2\pi N}{V}\omega$, this functional is given for all $f\in\mathcal{P}_{h_0}$ by
\begin{equation}\label{eq:vortex-functional.1}
\widehat{\mathcal{M}}_{\omega}(f)
=
\int_\Sigma i\partial f\wedge \bar\partial f
+
\frac{1}{4}\int_\Sigma \left|\bm\phi\right|_{h_0}^2 e^{2f}\omega
-
\frac{\tau}{2V}\left( V-\frac{4\pi N}{\tau}\right) \int_\Sigma f\omega
-
\frac{1}{4}\int_\Sigma \left|\bm\phi\right|_{h_0}^2 \omega.
\end{equation}
This follows as the calculation of the functional $\mathcal{M}_\alpha(f,\varphi)$ in the proof of Proposition~\ref{prop:explicit}. In fact, it is clear that when $S_{\omega_0} =\langle S\rangle $, the functional
$\mathcal{M}_\alpha$ of Proposition~\ref{prop:explicit} satisfies
\[
\mathcal{M}_\alpha(f,0)=4\alpha\widehat{\mathcal{M}}_{\omega_0}(f).
\]

\begin{theorem}\label{thm:classicalvortex}
Let $\omega$ be a background smooth K\"ahler metric of volume $V$ on $\Sigma$ and $h_0$ the Hermitian metric on $L$ such that $iF_{h_0}=\frac{2\pi N}{V}\omega$.
Then the following statements hold:
\begin{enumerate}
\item[\textup{(1)}]
A function $f\in\mathcal{P}_{h_0}$ is a critical point of $\widehat{\mathcal{M}}_{\omega}$ if and only if $h=h_0e^{2f}$ is a solution to the vortex equation~\eqref{eq:app.vortex.1}.
\item[\textup{(2)}]
Given any $f_0, f_1\in \mathcal{P}_{h_0}$, the function $\widehat{\mathcal{M}}_{\omega}(f_t)$ is strictly convex along the affine segment $f_t= (1-t)f_0+tf_1$, except in the case $f_1=f_0$. Therefore the solution
to the vortex equation~\eqref{eq:app.vortex.1} is unique.
\item[\textup{(3)}]
The existence of a solution $h=h_0e^{2\widehat f}$ to the vortex equation~\eqref{eq:app.vortex.1}, where $\widehat{f}\in\mathcal{P}_{h_0}$, implies
$\widehat{\mathcal{M}}_{\omega}(f)>\widehat{\mathcal{M}}_{\omega}(\widehat f)$ for any $f\in\mathcal{P}_{h_0}$ such that $f\neq\widehat{f}$.
\item[\textup{(4)}]
The existence of a solution to the vortex equation~\eqref{eq:app.vortex.1} implies the inequality \eqref{eqn:ineqtauV}. 
\item[\textup{(5)}]
The inequality \eqref{eqn:ineqtauV} implies the existence of a solution to the vortex equation~\eqref{eq:app.vortex.1}.
\end{enumerate}
\end{theorem}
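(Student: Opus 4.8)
The plan is to establish existence by the direct method of the calculus of variations, minimizing the functional $\widehat{\mathcal{M}}_{\omega}$ from \eqref{eq:vortex-functional.1}, whose critical points are solutions of \eqref{eq:app.vortex.1} by item~(1). Write $f=\bar f+\tilde f$, where $\bar f=\frac1V\int_\Sigma f\,\omega$ is the average and $\int_\Sigma\tilde f\,\omega=0$, and note $\int_\Sigma i\partial f\wedge\bar\partial f=\int_\Sigma i\partial\tilde f\wedge\bar\partial\tilde f$. Setting $P(\tilde f):=\int_\Sigma\lvert\bm\phi\rvert_{h_0}^2 e^{2\tilde f}\,\omega>0$, the functional reads
\begin{equation*}
\widehat{\mathcal{M}}_{\omega}(f)=\int_\Sigma i\partial\tilde f\wedge\bar\partial\tilde f+\frac14 e^{2\bar f}P(\tilde f)-\frac{\tau}{2}\Big(V-\frac{4\pi N}{\tau}\Big)\bar f-\frac14\int_\Sigma\lvert\bm\phi\rvert_{h_0}^2\,\omega.
\end{equation*}
First I would minimize explicitly in the constant direction $\bar f$. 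Since \eqref{eqn:ineqtauV} gives $V-\frac{4\pi N}{\tau}>0$, the map $\bar f\mapsto\frac14 e^{2\bar f}P-\frac{\tau}{2}(V-\frac{4\pi N}{\tau})\bar f$ is strictly convex and coercive, with minimum at $e^{2\bar f_\ast}P=\tau(V-\frac{4\pi N}{\tau})$. Substituting yields the reduced functional on the space of zero-average $W^{1,2}$ functions,
\begin{equation*}
\mathcal{E}(\tilde f)=\int_\Sigma i\partial\tilde f\wedge\bar\partial\tilde f+\frac{\tau}{4}\Big(V-\frac{4\pi N}{\tau}\Big)\log P(\tilde f)+\mathrm{const}.
\end{equation*}

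The key step is coercivity of $\mathcal{E}$. Applying Jensen's inequality to the probability measure $\lvert\bm\phi\rvert_{h_0}^2\omega/A$, where $A=\int_\Sigma\lvert\bm\phi\rvert_{h_0}^2\omega$, gives $\log P(\tilde f)\geq\log A+\frac2A\int_\Sigma\tilde f\,\lvert\bm\phi\rvert_{h_0}^2\,\omega$. Because $\tilde f$ has zero average, the last integral equals $\int_\Sigma\tilde f\big(\lvert\bm\phi\rvert_{h_0}^2-A/V\big)\omega$, bounded in absolute value by $C\big(\int_\Sigma i\partial\tilde f\wedge\bar\partial\tilde f\big)^{1/2}$ via Cauchy--Schwarz and the Poincaré inequality. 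Since the coefficient $\frac{\tau}{4}(V-\frac{4\pi N}{\tau})$ is positive, the quadratic Dirichlet term dominates this linear lower bound, so $\mathcal{E}(\tilde f)\to+\infty$ as the Dirichlet energy tends to infinity. For weak lower semicontinuity, the Dirichlet term is weakly lsc, while $\log P$ is in fact weakly continuous: the Moser--Trudinger inequality \cite{Li} bounds $\lVert e^{2\tilde f}\rVert_{L^p}$ uniformly for $\tilde f$ in a $W^{1,2}$-bounded set and any $p\geq1$, so weak $W^{1,2}$-convergence yields (along a subsequence) almost-everywhere convergence with uniform integrability, whence $P(\tilde f_k)\to P(\tilde f)$.

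The direct method then produces a minimizer $\tilde f_\ast\in W^{1,2}$ of $\mathcal{E}$; together with the explicit $\bar f_\ast$ it gives a minimizer $f_\ast=\bar f_\ast+\tilde f_\ast$ of $\widehat{\mathcal{M}}_{\omega}$, whose Euler--Lagrange equation is precisely \eqref{eq:app.vortex.1}. Standard elliptic bootstrapping, again using Moser--Trudinger to control the exponential nonlinearity, upgrades $f_\ast$ to a smooth solution. The main obstacle is exactly this coercivity/semicontinuity step, which is where the hypothesis \eqref{eqn:ineqtauV} is essential: it makes the minimization over the constant $\bar f$ well posed and ensures the coefficient of $\log P$ in $\mathcal{E}$ has the correct sign for the Dirichlet energy to dominate. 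Uniqueness is already furnished by the strict convexity in item~(2), completing the proof of Theorem~\ref{th:B-GP}.
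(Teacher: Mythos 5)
Your proposal establishes only item (5) of Theorem \ref{thm:classicalvortex}; as a proof of the full five-part statement it has a genuine gap. Items (1) and (2) are used as inputs (critical points of $\widehat{\mathcal{M}}_{\omega}$ are vortex solutions; strict convexity along affine segments gives uniqueness) but are never proved: taking \eqref{eq:vortex-functional.1} as the definition, one must verify that its first variation is the 1-form \eqref{eq:vortex-functional.0}, and compute the second variation
\begin{equation*}
\frac{\mathrm{d}^2}{\mathrm{d}t^2}\widehat{\mathcal{M}}_{\omega}(f_t)
=2\int_\Sigma i\partial(f_1-f_0)\wedge\bar\partial(f_1-f_0)
+\int_\Sigma (f_1-f_0)^2\,\lvert\bm\phi\rvert_{h_0e^{2f_t}}^2\,\omega,
\end{equation*}
together with the strictness analysis: vanishing forces $f_1-f_0$ to be a constant annihilating $\lvert\bm\phi\rvert^2$, hence zero because $\bm\phi\not\equiv 0$. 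Items (3) and (4) are not addressed at all. Item (3) follows from (1)--(2) by convexity along the segment from $\widehat f$ to $f$, but it must be recorded, since you invoke precisely this minimizing property. Item (4) needs its own argument: either the paper's ray argument, evaluating \eqref{eq:vortex-functional.1} along $f_t=\widehat f-t$ and noting that boundedness below (from item (3)) forces the coefficient $\frac{\tau}{2}\bigl(V-\frac{4\pi N}{\tau}\bigr)$ of $t$ to be positive, or, more directly, integrating \eqref{eq:app.vortex.1} against $\omega$, which yields $\frac{\tau V}{2}=2\pi N+\frac{1}{2}\int_\Sigma\lvert\bm\phi\rvert_h^2\,\omega>2\pi N$ since $\bm\phi\not\equiv 0$.

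Your treatment of item (5) itself is correct, and it organizes the coercivity differently from the paper. You minimize over the constant mode $\bar f$ in closed form (this is where \eqref{eqn:ineqtauV} enters, making the one-variable problem convex and coercive) and reduce to $\mathcal{E}(\tilde f)=\int_\Sigma i\partial\tilde f\wedge\bar\partial\tilde f+\frac{\tau}{4}\bigl(V-\frac{4\pi N}{\tau}\bigr)\log P(\tilde f)+\mathrm{const}$ on zero-average potentials, with $P(\tilde f)=\int_\Sigma\lvert\bm\phi\rvert_{h_0}^2e^{2\tilde f}\omega$, getting coercivity from Jensen relative to the probability measure $\lvert\bm\phi\rvert_{h_0}^2\omega/A$, $A=\int_\Sigma\lvert\bm\phi\rvert_{h_0}^2\omega$, plus Poincar\'e. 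The paper instead bounds the full functional below without eliminating $\bar f$: Poincar\'e on the Dirichlet term, Jensen relative to $\omega/V$ on the exponential term, and the elementary inequality $ae^{2x}-bx\geq b\lvert x\rvert-\lvert b(1+\log a-\log b)\rvert$; the two devices play the same role, and both produce a $W^{1,2}$-bounded minimizing sequence with the same endgame (weak limit, Euler--Lagrange equation, elliptic regularity). Two simplifications are available in your route: full weak continuity of $P$ (Moser--Trudinger plus uniform integrability) is more than needed, since Fatou's lemma along an a.e.\ convergent subsequence gives $P(\tilde f)\leq\liminf_k P(\tilde f_k)$, and because $\log P$ enters $\mathcal{E}$ with a positive coefficient this lower semicontinuity already suffices (it is all the paper uses); and the regularity step is cleanest if you record that Moser--Trudinger places $e^{2f_\ast}$ in every $L^p$, so the equation gives $f_\ast\in W^{2,p}$ for all $p$, hence $C^{1,\beta}$, and then Schauder theory and bootstrapping apply (the paper instead reaches $L^\infty$ by Moser iteration on the sub-equation $2i\partial\bar\partial f_\infty\geq-\frac{\tau}{2}\omega$).
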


\begin{proof}
Part (1) follows because the 1-form $\widehat{\sigma}$ is the differential of $\widehat{\mathcal{M}}_{\omega}$, as already observed, and $\widehat{\sigma}|_f=0$ if and only if $f$ is a vortex solution.
Parts (2) and (3) follow because along an affine segment $f_t=(1-t)f_0+tf_1$,~\eqref{eq:vortex-functional.1} (or more directly,~\eqref{eq:vortex-functional.0}) implies
\begin{equation}
\label{eq:thm:classicalvortex.1}
\frac{\mathrm{d}^2\,}{\mathrm{d}t^2}\widehat{\mathcal{M}}_{\omega}(f_t)
=
2\int_\Sigma i \partial (f_1-f_0)\wedge \bar\partial (f_1-f_0)
+
\int_\Sigma (f_1-f_0)^2 \left|\bm\phi\right|_{h_t}^2 \omega.
\end{equation}
To prove part (4), we assume $h=h_0e^{2\hat f}$ is a vortex solution, and evaluate the functional along the ray $f_t=\hat f-t=(1-t)\hat{f}+t(\hat{f}-1)$, $t\in [0, +\infty)$:
\begin{align*}
\widehat{\mathcal{M}}_{\omega}(f_t)
& =
\int_\Sigma i\partial \hat f\wedge \bar\partial \hat f
+
\frac{e^{-2t}}{4}\int_\Sigma \left|\bm\phi\right|_{h_0}^2 e^{2\hat f}\omega
+ t\cdot \frac{\tau}{2}\left(V-\frac{4\pi N}{\tau}\right) \\
&	-
\frac{\tau}{2V}\left( V- \frac{4\pi N}{\tau}\right) \int_\Sigma \hat f\omega
-
\frac{1}{4}\int_\Sigma \left|\bm\phi\right|_{h_0}^2 \omega.
\end{align*}
Hence the uniform lower bound of $\widehat{\mathcal{M}}_{\omega}(f_t)$ (that follows from part (1) and~\eqref{eq:thm:classicalvortex.1}) implies part (4).
For the converse, we first notice the inequality
\begin{align*}
\widehat{\mathcal{M}}_{\omega}(f)
& \geq
\frac{\lambda_1(\omega)}{2}\int_\Sigma \left( f - \fint_\Sigma f\omega\right)^2\omega
+
\frac{V}{4}e^{\fint_\Sigma \log\left|\bm\phi\right|_{h_0}^2\omega} \cdot e^{2\fint_\Sigma f\omega}\\
&\quad
-
\frac{\tau}{2}\left(V-\frac{4\pi N}{\tau}\right) \fint_\Sigma f\omega
-
\frac{1}{4}\int_\Sigma \left|\bm\phi\right|_{h_0}^2 \omega\\
& \geq
\frac{\lambda_1(\omega)}{2}\int_\Sigma \left( f - \fint_\Sigma f\omega\right)^2\omega
+
\frac{\tau}{2}\left(V-\frac{4\pi N}{\tau}\right) \left|\fint_\Sigma f\omega\right| \\
& -\left|
\frac{\tau}{2}\left(V-\frac{4\pi N}{\tau}\right)
\left[
1+ \log \frac{V}{4}e^{\fint_\Sigma \log\left|\bm\phi\right|_{h_0}^2\omega}
-
\log \frac{\tau}{2}\left(V-\frac{4\pi N}{\tau}\right)
\right]
 \right|
-
\frac{1}{4}\int_\Sigma \left|\bm\phi\right|_{h_0}^2 \omega,
\end{align*}
where $\lambda_1(\omega)$ is the first eigenvalue of $\Delta_{\omega}= 2i\Lambda_\omega \bar\partial \partial $ and we have used the elementary inequality $a e^{2x} - b x \geq b|x| -\left| b\left( 1+ \log a - \log
b\right)\right|$ for $a,b>0$ and $x\in \mathbb{R}$.
This inequality firstly implies that $\widehat{\mathcal{M}}_\omega$ is bounded from below on $\mathcal{P}_{h_0}$ and we can take a minimizing sequence $\{f_n\}$, that is,
\[
\lim_{n\to +\infty} \widehat{\mathcal{M}}_{\omega}(f_n)= \inf_{f\in \mathcal{P}_{h_0}} \widehat{\mathcal{M}}_{\omega}(f).
\]
By strengthening the above inequality to
\[
\widehat{\mathcal{M}}_{\omega}(f)\geq
\frac{1}{2}\int_\Sigma \left|\partial f\right|_\omega^2\omega
+
\frac{\lambda_1(\omega)}{4}\int_\Sigma\left( f- \fint_\Sigma f\omega\right)^2\omega
+
\frac{\tau}{2}\left(V-\frac{4\pi N}{\tau}\right) \left|\fint_\Sigma f\omega\right|
- C,
\]
where
\begin{equation}
C=
\left|\frac{\tau}{2}\left(V-\frac{4\pi N}{\tau}\right)
\left[
1+ \log \frac{V}{4}e^{\fint_\Sigma \log\left|\bm\phi\right|_{h_0}^2\omega}
-
\log \frac{\tau}{2}\left(V-\frac{4\pi N}{\tau}\right)
\right]
 \right|
+
\frac{1}{4}\int_\Sigma \left|\bm\phi\right|_{h_0}^2 \omega,
\end{equation}
we derive that
\[
\lVert  f_n\rVert_{W^{1,2}}\leq C.
\]
This implies that there exists $f_\infty\in W^{1,2}$ and a subsequence of $\{f_n\}$, still denoted $\{f_n\}$, such that $f_n\to f_\infty$ weakly in $W^{1,2}$ and $f_n\rightarrow f_\infty$ in $L^2$. In particular, we
have convergence almost everywhere for a subsequence. By Fatou's Lemma,
\[
\int_\Sigma \left|\bm\phi\right|_{h_0}^2e^{2f_\infty}\omega \leq \liminf_{n\to +\infty}\int_\Sigma \left|\bm\phi\right|_{h_0}^2e^{2f_n}\omega
\]
(actually, by the Moser--Trudinger inequality \cite{Li}, there exists $\beta>0$ such that $\int_\Sigma e^{\beta f_\infty^2}\omega<+\infty$ and further $\int_\Sigma e^{\gamma |f_\infty|}\omega<+\infty$ for any
$\gamma>0$). By weak convergence in $W^{1,2}$ and strong convergence in $L^2$, we have $\int_\Sigma i\partial f_\infty\wedge \bar\partial f_\infty \leq {\displaystyle\liminf_{n\to +\infty}}\int_\Sigma i \partial f_n
\wedge \bar\partial f_n$. These combine to show
\[
\widehat{\mathcal{M}}_{\omega}(f_\infty)\leq \liminf_{n\to +\infty}\widehat{\mathcal{M}}_{\omega}(f_n).
\]
Therefore any minimizing sequence yields a $W^{1,2}$ minimizer to the functional $\widehat{\mathcal{M}}_{\omega}$ on $ \mathcal{P}_{h_0}^{1,2}\defeq W^{1,2}(\Sigma,\omega)$. For any $v\in C^\infty(\Sigma)$ and
$t\in \mathbb{R}$, the inequality $\widehat{\mathcal{M}}_{\omega}(f_\infty + tv)\geq \widehat{\mathcal{M}}_{\omega}(f_\infty)$ actually implies
$\frac{\mathrm{d}\,}{\mathrm{d}t}\big|_{t=0}\widehat{\mathcal{M}}_{\omega}(f_\infty+tv)=0$, which precisely means that $f_\infty$ is a $W^{1,2}$ weak solution to
\[
iF_{h_0} - 2i\partial\bar\partial f_\infty
+
\frac{1}{2}\left( \left|\bm\phi\right|_{h_0}^2e^{2f_\infty}-\tau\right)\omega = 0.
\]
The standard Moser iteration applied to the weak sub equality $2i\partial\bar\partial f_\infty \geq - \frac{\tau}{2}\omega$ implies $f_\infty \in L^\infty$. Then the standard $W^{2,p}$ estimate to the equation
implies that $f_\infty\in W^{2,p}$ for any $p>1$.  This further implies that $f_\infty\in C^{1,\alpha}$ for any $\alpha\in (0,1)$. Finally, the Schauder estimate and bootstrapping arguments imply that $f_\infty\in
C^\infty$.
\end{proof}

\end{document}